\theoremstyle{plain}
\newtheorem{main}{Theorem}
\newtheorem{maincor}[main]{Corollary}
\newtheorem{theorem}{Theorem}[section]
\newtheorem{lemma}[theorem]{Lemma}
\newtheorem{proposition}[theorem]{Proposition}
\newtheorem{corollary}[theorem]{Corollary}
\theoremstyle{remark}
\newtheorem{remark}[theorem]{Remark}
\newtheorem{definition}[theorem]{Definition}
\newtheorem{example}[theorem]{Example}
\newcommand{\quand}{\quad\text{and}\quad}
\newcommand{\Leb}{\operatorname{vol}}
\newcommand{\C}{\operatorname{C}}
\newcommand{\Gibbs}{\operatorname{Gibbs}}
\newcommand{\Jac}{\operatorname{Jac}}
\newcommand{\G}{\operatorname{G}}
\newcommand{\Int}{\operatorname{Int}}
           \def\ea{\end{array}}
          \def\ec{\end{center}}
     \def\ed{\end{description}}
        \def\ee{\end{equation}}
       \def\eea{\end{eqnarray}}
     \def\eeaa{\end{eqnarray*}}
 \def\et{\end{thebibliography}}
\def\bib{\bibitem}
\def\Orb{{\rm Orb}}
\def\Diff{{\rm Diff}}
\def\Cl{{\rm Cl}}
\def\Gibb{{\rm Gibbs}}
\def\inv{{\rm inv}}
\def\supp{\operatorname{supp}}
\def\cG{{\mathcal G}}
\def\cA{{\mathcal A}}
\def\cD{{\mathcal D}}
\def\cO{{\mathcal O}}
\def\cI{{\mathcal I}}
\def\cU{{\mathcal U}}
\def\cV{{\mathcal V}}
\def\cR{{\mathcal R}}
\def\cB{{\mathcal B}}
\def\cH{{\mathcal H}}
\def\cE{{\mathcal E}}
\def\cF{{\mathcal F}}
\def\cM{{\mathcal M}}
\def\cP{{\mathcal P}}
\def\cR{{\mathcal R}}
\def\cS{{\mathcal S}}
\def\loc{\operatorname{loc}}
\def\vep{\varepsilon}
\def\TT{{\mathbb T}}
\def\RR{{\mathbb R}}
\title[Diffeomorphisms with mostly expanding center]{Geometrical and measure-theoretic structures of
maps with mostly expanding center}
\author{Jiagang Yang}
\date{\today}
\thanks{J.Y. is partially supported by NNSF 11871487, CNPq, FAPERJ, and PRONEX.}
\address{Department of Mathematics, Southern University of Science and Technology of China, 1088 Xueyuan Rd., Xili, Nanshan District, Shenzhen, Guangdong, China 518055}
\address{Departamento de Geometria, Instituto de Matem\'atica e Estat\'istica, Universidade
Federal Fluminense, Niter\'oi, Brazil}
\email{yangjg\@@impa.br}
\begin{document}

\begin{abstract}
In this paper we study physical measures for $\C^{1+\alpha}$ partially hyperbolic diffeomorphisms with mostly expanding center.
We show that every diffeomorphism with mostly expanding center direction exhibits a geometrical-combinatorial structure, which we call skeleton,
that determines the number, basins and supports of the physical measures. Furthermore, the skeleton allows us to describe how  physical measures bifurcate as the diffeomorphism changes under $C^1$ topology.

Moreover, for each diffeomorphism with mostly expanding center, there exists a $C^1$ neighborhood,
such that diffeomorphism among a $C^1$ residual subset of this neighborhood admits finitely many physical measures, whose basins
have full volume.

We also show that the physical measures for diffeomorphisms with mostly expanding center satisfy exponentially decay of correlation for any H\"older observes.
\end{abstract}

\maketitle

\tableofcontents

\section{Introduction and statement of results}
{\em Physical measures} were introduced in 1970's by Bowen, Ruelle and Sinai to study the large time behavior of Lebesgue typical points for Axiom A attractors. Such systems do not preserve volume (or any measure that is equivalent to the volume) due to the contracting near the attractor. For this reason, those measures are often supported on a zero volume subset of the manifold, but captures the behavior of points in a large set with positive Lebesgue measure. More precisely, an invariant measure $\mu$ is called a physical measure, if the set
$$
\cB(\mu):=\{x\in M: \frac{1}{n}\sum_{i=0}^{n-1}\delta_{f^i(x)}\overset{weak *}{\longrightarrow}\mu\}
$$
has positive volume. This set is known as the {\em basin of $\mu$}. For Axiom A attractors, many properties of physical measures were studied by many different authors. We refer the readers to the review paper~\cite{Y02} and the book~\cite{BDVnonuni} for more details.

It is also known that the physical measures of Axiom A attractors have strong statistical property, one of the most important being the {\em decay of correlations}. It can be seen as the speed at which the system losses dependence and starts to behave like a random system. To be more precise:
\begin{definition}\label{df.decay}
	Given observables $\phi,\psi: M\to \mathbb{R}$, we define the correlation function with
	respect to a measure $\mu$ as
	$$C_\mu(\phi,\psi\circ f^n)=\mid \int \phi(\psi\circ f^n)d\mu -\int \phi d\mu \int \psi d\mu \mid \text{ for } n\geq 1.$$
	We say that the system has decay of correlations, if for all $\phi$ and $\psi$ in some families of functions, $C_\mu(\phi,\psi\circ f^n)$ converges to zero as $n$ goes to infinity.
\end{definition}

With that we are ready to introduce our first result of this paper.

\begin{main}\label{m.a}
Let $f$ be a $C^2$, partially hyperbolic, volume preserving diffeomorphism with one dimensional center. Assume that $f$ is accessible, and that the center Lyapunov exponent of the volume is non-vanishing. Then $f$ has exponential decay of correlations: there is $d>0$ such that
$$
C_{\Leb}(\phi,\psi\circ f^n) = \cO(e^{-dn})
$$
for all H\"older continuous $\phi:M\to\RR$, and $\psi\in L^\infty(\Leb)$. Furthermore, the volume measure is Bernoulli.
\end{main}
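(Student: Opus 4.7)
The plan is to reduce Theorem A to the general exponential decay of correlations theorem for $C^{1+\alpha}$ diffeomorphisms with mostly expanding center that has already been established earlier in the paper, and then to upgrade mixing to the Bernoulli property using the absolute continuity of the invariant foliations.

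First I would set up the hypothesis so the main theorem of the paper applies. Since $f$ is volume-preserving, $C^2$, accessible, and the one-dimensional center is automatically center bunched, the Burns--Wilkinson ergodicity theorem gives that $\Leb$ is ergodic for $f$. Consequently the center Lyapunov exponent $\lambda^c$ is a constant $\Leb$-a.e., and by hypothesis it is nonzero. Replacing $f$ by $f^{-1}$ if necessary (the statement is symmetric under this change), I may assume $\lambda^c>0$.

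Next I would verify that $f$ has mostly expanding center in the sense of the paper, namely that along every disk tangent to $E^u$, a full-volume set of points has positive center Lyapunov exponent. The key input is the absolute continuity of the disintegration of $\Leb$ along strong-unstable leaves (Brin--Pesin), which implies that the $\Leb$-full-volume set $\{\lambda^c>0\}$ meets $\Leb$-a.e. unstable leaf in a set of full induced volume. Invariance of this property under $f$, together with accessibility (propagating the property across leaves via $su$-paths) and the fact that $\Leb$ itself is a Gibbs $u$-state, extends the positive-exponent conclusion to \emph{every} unstable disk. Once mostly expanding center is established, $\Leb$ is itself a physical measure and is ergodic, so by uniqueness of physical measures established earlier it is the unique one; the exponential decay estimate $C_{\Leb}(\phi,\psi\circ f^n)=\cO(e^{-dn})$ then follows directly from the general decay-of-correlations theorem proved earlier for diffeomorphisms with mostly expanding center.

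For the Bernoulli conclusion I would proceed via the standard Ornstein-theoretic pipeline. Accessibility together with center bunching upgrade ergodicity to the K-property through the Burns--Wilkinson argument; nonvanishing of all Lyapunov exponents (hyperbolic in the stable and unstable directions, and nonzero in the center by hypothesis) give nonuniform hyperbolicity on a full-volume set, so that Pesin partitions and their entropy theory are available. Combining the absolute continuity of stable and unstable foliations with the exponential mixing from the previous paragraph gives the very-weak-Bernoulli estimates on partitions adapted to Pesin blocks, and Ornstein's isomorphism theorem then promotes the K-system to a Bernoulli one.

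The main obstacle I expect is the second step: turning the $\Leb$-a.e. positivity of $\lambda^c$ on $M$ into positivity on $\Leb$-a.e. point of \emph{every} unstable disk, which is what ``mostly expanding center'' requires. Merely applying Fubini along the unstable foliation only handles almost every leaf; removing the null set of exceptional leaves is exactly where accessibility and the absolute continuity of the $su$-holonomies are used, and the argument must be carried out carefully given only the $C^2$ hypothesis. A secondary technical point is checking that the hypotheses of the earlier decay-of-correlations theorem (which is stated in the $C^{1+\alpha}$ mostly expanding setting) are met by the volume measure without needing any stronger regularity; once this is verified, the Bernoulli step is essentially a combination of known ingredients.
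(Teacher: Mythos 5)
Your overall strategy --- reduce to ``mostly expanding center'' and then invoke the decay-of-correlations theorem for that class, with Ornstein theory for the Bernoulli conclusion --- is the same as the paper's, which deduces Theorem~\ref{m.a} from Theorem~\ref{main.decay of Correlations} and Corollary~\ref{maincor.period} after quoting \cite[Section 8]{Y} for the mostly-expanding property. The problem is in your second step, and it is a genuine gap rather than a technicality: you verify the wrong definition. In this paper (Definition~\ref{df.me}, following Andersson--V\'asquez) ``mostly expanding center'' means that the center exponent is positive almost everywhere with respect to \emph{every} Gibbs $u$-state of $f$, and the paper explicitly notes this is strictly stronger than the Alves--Bonatti--Viana formulation in terms of unstable disks. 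Your Fubini/absolute-continuity/accessibility argument only addresses the volume measure and Lebesgue-typical points of unstable disks; it says nothing about a possible \emph{singular} ergodic Gibbs $u$-state $\mu$ with $\lambda^c(\mu)\le 0$, whose support could be a zero-volume $u$-saturated set that your propagation along $su$-paths never sees. Excluding such a measure is exactly the content of the cited result (\cite[Section 8]{Y}, cf.\ Proposition~\ref{p.volumeperturbation}), and the known proof goes through an entropy/invariance-principle argument (a Gibbs $u$-state with non-positive center exponent satisfies $h_\mu(f)=h_\mu(f,\cF^u)=\int\log\Jac^u\,d\mu$ by Proposition~\ref{p.vanishingcenterexponents}, and accessibility then forces rigidity incompatible with $\lambda^c(\Leb)\neq 0$), not through holonomy of the volume. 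Without the stronger property, none of the machinery you want to invoke (the uniform estimate of Proposition~\ref{p.onestep}, hyperbolic times for all measures in $\G^u$, skeletons, Theorem~\ref{main.decay of Correlations}) is available.

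Two smaller points. First, Theorem~\ref{main.decay of Correlations} gives exponential decay for the power $f^{P}$, where $P$ is the product of the periods of the skeleton elements; to state the conclusion for $f^n$ as in Theorem~\ref{m.a} you should add the (easy) interpolation step writing $n=qP+r$ and absorbing $\psi\circ f^r$ into the $L^\infty$ observable, using that the volume is the unique physical measure of every power (which follows from the K-property). Second, for the Bernoulli conclusion the paper does not rerun the very-weak-Bernoulli estimates: Corollary~\ref{maincor.period} shows each physical measure of $f^P$ is ergodic for all further powers and then quotes Ornstein--Weiss \cite{OW}; your sketch is consistent with this but could simply cite that corollary.
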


This theorem generalizes~\cite[Corollary 0.2]{BW}, where it is shown that every $C^2$, accessible, partially hyperbolic diffeomorphism with one dimensional center is ergodic and has K-property. We remark that such systems are abundant, see the discussion in Section~\ref{ss.3.2}. Also note that the hyperbolicity assumption in the previous theorem (non-vanishing center exponent) was thought rather weak, yet we obtain strong statistical property in the form of fast decay of correlations, central limit theorem and exponential large deviation control (the latter two results are the natural consequences of the decay of correlations; see~\cite{AL} and~\cite{G06}).

By~\cite[Section 8]{Y}, if $f$ is a $C^2$, partially hyperbolic, volume preserving diffeomorphism with one dimensional center and $\lambda^c(\Leb)\ne 0$, then either $f$ or $f^{-1}$ has {\em mostly expanding center}. The rest of this paper is devoted to a general theory on such diffeomorphisms. In particular, Theorem~\ref{m.a} is a direct consequence of Theorem~\ref{main.decay of Correlations} below.
\subsection{Diffeomorphisms with mostly expanding center}

Shortly after the physical measures were introduced for Axiom A attractors, a program for investigating the physical measures of diffeomorphisms beyond uniform hyperbolicity was initiated by Alves, Bonatti, Viana in a sequence of papers, such as \cite{ABV00,BV00} to name but a few.
They introduced several classes of systems, for which the physical measures exist, and the number of physical measures is finite. Among them are the {\em diffeomorphisms with mostly contracting center}, and {\em diffeomorphisms with mostly expanding center}. In this paper, we are particularly interested in the latter class.

{\em Diffeomorphisms with mostly expanding center} are, roughly speaking, partially hyperbolic diffeomorphisms whose center Lyapunov exponents are positive. This class of systems was introduced by
Alves, Bonatti and Viana (\cite{ABV00}) using a different, more technical definition.  Later, another definition was given
by Dolgopyat \cite{D2}, and more recently by Andersson and V\'asquez \cite{AC}. In \cite{AC}, they
also proposed the latter, somewhat stronger, definition as the official definition of having
mostly expanding center, which we will follow in this paper.

We call a diffeomorphism $f$ \emph{partially hyperbolic}, if there exists a
decomposition $TM = E^s \oplus E^c \oplus E^u$ of the tangent bundle $TM$ into three
continuous invariant sub-bundles: $E^s_x$ and $E^c_x$ and $E^u_x$, such that $Df \mid E^s$ is
uniform contraction, $Df\mid E^u$ is uniform expansion and $Df \mid E^c$ lies in between them:
$$
\frac{\|Df(x)v^s\|}{\|Df(x)v^c\|} \le \frac 12
\quand
\frac{\|Df(x)v^c\|}{\|Df(x)v^u\|} \le \frac 12
$$
for any unit vectors $v^s\in E^s_x$, $v^c\in E^c_x$, $v^u\in E^u_x$ and any $x\in M$.
This notation was proposed by Brin, Pesin~\cite{BP} and Pugh,
Shub~\cite{PS72} independently as early as 1970's.

As shown by Bonatti, D\'{i}az and Viana \cite{BDVnonuni} and Dolgopyat \cite{D}, physical measures of any $C^{1+\alpha}$ partially hyperbolic diffeomorphism should be a {\em Gibbs
	$u$-state}, meaning that the conditional
measures of $\mu$ with respect to the partition into local strong-unstable manifolds are absolutely
continuous with respect to Lebesgue measure along the unstable leaves.

\begin{definition}\label{df.me}
A partially hyperbolic diffeomorphism $f: M\to M$ is \emph{mostly expanding along the central direction} if $f$ has
positive central Lyapunov exponents almost everywhere with respect to every Gibbs $u$-state
for $f$.
\end{definition}
This definition is comparable to diffeomorphisms with {\em mostly contracting center} (see, for example,~\cite{DVY}), and share similar properties with the latter. In particular,
$C^1$ openness of the partially hyperbolic
diffeomorphisms with mostly expanding center was recently proved in \cite{Y}. Note however, that the inverse of a diffeomorphism with mostly expanding center may not be mostly contracting. This is because the space of Gibbs $u$-states of $f$ could be very different from that of $f^{-1}$.

A list of examples for partially hyperbolic diffeomorphisms with mostly expanding center will be provided in Section~\ref{s.exME}.

\subsection{Index-$\dim(E^{cu})$ skeleton}
In this article, we will introduce a topological structure of $f$, known as the {\em skeleton}, and use it to study the structure of physical measures of $f$. To this end, for a $C^1$ partially hyperbolic diffeomorphism $f$ with partially hyperbolic splitting $E^{s}\oplus E^{c}\oplus E^u$, we denote by
$i_{cu}=\dim(E^{cu})$ and $i_s=\dim(E^s)$, where $E^{cu}= E^c\oplus E^u$.

\begin{definition}\label{d.1}\footnote{In \cite{DVY}, a different type of skeleton
was defined for diffeomorphisms with mostly contracting center, where the index of saddles in $\cS$ equals $i_{cs}=\dim(E^s\oplus E^c)$. Instead of condition (a),
there the union of stable manifold of periodic orbits of the skeleton is a $u$-section. The existence of index $i_{cs}$ skeleton is a $C^1$ open
property, but it is not necessarily true any more for index $i_s$ skeleton. For more discussions, see Section~\ref{s.skeleton}.}
We say that $\cS$ is an \emph{index $i_{s}$ skeleton of $f$} if $\cS=\{p_1,\cdots, p_k\}$
consists of finitely many hyperbolic saddles with stable index $i_s$, such that:
\begin{itemize}
\item[(a)] $\bigcup_{i=1,\cdots k}\cF^s(\Orb(p_i))$ is dense in $M$;
\item[(b)] $\cS$ does not have a proper subset that satisfies property (a).
\end{itemize}
A set $\cS$ consisting of finitely many hyperbolic saddles with stable index $i_s$ and satisfying (a) above is called a \emph{pre-skeleton}.
\end{definition}

Let us observe that in general, a partially hyperbolic diffeomorphism may not have any skeleton, since
it may not have any hyperbolic periodic orbit at all. Even if it admits a set of periodic points such that the union of their
stable manifolds are dense, such set may have infinite cardinality. However, we will see in Section~\ref{s.skeleton} that if $f$ does have a skeleton, then all the skeletons of $f$ (with the same index) must have the same cardinality (Lemma~\ref{l.skeletondifferent}). Furthermore, every pre-skeleton of $f$ contains a skeleton (Lemma~\ref{l.subsetofpre}).

Finally, in Proposition~\ref{l.existenceskeleton} we will show that if $f$ is $C^{1+\alpha}$ with mostly expanding center (or if $f$ is $C^1$ and close to a $C^{1+\alpha}$ diffeomorphism with mostly contracting center), then $f$ has an index $i_s$ skeleton. Furthermore, in Section~\ref{s.robustskeleton} we will see that the skeletons are robust under $C^1$ topology, in the sense that the continuation of a skeleton of $f$ is a pre-skeleton for nearby $C^1$ maps. Note however, that this property requires $f$ to have mostly expanding center, unlike those skeletons in~\cite{DVY}.

The main result of this paper shows that for such diffeomorphisms,
skeletons  provide rich geometrical information on the physical measures of $f$.

For simplicity, we will frequently suppress the dependence on the H\"older index $\alpha$ and write $C^{1+}$, as the H\"older index $\alpha$ does not play any particular role.
\begin{main}\label{main.sksleton}
Let $f$ be a $C^{1+}$ diffeomorphism with mostly expanding center.
Then $f$ admits an index $i_s$ skeleton. Moreover, Let
$\cS=\{p_1,\cdots,p_k\}$ be any index $i_s$ skeleton of $f$, then for
each $p_i\in \cS$ there exists a distinct physical measure $\mu_i$ such that:
\begin{itemize}
\item[(1)] both the closure of $W^u(Orb(p_i))$ and the homoclinic class of the orbit $\Orb(p_i)$
 coincide with $\supp(\mu_i)$;
\item[(2)] the closure of $\cF^s(\Orb(p_i))$ coincides with the closure of the basin
of the measure $\mu_i$.
\end{itemize}
In particular, the number of physical measures of $f$ is precisely $k=\#\cS$. Moreover,
$$\Int(\Cl(\cB(\mu_i)))\cap \Int(\Cl(\cB(\mu_j)))= \emptyset$$
for $1 \leq i \neq j \leq k$, where $\cB(\mu_i)$ is the basin of $\mu_i$.
\end{main}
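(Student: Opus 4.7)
The plan is to take the existence of an index $i_s$ skeleton for granted (via Proposition~\ref{l.existenceskeleton}, which will be proved separately) and then, for a fixed skeleton $\cS=\{p_1,\ldots,p_k\}$, attach a physical measure $\mu_i$ to each $p_i$ so that (1) and (2) hold. The driving tool throughout is the Bonatti--D\'iaz--Viana construction of Gibbs $u$-states together with two features specific to the mostly expanding regime: every ergodic Gibbs $u$-state has positive center exponent and is therefore a physical measure, and there are only finitely many of them.

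For each $p_i$ I would take a small disk $D_i\subset W^u_{\loc}(p_i)$ tangent to $E^u$ and form the Ces\`aro averages
\[
\nu_{i,n}=\frac{1}{n}\sum_{j=0}^{n-1} f^j_*\bigl(\Leb_{D_i}/\Leb(D_i)\bigr).
\]
Any weak-$*$ accumulation $\nu_i$ is a Gibbs $u$-state whose support is $u$-saturated, contains $\Orb(p_i)$, and is contained in $\overline{\bigcup_j f^j(D_i)}=\overline{W^u(\Orb(p_i))}$, hence equals $\overline{W^u(\Orb(p_i))}$. I then define $\mu_i$ to be an ergodic component of $\nu_i$ containing $p_i$ in its support. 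The $u$-saturation of $\supp(\mu_i)$ together with the transitivity inside the homoclinic class $H(\Orb(p_i))$---supplied by mostly expanding, via abundant transverse homoclinic intersections of $W^u(\Orb(p_i))$ with $W^s(\Orb(p_i))$---forces $\supp(\mu_i)=\overline{W^u(\Orb(p_i))}=H(\Orb(p_i))$, which is (1).

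For (2), the key point is that at a $\mu_i$-generic $x$ the Pesin stable manifold coincides with the strong stable leaf $\cF^s(x)$, because the center Lyapunov exponent of $\mu_i$ is strictly positive; hence $\cB(\mu_i)$ is $\cF^s$-saturated modulo a set of volume zero. Since $\Orb(p_i)\subset\supp(\mu_i)$, every neighborhood of any point of $\Orb(p_i)$ meets $\cB(\mu_i)$ in a set of positive volume, and continuity of $\cF^s$ gives $\cF^s(\Orb(p_i))\subset\overline{\cB(\mu_i)}$. The reverse inclusion uses that $\cB(\mu_i)$ lies (up to a null set) in the $\cF^s$-saturation of $\supp(\mu_i)=H(\Orb(p_i))$, combined with the density of $\Orb(p_i)$ inside $H(\Orb(p_i))$ and continuity of the foliation.

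Distinctness of the $\mu_i$ then follows from clause (b) of Definition~\ref{d.1}: if $\mu_i=\mu_j$ with $i\neq j$, then (2) gives $\overline{\cF^s(\Orb(p_i))}=\overline{\cF^s(\Orb(p_j))}$, so $\cS\setminus\{p_j\}$ would still satisfy clause (a), contradicting minimality. The disjointness $\Int(\Cl(\cB(\mu_i)))\cap\Int(\Cl(\cB(\mu_j)))=\emptyset$ then holds because, up to volume zero, the intersection $\Cl(\cB(\mu_i))\cap\Cl(\cB(\mu_j))$ consists of points in $\cB(\mu_i)\cap\cB(\mu_j)=\emptyset$, so it cannot carry any interior. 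The main obstacle I expect is selecting the correct ergodic component of $\nu_i$ so that its support is exactly $\overline{W^u(\Orb(p_i))}$ rather than a proper closed invariant subset, and then upgrading this to the full identification of $\overline{\cB(\mu_i)}$ with $\overline{\cF^s(\Orb(p_i))}$; both steps rely essentially on the interplay between $u$-saturation, Pesin theory for positive center exponents, and the homoclinic-class structure supplied by mostly expanding.
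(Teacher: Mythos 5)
Your strategy is built on a premise that fails precisely in the mostly expanding regime and that the paper's Section~\ref{s.5} is designed to circumvent: it is \emph{not} true that every ergodic Gibbs $u$-state with positive center exponents is a physical measure, nor that there are finitely many of them. A Gibbs $u$-state only has absolutely continuous conditionals along the \emph{strong} unstable leaves (tangent to $E^u$); when the center exponent is positive, physicality requires absolute continuity along the $\dim E^{cu}$-dimensional Pesin unstable manifolds, which is strictly stronger. (Compare Proposition~\ref{p.Gibbsustates}(4), which only covers the case of negative center exponents, and the paper's explicit remark that $\Gibb^u(f)$ is ``too large'' and contains plenty of non-physical ergodic measures.) Consequently, the ergodic component $\mu_i$ you extract from the limit $\nu_i$ of Ces\`aro averages of $\Leb_{D_i}$ on a strong-unstable disk need not be physical at all; the paper instead works in the smaller space $\G(f)=\G^u(f)\cap\G^{cu}(f)$ and uses the entropy inequality plus Ledrappier--Young to upgrade to absolute continuity along Pesin unstable manifolds (Proposition~\ref{p.existencephysical}). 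A related dimensional confusion undermines your support computation: $u$-saturation of $\supp(\nu_i)$ concerns leaves of $\cF^u$ (tangent to $E^u$), while $W^u(\Orb(p_i))$ is tangent to $E^{cu}$; your sandwich argument would at best give $\supp(\nu_i)=\Cl(\cF^u(\Orb(p_i)))$, not $\Cl(W^u(\Orb(p_i)))$, and even that requires $p_i\in\supp(\nu_i)$, which does not follow since the Ces\`aro averages can lose all mass near the saddle.

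The paper's proof runs in the opposite direction and avoids all of this. It first establishes (Proposition~\ref{p.existencephysical}) that $f$ has finitely many physical measures, namely the extreme points of $\G(f)$, each with a generic point $x_j\in\cH(b_0/2,f)$ carrying a Pesin unstable disk of uniform size $r_1$ tangent to $E^{cu}$ consisting Lebesgue-a.e.\ of generic points. It then maps each physical measure $\mu_j$ to the (unique) $p_i$ whose stable manifold transversally meets that disk; the Inclination lemma gives $\Cl(W^u(\Orb(p_i)))\subset\supp(\mu_j)$, and absolute continuity of the stable holonomy gives that Lebesgue-a.e.\ point of $W^u(\Orb(p_i))$ lies in $\cB(\mu_j)$, whence equality of the support and injectivity of $j\mapsto i(j)$ from disjointness of basins. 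Surjectivity (hence the exact count $k=\#\cS$) follows from minimality of the skeleton together with the fact that the basins fill a full-volume set --- a step your proposal does not address: even granting your construction, nothing rules out physical measures unrelated to any $p_i$. The identification $\Cl(W^u(\Orb(p_i)))=H(p_i,f)$ and the topological separation of the basins come from the skeleton geometry (Proposition~\ref{p.skeletongeometry}, Lemma~\ref{l.skeleton}), not from ``abundant homoclinic intersections supplied by mostly expanding'' as you suggest.
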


\begin{remark}\label{rk.realbasinlocation}
From the proof of Theorem~\ref{main.sksleton}, we have more detailed description on the basins of $\mu_i$: for every $p_i\in \cS$,
denote  by
$$\cO_i=\bigcup_{x\in W^u(\Orb(p_i))} \cF^s(x),$$
then $\cO_i$ contains an open neighborhood of $\Orb(p_i)$. We are going to show that
$\cO_i$ is open and dense in $\Cl(\cF^s(\Orb(p_i)))=\Cl(\cB(\mu_i)))$. Moreover, $\cB(\mu_i)$ is a full volume subset of $\cO_i$,
and $\cO_i\cap \cO_j=\emptyset$ for $1\leq i\neq j\leq k$. This shows that the basin of different physical measures are {\em topologically} separated.
\end{remark}

We would like to mention that the idea of using homoclinic classes to study measures was initiated by \cite{HHTU}, see also
\cite{DVY} and \cite{BCS} for recent similar results.

As a corollary of the previous theorem, we are going to show that any iteration of $f$  still has mostly expanding center; furthermore, the number of physical measures of $f^k$ is also determined by the skeleton of $f$:

\begin{maincor}\label{maincor.period}
Let $f$ be a $C^{1+}$ partially diffeomorphism with mostly expanding center, and
$\cS=\{p_1,\cdots,p_k\}$ be any index $i_s$ skeleton of $f$. Then for any $n>0$, $f^n$ has
mostly expanding center, and  has finitely many physical measures with number bounded by
\begin{equation}\label{eq.P}
P=\prod_{i=1}^k \pi(p_i), \text{ where $\pi(p_i)$ denotes the period of $p_i$.}
\end{equation}
Moreover, every physical measure of $f^P$ is Bernoulli.
\end{maincor}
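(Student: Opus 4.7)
\emph{Plan.} The argument splits naturally into the three claims.

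\emph{Stage 1: Mostly expanding center is preserved.} The key observation is a passage from $f^n$-Gibbs $u$-states to $f$-Gibbs $u$-states. Let $\mu$ be any $f^n$-Gibbs $u$-state; then $\nu := \frac{1}{n}\sum_{i=0}^{n-1}f^i_*\mu$ is $f$-invariant, and its conditionals along strong-unstable manifolds remain absolutely continuous — the strong-unstable foliation is the same for $f$ and $f^n$, and absolute continuity is preserved under smooth push-forward and convex combinations. Hence $\nu$ is an $f$-Gibbs $u$-state, and by hypothesis $\lambda^c_f>0$ $\nu$-a.e.; since $\mu \le n\nu$, the same holds $\mu$-a.e. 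The center Lyapunov exponent of $f^n$ being $n$ times that of $f$, we conclude $\lambda^c_{f^n}>0$ $\mu$-a.e.

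\emph{Stage 2: Finiteness of physical measures of $f^n$.} By Stage 1, Theorem~\ref{main.sksleton} applies to $f^n$, and it suffices to bound the size of any index-$i_s$ skeleton of $f^n$. Each $f$-orbit $\Orb_f(p_i)$ splits into $\gcd(n,\pi(p_i))$ distinct $f^n$-orbits of hyperbolic saddles of stable index $i_s$; since the stable foliation $\cF^s$ does not depend on the iterate, the union of stable leaves through these $f^n$-orbits equals $\bigcup_i \cF^s(\Orb_f(p_i))$, which is dense in $M$. This is a pre-skeleton of $f^n$, of cardinality $\sum_i \gcd(n,\pi(p_i)) \le P$. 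By Lemma~\ref{l.subsetofpre}, it contains an index-$i_s$ skeleton of $f^n$, giving the claimed bound on the number of physical measures.

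\emph{Stage 3: Bernoulli for $f^P$.} For $n = P$, every $p_i$ becomes a fixed saddle of $f^P$. Each physical measure of $f^P$ is $f^P$-ergodic (as an $f^P$-ergodic component of some $\mu_i$) and, by Theorem~\ref{main.sksleton} applied to $f^P$, is supported on the homoclinic class of a $f^P$-fixed saddle. For the Bernoulli conclusion I would invoke Theorem~\ref{main.decay of Correlations} (to be proved later in the paper) applied to $f^P$, which yields strong mixing, and then combine it with the non-uniform hyperbolicity guaranteed by mostly expanding center, running the standard K-to-Bernoulli upgrade (Pesin blocks, absolute continuity of invariant foliations, Chernov-Haskell verification of the very weak Bernoulli condition).

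\emph{Main obstacle.} The hardest step is the Bernoulli conclusion of Stage 3: even exponential mixing does not give Bernoulli for free on non-uniformly hyperbolic systems, and verifying the VWB property on Pesin sets requires the full apparatus of Pesin theory for physical measures. The other two stages are essentially soft, using only the compatibility between Gibbs $u$-states under iteration (Stage 1) and the invariance of the stable foliation under iteration together with Lemma~\ref{l.subsetofpre} (Stage 2).
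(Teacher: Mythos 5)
Your Stages 1 and 2 follow the paper's proof essentially verbatim: averaging an $f^n$-Gibbs $u$-state into an $f$-Gibbs $u$-state is exactly Lemma~\ref{l.power}, and taking the pre-skeleton $\{q\in\Orb(p_i):i=1,\dots,k\}$ of $f^n$ and extracting a skeleton via Lemma~\ref{l.subsetofpre} is exactly how the paper bounds the number of physical measures of $f^n$. (One caveat you share with the paper: the inequality $\sum_i\gcd(n,\pi(p_i))\le\prod_i\pi(p_i)$ that you assert is not true in general --- it already fails when $k\ge2$ and some $p_i$ are fixed points --- so the honest bound produced by this argument is $\sum_i\gcd(n,\pi(p_i))$, resp.\ $\sum_i\pi(p_i)$; the paper itself writes $P=\#\cS$ with the same conflation, so this is a defect of the statement rather than of your argument.)

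Stage 3 is where you diverge and where the genuine gap sits. The paper does not route the Bernoulli property through decay of correlations at all. Its argument is: the points of $\cS$ are distinct fixed points of $f^{nP}$ for every $n\ge1$, and heteroclinic (non)intersections do not depend on the iterate, so $\cS$ is a skeleton of $f^{nP}$ for every $n$; hence by Theorem~\ref{main.sksleton} the number of physical measures of $f^{nP}$ is the same for all $n$. Any $f^{nP}$-ergodic component $\tilde\mu$ of a physical measure $\mu$ of $f^P$ still has absolutely continuous conditionals on Pesin unstable manifolds, hence is itself a physical measure of $f^{nP}$, and constancy of the count forces $\tilde\mu=\mu$. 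Thus $\mu$ is ergodic for every power of $f^P$, and Bernoullicity follows at once from Ornstein--Weiss: a hyperbolic measure with absolutely continuous unstable conditionals is Bernoulli up to a finite cyclic permutation of pieces, and total ergodicity kills the permutation. Your route instead invokes Theorem~\ref{main.decay of Correlations}, whose proof in Section~\ref{s.9} opens by citing this very corollary, so at best you must verify that only Stages 1--2 are used there to avoid circularity; and even granting exponential mixing, you leave the K-to-Bernoulli upgrade (the very weak Bernoulli verification on Pesin blocks) entirely as a black box --- which is, as you say yourself, the hard part. The missing idea is the total-ergodicity argument via the constancy of $\#\cS(f^{nP})$ in $n$, which reduces the Bernoulli step to a one-line citation of \cite{OW}.
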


\subsection{Perturbation of physical measures}
It was shown in \cite{Y} that partially hyperbolic diffeomorphisms with mostly expanding center are
$C^1$ open, i.e., if a $C^{1+}$ diffeomorphism $f$ has mostly expanding center, then any
$C^{1+}$ diffeomorphism $g$ which is sufficiently $C^1$ close to $f$  also has mostly expanding center.
In the following we will analyse how the physical measures vary with respect to the $C^{1+}$ diffeomorphisms
in $C^1$ topology, which generalizes a similar result of Andersson and V\'asquez (\cite{AC2}) under $C^{1+\alpha}$ topology. The key observation here is that physical measures of $f$ are associated with skeletons, which behaves well under $C^1$ topology.

\begin{main}\label{main.robust}
Let $f:M\to M$ be a $C^{1+}$ partially hyperbolic diffeomorphism with mostly expanding center.
Then there exists a $C^1$ neighborhood $\cU$ of $f$ such that the number of physical measures
depends upper semi-continuously in $C^1$ topology among diffeomorphisms in $\Diff^{1+}(M)\cap \cU$.
Moreover, the number of physical measures is locally constant and
the physical measures vary continuously in the weak* topology on an $C^1$ open
and dense subset $\cU^\circ \subset \cU$.
\end{main}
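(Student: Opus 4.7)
The plan is to split the argument into a combinatorial part -- upper semi-continuity of the number $N(g)$ of physical measures of $g$ -- and an analytic part -- weak* continuity of each individual physical measure on the open dense set where $N$ is locally constant. Both parts rest on Theorem~\ref{main.sksleton} and on the $C^1$ robustness of skeletons for maps with mostly expanding center, to be proved in Section~\ref{s.robustskeleton}.

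\textbf{Upper semi-continuity and the open dense set.}
Fix an index $i_s$ skeleton $\cS=\{p_1,\ldots,p_k\}$ of $f$. Using the $C^1$ openness of mostly expanding center from~\cite{Y} together with the continuation of hyperbolic periodic points, choose a $C^1$ neighborhood $\cU$ of $f$ on which every $C^{1+}$ diffeomorphism $g$ still has mostly expanding center and on which the continuations $p_i(g)$ are defined. The robustness result asserts that $\cS(g):=\{p_1(g),\ldots,p_k(g)\}$ is a \emph{pre-skeleton} of $g$. By Lemma~\ref{l.subsetofpre} it contains a skeleton of $g$, and by Theorem~\ref{main.sksleton} the cardinality of any skeleton of $g$ equals $N(g)$; hence $N(g)\leq k=N(f)$. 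Applying the same step at every $g'\in\cU$ (with a skeleton of $g'$ playing the role of $\cS$) yields $C^1$ upper semi-continuity of $N$ throughout $\cU\cap\Diff^{1+}(M)$. Since $N$ is integer-valued, a one-line Baire argument -- on any open $V$ set $n_0:=\min_{g\in V}N(g)$ and apply upper semi-continuity to $\{N\leq n_0\}$ -- produces an open dense subset $\cU^\circ\subset\cU$ on which $N$ is locally constant.

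\textbf{Weak* continuity and main obstacle.}
Fix $g_0\in\cU^\circ$ with $N\equiv k'$ on a neighborhood $\cV$. For every $g\in\cV$ the pre-skeleton $\cS(g)$ has cardinality $k'=N(g)$, so $\cS(g)$ itself \emph{is} a skeleton of $g$, and this gives a canonical labelling $\mu_i(g)\leftrightarrow p_i(g)$. Take $g_n\to g$ in $\cV$ and, by compactness of the space of Borel probability measures on $M$ together with a diagonal argument, pass to a subsequence along which $\mu_i(g_n)\to\nu_i$ weak* for every $i\leq k'$. Upper semi-continuity of the space of Gibbs $u$-states in the $C^1$ topology (a standard fact in this theory) forces each $\nu_i$ to be a Gibbs $u$-state of $g$, so that one has a decomposition $\nu_i=\sum_{j=1}^{k'}a_{ij}\mu_j(g)$ into ergodic Gibbs $u$-states. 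The crux of the proof, and the step I expect to require the most work, is to verify $a_{ij}=\delta_{ij}$. I intend to combine the pairwise disjoint open sets $\cO_j(g)$ furnished by Remark~\ref{rk.realbasinlocation}, where $\cO_i(g)$ contains a neighborhood of $\Orb(p_i(g))$, with a uniform-in-$n$ lower bound on the $\mu_i(g_n)$-mass of a fixed neighborhood of $p_i(g)$, extracted from the absolute continuity of the $u$-Gibbs conditionals along unstable disks through the saddles $p_i(g_n)$. This forces $\nu_i(\cO_i(g))>0$, so $a_{ii}>0$; the disjointness of the $\cO_j(g)$ together with the count $k'$ then pins down $a_{ij}=\delta_{ij}$. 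Abstract weak* convergence of Gibbs $u$-states is not by itself strong enough to rule out non-trivial mixing of the limits, and it is this geometric separation of basins, plus uniform distortion estimates in the $u$-Gibbs theory, where the real work lives.
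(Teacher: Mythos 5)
Your first half---bounding the number of physical measures of $g$ by $k$ via the continuation pre-skeleton, Lemma~\ref{l.subsetofpre}, Lemma~\ref{l.skeletondifferent} and Theorem~\ref{main.sksleton}, then extracting the open dense set $\cU^\circ$ from integer-valued upper semi-continuity---is exactly the paper's route. The weak* continuity half, however, contains two genuine gaps.

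First, the decomposition step fails as stated. Knowing that the limit $\nu_i$ is a Gibbs $u$-state of $g$ does \emph{not} allow you to write $\nu_i=\sum_{j}a_{ij}\mu_j(g)$ with the $\mu_j(g)$ the physical measures: for mostly expanding center, $\Gibb^u(g)$ contains many ergodic members that are \emph{not} physical (this is stressed at the start of Section~\ref{s.5}; it is precisely what distinguishes this setting from the mostly contracting one, where every ergodic Gibbs $u$-state is physical). The paper instead works in the smaller space $\G(g)=\G^u(g)\cap\G^{cu}(g)$ and shows in Proposition~\ref{p.existencephysical} that it is a compact convex set whose extreme points are exactly the physical measures and that $g\mapsto\G(g)$ is upper semi-continuous in the $C^1$ topology. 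That upper semi-continuity is not free: it rests on the uniform entropy expansiveness of measures in $\G^u$ (Lemma~\ref{l.uniformentropyexpansiveness}) and the resulting upper semi-continuity of metric entropy restricted to $\G^u$ (Corollary~\ref{c.uppersemicontinuous}), because the $h_\mu$ term in the definition of $\G^{cu}$ is not upper semi-continuous in general. Only after placing $\nu_i$ in $\G(g)$ do you get the convex combination of physical measures; semi-continuity of $\Gibb^u$ alone cannot substitute for this.

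Second, even granting the decomposition, your endgame does not close. Establishing $a_{ii}>0$ for every $i$ only says the row-stochastic matrix $(a_{ij})$ has positive diagonal; for instance $\nu_1=\nu_2=\frac12\mu_1(g)+\frac12\mu_2(g)$ is consistent with everything you prove. What must be shown is $a_{ij}=0$ for $j\neq i$. The paper's Lemma~\ref{l.weak*} does this by contradiction: if $a_{ij}>0$ for some $j\neq i$, then for large $n$ the measure $\mu_{n,i}$ charges a small ball $B_r(p_j(g))\subset\cO_j(g)$; since $\supp(\mu_{n,i})=H(p_i(g_n),g_n)=\Cl(W^u(\Orb(p_i(g_n))))$, continuity of the stable foliation produces a transverse intersection of $\cF^s(\Orb(p_i(g_n)))$ with $W^u(\Orb(p_j(g_n)))$, i.e.\ a heteroclinic intersection between distinct elements of the skeleton of $g_n$, contradicting Lemma~\ref{l.skeleton}(2). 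This is the geometric separation you were reaching for with the sets $\cO_j$, but it has to be deployed to kill the off-diagonal coefficients, not to establish the diagonal ones.
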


Indeed, the skeletons of  $f$ provide even more  information on the physical measures
for $C^1$ perturbed $C^{1+}$ diffeomorphisms. In particular, the skeletons allow us to describe how the
physical measures bifurcate as the diffeomorphism changes. To this end, we write $p_i(g)$ the continuation of  the hyperbolic saddle $p_i$ for $g$ in a $C^1$ neighborhood of $f$. Theorem~\ref{main.robust} is a direct consequence
of the following, more technical result:

\begin{main}\label{main.robustskeleton}
Let $f$ be a $C^{1+}$ partially hyperbolic diffeomorphism with mostly expanding center,
and $\cS=\{p_1,\cdots, p_k\}$ be a skeleton of $f$. There exists a $C^1$ neighborhood $\cU$
of $f$ such that, for any $C^1$ diffeomorphism $g\in \cU$, there is a subset of $\cS(g) =\{p_1(g),\cdots, p_k(g)\}$ which is a skeleton.
Consequently, for $g\in \Diff^{1+}(M)\cap \cU$, the number of physical measures of $g$ is
no larger than the number of physical measures of $f$. Moreover, these two numbers coincide
if and only if there is no heteroclinic intersection within $\{p_i(g)\}$. In this case, each physical measure of $g$ is close to some physical measure
of $f$, in the weak-* topology.

In addition, restricted to any subset of $\cV\subset \cU$ where the number of physical measures
is constant, the supports of the physical measures
and the closures of their basins vary in a lower semi-continuous fashion,
 in the sense of the Hausdorff topology.
\end{main}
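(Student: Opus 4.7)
The strategy is to combine the $C^1$-openness of the mostly expanding center property (established in \cite{Y}) with the robustness of skeletons promised in Section~\ref{s.robustskeleton}, and to read off the number and geometry of physical measures from Theorem~\ref{main.sksleton}. I would shrink $\cU$ until, for every $g \in \Diff^{1+}(M)\cap\cU$: (i) $g$ is partially hyperbolic with mostly expanding center; (ii) each $p_i(g)$ is a hyperbolic continuation of $p_i$ of stable index $i_s$; and (iii) $\cS(g)=\{p_1(g),\ldots,p_k(g)\}$ is a pre-skeleton of $g$, with (iii) being exactly the robustness statement from Section~\ref{s.robustskeleton}. Lemma~\ref{l.subsetofpre} then produces a sub-skeleton $\cS'(g)\subseteq \cS(g)$, and Theorem~\ref{main.sksleton} identifies the number of physical measures of $g\in \Diff^{1+}(M)\cap\cU$ with $\#\cS'(g)\le k$. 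This already yields the upper semi-continuity claim of Theorem~\ref{main.robust}.

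To prove the equality criterion I would show that $\#\cS'(g)=k$ if and only if no pair of distinct $p_i(g),p_j(g)$ have a heteroclinic intersection. If some $p_i(g)$ can be dropped from the pre-skeleton, then $\bigcup_{j\neq i}\cF^s(\Orb(p_j(g)))$ is already dense in $M$. Take a small $cu$-disk $D\subset W^u_{\loc}(p_i(g))$ at $p_i(g)$ and saturate by local $\cF^s$-leaves to obtain an open neighborhood of $p_i(g)$ with local product structure $D\times \cF^s_{\loc}$; density forces some leaf $\cF^s(q)$, with $q\in\Orb(p_j(g))$ and $j\neq i$, to meet this neighborhood, and the local product structure then yields a transverse intersection $W^u(p_i(g))\cap W^s(\Orb(p_j(g)))\neq\emptyset$. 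Conversely, a heteroclinic intersection $W^u(p_i(g))\cap W^s(p_j(g))\neq\emptyset$ is automatically transverse since $i_{cu}+i_s=\dim M$; the $\lambda$-lemma in backward time applied to a disk in $W^s(p_j(g))$ through the intersection gives $\cF^s(\Orb(p_i(g)))\subseteq \Cl(\cF^s(\Orb(p_j(g))))$, so removing $p_i(g)$ preserves density and $\cS(g)$ fails the minimality condition (b) of Definition~\ref{d.1}.

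For the weak-$*$ continuity, assume $\#\cS'(g_n)=k$ with $g_n\to f$ in $C^1$, and let $\mu_i(g_n)$ be the physical measure attached by Theorem~\ref{main.sksleton} to $p_i(g_n)$. Any weak-$*$ accumulation point $\mu^*$ of $\mu_i(g_n)$ is a Gibbs $u$-state of $f$, hence a convex combination of the ergodic physical measures $\mu_1(f),\ldots,\mu_k(f)$. The convergence $p_i(g_n)\to p_i(f)$ gives $p_i(f)\in\supp(\mu^*)$, while the supports $\supp(\mu_j(f))$ coincide with the homoclinic classes $H(p_j(f))$ (Theorem~\ref{main.sksleton}(1)), and these are pairwise disjoint: two skeleton elements in the same homoclinic class would be homoclinically related, so by the $\lambda$-lemma one could be removed from $\cS(f)$ while preserving density, violating condition (b). Hence $p_i(f)$ belongs only to $\supp(\mu_i(f))$, forcing $\mu^*=\mu_i(f)$.

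Lower semi-continuity on a subset $\cV\subseteq\cU$ of constant cardinality follows because $\supp(\mu_i(g))=\Cl(W^u(\Orb(p_i(g))))$ and $\Cl(\cB(\mu_i(g)))=\Cl(\cF^s(\Orb(p_i(g))))$ by Theorem~\ref{main.sksleton}, while both $W^u(\Orb(p_i(g)))$ and $\cF^s(\Orb(p_i(g)))$ vary lower semi-continuously in the Hausdorff topology as $g$ varies in $C^1$: any point in their closures for $f$ is approximated by a point in a compact piece of the invariant manifold, whose continuation for $g\in\cV$ near $f$ exists and is $C^1$-close. The main obstacle I expect is the heteroclinic characterization in the second paragraph, which requires a careful transversality argument for the $\cF^s$-foliation combined with the $\lambda$-lemma in the partially hyperbolic setting, where only a one-sided invariant strong stable foliation is available; the remaining parts are essentially bookkeeping built on Theorem~\ref{main.sksleton} and the robustness of skeletons from Section~\ref{s.robustskeleton}.
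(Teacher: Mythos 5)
Your overall architecture (continuation of $\cS$ is a pre-skeleton, extract a sub-skeleton via Lemma~\ref{l.subsetofpre}, count physical measures via Theorem~\ref{main.sksleton}, characterize equality via heteroclinic intersections, soft continuity of invariant manifolds for the Hausdorff statements) is the same as the paper's, and your second and fourth paragraphs are essentially the paper's Lemmas~\ref{l.skeleton}, \ref{l.preskeleton} and the final paragraph of Section~\ref{s.robustskeleton}. But two steps are genuinely incomplete. First, item (iii) — that $\cS(g)$ remains a pre-skeleton for all $g\in\cU$ — is the main technical content of the theorem and cannot be quoted away: density of $\bigcup_i\cF^s(\Orb(p_i(g)))$ is not an open condition in $g$. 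The paper's Lemma~\ref{l.robustskeleton} proves it by first showing (Proposition~\ref{l.existenceskeleton}, via Lemma~\ref{l.robustGexpand}, hyperbolic times and Liao's shadowing) that \emph{every} $g\in\cU$ admits some skeleton whose elements have unstable manifolds of uniform size $r_1$, and then intersecting those uniform disks with the $r_1$-dense continued family $\bigcup_i\cF^s(\Orb(p_i(g)))$ and applying the Inclination lemma. Without that uniform-size input your step (iii) has no proof.

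Second, the weak-$*$ continuity argument is wrong at two points. A weak-$*$ limit of the $\mu_i(g_n)$ being a Gibbs $u$-state of $f$ does \emph{not} make it a convex combination of the physical measures: for mostly expanding center the paper stresses that $\Gibb^u(f)$ is ``too large'' and contains many ergodic non-physical measures (that implication is the hallmark of the mostly \emph{contracting} case). The paper instead places the limit in $\G(f)=\G^u(f)\cap\G^{cu}(f)$, whose extreme points are exactly the physical measures, and this requires the upper semi-continuity of $\G(\cdot)$ from Proposition~\ref{p.existencephysical}, which in turn rests on the uniform entropy expansiveness of Section~\ref{s.5}. Moreover, even granting $\mu^*=\sum_j a_j\mu_j(f)$, your deduction fails: $p_i(f)\in\supp(\mu^*)$ yields at best $a_i>0$, not $a_j=0$ for $j\neq i$, and the pairwise disjointness of the supports $H(p_j,f)$ is neither proved in the paper for all $g\in\cU$ (it is only obtained generically in Section~\ref{s.8} via the connecting lemma) nor implied by your remark that cohabiting skeleton elements would be homoclinically related. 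The paper's Lemma~\ref{l.weak*} argues differently: if $a_j>0$ for some $j\neq i$, then $\mu_i(g_n)$ charges a small ball $B_r(p_j(f))\subset\cO_j$ for large $n$, and the local product structure there produces a transverse intersection $\cF^s(\Orb(p_i(g_n)))\pitchfork W^u(\Orb(p_j(g_n)))\neq\emptyset$, contradicting Lemma~\ref{l.skeleton}(2). You should replace your support argument by this one.
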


\subsection{Existence of physical measures for $C^1$ generic diffeomorphisms}
Previously, the study of physical measures is mainly focused on maps that are  sufficiently smooth, i.e., with $C^{1+}$ regularity. Recently, the new technique developed in~\cite{HYY, CYZ} enables us to shows the existence of physical measure for a large family of $C^1$ diffeomorphisms, such as those with mostly contacting center.

In this paper, we will  further show the existence of physical measures for $C^1$ generic diffeomorphisms close
to a partially hyperbolic diffeomorphism $f$ that has mostly expanding center. 

Before stating the main theorem of this section, we need the following definition:
\begin{definition}\label{df.Lyapunovstable}
A set $\Lambda$ of a homeomorphism $f$ is \emph{Lyapunov stable} if there is a
sequence of open neighborhoods $U_1\supset U_2\supset \cdots$ such that:
\begin{itemize}
\item[(a)] $\bigcap U_i=\Lambda$;
\item[(b)]$f^n(U_{i+1})\subset U_i$ for any $n,i\geq 1$.
\end{itemize}
\end{definition}

A set being Lyapunov stable means that points starting near $\Lambda$ will not travel too far away from this set under  forward iterations of $f$. However, this does not mean that $\Lambda$ is an attractor.

We have the following $C^1$ locally generic result, which generalizes Theorem~\ref{main.robustskeleton}. We state it as a standalone result since the techniques involved are quite different from Theorem~\ref{main.robustskeleton}.

\begin{main}\label{main.generic}
Let $f:M\to M$ be a $C^{1+}$ partially hyperbolic diffeomorphism with mostly expanding center,
and $\cS=\{p_1,\cdots, p_k\}$ be a skeleton of $f$. Then there exists a $C^1$ neighborhood $\cU$ of
$f$ and a $C^1$ residual subset $\cR\subset \cU$, such that  every $C^1$ diffeomorphism $g\in \cR$ admits finitely
many physical measures, whose basins have full volume. The number of physical measures of $g$ coincides with
the cardinality  of its skeleton, which is no more than the number of physical measures of $f$.
Moreover, the physical measures of $g$ are supported on disjoint Lyapunov stable chain recurrent classes, each of which  is the homoclinic class of some saddle in its skeleton.
\end{main}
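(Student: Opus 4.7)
The plan is to combine the robust skeleton provided by Theorem~\ref{main.robustskeleton} with a Baire-category argument built from a short list of well-known $C^1$ generic properties, and then to invoke the abstract existence theorem for physical measures in $C^1$ regularity near maps with mostly expanding center from \cite{HYY,CYZ} in order to produce and control the measures on each class.

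First I would let $\cU$ be a $C^1$-neighborhood furnished by Theorem~\ref{main.robustskeleton}, so that for every $g\in\cU$ the continuations $\{p_1(g),\dots,p_k(g)\}$ are well defined and contain a skeleton $\cS(g)$; in particular $\bigcup_{p\in\cS(g)}\cF^s(\Orb(p))$ is dense in $M$. I would then take $\cR\subset\cU$ to be the residual subset on which the standard $C^1$-generic facts are in force: $g$ is Kupka--Smale; for every hyperbolic periodic $q$ the chain recurrent class $\cC(q,g)$ coincides with the homoclinic class $H(q,g)$ and depends continuously on $g$ in the Hausdorff topology (Bonatti--Crovisier, Abdenur--Bonatti--Crovisier); and every Lyapunov stable chain recurrent class is a quasi-attractor, which in the presence of a uniform $E^u$ bundle must contain a hyperbolic saddle of stable index $i_s$.

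Next I would verify that, for $g\in\cR$, each class $H(p_i(g),g)$ with $p_i(g)\in\cS(g)$ is Lyapunov stable and that distinct saddles in $\cS(g)$ yield distinct classes. The first point uses that any chain recurrent class of a partially hyperbolic $g$ is saturated by strong-unstable manifolds, together with the density of $\bigcup\cF^s(\Orb(p_j(g)))$: a small neighborhood of $H(p_i(g),g)$ must be forward trapped, for otherwise orbits escaping it would have to cross $\cF^s(\Orb(p_j(g)))$ for some $j$ and thus accumulate on the wrong class. The minimality clause (b) in Definition~\ref{d.1} then forces the classes associated with different saddles of $\cS(g)$ to be disjoint, since a heteroclinic collapse would allow one of the points to be removed without destroying the density in (a). Applying the $C^1$ existence result of \cite{HYY,CYZ} to each Lyapunov stable class, I obtain a physical measure $\mu_i(g)$ supported precisely on $H(p_i(g),g)$, and these measures are mutually singular because their supports are pairwise disjoint.

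The main obstacle is the full-volume assertion for the union of the basins: in the $C^{1+}$ regime it would follow from Pesin theory and the absolute continuity of the Pesin stable lamination, neither of which is available for a merely $C^1$ map. I would therefore rely on the Gibbs $cu$-state machinery adapted to $C^1$ in \cite{HYY,CYZ}: every Lebesgue-typical point has its forward images on a small $cu$-disk developing long unstable branches, whose normalized Lebesgue measures accumulate on Gibbs $cu$-states; by the density of $\bigcup_i\cF^s(\Orb(p_i(g)))$ and the trapping established in the previous step, each such Gibbs $cu$-state is supported in $\bigcup_i H(p_i(g),g)$ and projects under the ergodic decomposition onto the $\mu_i(g)$'s, forcing the basins to cover a full volume subset of $M$. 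The inequality $\#\cS(g)\le\#\cS(f)=k$ is built into Theorem~\ref{main.robustskeleton}, and the possible drop in the number of physical measures along $\cR$ is accounted for exactly by heteroclinic collapses among the continuations $\{p_i(g)\}$.
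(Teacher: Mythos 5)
Your overall architecture (a residual set built from generic continuity/stability properties, pairwise disjoint Lyapunov stable homoclinic classes indexed by the skeleton, one physical measure per class, full-volume basins via the $C^1$ empirical-measure machinery) matches the paper's, but two of your key steps have genuine gaps. First, your derivation of Lyapunov stability of $H(p_i(g),g)$ from the density of $\bigcup_j\cF^s(\Orb(p_j(g)))$ does not work: an orbit escaping a small neighborhood of $H(p_i(g),g)$ merely passes \emph{near} the dense union of stable leaves, it need not lie on any of them, so nothing forces it to accumulate on another class. The paper does not attempt such an argument; it imports the $C^1$-generic theorem (Proposition~\ref{p.genericlyapunov}, due to Morales--Pacifico) that $\Cl(W^u(\Orb(p),h))$ is Lyapunov stable for \emph{every} periodic point of a generic $h$, and separately uses the Bonatti--Crovisier connecting lemma to upgrade ``no heteroclinic intersection between skeleton elements'' (Lemma~\ref{l.skeleton}) to the robust disjointness of the \emph{closures} $\Cl(W^u(\Orb(p_i(\cdot))))$ and $\Cl(W^s(\Orb(p_j(\cdot))))$ throughout a whole neighborhood $\cV$ --- a point your Kupka--Smale/minimality argument does not reach, since absence of actual intersections does not by itself keep the closures apart for all nearby maps.

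Second, invoking ``the abstract $C^1$ existence theorem of \cite{HYY,CYZ} applied to each Lyapunov stable class'' is not a proof that each class carries a physical measure, nor that there are exactly $\#\cS(g)$ of them; those references do not supply such a statement off the shelf in this setting. The paper's actual mechanism, which your proposal omits entirely, is the candidate space $\G(\cdot)=\G^u(\cdot)\cap\G^{cu}(\cdot)$: it is upper semi-continuous (Proposition~\ref{p.existencephysical}), hence a generic $g$ is a continuity point of $\cG$; approximating $g$ by $C^{1+}$ diffeomorphisms $h_n\in\cV$, whose $\G(h_n)$ is the simplex spanned by exactly $l$ physical measures supported on the classes $H(p_i(h_n),h_n)$, and using the generic continuity of $h\mapsto H(p_i(h),h)$ together with the disjointness \eqref{eq.disjointhomoclinic}, one identifies $\G(g)$ as a simplex with exactly $l$ ergodic extreme points, one supported on each $\Cl(W^u(\Orb(p_i(g)),g))$. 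Only then do Lyapunov stability and Proposition~\ref{p.physical} (almost every empirical limit lies in $\G(g)$) combine to show each extreme point is physical with basin of full volume near its support, and a density-point argument (Lemma~\ref{l.fullbasin}) gives the global full-volume statement. Your sketch of the full-volume step gestures at the right objects but, without the continuity-point argument, you have no way to pin down the extreme points of $\G(g)$ or to rule out extra ergodic candidates.
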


\subsection{Statistical properties}

To study the speed of decay of correlations for systems beyond uniformly hyperbolic, in \cite{You98}
Young used a type of Markov partitions with infinitely many symbols to build towers for
systems with non-uniform hyperbolic behavior. These structures are nowadays
commonly referred to as Gibbs-Markov-Young (GMY) structures (see for instance \cite{AL}.)
And it is well known that such maps have exponential speed of decay of correlations  whenever the GMY
structure has exponentially small tails. By Alves and Li in \cite{AL} which is built on the work of Gou\"ezel~\cite{G06},
the latter case happens if the center bundle has certain expansion and
moreover, the tail of {\em hyperbolic times} is exponentially small.

We are going to show that Alves and Li's criterion can be applied to partially hyperbolic diffeomorphisms with mostly expanding
center, and in particular, we prove exponential decay of correlations and exponential
large deviations for the physical measures of $f$, provided that $f$ has mostly expanding center.

\begin{main}\label{main.decay of Correlations}
Let $f:M\to M$ be a $C^{1+}$ partially hyperbolic diffeomorphism with mostly expanding center, $\cS=\{p_1,\cdots,p_k\}$
be a skeleton of $f$ and $P=\prod_{i=1}^k \pi(p_i)$. Then for every physical measure $\mu$ of $f^P$, there is $d>0$ such that
$$C_\mu(\phi,\psi\circ f^{Pn})=\cO(e^{-dn})$$
for H\"older continuous $\phi: M\to \mathbb{R}$, and $\psi\in L^\infty(\mu)$.
\end{main}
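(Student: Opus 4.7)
The plan is to verify the hypotheses of the Alves--Li criterion in \cite{AL}, which together with Gou\"ezel's results \cite{G06} upgrades non-uniform expansion along the center and an exponential tail of hyperbolic times into exponential decay of correlations. By Corollary~\ref{maincor.period} the iterate $f^P$ has mostly expanding center and every physical measure $\mu$ of $f^P$ is Bernoulli, hence strongly mixing; moreover, Theorem~\ref{main.sksleton} applied to $f^P$ identifies $\supp(\mu)$ with $\Cl(W^u(\Orb(p)))$ for some periodic point $p$ in a skeleton of $f^P$. Replacing $f$ by $f^P$, it suffices to establish exponential decay of correlations for a single such $\mu$.

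The non-uniform expansion along the center-unstable bundle follows directly from Definition~\ref{df.me}: center Lyapunov exponents are strictly positive $\mu$-a.e., while $E^u$ is uniformly expanding. The core technical step is to show that on any small disk $D$ inside a local unstable leaf contained in $\supp(\mu)$, the first hyperbolic time
$$
h(x)=\min\Bigl\{n\ge 1\,:\,\prod_{j=n-k}^{n-1}\|Df^{-1}|E^c(f^{j+1}(x))\|\le e^{-ck}\text{ for every }1\le k\le n\Bigr\}
$$
satisfies $\Leb_D\{x:h(x)>n\}\le Ce^{-\tau n}$ for some constants $c,\tau,C>0$. By a standard Pliss-type argument this reduces to an exponential large deviation bound for the Birkhoff averages of the H\"older observable $\varphi(x)=\log\|Df^{-1}|E^c(x)\|^{-1}$ under $\Leb_D$, whose $\mu$-average is strictly positive by the mostly expanding hypothesis.

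Since $D\subset \cB(\mu)$ and the push-forwards $f^n_*\Leb_D$ approach $\mu$ with densities controlled by the uniform Jacobian bounds available for Gibbs $u$-states, it is enough to establish the large deviation estimate for $\mu$ itself and transfer it to $\Leb_D$ by a distortion argument. For $\mu$, the plan is to build a Young tower over a rectangle $R\subset\supp(\mu)$ adapted to the homoclinic class of $p$: exploiting the local product structure within $\supp(\mu)$, the uniform hyperbolicity along $E^s$ and $E^u$, and the mixing of $\mu$, one produces a first-return map of Gibbs--Markov--Young type whose return-time tail is exponentially small, whence the desired large deviation follows as in \cite{You98,G06,AL}. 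The Alves--Li criterion then yields the exponential rate $d>0$ claimed in the statement. The main obstacle is promoting the positive frequency of hyperbolic times, granted by the mostly expanding hypothesis, to a genuinely \emph{exponential} tail; this requires a careful construction of a Markov rectangle with uniformly long unstable leaves and exponentially decaying return-time distribution, for which the full strength of the K-property of $\mu$ and the skeleton-based geometric structure of $\supp(\mu)$ provided by Theorem~\ref{main.sksleton} must be fully exploited.
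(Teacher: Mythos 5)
You have the right framework: reduce to $f^P$ via Corollary~\ref{maincor.period}, invoke the Alves--Li criterion, and observe that everything hinges on showing that the tail of hyperbolic times on a local unstable disk $D$ decays exponentially under $\Leb_D$. This is exactly the structure of the paper's proof (Section~\ref{s.9}, Lemma~\ref{l.tailofhyperbolictime}). But your derivation of that tail estimate has a genuine gap, and the route you sketch for filling it is circular. You propose to prove the exponential tail by first building a Gibbs--Markov--Young tower over a rectangle in $\supp(\mu)$ with exponentially decaying return times, deducing a large deviation bound for $\mu$, and then transferring it to $\Leb_D$. The GMY tower with exponential tail is, however, precisely the object that the Alves--Li machinery \emph{constructs from} the hyperbolic-time tail estimate; you cannot assume its existence as an input to proving that estimate. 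Your own closing sentence concedes that ``promoting the positive frequency of hyperbolic times \dots to a genuinely exponential tail'' is the main obstacle --- that obstacle is the entire content of the step, and mixing, the K-property, or Bernoullicity of $\mu$ do not by themselves produce it.

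The paper's actual argument is much more direct and avoids any tower construction at this stage. It applies Dolgopyat's leafwise large-deviation estimate (\cite{D}, Proposition 3.1): for any H\"older observable $A$ and any unstable plaque $L$, the set of $x\in L$ whose Birkhoff average $\frac1n S_n(A)(x)$ stays $\vep$-far from the interval $I_A=\{\int A\,d\mu\}_{\mu\in\Gibb^u(f)}$ has $\Leb_L$-measure $O(e^{-\delta n})$. Taking $A=\log\|Df^{-1}|_{E^{cu}}\|$, the mostly expanding hypothesis enters through the \emph{uniform} bound $\int A\,d\mu<-b_0$ valid for \emph{every} Gibbs $u$-state (Proposition~\ref{p.onestep}), not merely through $\mu$-a.e.\ positivity of the center exponents as you state; without that uniformity over all of $\Gibb^u(f)$ the interval $I_A$ could reach $0$ and the estimate would be useless. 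Summing over $m\ge n$ and using absolute continuity of the $u$-subfoliation of $D$ then gives $\Leb_D(\cE_{b_0}>n)=O(e^{-\delta' n})$, and Alves--Li does the rest. A secondary point: your hyperbolic-time function is defined using $\|Df^{-1}|_{E^c}\|$ only, whereas the criterion (and the uniform estimate) concerns the full bundle $E^{cu}$; this should be corrected to match the non-uniform expansion condition actually required in \cite{AL}.
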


\begin{maincor}\label{maincor.large deviations}
Under the assumptions of Theorem~\ref{main.decay of Correlations}, for every physical measure  $\mu$ of $f^P$  and any  H\"older continuous function $\phi$,
the limit exists:
$$\sigma^2=\lim_{n\to \infty} \frac{1}{n}\int (\sum_{j=0}^{n-1}\phi\circ f^{jP} -n\int \phi d\mu)^2d\mu.$$
Moreover, if $\sigma^2>0$, then there is a rate function $c(\vep)>0$ such that
$$\lim_{n\to \infty} \frac{1}{n}\log \mu(\mid \sum_{j=0}^{n-1}\phi\circ f^{jP} -n\int \phi d\mu \mid \geq \vep)=-c(\vep).$$
\end{maincor}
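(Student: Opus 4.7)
The plan is to deduce Corollary~\ref{maincor.large deviations} from Theorem~\ref{main.decay of Correlations} via two standard abstract principles applied to the system $(f^P,\mu)$: exponential decay of correlations for H\"older versus $L^\infty$ observables implies summability of autocorrelations, hence a Green--Kubo formula for $\sigma^2$; and when paired with positive asymptotic variance, it yields an exponential large deviation bound. No new dynamical construction is needed, so the whole argument is essentially formal once Theorem~\ref{main.decay of Correlations} is in hand.

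For existence of $\sigma^2$, I fix a physical measure $\mu$ of $f^P$, set $\bar\phi=\int\phi\,d\mu$ and $\tilde\phi=\phi-\bar\phi$, and note that $\tilde\phi$ is H\"older and bounded. Applying Theorem~\ref{main.decay of Correlations} to the pair $(\tilde\phi,\tilde\phi)$ yields $|\int\tilde\phi\,(\tilde\phi\circ f^{Pn})\,d\mu|=\cO(e^{-dn})$, so these autocorrelations are summable. Expanding the ergodic sum squared and using $f^P$-invariance of $\mu$ gives
$$\frac{1}{n}\int\Bigl(\sum_{j=0}^{n-1}\tilde\phi\circ f^{jP}\Bigr)^2 d\mu = \int\tilde\phi^2\,d\mu + 2\sum_{k=1}^{n-1}\Bigl(1-\tfrac{k}{n}\Bigr)\int\tilde\phi\,(\tilde\phi\circ f^{kP})\,d\mu,$$
and dominated convergence, justified by the exponential bound above, produces the Green--Kubo expression
$$\sigma^2=\int\tilde\phi^2\,d\mu+2\sum_{k=1}^{\infty}\int\tilde\phi\,(\tilde\phi\circ f^{kP})\,d\mu,$$
establishing the first limit.

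For the exponential large deviation bound, I invoke Gou\"ezel's result~\cite{G06} (discussed also in~\cite{AL}): any probability-preserving transformation with exponential decay of correlations against bounded observables and with non-degenerate asymptotic variance satisfies an exponential large deviation principle. Applied to $(f^P,\mu)$ and the H\"older observable $\phi$, this directly produces a rate function $c(\varepsilon)>0$ such that
$$\lim_{n\to\infty}\frac{1}{n}\log\mu\Bigl(\Bigl|\sum_{j=0}^{n-1}\phi\circ f^{jP}-n\bar\phi\Bigr|\geq\varepsilon\Bigr)=-c(\varepsilon),$$
as claimed. The main obstacle lies entirely upstream, in Theorem~\ref{main.decay of Correlations} itself, whose proof rests on constructing a GMY tower with exponentially small tails; in the present corollary, the derivation of the CLT variance and the exponential large deviation principle is a purely formal consequence of that input.
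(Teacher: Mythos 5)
Your derivation of the first limit is correct and is actually a different (more elementary) route than the paper's: the Green--Kubo computation, using Theorem~\ref{main.decay of Correlations} applied to the pair $(\tilde\phi,\tilde\phi)$ to get summable autocorrelations and hence convergence of $\frac1n\int(S_n\tilde\phi)^2d\mu$ to $\int\tilde\phi^2d\mu+2\sum_{k\ge1}\int\tilde\phi\,(\tilde\phi\circ f^{kP})d\mu$, is a legitimate formal consequence of the stated decay of correlations. The paper does not argue this way: it obtains both halves of the corollary simultaneously from Proposition~\ref{p.centerlimit} of Alves--Li, whose hypothesis is the exponential tail estimate $\Leb_D(\cE_{b_0}>n)=\cO(e^{-cn})$ on a local unstable disk $D=W^u_r(p_1)$ (Lemma~\ref{l.tailofhyperbolictime}, proved via Dolgopyat's leafwise large deviations for Gibbs $u$-states). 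In other words, in the paper the corollary is a sibling of Theorem~\ref{main.decay of Correlations}, derived in parallel from the same GMY input, not a consequence of it.

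The second half of your argument has a genuine gap. There is no abstract principle asserting that a probability-preserving system with exponential decay of two-point correlations (H\"older against $L^\infty$) and positive asymptotic variance satisfies an exponential large deviation principle, and neither~\cite{G06} nor~\cite{AL} states one. Exponential decay of pair correlations controls only second moments of Birkhoff sums; an exponential large deviation bound requires control of the full logarithmic moment generating function $\lim_n\frac1n\log\int e^{tS_n\phi}d\mu$ (or, equivalently in this setting, a spectral gap for the transfer operator on a tower with exponentially small tails), and this is precisely the content of the hypothesis $\Leb_D(\cE_b>n)=\cO(e^{-cn^\tau})$ in Proposition~\ref{p.centerlimit}. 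Indeed there are systems with exponential decay of correlations for which large deviations decay only polynomially, so the implication you invoke is false in general. To repair the proof you must descend to the level of the GMY structure: establish Lemma~\ref{l.tailofhyperbolictime} and then quote Proposition~\ref{p.centerlimit} for the disk $D=W^u_r(p_1)$, identifying the resulting measure with $\mu_1$ via Remark~\ref{rk.decayexpanding} and Proposition~\ref{l.support}, exactly as for Theorem~\ref{main.decay of Correlations}.
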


\subsection{Robustly transitive partially hyperbolic diffeomorphisms}
The diffeomorphisms with mostly expanding center also provide a new mechanism to describe the topological
transitivity property. To make this article more complete, we collect two results from two other papers without giving their proof. For more details, see the related papers and the references therein.

\begin{main}\cite{Y}\label{main.transitive}
Let $f$ be a $C^{1+}$ volume preserving, partially hyperbolic diffeomorphism with one dimensional center.
Suppose $f$ is accessible and the center exponent is not vanishing, then $f$ is $C^1$ robustly transitive, i.e.
every diffeomorphism $g$ is transitive for $g$ in a $C^1$ neighborhood $f$ which is not necessarily volume preserving.
\end{main}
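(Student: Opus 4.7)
The plan is to combine the Burns--Wilkinson ergodicity theorem with the skeleton machinery of Theorems~\ref{main.sksleton} and~\ref{main.robustskeleton}, and close the argument with a classical inclination-lemma criterion for transitivity. First, by~\cite{BW}, the assumptions on $f$ imply that $f$ is ergodic with respect to Lebesgue, so $\Leb$ is the unique physical measure of $f$ and $\supp(\Leb)=M$. The dichotomy in \cite[Section~8]{Y} lets me assume, after replacing $f$ by $f^{-1}$ if needed, that $f$ has mostly expanding center. Theorem~\ref{main.sksleton} then forces the index-$i_s$ skeleton of $f$ to be a singleton $\cS=\{p\}$, with
$$
\overline{W^u(\Orb(p))}=\supp(\Leb)=M \quand \overline{\cF^s(\Orb(p))}=\overline{\cB(\Leb)}=M.
$$

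Second, I would transfer this structure to nearby $C^1$ maps. Partial hyperbolicity is $C^1$-open and periodic orbits continue, so for $g$ in a $C^1$ neighborhood $\cU$ of $f$ the continuation $p(g)$ is a hyperbolic saddle of stable index $i_s$. Using the $C^1$-robustness of skeletons developed in Section~\ref{s.robustskeleton}, $\{p(g)\}$ remains a pre-skeleton of $g$, so $\cF^s(\Orb(p(g)))$ is dense in $M$. To obtain the analogous density of $W^u(\Orb(p(g)))$, I would exploit volume invariance: $\Leb$ is simultaneously a Gibbs $u$-state of $f$ and of $f^{-1}$, and its center exponent for $f^{-1}$ is the negative of that for $f$, placing $f^{-1}$ on the \emph{mostly contracting} side of the dichotomy. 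The parallel (and $C^1$-open) skeleton theory of~\cite{DVY} referenced in the footnote to Definition~\ref{d.1} then supplies, after shrinking $\cU$, a periodic saddle whose $f$-unstable manifold is dense in $M$; uniqueness of the physical measure identifies that saddle with $p$, and its continuation with $p(g)$, giving $\overline{W^u(\Orb(p(g)))}=M$ for every $g\in\cU$.

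Third, transitivity of $g$ follows from a standard $\lambda$-lemma argument. Given non-empty open $U,V\subset M$, choose a strong-stable disc $D^s\subset\cF^s(\Orb(p(g)))$ meeting $U$. Backward iterates of $D^s$ accumulate on $\Orb(p(g))$, so by the inclination lemma their forward iterates $C^1$-accumulate on $W^u(\Orb(p(g)))$, which meets $V$ by density. This yields some $n$ with $g^n(U)\cap V\neq\emptyset$, proving robust transitivity of $g$.

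The main obstacle will be the step supplying density of $W^u(\Orb(p(g)))$: since the property of having mostly expanding center is not preserved under $f\mapsto f^{-1}$, a direct appeal to Theorem~\ref{main.robustskeleton} applied to $f^{-1}$ is unavailable. The resolution I propose is to use the symmetry of $\Leb$ as a Gibbs $u$-state for both $f$ and $f^{-1}$ to bring in the complementary skeleton theory of~\cite{DVY} for diffeomorphisms with mostly contracting center, and to use the uniqueness of the physical measure of $f$ to identify the saddle produced there with the skeleton saddle $p$ of $f$.
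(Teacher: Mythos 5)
First, note that the paper does not prove Theorem~\ref{main.transitive} at all: it is quoted from \cite{Y} explicitly ``without giving their proof,'' so there is no in-paper argument to compare against. Judged on its own terms, your proposal has a genuine gap at the step you yourself flag as the main obstacle, and your proposed resolution does not close it. You claim that because $\Leb$ is a Gibbs $u$-state of $f^{-1}$ with negative center exponent, $f^{-1}$ ``is placed on the mostly contracting side of the dichotomy.'' But having mostly contracting center requires \emph{every} Gibbs $u$-state of $f^{-1}$ (equivalently, every Gibbs $s$-state of $f$) to have negative center exponents, not just the volume; the space $\Gibb^u(f^{-1})$ is in general much larger than $\{\Leb\}$. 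The paper warns about exactly this: ``the inverse of a diffeomorphism with mostly expanding center may not be mostly contracting,'' and Example~\ref{ex.inverseofMC} treats ``$f^{-1}$ has mostly contracting center'' as an \emph{additional} hypothesis (equivalent to minimality of $\cF^s$), separate from accessibility. So the appeal to the \cite{DVY} skeleton theory for $f^{-1}$ is unavailable, and with it the robust density of $W^u(\Orb(p(g)))$.

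Without that input, nothing in your argument gives $\overline{W^u(\Orb(p(g)))}=M$ for all $g$ in a \emph{fixed} neighborhood $\cU$, which is exactly what your inclination-lemma step needs on the $V$ side. The pre-skeleton robustness (Lemma~\ref{l.robustskeleton}) only controls the stable side, yielding density of $\cF^s(\Orb(p(g)))$; for $C^{1+}$ perturbations $g$, Theorem~\ref{main.sksleton} gives $\supp(\mu_g)=\overline{W^u(\Orb(p(g)))}$ but provides no reason this support remains all of $M$ rather than shrinking to a proper attractor. The lower semi-continuity of supports in Theorem~\ref{main.robustskeleton} does not rescue this: it only guarantees, for each fixed open set $O$, a neighborhood of $f$ \emph{depending on $O$} in which $\supp(\mu_g)$ meets $O$, which is strictly weaker than density on a uniform neighborhood. (Density of the open set $\cO_1(g)=\bigcup_{x\in W^u(\Orb(p(g)))}\cF^s(x)$, which does follow, is also insufficient: it produces points of $g^n(U)$ whose stable leaves pass through $V$, not points of $g^n(U)\cap V$.) The missing ingredient is a robust mechanism, presumably exploiting accessibility (which is $C^1$ open here) for the non-volume-preserving perturbations, forcing the unstable manifold of $p(g)$ to remain dense; this is the actual content of the theorem as proved in \cite{Y}, and your outline does not supply it.
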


\begin{main}\cite{UVY}\label{main.minimalfoliation}
Let $f$ be a $C^{1+}$ partially hyperbolic diffeomorphism with mostly expanding center, such that the stable foliation $\cF^s$
is minimal. Then there is a $C^1$ neighborhood $\cU$ of $f$, such that the stable foliation of any $g\in \cU$ is minimal.
\end{main}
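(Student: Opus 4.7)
The plan is to combine the skeleton machinery of Theorems~\ref{main.sksleton} and~\ref{main.robustskeleton} with an auxiliary propagation step. First, minimality of $\cF^s_f$ forces any index-$i_s$ skeleton of $f$ to consist of a single saddle: for any hyperbolic saddle $q$ of stable index $i_s$, the single leaf $\cF^s(q)\subset\cF^s(\Orb(q))$ is already dense in $M$ by hypothesis, so $\{q\}$ is a pre-skeleton and, by Lemma~\ref{l.subsetofpre}, a skeleton. Fix such a saddle $p$; by Theorem~\ref{main.sksleton}, $f$ has a unique physical measure $\mu$ with $\supp\mu=H(p)=\overline{W^u(\Orb(p))}=M$.

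Next, I apply Theorem~\ref{main.robustskeleton} to obtain a $C^1$ neighborhood $\cU$ of $f$ such that for every $g\in\Diff^{1+}(M)\cap\cU$, a (nonempty) subset of $\{p(g)\}$ is a skeleton of $g$; it must equal $\{p(g)\}$ itself. Thus $g$ has a unique physical measure $\mu_g$ with $\supp\mu_g=\overline{W^u(\Orb(p(g)))}$, and the lower semi-continuity of supports in Theorem~\ref{main.robustskeleton} lets me shrink $\cU$ so that $\supp\mu_g=M$ for all such $g$. In particular $W^u(\Orb(p(g)))$ and $\cF^s_g(\Orb(p(g)))$ are both dense in $M$.

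It remains to promote density of these finite unions of leaves to minimality of $\cF^s_g$. Suppose for contradiction $\cF^s_g(x_0)$ is not dense for some $x_0$, and let $\Lambda\subsetneq M$ be a proper closed $\cF^s_g$-saturated set containing $\overline{\cF^s_g(x_0)}$. I expect $\Lambda$ to have empty interior: an interior point would contain a small unstable disk whose forward iterates grow by mostly expanding center and, combined with density of $W^u(\Orb(p(g)))$, eventually force $p(g)\in\Lambda$, whence $\Lambda\supseteq\overline{\cF^s_g(\Orb(p(g)))}=M$. Once $\Lambda$ is nowhere dense, density of $W^u(\Orb(p(g)))$ means $\Lambda$ is transversely accumulated by pieces of this unstable manifold; via local product structure and $\cF^s_g$-saturation one extracts a stable leaf inside $\Lambda$ passing arbitrarily close to a point of $\Orb(p(g))$, contradicting $\Lambda\subsetneq M$ once this is upgraded to an actual containment. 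Diffeomorphisms $g\in\cU$ that are only $C^1$ are handled by $C^1$-approximation by $C^{1+}$ maps, using continuous dependence of $\cF^s_g$ and of $p(g)$ on $g$ in $C^1$ topology. The main obstacle is this third step; making the transfer from one dense stable leaf to every stable leaf $C^1$-robust is the technical heart of \cite{UVY}, which I invoke rather than reproduce here.
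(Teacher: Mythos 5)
The paper does not actually prove this statement: the surrounding subsection says explicitly that Theorems~\ref{main.transitive} and~\ref{main.minimalfoliation} are collected from other papers ``without giving their proof,'' so Theorem~\ref{main.minimalfoliation} is imported wholesale from \cite{UVY}. There is therefore no in-paper argument to compare yours against, and your proposal must stand on its own. It does not, for two reasons. Most importantly, your third step is circular: the passage from density of the single orbit of leaves $\cF^s_g(\Orb(p(g)))$ to density of \emph{every} leaf of $\cF^s_g$, robustly in $g$, is exactly the content of the theorem, and you close by saying you ``invoke rather than reproduce'' \cite{UVY} for it — that is assuming the conclusion. The sketch you do give is also not sound as written: a proper closed $\cF^s_g$-saturated set $\Lambda\supset\Cl(\cF^s_g(x_0))$ need not be $g$-invariant, so the growth argument for forward iterates of an unstable disk cannot be run inside $\Lambda$ without first replacing it by an invariant saturated set, and the final ``upgrade to an actual containment'' is precisely the missing idea.

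Second, the claim ``$\supp\mu_g=M$ for all $g$ in a shrunken $\cU$'' does not follow from the lower semi-continuity clause of Theorem~\ref{main.robustskeleton}. Hausdorff lower semi-continuity of $g\mapsto\supp\mu_g$ only yields, for each fixed $\vep>0$, an $\vep$-dependent neighborhood on which $\supp\mu_g$ is $\vep$-dense in $\supp\mu_f$; it never upgrades to the exact equality $\supp\mu_g=M$ on one fixed neighborhood. (Compare Theorem~\ref{main.transitive}, where robust transitivity — hence robust full support — needs the extra hypotheses of volume preservation and accessibility and is itself only quoted from \cite{Y}.) Even for $f$ itself, your equality $\Cl(W^u(\Orb(p)))=M$ is asserted rather than derived: minimality of $\cF^s$ gives $\Cl(\cF^s(\Orb(p)))=M$, i.e.\ that the closure of the \emph{basin} is $M$, while Theorem~\ref{main.sksleton} identifies $\supp\mu$ with the closure of the \emph{unstable} manifold, which a priori could be a proper attractor. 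Your first step — minimality forces every index-$i_s$ skeleton to be a singleton, hence uniqueness of the physical measure for $f$ and for nearby $C^{1+}$ maps — is correct and a sensible opening, but the real content of the theorem lies beyond it.
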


\subsection{Structure of the paper}
This paper is organized in the following way: In  Section~\ref{s.intro} we introduce the main tool of this paper: a special space of probability measures, denoted by $\G(f)$, which is defined using the partial entropy along unstable leaves. This space will serve as the candidate space of physical measures.

In Section~\ref{s.skeleton}, we provide some geometrical properties of skeletons, assuming that such structure exist (which will not be proven until Section~\ref{s.6}). In particular, we will show that every skeleton of $f$ must have the same cardinality, and provide a useful criterion for the existence of a skeleton to be used in the later sections.

Section~\ref{s.5} consists of a direct proof on the existence of physical measures for $C^{1+}$ diffeomorphisms with mostly expanding center. More importantly, we show that the space $\G(f)$ is a finite dimensional simplex that varies upper semi-continuously with respect to the diffeomorphism in $C^1$ topology; moreover, every extreme point of $\G(f)$ is an ergodic physical measure of $f$.

The proof of Theorem~\ref{main.sksleton} and~\ref{main.robustskeleton} occupies the next two sections. We will carefully analyse the non-uniform expanding of $f$ along $E^c$ using hyperbolic times, and use the shadowing lemma of Liao to show the existence of skeletons. We then build a one-to-one correspondence between elements of a skeleton and the physical measures of $f$, and show that physical measures bifurcate as heteroclinic intersections are created between different elements of a skeleton.
Then in Section~\ref{s.robustskeleton}, we generalize the result of Theorem~\ref{main.robustskeleton} to generic $C^1$ diffeomorphisms near $f$.

Section~\ref{s.exME} contains all the existing examples of diffeomorphisms with mostly expanding center, as far as the author is aware. In particular, we collect some very recent examples from~\cite{Y}.

\subsection{On the regularity assumption}\label{s.1.6}

Throughout this article, the regularity assumption on $f$ is changed several times between $C^1$ and $C^{1+}$. For the convenience of the readers, we summarize those changes below:
\begin{enumerate}
	\item having mostly contracting center requires the diffeomorphism to be $C^{1+};$ as a result, the initial diffeomorphism $f$ is always assumed to be $C^{1+}$;
	\item the topology is always $C^1$. Throughout this article, $\cU$ is a neighborhood of $f$ under $C^1$ topology;
	\item the geometrical properties of skeletons only require the diffeomorphism to be $C^1$; this involves Section~\ref{s.skeleton}, Section~\ref{ss.skeleton} and certain part of Section~\ref{s.robustskeleton};
	\item the physical measure having absolutely continuous conditional measure on the unstable leaves and the stable holonomy being absolutely continuous requires $C^{1+}$ regularity, as shown in the classical theory of physical measures. This affects Section~\ref{s.5}, Section~\ref{ss.skeletonandmeasures}, certain part of Section~\ref{s.robustskeleton} and Section~\ref{s.9};
	\item Section~\ref{s.8} deals with $C^1$ generic diffeomorphisms in $\cU$, thus only requires $C^1$ smoothness.
\end{enumerate}

\section{Preliminary\label{s.intro}}

In this section, we introduce some necessary notations and results which will be used later.
Throughout this section, we assume $f$ to be a partially hyperbolic diffeomorphism on the manifold $M$, and
$\mu$ an invariant probability measure of $f$. In Section~\ref{ss.introGibbs} we will assume $f$ to be $C^{1+}$ for the discussion on the Gibbs $u$-states. In Section~\ref{ss.2} and~\ref{ss.candidates}, $f$ is assumed to be $C^1$ only.

\subsection{Gibbs $u$-states\label{ss.introGibbs}}
Following Pesin and Sinai~\cite{PS82} and Bonatti and Viana~\cite{BV00} (see also~\cite[Chapter 11]{BDVnonuni}),
we call \emph{Gibbs $u$-state} any invariant probability measure whose conditional probabilities
(Rokhlin~\cite{Rok49}) along strong unstable leaves are absolutely continuous with respect to the
Lebesgue measure on the leaves. In fact, assuming the derivative $Df$ is H\"older continuous, the
Gibbs-$u$ state always exists, and the densities with respect to Lebesgue measures along unstable
plaques are continuous. Moreover, the densities vary continuously with respect to the strong unstable
leaves. As a consequence, the space of Gibbs $u$-states of $f$,
denoted by $\Gibb^u(\cdot)$, is compact relative to the weak-* topology in the probability space.

The set of Gibbs $u$-states plays important roles in the study of physical measures for
partially hyperbolic diffeomorphisms. The proofs for the following
basic properties of Gibbs $u$-states can be found in the book of Bonatti, D\'{i}az and Viana~\cite[Subsection 11.2]{BDVnonuni} (see also Dolgopyat~\cite{D}):
\begin{proposition}\label{p.Gibbsustates}
Suppose $f$ is a $C^{1+}$ partially hyperbolic diffeomorphism, then
\begin{itemize}
\item[(1)] $\Gibb^u(f)$ is non-empty, weak* compact and convex. Ergodic components of Gibbs $u$-states are Gibbs u-states.
\item[(2)] The support of every Gibbs $u$-state is $\cF^u$-saturated, that is, it consists of entire strong
    unstable leaves.
\item[(3)] For Lebesgue almost every point $x$ in any disk inside some strong unstable leaf, every accumulation point of
    $\frac{1}{n}\sum_{j=0}^{n-1}\delta_{f^j(x)}$ is a Gibbs $u$-state.
\item[(4)] Every physical measure of $f$ is a Gibbs $u$-state; conversely, every ergodic Gibbs $u$-state whose center Lyapunov
    exponents are negative is a physical measure.
\end{itemize}
\end{proposition}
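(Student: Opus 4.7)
The plan is to prove the four statements via the standard construction of Gibbs $u$-states as weak-$*$ accumulation points of Ces\`aro averages of iterated pushforwards of Lebesgue on unstable disks, using the H\"older regularity of $Df$ to control distortion along unstable plaques.

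For (1), fix a small disk $D$ inside a strong unstable leaf and let $m_D$ be the normalized Lebesgue measure on $D$. The H\"older continuity of $Df$ yields uniform bounded distortion along unstable plaques: the ratio $\Jac^u f^n(y)/\Jac^u f^n(z)$ stays uniformly bounded for $y,z$ in the same unstable plaque of sufficiently small diameter, independently of $n$. This forces uniform bounds and equicontinuity for the densities of $f^n_* m_D$ along unstable plaques, hence for $\mu_n=\frac{1}{n}\sum_{j=0}^{n-1} f^j_* m_D$. Passing to a weak-$*$ limit, together with the continuity of the unstable foliation, gives that the conditional measures of the limit $\mu$ along unstables are absolutely continuous with continuous positive density, so $\mu\in \Gibb^u(f)$. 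Applying the same bounded-density estimates to any weak-$*$ convergent sequence of Gibbs $u$-states yields closedness of $\Gibb^u(f)$, while convexity is immediate from the linearity of Rokhlin disintegration. For the ergodic-component statement, I would combine Rokhlin's theorem with the fact that the measurable partition into unstable leaves is $f$-invariant, so the a.c.\ property on plaques transfers to $\mu$-almost every ergodic component.

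For (2), the continuity and positivity of the conditional density on unstable plaques imply that every $x\in\supp\mu$ has entire small unstable plaques through it contained in $\supp\mu$; forward $f$-invariance of $\supp\mu$ and the density of forward iterates of a plaque inside the global unstable leaf then fill out the whole strong unstable leaf through $x$. For (3), I would again form $\mu_n$ built from $m_D$; every weak-$*$ accumulation of $\mu_n$ is a Gibbs $u$-state by the argument for (1). A Birkhoff-type ergodic argument along the foliated disk, together with the ergodic-component statement from (1), shows that for Lebesgue-a.e.\ $x\in D$, every accumulation point of $\frac{1}{n}\sum_{j=0}^{n-1}\delta_{f^j(x)}$ is an ergodic component of such a limit, hence a Gibbs $u$-state.

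For the first half of (4), if $\mu$ is a physical measure with basin of positive volume, then a Fubini argument in a local product chart produces an unstable disk on which the basin has positive induced Lebesgue measure, and (3) forces $\mu$ to be a Gibbs $u$-state. For the converse, given an ergodic Gibbs $u$-state $\mu$ with all center Lyapunov exponents negative, Pesin theory yields $\mu$-a.e.\ existence of local stable manifolds tangent to $E^s\oplus E^c$, of full codimension $\dim E^u$; combining the absolute continuity of the conditionals of $\mu$ along unstables with the absolute continuity of the Pesin stable holonomies, the saturation of a full-$\mu$-measure set by these stable manifolds has positive volume and is contained in $\cB(\mu)$. The main technical obstacle is the bounded-distortion and density-continuity analysis in (1), from which (2) and (3) follow cheaply; the Pesin-theoretic converse in (4) relies on the standard but delicate absolute continuity of stable holonomies in the non-uniform setting, which is available once the Gibbs $u$-property is in place.
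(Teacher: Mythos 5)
The paper does not actually prove this proposition; it is quoted from \cite[Subsection 11.2]{BDVnonuni} and Dolgopyat \cite{D}, and your outline for items (1), (2) and (4) follows exactly the standard construction given there: Ces\`aro averages of push-forwards of Lebesgue measure on an unstable disk, bounded distortion of $\Jac^u f^n$ along plaques from the H\"older continuity of $Df$, $u$-saturation of ergodic components via constancy of backward Birkhoff averages along unstable leaves, and, for (4), the Fubini argument in one direction and Pesin stable manifolds tangent to $E^s\oplus E^c$ plus absolute continuity of their holonomy in the other. Those parts are fine as a sketch (modulo the usual caveat that the partition into unstable leaves is not measurable, so one works with a subordinate measurable partition).

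There is, however, a genuine gap in your argument for item (3), which is the delicate part of the proposition and the one your item (4) then leans on. You propose ``a Birkhoff-type ergodic argument along the foliated disk, together with the ergodic-component statement from (1)'' to conclude that for Lebesgue-a.e.\ $x\in D$ every accumulation point of $\frac1n\sum_{j=0}^{n-1}\delta_{f^j(x)}$ is ``an ergodic component of such a limit.'' This does not work as stated: $\Leb_D$ is not $f$-invariant, so Birkhoff's theorem gives nothing about the pointwise empirical measures; moreover, accumulation points of the empirical measures of a single orbit need not be ergodic, and there is no a priori relation between them and the ergodic decomposition of a weak-$*$ limit of $\frac1n\sum_j f^j_*m_D$ --- knowing that the \emph{averaged} measures converge to $\Gibb^u(f)$ does not prevent a positive-measure set of individual orbits from tracking non-Gibbs measures along subsequences. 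The correct mechanism is a deviation estimate for Birkhoff sums of continuous observables along unstable plaques: for each continuous $\phi$ one bounds $\Leb_D$ of the set of $x$ whose time-$n$ average of $\phi$ leaves a neighborhood of the interval $\{\int\phi\,d\mu:\mu\in\Gibb^u(f)\}$ (this is exactly Dolgopyat's Proposition 3.1, quoted in Section \ref{s.9} of this paper, where the bound is even exponential, so Borel--Cantelli applies); one then uses that the compact convex set $\Gibb^u(f)$ is the intersection of the countably many half-space conditions determined by a dense family of observables to conclude that a.e.\ $x$ has all its accumulation points in $\Gibb^u(f)$. Alternatively one can follow the softer argument of \cite[Theorem 11.16]{BDVnonuni}. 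Without some such uniform-over-$n$ control of the bad set in $D$, item (3) --- and hence the ``physical $\Rightarrow$ Gibbs $u$-state'' half of item (4) --- is not established.
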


The semi-continuity of Gibbs $u$-states with respect to $C^{1+}$ diffeomorphisms under $C^1$ topology was recently proved by the author of this article in \cite{Y}:

\begin{proposition}\label{p.GibbsuC1regularity}
Suppose $f_n$ ($n=1,\cdots, \infty$) and $f$ are $C^{1+}$ partially hyperbolic diffeomorphisms such that
$f_n\overset{C^1}{\to} f$. Then
$$\limsup \Gibb^u(f_n)\subset \Gibb^u(f),$$
where the convergence is in the Hausdorff topology of the probability space.
\end{proposition}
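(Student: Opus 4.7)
\bigskip

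\noindent\emph{Proof proposal for Proposition~\ref{p.GibbsuC1regularity}.}

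The plan is to take a weak-$*$ convergent sequence $\mu_n \to \mu$ with $\mu_n \in \Gibb^u(f_n)$ and show $\mu \in \Gibb^u(f)$; this is what the Hausdorff upper semi-continuity statement requires. First I would verify that $\mu$ is $f$-invariant: since $f_n \to f$ in $C^1$, for any continuous $\phi$ we have $\phi\circ f_n \to \phi\circ f$ uniformly, so
$$
\int \phi\circ f\, d\mu = \lim_n \int \phi\circ f_n\, d\mu_n = \lim_n \int \phi\, d\mu_n = \int \phi\, d\mu.
$$

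The heart of the proof is showing that the disintegration of $\mu$ along the strong unstable foliation $\cF^u_f$ is absolutely continuous with respect to Lebesgue on leaves. I would use the variational characterization of Gibbs $u$-states via partial (unstable) entropy: an invariant measure $\nu$ is a Gibbs $u$-state for $g$ if and only if
$$
h_\nu(g,\cF^u_g)\;=\;\int \log\bigl|\det(Dg|E^u_g)\bigr|\, d\nu,
$$
where $h_\nu(g,\cF^u_g)$ denotes the conditional metric entropy along the strong unstable foliation (Ledrappier--Young / Ma\~n\'e--Rokhlin type formula in the $C^{1+}$ setting). The RHS is well-behaved under $C^1$ convergence: since $f_n \to f$ in $C^1$, the unstable bundles satisfy $E^u_{f_n} \to E^u_f$ continuously, so $\log|\det(Df_n|E^u_{f_n})|$ converges uniformly to $\log|\det(Df|E^u_f)|$, and combined with $\mu_n \to \mu$ weak-$*$ we get
$$
\int \log\bigl|\det(Df_n|E^u_{f_n})\bigr|\, d\mu_n \;\longrightarrow\; \int \log\bigl|\det(Df|E^u_f)\bigr|\, d\mu.
$$

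For the LHS I need upper semi-continuity of the partial unstable entropy under $C^1$ perturbation, i.e.\ $h_\mu(f,\cF^u_f) \geq \limsup_n h_{\mu_n}(f_n,\cF^u_{f_n})$. The idea here is to fix a finite measurable partition $\xi$ subordinate to $\cF^u_f$ (constructed from a foliation box atlas for $\cF^u_f$) whose boundary has $\mu$-measure zero; since strong unstable foliations vary continuously in the $C^0$ sense under $C^1$-perturbation of the diffeomorphism, for large $n$ the same partition is (up to an arbitrarily small modification) subordinate to $\cF^u_{f_n}$, and one can transfer the entropy estimate through the Kolmogorov--Sinai formula $h_\nu(g,\xi) = H_\nu(\xi \mid g\xi^+)$, exploiting weak-$*$ convergence and continuity of $g\mapsto g\xi$. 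Combining both sides,
$$
h_\mu(f,\cF^u_f) \;\geq\; \limsup_n h_{\mu_n}(f_n,\cF^u_{f_n}) \;=\; \lim_n \int \log\bigl|\det(Df_n|E^u_{f_n})\bigr|\, d\mu_n \;=\; \int \log\bigl|\det(Df|E^u_f)\bigr|\, d\mu,
$$
while the opposite inequality $h_\mu(f,\cF^u_f) \leq \int \log|\det(Df|E^u_f)|\, d\mu$ is the standard Ma\~n\'e--Rokhlin inequality for the unstable partition, valid for any invariant measure. Equality then yields $\mu \in \Gibb^u(f)$.

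The main obstacle is the upper semi-continuity of the partial unstable entropy under $C^1$ perturbation, which is delicate because the foliation $\cF^u$ itself varies with $f_n$; a direct alternative would be to show absolute continuity of the disintegration of $\mu$ on $\cF^u_f$-plaques by controlling the densities of the disintegrations of $\mu_n$ on $\cF^u_{f_n}$-plaques uniformly. That approach runs into the difficulty that the H\"older norms of $Df_n|E^u$ need not be uniformly bounded under mere $C^1$ convergence, making the standard density-product formula awkward. The entropy route bypasses this by using only $C^1$ data (Jacobians on $E^u$) and the Ledrappier--Young characterization.
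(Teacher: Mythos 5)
Your argument is correct and is exactly the route the paper intends: the paper does not prove this proposition directly but cites \cite{Y}, and the two ingredients you combine --- the characterization of Gibbs $u$-states by the equality $h_\nu(g,\cF^u)=\int\log|\det(Dg|_{E^u})|\,d\nu$ and the upper semi-continuity of the partial unstable entropy in the $C^1$ topology --- are precisely Proposition~\ref{p.Ruelle} and Proposition~\ref{p.partialupper} of the paper. The only genuinely delicate step, the semi-continuity of $h_\mu(f,\cF^u)$ across varying diffeomorphisms (which you sketch via partitions subordinate to the varying unstable foliations), is itself the main technical content of \cite{Y}, so you may simply invoke Proposition~\ref{p.partialupper} rather than reprove it.
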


The following lemma shows the relation between the Gibbs $u$-states of a diffeomorphism and its iterations.

\begin{lemma}\label{l.power}
For any $n>0$, $\Gibb^u(f)\subset \Gibb^u(f^n)$. conversely, let $\nu$ be any Gibbs $u$-state of $f^n$,
then $\frac{1}{n}\sum_{i=0}^{n-1}f^i(\nu)$ is a Gibbs $u$-state of $f$.
\end{lemma}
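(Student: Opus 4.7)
The plan is to exploit the fact that the strong unstable foliation $\cF^u$ of $f^n$ agrees with that of $f$ (both are the unique foliation tangent to the same invariant expanding bundle $E^u$), so that ``having absolutely continuous conditional measures along $\cF^u$'' is a property of the measure and the foliation alone. The only dynamical content of the statement will then be $f$-invariance versus $f^n$-invariance, which must be bridged.

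For the first inclusion I would simply observe: if $\mu\in\Gibb^u(f)$, then $\mu$ is $f^n$-invariant and its disintegration along $\cF^u$ (the same foliation as for $f^n$) is absolutely continuous with respect to Lebesgue on leaves, so $\mu\in\Gibb^u(f^n)$.

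For the converse, given $\nu\in\Gibb^u(f^n)$, the main step will be to verify that $f_*\nu\in\Gibb^u(f^n)$ as well. Invariance is immediate from $f^n_*(f_*\nu)=f_*(f^n_*\nu)=f_*\nu$. For absolute continuity of the disintegration, I would use that $f$ maps each leaf of $\cF^u$ diffeomorphically onto another leaf of $\cF^u$: if $\nu_x$ denotes the conditional measure of $\nu$ on the local unstable plaque through $x$, then $f_*\nu_x$ is the conditional of $f_*\nu$ on the plaque through $f(x)$, and $f$ restricted to an unstable leaf, being a $C^1$ diffeomorphism, carries Lebesgue-absolutely continuous measures to Lebesgue-absolutely continuous ones. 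Iterating, $f^i_*\nu\in\Gibb^u(f^n)$ for every $i$; convexity of $\Gibb^u(f^n)$ (Proposition~\ref{p.Gibbsustates}(1)) then yields $\mu:=\tfrac{1}{n}\sum_{i=0}^{n-1}f^i_*\nu\in\Gibb^u(f^n)$. Finally $\mu$ is manifestly $f$-invariant, and its disintegration along $\cF^u$ is a convex combination of absolutely continuous conditionals, hence still absolutely continuous; therefore $\mu\in\Gibb^u(f)$.

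No step in this argument looks genuinely difficult, and I would not expect a real obstacle. The only point that requires any care is the claim that $f_*$ preserves the class of Gibbs $u$-states for $f^n$, and this reduces via a leafwise change of variables to the elementary observation that $f$ permutes the leaves of $\cF^u$ by $C^1$ diffeomorphisms.
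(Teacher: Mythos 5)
Your proposal is correct and follows essentially the same route as the paper: both parts rest on the observation that $f$ and $f^n$ share the same strong unstable foliation, so absolute continuity of the disintegration is preserved. Your argument merely fills in the detail the paper leaves implicit in its ``by a similar argument'' step, namely that $f$ permutes unstable leaves by $C^1$ diffeomorphisms and hence $f_*$ preserves absolute continuity of the leafwise conditionals.
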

\begin{proof}
Let $\mu$ be a Gibbs $u$-state of $f$, then it is also an invariant probability of $f^n$. Since $f$ and $f^n$ share the same
unstable foliation, $\mu$ must have the same disintegration along the unstable plaques. Then it follows from the
definition that $\mu$ is also a Gibbs $u$-state of $f^n$.

On the other hand, it is clear that $\frac{1}{n}\sum_{i=0}^{n-1}f^i(\nu)$ is an invariant probability of $f$. By a similar argument as
above, $\frac{1}{n}\sum_{i=0}^{n-1}f^i(\nu)$ is a Gibbs $u$-state of $f$.
\end{proof}

\subsection{Partial entropy along unstable foliation}\label{ss.2}
In this section, we give the precise definition of the partial metric entropy of
$\mu$ along the unstable foliation $\cF^u$ of $f$, which depends on a special class
of measurable partitions. The partial entropy has been proven to be a powerful tool in the study of partially hyperbolic diffeomorphisms, thanks to its semi-continuity in the $C^1$ topology
(\cite{Y}).

\begin{definition}
We say that a measurable partition $\xi$ of $M$ is \emph{$\mu$-subordinate} to the $\cF$-foliation if
for $\mu$-a.e. $x$, we have
\begin{itemize}
\item[(1)] $\xi(x)\subset \cF(x)$ and $\xi(x)$ has uniformly small diameter inside $\cF(x)$;
\item[(2)] $\xi(x)$ contains an open neighborhood of $x$ inside the leaf $\cF(x)$;
\item[(3)] $\xi$ is an increasing partition, meaning that $\xi \prec f\xi$.
\end{itemize}

\end{definition}

Ledrappier, Strelcyn~\cite{LS82} proved that the Pesin unstable lamination admits some $\mu$-subordinate
measurable partition. The following result is contained in Lemma~3.1.2 of Ledrappier, Young~\cite{LY85a}:

\begin{lemma}\label{l.definitionleafentropy}
For any measurable partitions $\xi_1$ and $\xi_2$ that are $\mu$-subordinate to $\cF$,
we have $h_\mu(f,\xi_1)=h_\mu(f,\xi_2)$.
\end{lemma}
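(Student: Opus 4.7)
The plan is to use Rokhlin's formula for the entropy of an increasing partition, together with a chain-rule calculation on a common refinement, reducing the proof to the finiteness of a single conditional entropy term.

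Recall first that for any $\mu$-subordinate partition $\xi$, condition~(3) in the definition ($\xi\prec f\xi$, equivalently $f^{-1}\xi\prec\xi$) combined with the $f$-invariance of $\mu$ yields the Rokhlin-type formula
$$h_\mu(f,\xi) \;=\; H_\mu(f^{-1}\xi\mid \xi)\;=\;H_\mu(\xi\mid f\xi).$$
For two subordinate partitions $\xi_1,\xi_2$, I form the common refinement $\xi:=\xi_1\vee\xi_2$. Its atoms $\xi(x)=\xi_1(x)\cap\xi_2(x)$ are open neighborhoods of $x$ inside $\cF(x)$ of uniformly small diameter, and $\xi\prec f\xi$ follows from the analogous property for each $\xi_i$; hence $\xi$ is itself $\mu$-subordinate, and refines both $\xi_1$ and $\xi_2$. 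By symmetry it suffices to show $h_\mu(f,\xi)=h_\mu(f,\xi_1)$.

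To this end I compute $H_\mu(\xi\mid f\xi_1)$ in two ways via the chain rule. Using that $\xi$ refines $\xi_1$ and that $\xi_1\vee f\xi_1=\xi_1$ (because $\xi_1$ refines $f\xi_1$),
$$H_\mu(\xi\mid f\xi_1) \;=\; H_\mu(\xi_1\mid f\xi_1)+H_\mu(\xi\mid \xi_1) \;=\; h_\mu(f,\xi_1)+H_\mu(\xi\mid \xi_1).$$
Using instead that $\xi$ refines $f\xi$ and that $f\xi\vee f\xi_1=f\xi$ (because $f\xi$ refines $f\xi_1$), followed by $f$-invariance of $\mu$,
$$H_\mu(\xi\mid f\xi_1) \;=\; H_\mu(f\xi\mid f\xi_1)+H_\mu(\xi\mid f\xi) \;=\; H_\mu(\xi\mid \xi_1)+h_\mu(f,\xi).$$
Equating and cancelling the common term yields $h_\mu(f,\xi)=h_\mu(f,\xi_1)$, \emph{provided} $H_\mu(\xi\mid\xi_1)<\infty$.

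The main obstacle is precisely this finiteness. Writing $\mu=\int \mu_x^{\xi_1}\,d\mu(x)$ for the Rokhlin disintegration along $\xi_1$, the finiteness of $H_\mu(\xi\mid\xi_1)$ reduces to showing that the partition induced by $\xi$ on each atom $\xi_1(x)$ has finite entropy under $\mu_x^{\xi_1}$, integrable in $x$. I would exploit the two geometric features of subordinate partitions -- each atom is an open plaque of uniformly small diameter inside a leaf of $\cF$, and contains a relative neighborhood of its base-point -- to show that a typical $\xi_1$-atom is met by only controlledly many $\xi_2$-atoms, whose conditional masses can be estimated via the local product structure with the transverse foliations. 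This is the geometric core of Lemma~3.1.2 of Ledrappier--Young; once the integrability is established, the chain-rule computation closes the proof, and the common value $h_\mu(f,\xi)$ emerges as the intrinsic partial entropy along $\cF$.
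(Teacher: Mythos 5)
The paper does not actually prove this lemma; it defers to \cite{LY85a}, Lemma~3.1.2. Your reduction is the same skeleton as that proof: pass to the join $\xi=\xi_1\vee\xi_2$ (which is indeed again $\mu$-subordinate), and run the chain rule for conditional entropies of measurable partitions to get
$h_\mu(f,\xi_1)+H_\mu(\xi\mid\xi_1)=H_\mu(\xi\mid f\xi_1)=h_\mu(f,\xi)+H_\mu(\xi\mid\xi_1)$.
That computation is correct, and since $h_\mu(f,\xi_1)\le\dim M\cdot\log\sup\|Df\|<\infty$, the cancellation is legitimate exactly when $H_\mu(\xi\mid\xi_1)<\infty$, as you say.

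The genuine gap is that this finiteness \emph{is} the lemma: everything else is bookkeeping, and you only announce that you ``would'' establish it. Moreover, the heuristic you offer --- that a typical $\xi_1$-atom is met by only ``controlledly many'' $\xi_2$-atoms --- is not correct as stated: the definition of a subordinate partition bounds the diameters of atoms from above but not from below, so a single plaque $\xi_1(x)$ can meet countably infinitely many $\xi_2$-atoms of arbitrarily small conditional mass, and a countable partition of a probability space can have infinite entropy. What is actually needed is an integrability estimate of the form $\int\!\!\int -\log\mu^{\xi_1}_x\bigl(\xi_2(y)\bigr)\,d\mu^{\xi_1}_x(y)\,d\mu(x)<\infty$, and to get it one must quantify condition (2) of the definition, e.g.\ show that $\mu\{y:\xi_2(y)\not\supset B^{\cF}(y,e^{-n})\}$ is summable in $n$ and combine this with uniform local finiteness of covers by leafwise balls of a fixed radius inside a $\xi_1$-atom (this is the Ledrappier--Strelcyn/Ledrappier--Young argument). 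For partitions satisfying only the qualitative properties (1)--(3), this step is where all the work lies; without it, both sides of your identity could equal $+\infty$ and nothing cancels. So the proposal correctly locates the difficulty but does not resolve it.
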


This allows us to define the partial entropy of $\mu$ using any $\mu$-subordinate partition:

\begin{definition}\label{d.partialentropy}
For a $C^1$ partially hyperbolic diffeomorphism $f$ and an invariant measure $\mu$, the \emph{partial $\mu$-entropy along unstable foliation $\cF^u$}, which we denote by $h_\mu(f,\cF^u)$, is defined to be $h_\mu(f,\xi)$
for any $\mu$-subordinate partition $\xi$.
\end{definition}

\begin{proposition}\cite{Y}\label{p.partialupper}
The partial entropy $h_\mu(f,\cF^u)$ varies  upper semi-continuously  with respect to the measures and maps in $C^1$ topology.
\end{proposition}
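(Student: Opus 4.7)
The plan is to establish, for $f_n\to f$ in $C^1$ and $\mu_n\to\mu$ weakly (with $\mu_n$ invariant for $f_n$ and $\mu$ invariant for $f$), the inequality
$$\limsup_n h_{\mu_n}(f_n,\cF^u_{f_n})\le h_\mu(f,\cF^u_f).$$
The strategy is to construct a single geometric partition that computes the partial entropy of all $f_n$ simultaneously (up to a negligible error), and then apply the classical upper semi-continuity of conditional entropy with respect to a fixed partition whose boundary has $\mu$-null measure. The key identity is a Ledrappier--Strelcyn Rokhlin-type formula expressing $h_\mu(f,\cF^u_f)=H_\mu(\xi\mid f\xi)$ for any increasing $\mu$-subordinate partition $\xi$.

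First I would fix a small foliation box $B$ for $\cF^u_f$ with $\mu(\partial B)=0$ (obtainable by sliding $B$ slightly). Continuity of the partially hyperbolic splitting and of local strong unstable leaves in the $C^1$ topology ensures that for all large $n$, the same box $B$ is a foliation box for $\cF^u_{f_n}$. For each such $n$, let $\xi_n$ be the measurable partition whose atoms inside $B$ are the local $\cF^u_{f_n}$-plaques of $B$ and whose single atom outside $B$ is $M\setminus B$; set $\xi_n^+=\bigvee_{k\ge 0}f_n^{\,k}\xi_n$, an increasing $\mu_n$-subordinate partition. By Lemma~\ref{l.definitionleafentropy} and the Rokhlin-type formula,
$$h_{\mu_n}(f_n,\cF^u_{f_n})=H_{\mu_n}(\xi_n^+\mid f_n\xi_n^+),$$
and the analogous expression with $\xi^+$, $\mu$, and $f$ gives $h_\mu(f,\cF^u_f)$.

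Next I would compare $\xi_n^+$ with the corresponding partition $\xi^+$ for $f$. Since the holonomies between $\cF^u_f$ and $\cF^u_{f_n}$ inside $B$ converge uniformly to the identity, and since truncating the infinite join to $\xi^{(N)}=\bigvee_{k=0}^N f^k\xi$ gives a finite partition whose boundary has $\mu$-measure zero, at each finite-truncation level the partitions $\xi_n^{(N)}$ and $\xi^{(N)}$ agree up to sets of arbitrarily small $\mu_n$-measure. Applying upper semi-continuity of $\nu\mapsto H_\nu(\xi^{(N)}\mid f\xi^{(N)})$ for the fixed finite partition $\xi^{(N)}$ yields
$$\limsup_n H_{\mu_n}(\xi^{(N)}\mid f\xi^{(N)})\le H_\mu(\xi^{(N)}\mid f\xi^{(N)}),$$
and passing to the limit $N\to\infty$ (monotone convergence using $f$-invariance of $\mu$) delivers the desired bound.

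The main obstacle I expect is bridging the gap between $H_{\mu_n}(\xi_n^+\mid f_n\xi_n^+)$, which is the genuine partial entropy of $f_n$ for a partition built from $\cF^u_{f_n}$-plaques, and $H_{\mu_n}(\xi^{(N)}\mid f\xi^{(N)})$, to which upper semi-continuity applies. The holonomy between $\cF^u_f$ and $\cF^u_{f_n}$ is only H\"older in general, and atoms of increasing subordinate partitions can be arbitrarily small, so one must show that $\mu_n$ does not concentrate near plaque boundaries or near $\partial B$. The practical remedy combines uniform H\"older control on holonomies to transport $\xi_n^+$ to $\xi^+$ with entropy loss $o(1)$, a finite-truncation step $\xi_n^+\rightsquigarrow\xi_n^{(N)}$ to land in the regime where semi-continuity holds, and the passage $N\to\infty$ at the end using the finiteness of $H_\mu(\xi^+\mid f\xi^+)$ for any Gibbs-type measure.
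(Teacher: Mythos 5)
The paper does not prove this proposition: it is quoted verbatim from \cite{Y}, so your attempt has to be measured against the argument there. Your overall architecture --- represent the partial entropy as an infimum (over a truncation parameter) of conditional entropies that behave continuously at partitions with $\mu$-null boundary, then use that an infimum of upper semi-continuous functions is upper semi-continuous --- is the right one, and it is essentially the architecture of \cite{Y}. But there is a genuine gap at the step where you invoke the classical semi-continuity of $\nu\mapsto H_\nu(P\mid Q)$. Your partition $\xi$ has as atoms the individual $\cF^u$-plaques of the box $B$, so it is an \emph{uncountable} measurable partition; consequently $\xi^{(N)}=\bigvee_{k=0}^N f^k\xi$ is not a finite partition, contrary to what you assert, and the null-boundary continuity statement simply does not apply to it. The proof in \cite{Y} avoids this by decoupling the two roles: the quantity being estimated is $H_\mu\bigl(\bigvee_{i=0}^{n-1}f^{-i}\cA\mid\cA^u\bigr)$ for a genuinely finite partition $\cA$ with $\mu(\partial\cA)=0$, conditioned on the plaque refinement $\cA^u$, and the conditioning on $\cA^u$ is itself written as an infimum of conditionings on finite partitions obtained by cutting transversally to the foliation. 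Both infima preserve upper semi-continuity, and the continuity of local unstable manifolds in the $C^1$ topology enters only to show that these finite transverse partitions can be chosen compatibly for all nearby $f_n$. That second approximation layer is exactly what is missing from your proposal.

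Relatedly, the step you yourself flag as the main obstacle --- transporting $\xi_n^+$ to $\xi^+$ by holonomy ``with entropy loss $o(1)$'' --- is not a repairable detail but the crux: closeness of two uncountable measurable partitions (even uniform closeness of their atoms) does not control the difference of the associated conditional entropies, precisely because conditional entropy given an uncountable partition is only accessible as a limit over finite approximations. Once you restructure the argument as above, this comparison disappears: for each fixed finite $\cA$ and each fixed finite transverse partition, the quantities for $f_n,\mu_n$ converge to those for $f,\mu$ by the null-boundary argument, and no holonomy transport between the two foliations is needed. Two smaller points: (i) taking a single foliation box plus the atom $M\setminus B$ does not obviously produce a $\mu_n$-subordinate partition (atoms of points whose backward orbit misses $B$ are not contained in unstable leaves, and one must also run the Ledrappier--Strelcyn Borel--Cantelli argument uniformly in $n$ to guarantee atoms contain leafwise neighborhoods); the standard remedy is to cover $M$ by finitely many boxes. (ii) Your final ``monotone convergence'' step is in the correct direction --- $H_\mu(\xi^{(N)}\mid f\xi^{(N)})$ does decrease to $H_\mu(\xi^+\mid f\xi^+)$ --- but you should state explicitly that you are using $\limsup_n\inf_N\le\inf_N\limsup_n$, since that inequality, not monotone convergence per se, is what yields the upper bound.
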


Although partially entropies are well defined for $C^1$ diffeomorphisms and behaves well under $C^1$ topology, one still need higher regularity such as $C^2$ or at least $C^{1+}$ in order to relate it with other quantities such as Lyapunov exponents or Gibbs $u$-states. The following upper bound for the partial entropy along the unstable foliation $\cF^u$ follows \cite{LY85a, LY85b}.

\begin{proposition}\label{p.Ruelle}
Let $f$ be $C^{1+}$ and $\mu$ be an invariant probability measure of $f$, then
$$h_\mu(f,\cF^u)\leq\int \log \Jac^u(x) d\mu(x).$$
Moreover,
\begin{equation}\label{e.Pesinformula}
h_\mu(f,\cF^u)=\int \log \Jac^u(x) d\mu(x).
\end{equation}
if and only if $\mu$ is a Gibbs $u$-state of $f$.
\end{proposition}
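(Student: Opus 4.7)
The plan is to follow the Ledrappier--Young strategy~\cite{LY85a,LY85b}, specialized to the setting of the strong unstable foliation $\cF^u$. This is simpler than the original Pesin unstable lamination setting because $\cF^u$ is a genuine continuous foliation with uniformly smooth leaves, so no Pesin block machinery is needed. By Lemma~\ref{l.definitionleafentropy} I am free to choose any single convenient $\mu$-subordinate partition $\xi$; I would build $\xi$ by restricting a fine Rokhlin-type partition of $M$ to local unstable plaques of small fixed diameter, and arrange it so that $\xi \prec f\xi$ and all atom-boundaries have $\mu$-measure zero.

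The inequality step then proceeds as follows. Using the increasing property of $\xi$, one has
$$
h_\mu(f,\cF^u) \;=\; H_\mu(f\xi \mid \xi) \;=\; \int H_{\mu_x^\xi}\bigl((f\xi)|_{\xi(x)}\bigr)\, d\mu(x),
$$
where $\mu_x^\xi$ denotes the conditional of $\mu$ on $\xi(x)$. Geometrically, the atoms of $(f\xi)|_{\xi(x)}$ are precisely the pieces $f(\xi(z))$ for those $z$ with $f(z)\in \xi(x)$, and each such piece has $\cF^u$-volume comparable to $\Jac^u(z)\cdot \Leb^u(\xi(z))$. Applying the Shannon bound $H_{\mu_x^\xi}((f\xi)|_{\xi(x)}) \leq \log N(x)$, where $N(x)$ is the number of atoms tiling $\xi(x)$, and combining a volume-counting estimate along unstable plaques with Birkhoff's ergodic theorem, I expect to obtain $\int \log N(x)\, d\mu \leq \int \log \Jac^u\, d\mu$, which yields the Ruelle-type inequality.

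For the equality characterization, if $\mu$ is a Gibbs $u$-state then $\mu_x^\xi$ admits a density $\rho_x$ with respect to $\Leb^u$ satisfying the Pesin--Sinai cocycle relation
$$
\frac{\rho_{fx}(fy)}{\rho_{fx}(fx)} \;=\; \frac{\rho_x(y)}{\rho_x(x)}\cdot \frac{\Jac^u(x)}{\Jac^u(y)};
$$
substituting this density into the entropy integrand and applying the change of variables $\Leb^u(f(\xi(z))) = \int_{\xi(z)} \Jac^u\, d\Leb^u$ yields exactly $\int \log \Jac^u\, d\mu$. Conversely, equality in the Ruelle bound forces equality in the Shannon step for $\mu$-a.e.\ $x$, and a standard rigidity argument in the style of~\cite{LY85b} then forces $\mu_x^\xi$ to be absolutely continuous with precisely the cocycle-consistent density, so $\mu$ is Gibbs $u$.

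The hard part is the combinatorial volume-counting step in the inequality: to pass cleanly from $\log N(x)$ to $\log \Jac^u(x)$ under the integral, I need $\xi$ to have small-boundary atoms (so that ``almost-fitting'' pieces near $\partial\xi(x)$ do not inflate $N(x)$), and I need to use the $C^{1+}$ H\"older regularity of $\Jac^u$ along unstable plaques to make the volume of $f(\xi(z))$ indeed comparable to $\Jac^u(z)\cdot \Leb^u(\xi(z))$ with multiplicative error bounded uniformly in $z$. Both of these are routine inputs once the geometric setup along $\cF^u$ is in place, so the entire argument reduces to importing the Ledrappier--Young machinery in this partially hyperbolic setting.
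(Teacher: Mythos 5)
First, note that the paper does not prove this proposition at all: it simply cites \cite{LY85b} (Theorem $C'$) for the inequality, \cite{Br} for the extension from $C^2$ to $C^{1+}$, and \cite[Theorem 3.4]{L84} for the equality characterization. So any honest comparison is between your sketch and the arguments in those references, and there your sketch has two genuine gaps.

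The first gap is the inequality step. The atoms of a measurable partition subordinate to $\cF^u$ do not have uniformly comparable leaf volumes -- they are only required to contain a neighborhood of $x$ in the leaf and to have small diameter -- so the number $N(x)$ of atoms of the refined partition sitting inside $\xi(x)$ is not controlled by the Jacobian: arbitrarily small atoms produce arbitrarily many pieces, and the passage from $\int\log N\,d\mu$ to $\int\log\Jac^u\,d\mu$ is exactly where the argument breaks, not a ``routine input.'' The counting-plus-Shannon route is Ma\~n\'e's proof of the classical Ruelle inequality, and it relies on partitions into sets of uniformly comparable diameter; it does not transfer to $\mu$-subordinate measurable partitions. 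The standard (and, for the full strong unstable foliation, genuinely simple) argument is different: writing $h_\mu(f,\cF^u)=-\int\log\mu^\xi_x\bigl((f^{-1}\xi)(x)\bigr)\,d\mu$, one inserts the normalized leaf Lebesgue measure $\lambda_x$ as a reference, splits the integrand as
$$-\log\frac{\mu^\xi_x\bigl((f^{-1}\xi)(x)\bigr)}{\lambda_x\bigl((f^{-1}\xi)(x)\bigr)/\lambda_x(\xi(x))}\;-\;\log\frac{\lambda_x\bigl((f^{-1}\xi)(x)\bigr)}{\lambda_x(\xi(x))},$$
checks by change of variables and invariance that the second term integrates to $\int\log\Jac^u\,d\mu$, and bounds the first term by $0$ via Jensen's inequality (it is minus a relative entropy). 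This is Ledrappier's argument in \cite{L84}.

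The second gap is the converse of the equality statement. Equality in your Shannon bound forces $\mu^\xi_x$ to be \emph{equidistributed over the atoms} of the refined partition, which is not absolute continuity and is not even a condition that survives refinement coherently; and since your inequality is a composite of two estimates (Shannon plus volume counting), overall equality does not isolate either one. In the Jensen formulation the rigidity is automatic: equality holds iff the relative entropy of $\mu^\xi_x$ against normalized leaf Lebesgue vanishes on $(f^{-n}\xi)\mid_{\xi(x)}$ for all $n$, and since these partitions generate the Borel $\sigma$-algebra of the plaque (the atoms shrink under backward iteration because $f^{-1}$ contracts $\cF^u$ uniformly), this forces $\mu^\xi_x=\lambda_x$, i.e.\ $\mu\in\Gibb^u(f)$. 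Your forward direction (Gibbs $u$ implies equality, via the density cocycle) is fine, but it is also subsumed by the same computation. The $C^{1+}$ hypothesis enters only to guarantee that $\log\Jac^u$ is H\"older along leaves so that the leafwise density cocycle converges; that part of your sketch is correctly identified.
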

\begin{proof}
The inequality follows by~\cite[Theorem $C^\prime$]{LY85b}, when $f$ is $\C^2$.
It was pointed out by \cite{Br} that the same inequality goes well for $\C^{1+}$
diffeomorphism.

The second part was stated in \cite[Theorem 3.4]{L84}.

\end{proof}
The following equality was built in~\cite[Proposition 5.1]{LY85b}, when $f$ is $\C^2$.
As explained above, it also holds under general situation assuming only $C^{1+}$:

\begin{proposition}\label{p.vanishingcenterexponents}

Let $\mu$ be a probability measure of $f$ such that all the center exponents of $\mu$ are
non-positive, then
$$h_\mu(f,\cF^u)=h_\mu(f).$$

\end{proposition}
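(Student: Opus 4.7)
The plan is to invoke Proposition 5.1 of Ledrappier--Young directly, which identifies the metric entropy with the partial entropy along the Pesin unstable lamination, and then to observe that under our hypothesis this lamination coincides $\mu$-a.e. with the strong unstable foliation $\cF^u$. More precisely, I would first recall that Ledrappier--Young proved for $C^2$ diffeomorphisms (and, as for Proposition~\ref{p.Ruelle}, the argument extends to $C^{1+}$ by Brown's work) the identity
$$h_\mu(f) = h_\mu(f,\eta),$$
where $\eta$ is any measurable partition $\mu$-subordinate to the Pesin unstable lamination $W^P$, i.e.~the $f$-invariant lamination whose leaf through a regular point $x$ is tangent to the direct sum $E^+(x)$ of Oseledets subspaces with strictly positive Lyapunov exponents.

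The next step is to show that $W^P=\cF^u$ at $\mu$-a.e.~point. Since $Df\mid E^u$ is uniformly expanding and $Df\mid E^s$ is uniformly contracting, the Oseledets exponents of $\mu$ along $E^u$ are all strictly positive while those along $E^s$ are strictly negative. Under the hypothesis, the exponents along $E^c$ are non-positive, so at $\mu$-a.e.~$x$,
$$E^+(x)=E^u(x).$$
Both $W^P$ and $\cF^u$ are $f$-invariant laminations whose leaves are tangent to $E^u$ $\mu$-a.e., and the strong unstable foliation is the unique $f$-invariant foliation tangent to the uniformly expanding bundle $E^u$; a Pesin unstable leaf through such a point is therefore contained in the corresponding strong unstable leaf. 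In particular, any measurable partition that is $\mu$-subordinate to $\cF^u$ in the sense of Section~\ref{ss.2} is also $\mu$-subordinate to $W^P$.

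Combining these two ingredients with Lemma~\ref{l.definitionleafentropy}, which guarantees that the partial entropy is independent of the choice of subordinate partition, I obtain
$$h_\mu(f,\cF^u)=h_\mu(f,\eta)=h_\mu(f),$$
as desired. The only delicate point is matching the notion of subordinate partition used in the Ledrappier--Young framework (where leaves are Pesin unstable manifolds defined only $\mu$-a.e.) with the one used here (where leaves are plaques of a globally defined continuous foliation). This is handled by the observation above that the two laminations agree $\mu$-a.e.\ and have the same leaf dimension; conditions (1)--(3) of a subordinate partition then translate directly from one setting to the other without further work.
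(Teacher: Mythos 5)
Your proposal is correct and follows essentially the same route as the paper, whose entire argument is the citation of Ledrappier--Young \cite[Proposition 5.1]{LY85b} together with the remark that the $C^2$ hypothesis relaxes to $C^{1+}$. Your unpacking of that citation --- non-positive center exponents force the Oseledets positive subspace to equal $E^u$, so the Pesin unstable lamination coincides with $\cF^u$ at $\mu$-a.e.\ point and subordinate partitions for one are subordinate for the other --- is exactly the content behind the reference.
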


\subsection{Other invariant measure subspaces\label{ss.candidates}}
Proposition~\ref{p.Gibbsustates} (4) states that when $f$ is $C^{1+}$, Gibbs $u$-states are the natural candidates of the physical measures of $f$. However, this statement falls apart when $f$ is only $C^1$. This is due to the lack of Pesin's formula (\eqref{e.Pesinformula}, Proposition~\ref{p.Ruelle}) for $C^1$ diffeomorphisms. To solve this issue, we will introduce two candidate spaces of physical measures for such $f$. See~\cite{HYY},~\cite{CYZ} and~\cite{CCE} for their properties.

\begin{definition}\label{df.G}
We define:
\begin{itemize}
\item[(A1)] \begin{equation*}\label{eq.Gibbsu}
\G^u(f)=\{\mu\in \cM_{\inv}(f): h_\mu(f,\cF^u)\geq \int \log(\det(Df\mid_{E^u(x)}))d\mu(x)\};
\end{equation*}

\item[(A2)] \begin{equation*}
\G^{cu}(f)=\{\mu\in \cM_{\inv}(f): h_\mu(f)\geq \int \log(\det(Df\mid_{E^{cu}(x)}))d\mu(x)\}
\end{equation*}
where $E^{cu}=E^c\oplus E^u$.
\end{itemize}
\end{definition}

We denote by
$$\G(f)=\G^{u}(f)\cap \G^{cu}(f).$$

\begin{remark}\label{rk.Gu}
\begin{itemize}
\item[(a)] When $f$ is $C^{1+}$, by Ledrappier~\cite{L84}, $\G^u(f)=\Gibbs^u(f)$.
\item[(b)] By the Ruelle's inequality for partial entropy (see for instance \cite{WWZ}), one can replace the inequality in the definition of $\G^u$ by equality:
$$\G^u(f)=\{\mu\in \cM_{\inv}(f): h_\mu(f,\cF^u)= \int \log(\det(Df\mid_{E^u(x)}))d\mu(x)\}.$$
However, the definition of $\G^{cu}$ remains unchanged due to the possibility of having negative Lyapunov exponents in $E^c$.
\end{itemize}
\end{remark}

We first observe that the spaces above are non-empty; moreover, the space $\G(f)$ contains all the candidates of
physical measures.

\begin{proposition}\label{p.physical}
There is a full volume subset $\Gamma$ such that for any $x\in\Gamma$, any limit of the
sequence $\frac{1}{n}\sum_{i=0}^{n-1}\delta_{f^i(x)}$ belongs to $\G(f)$.
\end{proposition}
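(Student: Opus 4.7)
The plan is to prove the membership in $\G^u(f)$ and in $\G^{cu}(f)$ separately on two full volume sets $\Gamma_u$ and $\Gamma_{cu}$, then take $\Gamma = \Gamma_u\cap \Gamma_{cu}$. In both cases the scheme is the same: pick an appropriately defined smooth disk $D$ through a Lebesgue typical point, compare the geometric volume growth of $f^n(D)$ with the empirical averages of a continuous Jacobian, and turn this growth estimate into a lower bound on the (partial) entropy of any weak-$*$ limit measure via a standard separated-set argument.

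For $\G^u(f)$ I would proceed as follows. The strong unstable foliation $\cF^u$ is a genuine foliation of dimension $\dim E^u$, and for Lebesgue-a.e.\ $x$, disintegrating along local unstable plates via Fubini reduces the problem to showing, for a.e.\ $y$ in a local unstable disk $D^u\subset \cF^u_{\loc}(x)$, that every accumulation point $\mu$ of $\frac1n\sum_{i=0}^{n-1}\delta_{f^i(y)}$ satisfies
\[
h_\mu(f,\cF^u)\ \geq\ \int \log \det(Df|_{E^u})\,d\mu.
\]
Because $f^n(D^u)$ stays inside $\cF^u$, its volume equals $\int_{D^u}\det(Df^n|_{E^u})\,d\Leb_{D^u}$ exactly, and the ergodic theorem for the continuous function $\log\det(Df|_{E^u})$ along the Birkhoff sums of $y$ matches the integral against $\mu$. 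A Brin--Katok / separated-set style argument applied to a $\mu$-subordinate partition (which exists by Ledrappier--Strelcyn) then yields the required lower bound on $h_\mu(f,\cF^u)$, giving $\mu\in \G^u(f)$.

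For $\G^{cu}(f)$ the idea is parallel, but $E^{cu}$ need not be integrable. I would take a Lebesgue density point $x$ of $M$ and a $C^1$ disk $D$ of dimension $\dim E^{cu}$ tangent to $E^{cu}$ at $x$. By the domination $E^s\prec E^{cu}$, the tangent spaces $T_y f^n(D)$ converge exponentially fast to $E^{cu}(f^n(y))$ as $n$ grows, and therefore
\[
\frac{1}{n}\log \Jac\bigl(f^n|_{T_y D}\bigr)\ -\ \frac{1}{n}\sum_{i=0}^{n-1}\log\det\bigl(Df|_{E^{cu}}\bigr)(f^i(y))\ \longrightarrow\ 0
\]
uniformly in $y$ in a small neighborhood of $x$ inside $D$. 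Counting $(n,\varepsilon)$-separated subsets of $D$ and distributing Dirac masses at their orbits, one shows that any weak-$*$ limit $\mu$ of the empirical averages along a Lebesgue typical $y\in D$ satisfies $h_\mu(f)\geq \int \log\det(Df|_{E^{cu}})\,d\mu$, i.e.\ $\mu\in \G^{cu}(f)$. Standard absolute continuity of the strong stable holonomy (or, at the $C^1$ level, the openness of $cu$-disks in the ambient Lebesgue measure after sliding) lets one promote this to Lebesgue-a.e.\ $x\in M$.

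The main obstacle in both halves is the lower bound on entropy: Ruelle's inequality gives only the easy direction. What must be delivered is a mechanism that transfers the deterministic volume growth of $f^n(D)$ into a genuine metric entropy bound on the (possibly non-ergodic) weak-$*$ limit $\mu$. The standard device is to use the continuity of $\log\det(Df|_{E^u})$ and $\log\det(Df|_{E^{cu}})$ (so that averages along orbits converge to integrals against $\mu$) together with a separated-set counting argument carried out inside the disk $D$ using a subordinate measurable partition; the non-integrability of $E^{cu}$ in the second part is the principal technical subtlety and is precisely what the exponential contraction of angles $\angle(T_yf^n(D),E^{cu})$ is used to overcome.
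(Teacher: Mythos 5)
Your decomposition $\Gamma=\Gamma_u\cap\Gamma_{cu}$ is exactly the paper's: its entire proof consists of quoting \cite{CCE} for the $\G^{cu}$ half and \cite{CYZ,HYY} for the $\G^u$ half, and intersecting the two full volume subsets. What you propose is to reprove those two external theorems from scratch, and the sketch has two genuine gaps. The first is regularity: this proposition is stated and used at $C^1$ regularity (it is applied to arbitrary $C^1$ diffeomorphisms $g\in\cU$ in Sections 5, 6 and 8), and at that regularity neither the strong unstable foliation nor the strong stable holonomy is absolutely continuous. Hence your Fubini reduction ``disintegrating along local unstable plates'' in the $\G^u$ half, and your closing appeal to ``absolute continuity of the strong stable holonomy'' in the $\G^{cu}$ half, are both unavailable. (The $\G^{cu}$ half can be repaired by foliating a chart with smooth disks tangent to a $cu$-cone, so that Lebesgue disintegrates smoothly; the $\G^u$ half cannot be reduced by Fubini to a statement about Lebesgue-a.e.\ point of a single unstable disk.)

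The second and more serious gap is the step you label ``a standard separated-set argument.'' Exact volume growth of $f^n(D)$ controls, at best, the averaged push-forwards $\frac1n\sum_i f^i_*(\Leb_D)$; converting it into the lower bound $h_\mu(f,\cF^u)\ge\int\log\det(Df|_{E^u})\,d\mu$ for \emph{every} weak-$*$ limit $\mu$ of the empirical measures of a \emph{single} typical orbit --- with $\mu$ possibly non-ergodic, with the disintegration of $\mu$ along unstable plaques not directly visible from one orbit, and with no bounded distortion since $f$ is only $C^1$ --- is precisely the main content of \cite{HYY} and \cite{CYZ}; likewise the $\G^{cu}$ inequality for Lebesgue-a.e.\ point is the main theorem of \cite{CCE}. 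Ruelle's inequality indeed gives only the easy direction, but the hard direction is not delivered by a routine Misiurewicz-type count. Unless you intend to reproduce those arguments in full, the correct proof here is the paper's: cite them and intersect the two full volume sets.
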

\begin{proof}
By \cite{CCE}, for $x$ belonging to a full volume subset, any limit of the sequence $\frac{1}{n}\sum_{i=0}^{n-1}\delta_{f^i(x)}$
belongs to $G^{cu}$. Moreover, by \cite{CYZ,HYY}, for $x$ belonging to a full volume subset, any limit of the sequence
$\frac{1}{n}\sum_{i=0}^{n-1}\delta_{f^i(x)}$ belongs to $G^{u}$. We conclude the proof by taking the intersection
of the two full volume subsets.
\end{proof}

The following property shows that $\G^u(\cdot)$ shares similar properties with $\Gibb^u(\cdot)$ (Proposition~\ref{p.Gibbsustates}).

\begin{proposition}\cite{HYY}[Propositions 3.1, 3.5]\label{p.Gu}
The space $\G^u(f)$ is convex, compact, and varies in a upper semi-continuous way with respect to the partially
hyperbolic diffeomorphisms under $C^1$ topology. Moreover, for any invariant measure $\mu\in \G^u(f)$, every ergodic component of
its ergodic decomposition still belongs to $\G^u(f)$.
\end{proposition}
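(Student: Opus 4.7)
The plan is to view $\G^u(f)$ as a level set of the functional
$$\Phi(\mu) := h_\mu(f,\cF^u) - \int \log\det(Df|_{E^u})\,d\mu$$
on $\cM_{\inv}(f)$. By Ruelle's inequality for partial entropy (Remark~\ref{rk.Gu}(b)), $\Phi(\mu)\leq 0$ for every invariant $\mu$, so the defining inequality $\Phi\ge 0$ is actually the equation $\Phi=0$. The map $\mu\mapsto\int\log\det(Df|_{E^u})\,d\mu$ is weak-$*$ continuous because $E^u$ is a continuous subbundle and the integrand is therefore continuous on $M$, and by Proposition~\ref{p.partialupper} the map $\mu\mapsto h_\mu(f,\cF^u)$ is upper semi-continuous. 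Granting that partial entropy is also \emph{affine} in $\mu$ on $\cM_{\inv}(f)$ (see the last paragraph), $\Phi$ is an affine upper semi-continuous function, and $\G^u(f)=\{\Phi\ge 0\}$ is then automatically convex, closed in $\cM_{\inv}(f)$, hence compact.

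For the upper semi-continuity with respect to $f$, suppose $f_n\to f$ in $C^1$ topology and $\mu_n\in\G^u(f_n)$ converges weak-$*$ to $\mu$. The unstable subbundle $E^u_{f_n}$ depends continuously on $f_n$ in the $C^1$ topology by standard partial hyperbolicity theory, so $\log\det(Df_n|_{E^u_{f_n}})\to \log\det(Df|_{E^u_f})$ uniformly on $M$; hence the integrals of this function against $\mu_n$ converge to the integral against $\mu$. By Proposition~\ref{p.partialupper},
$$h_\mu(f,\cF^u)\ge \limsup_{n\to\infty} h_{\mu_n}(f_n,\cF^u_{f_n}) = \limsup_{n\to\infty}\int\log\det(Df_n|_{E^u_{f_n}})\,d\mu_n = \int\log\det(Df|_{E^u_f})\,d\mu,$$
where the middle equality uses $\mu_n\in\G^u(f_n)$. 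Thus $\mu\in\G^u(f)$, as required.

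For the ergodic-decomposition statement, write $\mu=\int\mu_\alpha\,d\hat\mu(\alpha)$. The affineness of partial entropy, extended to general ergodic decompositions in the usual Jacobs-type manner, gives $h_\mu(f,\cF^u)=\int h_{\mu_\alpha}(f,\cF^u)\,d\hat\mu(\alpha)$, while $\int\log\det(Df|_{E^u})\,d\mu=\int\!\!\int\log\det(Df|_{E^u})\,d\mu_\alpha\,d\hat\mu(\alpha)$ is trivial. Hence $\int\Phi(\mu_\alpha)\,d\hat\mu(\alpha)=\Phi(\mu)=0$, and since $\Phi(\mu_\alpha)\le 0$ for every ergodic component by Ruelle, we conclude $\Phi(\mu_\alpha)=0$ for $\hat\mu$-a.e.\ $\alpha$, i.e., $\mu_\alpha\in\G^u(f)$.

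The main technical obstacle in the outline above is the affineness (in particular the ergodic-decomposition formula) for the partial entropy $h_\mu(f,\cF^u)$. For the classical metric entropy this is Jacobs' theorem, but here the definition of $h_\mu(f,\cF^u)$ through a $\mu$-subordinate measurable partition $\xi$ depends on $\mu$ itself, and one must either construct a measurable partition that is simultaneously subordinate for $\hat\mu$-a.e.\ $\mu_\alpha$ (so that the classical argument applies leaf-by-leaf) or adapt the Ledrappier--Young framework directly. Once this ingredient is in hand, all four assertions follow from the simple observation that $\G^u(f)$ is the top level set of an affine upper semi-continuous functional bounded above by zero.
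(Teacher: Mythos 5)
This proposition is quoted in the paper from \cite{HYY} without proof, so there is no internal argument to compare against; judged on its own terms, your proposal is correct and follows the same route as the cited source: Ruelle's inequality for partial entropy turns the defining inequality into an equality, upper semi-continuity of $(\mu,g)\mapsto h_\mu(g,\cF^u)$ (Proposition~\ref{p.partialupper}) gives closedness and the semi-continuity in $g$, and affineness of the partial entropy gives convexity and the ergodic-decomposition statement. The one ingredient you assert rather than prove --- that $h_\mu(f,\cF^u)=\int h_{\mu_\alpha}(f,\cF^u)\,d\hat\mu(\alpha)$, or at least the inequality $h_\mu(f,\cF^u)\le\int h_{\mu_\alpha}(f,\cF^u)\,d\hat\mu(\alpha)$ needed to force $\Phi(\mu_\alpha)=0$ a.e. --- is genuinely the technical heart of \cite{HYY}[Proposition 3.5], and you are right that it does not follow from concavity of conditional entropy (which gives the opposite inequality) nor directly from Jacobs' theorem, since the subordinate partition depends on the measure. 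It is, however, an established fact (one constructs a single measurable partition subordinate simultaneously for $\hat\mu$-a.e.\ ergodic component, exactly as you indicate), so your flagged obstacle is a citation gap rather than an error, and the rest of the argument is sound.
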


We need to observe that, in general, the space $\G(f)$ may not have such properties (especially when it comes to the ergodic components). Indeed, in Proposition~\ref{p.existencephysical}, we will show that the above properties holds for $\G(g)$, when $g$ is $C^1$ close to $f$ which is $C^{1+}$ with mostly expanding center.

\section{Examples of partially hyperbolic diffeomorphisms with mostly expanding center \label{s.exME}}

For a long time (before \cite{Y}), there are only two known examples of diffeomorphisms with mostly expanding center
(under the definition that is used in this paper, which is stronger than that in~\cite{ABV00}). These examples  are due to Ma{\~{n}}\'{e}~\cite{Ma} (see~\cite{ABV00} and~\cite[Section 6]{AC})
and Dolgopyat \cite{D2}. We list these examples below, as well as some new examples provided in~\cite{Y}.
Let us recall that the set of partially hyperbolic diffeomorphisms with mostly expanding center is $C^1$
open among $\Diff^{1+}(M)$.

\subsection{Derived from Anosov diffeomorphisms}
We assume $A$ to be a linear Anosov diffeomorphism over $\TT^3$ with $3$
positive simple real eigenvalues $0<k_1<1<k_2<k_3$.
\subsubsection{Local derived from Anosov diffeomorphisms}
Let us begin by recalling the construction of Ma\~n\'e's example, which is a local $C^0$ perturbation of $A$.
The  statement below is a little different from the original construction in the history:

\begin{example}\label{ex.Mane}
Let $p$ be a fixed point of $A$ and $U$ a small neighborhood of $p$. There is a partially hyperbolic diffeomorphism
$f_0$ that coincides with $A$ on $\TT^3\setminus U$. $f_0$ is topological Anosov, and
\begin{equation}\label{eq.topAnosov}
\mid Df_0\mid_{E^c(x)} \mid \geq 1
\end{equation}
where the equality holds if and only if $x=p$.
\end{example}

Since $Df_0\mid_{E^c(\cdot)}$ is expanding everywhere except at the point $p$, it is clear that $f_0$ has
mostly expanding center. Thus, by \cite{Y}, $f_0$ admits a $C^1$ neighborhood $\cU$ such that every $C^{1+}$
diffeomorphism belonging to $\cU$ has mostly expanding center.


\subsubsection{Generalized Derived from Anosov diffeomorphisms}
By the topological classification of partially hyperbolic diffeomorphisms which are isotopic to $A$ (\cite{BBI,HP,U}), we call such
diffeomorphisms \emph{derived from Anosov $A$}, and denote by $\cD\cA(A)$. The following example by Shi, Viana and the author of this paper \cite{SVY}
revises the fact that  $C^{1+}$ volume preserving derived from Anosov diffeomorphisms have mostly expanding center whenever
the volume has large metric entropy.

\begin{example}\label{ex.volumeDA}
Let $f\in \cD\cA(A)$ be a $C^{1+}$ volume preserving partially hyperbolic diffeomorphism and $h_{\Leb}(f)>\log k_3$, then $f$
has mostly expanding center.
\end{example}

\subsection{Perturbation of volume preserving partially hyperbolic diffeomorphisms}\label{ss.3.2}

In \cite{D2}, Dolgopyat showed that:
\begin{example}\label{ex.perturbationgeodesic}
Let $X_1$ be the time one map of a hyperbolic geodesic flow on a surface $M$, then for generic $C^\infty$
perturbation $f$ of $X_1$, either $f$ of its inverse $f^{-1}$ has mostly expanding center.
\end{example}

The following result in \cite{Y} allows us to obtain more examples using   $C^1$ perturbation:

\begin{proposition}\label{p.volumeperturbation}

Let $f$ be a $C^{1+}$ volume preserving partially hyperbolic diffeomorphism
with one-dimensional center. Suppose the center exponent of the volume measure is
positive and $f$ is accessible. Then $f$ admits an $\C^1$ open neighborhood, such that every
$\C^{1+}$ diffeomorphism in this neighborhood (not necessarily volume preserving) has mostly expanding center.

\end{proposition}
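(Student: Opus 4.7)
The plan is to deduce the proposition from two results already contained in \cite{Y}: first, that our specific $f$ has mostly expanding center, and second, that having mostly expanding center is a $C^1$ open property within $\Diff^{1+}(M)$. The required neighborhood is then the one furnished by the openness statement, and no volume-preservation on perturbations of $f$ enters, since mostly expanding center is defined purely via Gibbs $u$-states and their center exponents.

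For the first step, under our hypotheses the dichotomy from \cite[Section~8]{Y} (already invoked in the paragraph just before Theorem~\ref{m.a}) ensures that either $f$ or $f^{-1}$ has mostly expanding center. To select $f$, suppose toward a contradiction that some ergodic Gibbs $u$-state $\nu$ of $f$ has center exponent $\lambda^c(\nu)<0$. By Proposition~\ref{p.Gibbsustates}(4), $\nu$ is then a physical measure, so $\cB(\nu)$ has positive Lebesgue measure. On the other hand, by Burns--Wilkinson \cite{BW}, accessibility together with $\lambda^c(\Leb)\neq 0$ implies that $f$ is ergodic for $\Leb$, so $\cB(\Leb)$ has full volume and the Birkhoff ergodic theorem applied to $\log\|Df|_{E^c}\|$ forces $\lambda^c(x)=\lambda^c(\Leb)>0$ for $\Leb$-a.e.\ $x$. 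The two basins must therefore be disjoint, contradicting $\Leb(\cB(\nu))>0$. The subtler case of a vanishing center exponent for some ergodic Gibbs $u$-state is handled in \cite[Section~8]{Y} via Pesin's formula for partial unstable entropy (Proposition~\ref{p.Ruelle}) combined with the absolute continuity of the stable and unstable holonomies afforded by accessibility.

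With $f$ established to have mostly expanding center, the second step is immediate: invoking the $C^1$ openness of this class within $\Diff^{1+}(M)$, proved in \cite{Y} and quoted just above the proposition, yields a $C^1$ neighborhood $\cU$ of $f$ such that every $g\in\cU\cap\Diff^{1+}(M)$ has mostly expanding center, with no hypothesis of volume-preservation on $g$. The principal obstacle of the whole argument is thus the elimination of non-positive center exponents for ergodic Gibbs $u$-states of $f$; this is where the interplay between accessibility and the Pesin-theoretic characterization of Gibbs $u$-states as the equality case in Ruelle's inequality (Proposition~\ref{p.Ruelle}) is essential, and where the appeal to \cite[Section~8]{Y} does the heavy lifting rather than a direct argument reproduced here.
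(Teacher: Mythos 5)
The paper does not actually prove this proposition: it is imported verbatim from \cite{Y} (``The following result in \cite{Y} allows us to obtain more examples\ldots''), so there is no in-paper argument to compare against line by line. Your two-step reduction is the natural one and is consistent with how the surrounding text frames the result: first show that $f$ itself (not merely $f^{-1}$) has mostly expanding center, then invoke the $C^1$ openness of that class within $\Diff^{1+}(M)$, which is indeed quoted from \cite{Y} both in the introduction and at the start of Section~\ref{s.exME}, and which requires no volume preservation of the perturbation. Your exclusion of a \emph{negative} center exponent for an ergodic Gibbs $u$-state $\nu$ is correct: Proposition~\ref{p.Gibbsustates}(4) makes $\nu$ physical, while Burns--Wilkinson ergodicity of $\Leb$ (accessibility plus one-dimensional center) forces the Birkhoff average of the continuous function $\log\|Df|_{E^c}\|$ to converge to $\lambda^c(\Leb)>0$ on a full-volume set, disjoint from $\cB(\nu)$ where it converges to $\lambda^c(\nu)<0$.

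Two caveats. First, once you argue directly that no ergodic Gibbs $u$-state of $f$ has non-positive center exponent, the appeal to the dichotomy of \cite[Section~8]{Y} is redundant; the dichotomy neither shortens nor strengthens the argument as you have arranged it. Second, and more substantively, the entire difficulty of the proposition is concentrated in the case $\lambda^c(\nu)=0$, where one cannot invoke Proposition~\ref{p.Gibbsustates}(4) and where accessibility must enter through a genuinely different mechanism (an invariance-principle or entropy-rigidity argument, not the basin-disjointness one). You defer exactly this step to \cite{Y} without reproducing it, so your proposal is an outline of the reduction rather than an independent proof. Since the paper itself treats the whole proposition as a citation, this matches the paper's level of treatment, but it should be understood that the heavy lifting remains in the reference.
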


Proposition~\ref{p.volumeperturbation} contains abundance of systems: by Avila~\cite{AA}, $\C^\infty$
volume preserving diffeomorphisms are $\C^1$ dense. And by Baraviera and Bonatti~\cite{BB},
the volume preserving partially hyperbolic diffeomorphisms with one-dimensional center
and non-vanishing center exponent are $\C^1$ open and dense. Moreover, the subset
of accessible systems is $\C^1$ open and $\C^k$ dense for any $k\geq 1$ among all
partially hyperbolic diffeomorphisms with one-dimensional center direction, due to the work of  Burns,
Rodriguez Hertz, Rodriguez Hertz, Talitskaya and Ures~\cite{BHHTU}; see also Theorem
1.5 in Ni{\c{t}}ic{\u{a}} and T{\"o}r{\"o}k~\cite{NT}.

Indeed the accessibility assumption in the above proposition can be replaced by another hypothesis:

\begin{example}[see \cite{UVY}]\label{ex.inverseofMC}

Let $f$ be a $C^{1+}$ volume preserving partially hyperbolic diffeomorphism
with one-dimensional center. Suppose the center exponent of the volume measure is
positive and $f^{-1}$ has mostly contracting center. Then $f$ admits an $\C^1$ open neighborhood, such that every
$\C^{1+}$ diffeomorphism in this neighborhood has mostly expanding center.
\end{example}

\begin{remark}
The hypothesis that $f^{-1}$ has mostly contracting center is equivalent to the assumption that
$\cF^s$ is minimal.
\end{remark}

The diffeomorphisms with minimal strong stable and unstable foliations are
also quite common; they fill an open and dense subset of volume preserving partially
hyperbolic diffeomorphisms with one-dimensional center and has  compact center leaves. This follows from a conservative version of the results in~\cite{BDU}.

\subsection{Product of diffeomorphisms with mostly expanding center}

It is shown by Ures, Viana and the author of this article in \cite{UVY} that:

\begin{proposition}\label{p.product}
Suppose $f_1$ and $f_2$ are $C^{1+}$ partially hyperbolic diffeomorphisms over manifolds $M_1$ and $M_2$.
Assume that both $f_1$ and $f_2$ have mostly expanding center. Then $f_1\times f_2$ is a partially hyperbolic
diffeomorphism over $M_1\times M_2$ with mostly expanding center. As a result, nearby $C^{1+}$ diffeomorphisms (which may not be products any more) also have mostly expanding center.
\end{proposition}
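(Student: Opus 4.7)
The plan splits into two parts: verifying partial hyperbolicity of $f_1\times f_2$, and then establishing mostly expanding center. With the product Riemannian metric, the splitting $E^\sigma := E^\sigma_1\oplus E^\sigma_2$ ($\sigma = s,c,u$) is $D(f_1\times f_2)$-invariant, and the required uniform contraction on $E^s$ and expansion on $E^u$ are inherited from the factors. The domination ratios for the product may be worse than those of each $f_i$, but they decay exponentially under iteration, so after passing to a high power $(f_1\times f_2)^N$ the $1/2$-domination condition in the paper's definition is met; this transfers to $f_1\times f_2$ in an adapted metric. This part is routine.

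For the mostly expanding property, let $\mu$ be any Gibbs $u$-state of $f_1\times f_2$ and set $\mu_i:=(\pi_i)_*\mu$. Since $D(f_1\times f_2)|_{E^c_1\oplus E^c_2}=Df_1|_{E^c_1}\oplus Df_2|_{E^c_2}$, the center Lyapunov spectrum of $\mu$ at $(x_1,x_2)$ is the union of the center spectra of $f_i$ at $x_i$. Everything therefore reduces to the key claim that each marginal $\mu_i$ lies in $\Gibb^u(f_i)$; granted this, the mostly expanding hypothesis on $f_i$ forces positivity of all center exponents $\mu_i$-a.e., hence $\mu$-a.e.\ for $f_1\times f_2$.

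To prove the key claim, I would invoke the partial-entropy characterization (Proposition~\ref{p.Ruelle}). Pick $\mu_i$-subordinate measurable partitions $\xi_i$ for $\cF^u_i$, and define $\xi(x_1,x_2):=\xi_1(x_1)\times\xi_2(x_2)$. Then $\xi$ is $\mu$-subordinate to the product unstable foliation $\cF^u_1\times \cF^u_2$ of $f_1\times f_2$: its atoms are open neighborhoods of their base points inside the corresponding product leaf, and $\xi\prec (f_1\times f_2)\xi$ factorwise. The classical sub-additivity $H_\mu(\alpha\vee\beta)\le H_{\mu_1}(\alpha)+H_{\mu_2}(\beta)$ applied to the refinements
\begin{equation*}
\bigvee_{j=0}^{n-1}(f_1\times f_2)^{-j}\xi = \Bigl(\bigvee_{j=0}^{n-1}f_1^{-j}\xi_1\Bigr)\times\Bigl(\bigvee_{j=0}^{n-1}f_2^{-j}\xi_2\Bigr),
\end{equation*}
together with Lemma~\ref{l.definitionleafentropy}, yields
\begin{equation*}
h_\mu(f_1\times f_2,\cF^u_1\times \cF^u_2)\le h_{\mu_1}(f_1,\cF^u_1)+h_{\mu_2}(f_2,\cF^u_2).
\end{equation*}
Proposition~\ref{p.Ruelle} applied to each factor gives $h_{\mu_i}(f_i,\cF^u_i)\le\int\log\Jac^u_i\,d\mu_i$; summing and using the identity $\log\Jac^u_{f_1\times f_2}=\log\Jac^u_1\circ\pi_1+\log\Jac^u_2\circ\pi_2$ produces $h_\mu(f_1\times f_2,\cF^u_1\times \cF^u_2)\le\int\log\Jac^u_{f_1\times f_2}\,d\mu$. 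Since $\mu$ is a Gibbs $u$-state, the equality case of Proposition~\ref{p.Ruelle} forces equality throughout, so in particular $h_{\mu_i}(f_i,\cF^u_i)=\int\log\Jac^u_i\,d\mu_i$; the same proposition then identifies $\mu_i$ as a Gibbs $u$-state of $f_i$, completing the claim.

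The second assertion, robustness under $C^1$-perturbations that need not be products, is immediate from the $C^1$-openness of $C^{1+}$ partially hyperbolic diffeomorphisms with mostly expanding center established in~\cite{Y} and recalled at the top of Section~\ref{s.exME}. The main obstacle I anticipate is the key claim on marginals: handling it directly via Rokhlin disintegrations of $\mu$ over foliated boxes is awkward because natural foliated boxes for the product are not fibred boxes for each factor; the partial-entropy route sketched above bypasses this issue cleanly by exploiting only the sub-additivity of joint entropy for the product partition together with Lemma~\ref{l.definitionleafentropy}.
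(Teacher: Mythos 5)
The paper does not actually prove this proposition: it is quoted from \cite{UVY} without argument, so there is no internal proof to compare against, and your proposal has to stand on its own. Your treatment of the second (and harder) half is essentially correct and is a clean way to do it. The key claim that the marginals $\mu_i=(\pi_i)_*\mu$ of a Gibbs $u$-state of the product are Gibbs $u$-states of the factors does follow from the equality case of Proposition~\ref{p.Ruelle} by squeezing, exactly as you say; the only technical adjustment is that for partitions subordinate to $\cF^u$ the static entropies $H_\mu\bigl(\bigvee_{j=0}^{n-1}F^{-j}\xi\bigr)$ are typically infinite, so the subadditivity should be run on the conditional entropies, $H_\mu(\xi\mid F\xi)\le H_{\mu_1}(\xi_1\mid f_1\xi_1)+H_{\mu_2}(\xi_2\mid f_2\xi_2)$, which is the quantity that Lemma~\ref{l.definitionleafentropy} identifies with the partial entropy. (Incidentally, the direct route you dismiss is not really awkward: a product $\xi_1\times\xi_2$ of subordinate partitions is subordinate to the product unstable foliation, and the $\xi_1$-conditionals of $\mu_1$ are averages of $\pi_1$-pushforwards of the $\xi$-conditionals of $\mu$, hence absolutely continuous. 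Either way the claim holds.)

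The genuine gap is in the part you call routine, namely that $f_1\times f_2$ is partially hyperbolic. Uniform contraction on $E^s_1\oplus E^s_2$ and expansion on $E^u_1\oplus E^u_2$ are indeed inherited, but the domination conditions of the paper's definition involve \emph{cross} comparisons: taking $v^s=(v^s_1,0)$ and $v^c=(0,v^c_2)$ one needs $\|Df_1(x_1)v^s_1\|\le\tfrac12\|Df_2(x_2)v^c_2\|$, i.e.\ the stable contraction of $f_1$ must be stronger than the weakest central contraction of $f_2$ (and symmetrically, $E^c_1$ must be dominated by $E^u_2$). Nothing in the hypotheses guarantees this, and your proposed fix is based on a false premise: the ratio $\|Df_1^n|_{E^s_1(x_1)}\|/m(Df_2^n|_{E^c_2(x_2)})$ does \emph{not} decay under iteration in general. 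If $f_2$ has an orbit (say a periodic one) along which $E^c_2$ contracts at rate $1/10$ per iterate while $E^s_1$ contracts only at rate $9/10$ along some $f_1$-orbit, this ratio grows exponentially, so no power and no adapted metric produces the $1/2$-domination; mostly expanding center constrains only Gibbs $u$-states and places no pointwise restriction on $Df_2|_{E^c_2}$. So either the partial hyperbolicity of the product must be taken as an additional (spectral compatibility) hypothesis, or one must argue it from whatever extra structure \cite{UVY} imposes; your proof as written does not establish it. Once partial hyperbolicity of the product is granted, the rest of your argument, including the final appeal to the $C^1$-openness from \cite{Y}, goes through.
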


\section{Properties of skeleton\label{s.skeleton}}
In this section, we introduce several basic properties for skeletons, although the existence of skeletons will be postponed to Section~\ref{s.6}. The main tool in this Section is the Inclination lemma, also known as the $\lambda$-lemma.

To state the properties of skeletons under general situations, throughout this section, we assume $f$ to be a $C^1$ partially
hyperbolic diffeomorphism with partially hyperbolic splitting $E^s\oplus E^c\oplus E^u$, $\cS=\{p_1,\cdots,p_k\}$ is an index $i_{s}$ skeleton of $f$. In particular, we will not assume $f$ to have mostly expanding center. It is also worth noting that, unlike in~\cite{DVY}, we will not discuss the robustness of skeletons under perturbation of $f$ in this section. Such discussion requires $f$ to have mostly expanding center, and is postponed to Section~\ref{s.robustskeleton} (see Lemma~\ref{l.robustskeleton}).

The first three technical lemmas provide geometrical information on the structure of skeleton. The main result in this section is Lemma~\ref{l.skeletondifferent}, which states that every skeleton of $f$ must have the same cardinality. The last two lemma provide useful criterion for skeletons, which will be used multiple times in the later sections.

\begin{lemma}\label{l.skeleton}
\begin{itemize}
\item[(1)] For any $1\leq i \leq k$, $\Cl(\cF^s(\Orb(p_i)))$ has non-empty interior;
\item[(2)] For $1\leq i\neq j \leq k$, there is no heteroclinic intersection between $\Orb(p_i)$ and $\Orb(p_j)$, i.e., $\cF^s(\Orb(p_i))\cap W^u(\Orb(p_j))=\emptyset$;
\item[(3)] $\Int(\Cl(\cF^s(\Orb(p_i))))\cap \Int(\Cl(\cF^s(\Orb(p_j)))) =\emptyset$.
\end{itemize}
\end{lemma}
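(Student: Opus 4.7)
The plan is to derive all three parts from the minimality condition (b), combined with three standard partial hyperbolicity facts: that $\cF^s$ is a continuous foliation with local product structure, that $W^u(p_j)$ is transverse to $\cF^s$ at every one of its points (since $E^s$-vectors are expanded by $Df^{-n}$ whereas vectors tangent to $W^u(p_j)$ are contracted by $Df^{-n}$, they cannot coincide), and the $\lambda$-lemma.

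Part (1) is essentially immediate from (a) and (b). By (b), $\bigcup_{l\neq i}\cF^s(\Orb(p_l))$ fails to be dense in $M$, so the open set $W := M\setminus\Cl\bigl(\bigcup_{l\neq i}\cF^s(\Orb(p_l))\bigr)$ is nonempty. By (a), $\cF^s(\Orb(p_i))$ is then forced to be dense in $W$, giving $W\subset\Int\Cl(\cF^s(\Orb(p_i)))$. For (2), I argue by contradiction: after relabeling within the periodic orbits a heteroclinic point may be taken to lie in $\cF^s(p_i)\cap W^u(p_j)$, and the transversality statement above shows this intersection is transverse. Applying the $\lambda$-lemma to $f^{-1}$ at $p_j$ --- for which the $f^{-1}$-unstable manifold is precisely $\cF^s(p_j)$ --- with a small disk $D\subset\cF^s(p_i)$ through $q$ shows that the iterates $f^{-n}(D)$, which remain inside $\cF^s(\Orb(p_i))$, accumulate on $\cF^s(p_j)$. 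Hence $\cF^s(\Orb(p_j))\subset\Cl(\cF^s(\Orb(p_i)))$, so $\cS\setminus\{p_j\}$ still satisfies (a), contradicting (b).

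For (3), suppose $V := \Int\Cl(\cF^s(\Orb(p_i)))\cap\Int\Cl(\cF^s(\Orb(p_j)))$ is nonempty. The set $V$ is open and $f$-invariant (both closures are $f$-invariant, and $f$ is a homeomorphism). Choose $y\in\cF^s(\Orb(p_j))\cap V$; after a relabeling within the orbit of $p_j$ I may assume $y\in\cF^s(p_j)$. Then $f^n(y)\in V$ converges to $p_j$, so every neighborhood of $p_j$ meets $V$. Work inside a small foliation chart $U$ around $p_j$ in which $\cF^s$ is trivialized as a product and $W^u_{\loc}(p_j)$ is the graph of a continuous function over the $cu$-direction, and select $z\in U\cap V\cap\cF^s(\Orb(p_i))$, possible since $\cF^s(\Orb(p_i))$ is dense in $V\subset\Int\Cl(\cF^s(\Orb(p_i)))$. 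In the chart, the local plaque $\cF^s_{\loc}(z)$ is parallel to $\cF^s_{\loc}(p_j)$ and so intersects the graph $W^u_{\loc}(p_j)$ transversely at a unique point $z'$. This $z'$ lies in $\cF^s(\Orb(p_i))\cap W^u(\Orb(p_j))$, contradicting (2).

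The only delicate ingredient is the transversality of $TW^u(p_j)$ to $E^s$ at every point of $W^u(p_j)$ (needed both to apply the $\lambda$-lemma in (2) and to make the ``parallel plaques meet the graph'' argument in (3) work). This is a standard consequence of the partially hyperbolic splitting, after which (1)--(3) become essentially formal consequences of (a), (b), and the continuity of the invariant foliations.
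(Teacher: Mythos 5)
Your proposal is correct and follows essentially the same route as the paper: (1) from the minimality condition by a purely topological argument, (2) by transversality of $W^u(\Orb(p_j))$ with $\cF^s$ plus the Inclination lemma applied to $f^{-1}$ to conclude $\cF^s(\Orb(p_j))\subset \Cl(\cF^s(\Orb(p_i)))$ and contradict minimality, and (3) by local product structure of the stable foliation to manufacture a heteroclinic intersection contradicting (2). The only cosmetic difference is that in (3) you localize at $p_j$ itself while the paper localizes at a point of $\cF^s_R(\Orb(p_i))$ approximated by points of $\cF^s(\Orb(p_j))$; the two arguments are mirror images.
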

\begin{proof}
Because $\cS$ is a skeleton, from (a) of the definition of skeleton,
$$\bigcup_{i=1}^k \Cl(\cF^s(\Orb(p_i)))=M.$$
Suppose by contradiction that $\Cl(\cF^s(\Orb(p_i)))$ has empty interior for some $1\leq i \leq k$, then $\bigcup_{j\neq i}\Cl(\cF^s(\Orb(p_j)))=M$.
Thus $\cS\setminus \{p_i\}$ also satisfies (a) of Definition~\ref{d.1}, which contradicts with (b) of Definition~\ref{d.1} and the fact that $\cS$ is a skeleton.
This finishes the proof of (1).

We are ready to prove (2). First by the unstable manifold theorem, $W^u(\Orb(p_j))$ is tangent to  the bundle $E^{cu}$. Thus if the intersection $\cF^s(\Orb(p_i))\cap W^u(\Orb(p_j))$
is not empty, it must be transversal.
By the Inclination lemma, $\Cl(\cF^s(\Orb(p_j)))\subset \Cl(\cF^s(\Orb(p_i)))$, and thus $\cS\setminus \{p_j\}$ is a pre-skeleton,
a contradiction.

To prove (3), we assume by contradiction that there are $1\leq i \neq j \leq k$ such that
$U=\Int(\Cl(\cF^s(\Orb(p_i))))\cap \Int(\Cl(\cF^s(\Orb(p_j)))) \neq \emptyset$. Take $x\in \cF^s_R(\Orb(p_i))\cap U$ for some $R>0$ where $\cF^s_R(\cdot)$ is the disk in $\cF^s(\cdot)$ with radius  $R$ under leaf metric,
then there is $x_n\in \cF^s(\Orb(p_j))\cap U$ such that $x_n\to x$. By the continuity of stable foliation, we have $\cF^s_{2R}(x_n)\to
\cF^s_{2R}(x)$ and thus for $n$ sufficiently large, $\cF^s_{2R}(x_n) \cap W^u(\Orb(p_i))\neq \emptyset$. Because $x_n\in \cF^s(\Orb(p_j))$,
we have $\cF^s(\Orb(p_j))\cap W^u(\Orb(p_i))\neq \emptyset$, which is a heteroclinic intersection between $p_j$ and $p_i$, a contradiction with item (2).
\end{proof}

In the following, instead of using the open set $\Int(\Cl(\cF^s(\Orb(p_i))))$, we are going to consider the set
$\cO_i=\bigcup_{x\in W^u(\Orb(p_i))}\cF^s(x)$. By the transversality between $E^{cu}$ and $E^s$ and continuity of stable foliation,
the set $\cO_i$ is open. In the following we will reveal the relation between these two open sets.

For a hyperbolic saddle $p$, we denote by $H(p,f)$ the homoclinic class of $p$ with respect to the map $f$, that is, the closure of homoclinic intersections between $W^s(\Orb(p))$ and $W^u(\Orb(p))$.
\begin{proposition}\label{p.skeletongeometry}
For every $p_i\in \cS$,
\begin{itemize}
\item[(i)] $\Cl(W^u(\Orb(p_i)))=H(p_i,f)$;
\item[(ii)]
\begin{equation}\label{eq.closurestable}
\Cl(\cF^s(\Orb(p_i)))=\Cl(\cup_{x\in W^u(\Orb(p_i))}\cF^s(x)),
\end{equation}
thus
$\cO_i$ is open and dense in $\Int(\Cl(\cF^s(\Orb(p_i))))$.

\end{itemize}
\end{proposition}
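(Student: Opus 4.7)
My plan is to deduce both parts from a single core subclaim: that $W^u(\Orb(p_i))\subset \Cl(\cF^s(\Orb(p_i)))$. The three ingredients available are (A) the density $\bigcup_j \Cl(\cF^s(\Orb(p_j)))=M$ built into Definition~\ref{d.1}(a); (B) the absence of heteroclinic intersections between distinct elements of $\cS$ established in Lemma~\ref{l.skeleton}(2); and (C) the complementary-dimensional transversality $\cF^s\pitchfork W^u_{\loc}(\Orb(p_i))$, coming from $\dim E^s+\dim E^{cu}=\dim M$ together with the tangency of $W^u$ to $E^{cu}$. The $C^0$-continuity of $\cF^s$ on compact plaques $\cF^s_R$ glues these together: it turns any sequence of stable-leaf points approaching a point of the unstable manifold into a sequence of actual transverse intersections with that unstable manifold.

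For the subclaim and part (i), I fix $x\in W^u(\Orb(p_i))$. By (A) there exist $j$ and $x_n\in \cF^s(\Orb(p_j))$ with $x_n\to x$. Continuity of $\cF^s$ gives $\cF^s_\epsilon(x_n)\to \cF^s_\epsilon(x)$ in Hausdorff distance, and by (C) the disk $\cF^s_\epsilon(x)$ meets $W^u_{\loc}(\Orb(p_i))$ transversely at $x$; hence for all large $n$ there is a transverse intersection $y_n\in \cF^s_\epsilon(x_n)\cap W^u(\Orb(p_i))$ with $y_n\to x$. If $j\neq i$ this would be a heteroclinic intersection forbidden by (B), so $j=i$, proving the subclaim. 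With $j=i$ the points $y_n$ lie in $\cF^s(\Orb(p_i))\cap W^u(\Orb(p_i))$ and are transverse homoclinic points of $\Orb(p_i)$, so $x\in H(p_i,f)$. This yields $\Cl(W^u(\Orb(p_i)))\subset H(p_i,f)$; the reverse inclusion is immediate since transverse homoclinic points lie in $W^u(\Orb(p_i))$, so (i) is done.

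For (ii), the inclusion $\Cl(\cF^s(\Orb(p_i)))\subset \Cl(\cO_i)$ is trivial from $\Orb(p_i)\subset W^u(\Orb(p_i))$. For the reverse, take $y\in \cF^s(x)$ with $x\in W^u(\Orb(p_i))$; by the subclaim there exist $x_m\in \cF^s(\Orb(p_i))$ with $x_m\to x$. Choose $R$ larger than the intrinsic $\cF^s$-distance from $x$ to $y$; then the $C^0$-continuity of the foliation on the compact disks $\cF^s_R(\cdot)$ supplies $y_m\in \cF^s_R(x_m)\subset \cF^s(\Orb(p_i))$ with $y_m\to y$, so $y\in \Cl(\cF^s(\Orb(p_i)))$. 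This gives $\cO_i\subset \Cl(\cF^s(\Orb(p_i)))$. Since $\cO_i$ is open in $M$ (by (C)), it sits inside $\Int(\Cl(\cF^s(\Orb(p_i))))$; and since $\cO_i\supset \cF^s(\Orb(p_i))$, its closure equals $\Cl(\cF^s(\Orb(p_i)))$, so $\cO_i$ is dense in that interior.

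The subtle step is upgrading the local convergence $x_m\to x$ into a convergence $y_m\to y$ when $y$ sits at arbitrary intrinsic distance from $x$ on the same stable leaf; forward iteration would contract the leaf-distance but would not place $y_m$ inside an $f$-invariant target, while backward iteration blows it up. The resolution is to invoke global $C^0$-continuity of $\cF^s$ on plaques $\cF^s_R$ of any fixed radius, which is available because stable leaves are $C^1$-embedded and vary continuously with the base point. Once this is granted, the rest is routine transversality bookkeeping powered by the dimension equality $\dim E^s+\dim E^{cu}=\dim M$ and the two crucial constraints from (A) and (B).
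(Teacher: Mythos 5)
Your proof is correct and follows essentially the same route as the paper: both arguments derive the key subclaim $W^u(\Orb(p_i))\subset \Cl(\cF^s(\Orb(p_i)))$ from the density condition in Definition~\ref{d.1}(a), the absence of heteroclinic intersections from Lemma~\ref{l.skeleton}(2), and the transversality/continuity of $\cF^s$, and then propagate along stable plaques of fixed radius to get (ii). Your write-up merely makes explicit the continuity step (converting $x\in\Cl(\cF^s(\Orb(p_j)))$ into actual transverse intersections) that the paper leaves implicit.
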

\begin{proof}
We first prove (i). From the definition of homoclinic class, we have
$$\Cl(W^u(\Orb(p_i)))\supset H(p_i,g).$$
Now let us prove the other direction of the inclusion.

By the definition of skeleton, $\bigcup_{j=1,\cdots,k} (\cF^s(\Orb(p_j)))$ is dense in the manifold $M$. Thus
for any $x\in W^u(\Orb(p_i))$, there is $p_j\in \cS$ such that $x\in \Cl(\cF^s(\Orb(p_j))))$. According to (2) of Lemma~\ref{l.skeleton}, there is
no heteroclinic intersection between $\cF^s(\Orb(p_j))$ and $W^{u}(\Orb(p_i))$ when $i\neq j$, thus $i=j$. It
then follows that $\cF^s(\Orb(p_i))$ and $W^{u}(\Orb(p_i))$ have non-trivial intersections arbitrarily close $x$,
meaning that $x\in H(p_i,f)$. This completes the proof of (i).

By the discussion above, we have shown that $\cF^s(\Orb(p_i))\cap W^u(\Orb(p_i))$ is dense inside $W^u(\Orb(p_i))$, thus
$$\Cl(\cF^s(\Orb(p_i)))\supset \Cl(\cup_{x\in W^u(\Orb(p_i))}\cF^s(x)).$$
Meanwhile, because $\Orb(p_i)\subset W^u(\Orb(p_i))$, the inclusion
$$\Cl(\cF^s(\Orb(p_i)))\subset \Cl(\cup_{x\in W^u(\Orb(p_i))}\cF^s(x))$$
is trivially satisfied, and the equality~\eqref{eq.closurestable} follows immediately.
\end{proof}

The next two lemmas show that if one replaces $p_i\in\cS$ by another hyperbolic periodic point $q\in\cO_i$ with index $i_s$, the new set $\cS' = \cS\cup\{q\}\setminus \{p_i\}$ is still an index $i_s$ skeleton; moreover, any skeleton of $f$ can be obtained in this way.

\begin{lemma}\label{l.skeletonhomoclinic}
Let $q$ be an index $i_s$ hyperbolic periodic point, then $q \in \cO_i$
if and only if $q$ and $p_i$ are homoclinic related with each other. Moreover, $\cS^\prime=\{q\}\bigcup \cS\setminus\{p_i\}$ remains an index $i_s$
skeleton.
\end{lemma}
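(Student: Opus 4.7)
The plan splits the lemma into the equivalence and the verification that $\cS^\prime$ is a skeleton. The key geometric input I would rely on throughout is that since $q$ and each $p_i$ are hyperbolic periodic points with stable index exactly $i_s=\dim E^s$, uniqueness of the stable manifold forces $W^s(q)=\cF^s(q)$ and $W^s(p_i)=\cF^s(p_i)$; in particular, any intersection between an $\cF^s$-leaf and $W^u(\Orb(p_i))$ is automatically transverse, because $\cF^s$ is tangent to $E^s$ and $W^u$ is tangent to $E^{cu}$.

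For the implication $q\in \cO_i \Rightarrow q$ homoclinically related to $p_i$: unwinding the definition of $\cO_i$ produces a (transverse) point $x\in W^u(\Orb(p_i))\cap \cF^s(q)=W^u(\Orb(p_i))\cap W^s(q)$, and passing to iterates under $f$ one already obtains $W^u(p_i)\pitchfork W^s(\Orb(q))\ne\emptyset$. For the opposite direction of the homoclinic relation, I would apply the Inclination lemma with $q$ as the hyperbolic periodic point: a small $E^{cu}$-disk inside $W^u(\Orb(p_i))$ at $x$ is transverse to $W^s(q)$, and its iterates under $f^{n\pi(q)}$ therefore accumulate on $W^u_{\loc}(q)$ in $C^1$ topology. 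Consequently $W^u(q)\subset \Cl(W^u(\Orb(p_i)))=H(p_i,f)$ by Proposition~\ref{p.skeletongeometry}(i), so $q\in H(p_i,f)$ is accumulated by transverse homoclinic points $y_n\to q$ with $y_n\in W^s(p_i)=\cF^s(p_i)$, giving $\cF^s(y_n)=\cF^s(p_i)$. By continuity of the strong stable foliation, $\cF^s_{\loc}(y_n)\to \cF^s_{\loc}(q)$, and since $W^u_{\loc}(q)$ meets $\cF^s_{\loc}(q)$ transversely, for $n$ large I obtain a transverse intersection $W^u(q)\pitchfork W^s(\Orb(p_i))\ne\emptyset$, closing the homoclinic relation. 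The reverse implication is immediate: if $q$ and $p_i$ are homoclinically related then $W^s(q)=\cF^s(q)$ meets $W^u(\Orb(p_i))$, which is precisely $q\in \cO_i$.

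To verify that $\cS^\prime=\{q\}\cup \cS\setminus\{p_i\}$ is an index $i_s$ skeleton, I would first use both legs of the homoclinic relation together with the Inclination lemma to deduce $\Cl(\cF^s(\Orb(q)))=\Cl(\cF^s(\Orb(p_i)))$ and $\Cl(W^u(\Orb(q)))=\Cl(W^u(\Orb(p_i)))$; density condition (a) of Definition~\ref{d.1} for $\cS^\prime$ then follows from density for $\cS$. Next, $q\notin \cS\setminus\{p_i\}$, since otherwise $q=p_j$ with $j\ne i$ would be homoclinically related to $p_i$ and thus produce a heteroclinic intersection between $\Orb(p_j)$ and $\Orb(p_i)$, contradicting Lemma~\ref{l.skeleton}(2); hence $|\cS^\prime|=|\cS|=k$. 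For minimality (b), any proper $\cS^{\prime\prime}\subsetneq \cS^\prime$ satisfying (a) would, after swapping $q$ back for $p_i$ (legitimate by the stable-closure equality just proved), yield a proper subset of $\cS$ satisfying (a), contradicting the minimality of $\cS$.

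The main obstacle is the second half of the homoclinic relation $W^u(q)\pitchfork W^s(p_i)\ne\emptyset$, since the only direct input is the intersection on the stable side of $q$. The delicate point is to combine the Inclination lemma with Proposition~\ref{p.skeletongeometry}(i) and the continuity of $\cF^s$ so that transverse homoclinic points of $p_i$ accumulating on $q$ convert, via the identification $W^s(p_i)=\cF^s(p_i)$, into an honest transverse intersection of $W^u(q)$ with $W^s(\Orb(p_i))$.
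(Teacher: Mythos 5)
Your proof is correct and follows essentially the same route as the paper's: automatic transversality of $\cF^s$-leaves with $W^u(\Orb(p_i))$, Proposition~\ref{p.skeletongeometry}(i) together with continuity of the stable foliation to recover the missing leg $W^u(q)\pitchfork W^s(\Orb(p_i))\neq\emptyset$, and the swap-$q$-back-for-$p_i$ argument for minimality. The only (immaterial) difference is that you first place $W^u(q)$ inside $H(p_i,f)$ via the Inclination lemma and then work at $q$, whereas the paper applies the density of homoclinic points directly at the intersection point $a\in\cF^s(q)\pitchfork W^u(\Orb(p_i))$ and pushes forward by iteration.
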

\begin{proof}
If $q$ and $p_i$ are homoclinic related with each other, take $a\in \cF^s(q)\pitchfork W^u(\Orb(p_i))$ and $U$ a neighborhood of $a$
in $W^u(\Orb(p_i))$. By the continuity of stable foliation, $\bigcup_{x\in U}\cF^s(x)$ contains a neighborhood of $q$. Then by Proposition~\ref{p.skeletongeometry},
$q \in \bigcup_{x\in W^u(\Orb(p_i))}\cF^s(x)=\cO_i$.

On the other hand, suppose $q \in \bigcup_{x\in W^u(\Orb(p_i))}\cF^s(x)$, then there exists an intersection point $a\in \cF^s(q)\pitchfork W^u(\Orb(p_i))$. By Proposition~\ref{p.skeletongeometry}[(i)],
$a\in H(p_i,f)$ and thus can be approached by $\cF^s(\Orb(p_i))$. By the continuity of stable foliation,
$\cF^s(\Orb(p_i))\cap W^u(q)\neq \emptyset$. We conclude that $q$ and $p_i$ are homoclinic related.

Now suppose $q$ and $p_i$ are homoclinic related. Then by the Inclination lemma, we have $\Cl(\cF^s(\Orb(q)))=\Cl(\cF^s(\Orb(p_i)))$, which means that
$$\bigcup_{p\in \cS^\prime}\Cl(\cF^s(\Orb(p)))=M.$$
It remains to show that $\cS^\prime$ does not have a proper subset $\cS''$ that satisfies the above equality.

Assume by contradiction that $\cS''$ is such a proper subset of $\cS'$. Because $\cS$
is a skeleton, ${\cS}''$ has to contain $q$, otherwise ${\cS}''$ will be a proper subset of $\cS$, which  contradicts with the fact that $\cS$ is a skeleton.
By the discussion above,  $\tilde{\cS}=\{p_i\}\bigcup {\cS}''\setminus\{q\}$
is a pre-skeleton. However, this is impossible since $\tilde{\cS}$ is a proper subset of $\cS$.
\end{proof}

\begin{lemma}\label{l.skeletondifferent}
Suppose $\cS^\prime=\{q_1,\cdots,q_l\}$ is a skeleton of $f$, then $l=k$, and after reordering,
$q_i$ and $p_i$ are homoclinic related for $i=1,\cdots, k$.
\end{lemma}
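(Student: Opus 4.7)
The plan is to construct a bijection between $\cS$ and $\cS'$ by pairing saddles that are homoclinically related, so that equality of cardinalities and the reindexing statement both follow at once. Homoclinic relation is an equivalence relation on index $i_s$ saddles (reflexive; symmetric by swapping the roles of stable/unstable; transitive by the Inclination lemma applied to transverse intersections), which I will use freely.

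To produce the map, fix $p_i \in \cS$ and consider the $f$-invariant open set
\[
\cO_i = \bigcup_{x \in W^u(\Orb(p_i))} \cF^s(x).
\]
By Proposition~\ref{p.skeletongeometry} (and the paragraph preceding it), $\cO_i$ is open, non-empty, and $f$-invariant. Since $\cS'$ is a skeleton, $\bigcup_j \cF^s(\Orb(q_j))$ is dense in $M$, so it meets $\cO_i$. Because two distinct strong stable leaves are disjoint, such an intersection forces some iterate $f^n(q_j)$ to lie in $\cO_i$; by $f$-invariance of $\cO_i$, already $q_j \in \cO_i$. Lemma~\ref{l.skeletonhomoclinic} then gives that $q_j$ and $p_i$ are homoclinically related, and I set $\sigma(i) := j$.

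To see that $\sigma$ is well-defined and injective, I invoke Lemma~\ref{l.skeleton}(2) in both directions. If $q_j$ were homoclinically related to two different saddles $p_i, p_{i'} \in \cS$, transitivity would make $p_i$ and $p_{i'}$ homoclinically related, producing heteroclinic intersections between them and contradicting Lemma~\ref{l.skeleton}(2) for $\cS$; hence $\sigma$ is well-defined. Similarly, $\sigma(i)=\sigma(i')$ would force $p_i$ and $p_{i'}$ to share a common homoclinic partner $q_j$, again contradicting Lemma~\ref{l.skeleton}(2) for $\cS$; hence $\sigma$ is injective. Running the same construction starting from $\cS'$ produces an analogous map $\tau \colon \{1,\dots,l\} \to \{1,\dots,k\}$, and the uniqueness of the homoclinic partner that was just established forces $\sigma \circ \tau = \id$ and $\tau \circ \sigma = \id$. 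Therefore $k = l$, and after reindexing $\cS'$ by $\sigma$ we obtain $p_i$ homoclinically related to $q_i$ for every $i$.

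The proof is essentially bookkeeping once Lemmas~\ref{l.skeleton} and~\ref{l.skeletonhomoclinic} are in hand, so the only subtle point — and the step where I would take the most care — is the passage from $\cF^s(\Orb(q_j)) \cap \cO_i \ne \emptyset$ to $q_j \in \cO_i$ itself, which crucially relies on the $f$-invariance of $\cO_i$ together with the fact that distinct strong stable leaves cannot meet. Everything else (transitivity of the homoclinic relation, the two applications of Lemma~\ref{l.skeleton}(2), and the symmetric reverse construction) is standard.
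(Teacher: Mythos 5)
Your proof is correct, and it reaches the conclusion by a somewhat different route than the paper. The paper argues starting from each $q_j\in\cS'$: density of $\bigcup_i\cF^s(\Orb(p_i))$ produces a transverse intersection of $W^u(\Orb(q_j))$ with some $\cF^s(\Orb(p_i))$, a second application of density produces an intersection of $W^u(\Orb(p_i))$ with some $\cF^s(\Orb(q_k))$, and the Inclination lemma chains these into $W^u(\Orb(q_j))\pitchfork\cF^s(\Orb(q_k))\neq\emptyset$, which forces $j=k$ by Lemma~\ref{l.skeleton}(2) and yields that $p_i$ and $q_j$ are homoclinically related. You instead start from each $p_i\in\cS$ and route the argument through the open, $f$-invariant, $\cF^s$-saturated set $\cO_i$ together with Lemma~\ref{l.skeletonhomoclinic}: density of $\bigcup_j\cF^s(\Orb(q_j))$ plus saturation and invariance gives $q_j\in\cO_i$, hence the homoclinic relation. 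This is legitimate (Lemma~\ref{l.skeletonhomoclinic} and Proposition~\ref{p.skeletongeometry} are established before this lemma and do not depend on it), and it buys a cleaner reduction: all the transversality and Inclination-lemma work is absorbed into the already-proved Lemma~\ref{l.skeletonhomoclinic}, and your observation that a stable leaf meeting the leaf-saturated set $\cO_i$ must be entirely contained in it is exactly the right substitute for the paper's second density step. The concluding bookkeeping (uniqueness of the partner, injectivity, mutual inverses) matches the paper's. One small labeling slip: the argument you call ``well-definedness'' of $\sigma$ (a single $q_j$ related to two saddles $p_i,p_{i'}$) is really the injectivity argument; the uniqueness of the partner $q_j$ for a fixed $p_i$ --- which you genuinely need for $\sigma\circ\tau=\id$ --- comes from Lemma~\ref{l.skeleton}(2) applied to $\cS'$ rather than to $\cS$. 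Since you announce that you invoke that lemma ``in both directions,'' this is a bookkeeping issue, not a gap.
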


\begin{proof}
By Definition~\ref{d.1} (a), for each $q_j\in \cS^\prime$ there is some $p_i\in \cS$ such that $\cF^s(\Orb(p_i))$
approaches $p_i$; thus $W^u(q_j)$ intersects $\cF^s(p_i)$ transversally.

Choose any such $p_i$ (we will see in a second that the choice is unique). The same argument applied on $p_i$ shows that there exists some $q_k\in \cS^\prime$ such that $W^u(p_i)$ intersects
$\cF^s(\Orb(q_k))$ transversally. By the Inclination lemma, there is transverse intersection between $W^u(\Orb(q_j))$ and $\cF^s(\Orb(q_k))$.
By Lemma~\ref{l.skeleton}[(2)], this can only happens if $j=k$. In particular, $p_i$ and $q_j$ are homoclinically related to one another.

Since being homoclinically related is an equivalent relation, and different elements in a skeleton do not have heteroclinic intersections, it follows that the choice of $p_i$ is unique, and the map
$q_j\mapsto p_i$ is injective. Reversing the roles of $\cS'$ and $\cS$, we also get an injective
map $p_i \mapsto q_j$ which, by construction, is the inverse of the previous one. Thus, both
maps are bijective and, in particular, $\#\cS=\#\cS^\prime$. Moreover, after reordering,
$q_i$ and $p_i$ are homoclinic related for $i=1,\cdots, k$.
\end{proof}

The following lemma provides a useful criterion on the existence of skeletons, which will be used in Section~\ref{s.6}.

\begin{lemma}\label{l.subsetofpre}
Any pre-skeleton contains a subset which forms a skeleton.
\end{lemma}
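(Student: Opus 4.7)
The plan is to exploit the finiteness of the pre-skeleton and simply extract a minimal sub-pre-skeleton by a counting argument. Let $\cS = \{p_1,\dots,p_k\}$ be a pre-skeleton of $f$, so in particular $\cS$ consists of finitely many hyperbolic saddles with stable index $i_s$, and $\bigcup_{i=1}^k \cF^s(\Orb(p_i))$ is dense in $M$.

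I would introduce the family
\[
\cA = \bigl\{\cS' \subset \cS : \text{$\cS'$ satisfies condition (a) of Definition~\ref{d.1}}\bigr\},
\]
that is, all subsets of $\cS$ whose union of stable manifolds is dense in $M$. Since $\cS \in \cA$, this family is non-empty, and since $\cS$ is finite, $\cA$ is also finite. Pick any $\cS_0 \in \cA$ of minimal cardinality. Because $\cS_0 \subset \cS$, its elements are still hyperbolic saddles of stable index $i_s$, so it satisfies the first structural requirement of a skeleton; and it inherits property (a) from being in $\cA$. It remains to verify property (b): if $\cS_0$ had a proper subset $\cS_0' \subsetneq \cS_0$ satisfying (a), then $\cS_0'$ would be an element of $\cA$ with strictly smaller cardinality than $\cS_0$, contradicting the choice of $\cS_0$. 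Hence $\cS_0$ is an index $i_s$ skeleton contained in $\cS$.

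There is no real obstacle here: the statement is a pure minimality argument and requires nothing beyond the finiteness of the pre-skeleton and the definitions. I will just be careful to note that the conditions on the periodic points (being hyperbolic saddles of stable index $i_s$) are automatically preserved under taking subsets, so the only nontrivial condition to preserve when pruning is (a), which is built into the definition of $\cA$.
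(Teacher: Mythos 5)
Your argument is correct: since a skeleton is, by Definition~\ref{d.1}, nothing more than a pre-skeleton admitting no proper sub-pre-skeleton, and since every proper subset of a finite set has strictly smaller cardinality, a minimum-cardinality element of your family $\cA$ automatically satisfies condition (b), while the structural requirements (finitely many hyperbolic saddles of stable index $i_s$) pass trivially to subsets. This is, however, a genuinely different route from the paper's. The paper does not argue by cardinality at all: it introduces the relation $p_i\prec p_j$ whenever $W^u(\Orb(p_i))\pitchfork \cF^s(p_j)\neq\emptyset$, observes via the Inclination lemma that $p_i\prec p_j$ implies $\Cl(\cF^s(\Orb(p_j)))\supset\Cl(\cF^s(\Orb(p_i)))$, passes to the induced equivalence classes (which coincide with homoclinic relatedness), and retains one representative from each maximal class; the resulting subset is still a pre-skeleton with no heteroclinic intersections among its elements, and a nested result (Lemma~\ref{l.preskeleton}) then shows that any such pre-skeleton is a skeleton. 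Your proof is shorter and purely combinatorial, requiring no dynamics beyond the definitions. The paper's proof buys two things yours does not: it identifies dynamically \emph{which} saddles survive the pruning (representatives of the $\prec$-maximal homoclinic classes), and it establishes the converse of item (2) of Lemma~\ref{l.skeleton} --- namely that absence of heteroclinic intersections characterizes skeletons among pre-skeletons --- a criterion whose logic is reused when skeletons are actually constructed in Section~\ref{s.6}. For the literal statement of the lemma, your minimality argument is a complete and valid proof.
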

\begin{proof}
Let $\cS^\prime=\{p_1,\cdots, p_l\}$ be a pre-skeleton.
We first define a relation between the elements of $\cS^\prime$:
we say $p_i\prec p_j$ if $W^u(\Orb(p_i))\pitchfork \cF^s(p_j)\neq \emptyset$.
By the Inclination lemma, it is easy to see that $\prec$ is reflexive and transitive: if $p_i\prec p_j$ then
\begin{equation}\label{eq.hetero}
\Cl(\cF^s(\Orb(p_j)))\supset \Cl(\cF^s(\Orb(p_i))).
\end{equation}
Moreover,
if we have $p_i\prec p_j$ and $p_j\prec p_i$, then we say that they belong to the same equivalent class. Two elements
belong to the same equivalent class if and only if they are homoclinic related.

Now in the set of  equivalent classes, $\prec$ induces a partial order. For every maximal equivalent class under this partial order, we pick up an representative element
and then obtain a subset $\cS\subset \cS^\prime$.
By \eqref{eq.hetero}, $\cS$ is clearly a pre-skeleton. Moreover, from the construction,
the elements of $\cS$ have no heteroclinic intersection. Then this lemma is a corollary of the following result:

\begin{lemma}\label{l.preskeleton}
Let $\cS=\{p_1,\cdots,p_k\}$ be a pre-skeleton of $f$ such that there is no heteroclinic intersection
between $\Orb(p_i)$ and $\Orb(p_j)$ for $1\leq i\neq j\leq k$, then $\cS$ is a skeleton.
\end{lemma}

\begin{proof}
We prove by contradiction. Suppose $\cS$ is not a skeleton, then by Definition~\ref{d.1} (b),  it contains a proper subset $\cS''$
which forms a pre-skeleton. After reordering, we may assume $\cS''=\{p_1,\cdots, p_l\}$ where $l<k$.

Then by the definition of skeleton, $\bigcup_{1\leq i \leq l}\cF^s(\Orb(p_i))$ is dense in the manifold $M$. As a result, there is $1\leq i_0\leq l$ such that $\cF^s(\Orb(p_{i_0}))$ approaches $p_k$, and thus
$\cF^s(\Orb(p_{i_0}))\pitchfork W^u(p_k)\neq \emptyset$, which contradicts with the assumption that there is no heteroclinic intersection between elements of $\cS$. The proof is complete.
\end{proof}

\end{proof}
\section{Diffeomorphisms with mostly expanding center revisit}\label{s.5}
Throughout this section, we assume $f$ to be a $C^{1+}$ partially hyperbolic diffeomorphism with mostly expanding center.
To make this paper as self-contained as possible,  we will provide a
direct proof on the existence of physical measures for diffeomorphisms with mostly expanding center. The proof is
different from the original argument in \cite{ABV00} and is useful
for the discussion in later sections.

One of the main difficulties in the study of diffeomorphisms with mostly expanding center lies in the fact that the space $\Gibb^u(f)$ (or $G^u(g)$ for nearby $C^1$ map  $g$) is `too large', in the sense that it contains plenty of ergodic measures that are not physical.\footnote{In comparison, if $f$ has {\em mostly contracting center}, then every ergodic measure in $\Gibb^u(f)$ is a physical measure, and finiteness follows easily. See~\cite{DVY} and~\cite{HYY} for the discussion there.}

We start solving this issue by introducing the following description for diffeomorphisms with mostly expanding center, which turns out to be equivalent to Definition~\ref{df.me}. The main advantage is that  it gives a uniform estimate on the center Lyapunov exponents for measures in $\Gibb^u(f)$.

\begin{proposition}\cite{Y}[Proposition 6.1]\label{p.onestep}
Suppose $f$ has mostly expanding center, then there is $N_0\in \mathbb{N}$ and $b_0>0$ such that,
for any $\tilde{\mu}\in \Gibb^u(f^{N_0})$,
\begin{equation}\label{eq.definitionofme}
\int \log \|Df^{-N_0}|_{E^{cu}(x)}\|d\tilde{\mu}(x)<-b_0.
\end{equation}
\end{proposition}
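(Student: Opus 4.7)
The plan is to argue by contradiction. Suppose no such $N_0, b_0$ exist; then for each $n\in\NN$ we can choose $\tilde\mu_n\in\Gibb^u(f^n)$ with $\int\phi_n\,d\tilde\mu_n\ge -1/n$, where $\phi_n(x):=\log\|Df^{-n}|_{E^{cu}(x)}\|$. Setting $\nu_n:=\frac1n\sum_{i=0}^{n-1}f^i_*\tilde\mu_n$, Lemma~\ref{l.power} yields $\nu_n\in\Gibb^u(f)$, and by weak-$*$ compactness of $\Gibb^u(f)$ (Proposition~\ref{p.Gibbsustates}(1)) I extract a subsequential limit $\nu\in\Gibb^u(f)$.

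The first step is to reduce the $E^{cu}$ operator norm to the one on $E^c$ alone. From the partial hyperbolicity inequality $\|Dfv^c\|/\|Dfv^u\|\le 1/2$, applied backwards at each point, one gets $\|Df^{-1}|_{E^u(y)}\|\le \tfrac12\, m(Df^{-1}|_{E^c(y)})$ for every $y$; iterating and using the super-additivity of the conorm gives $\|Df^{-n}|_{E^u(x)}\|\le (1/2)^n\|Df^{-n}|_{E^c(x)}\|$. Combining with the uniform bound on the angle between $E^c$ and $E^u$,
\[
\|Df^{-n}|_{E^{cu}(x)}\|\;\le\;2C\,\|Df^{-n}|_{E^c(x)}\|,
\]
with $C$ independent of $n$ and $x$. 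Taking $\log$ and integrating against $\tilde\mu_n$,
\[
\int\phi_n\,d\tilde\mu_n\;\le\;\log(2C)+\int\log\|Df^{-n}|_{E^c}\|\,d\tilde\mu_n.
\]

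The second step is to unfold the right-hand side into a Birkhoff sum for $\nu_n$. When $\dim E^c=1$, $\log\|Df^{-n}|_{E^c(x)}\|=-\log\|Df^n|_{E^c(f^{-n}x)}\|=-\sum_{i=0}^{n-1}\log\|Df|_{E^c(f^{i-n}x)}\|$; using $f^n$-invariance of $\tilde\mu_n$ and the definition of $\nu_n$, this integrates to $-n\int\log\|Df|_{E^c}\|\,d\nu_n$. Combining with our standing bound,
\[
-\tfrac{1}{n}\;\le\;\int\phi_n\,d\tilde\mu_n\;\le\;\log(2C)-n\int\log\|Df|_{E^c}\|\,d\nu_n,
\]
so $\int\log\|Df|_{E^c}\|\,d\nu_n\le(\log(2C)+1/n)/n\to 0$. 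Since $\log\|Df|_{E^c}\|$ is a continuous function on $M$, weak-$*$ convergence gives $\int\log\|Df|_{E^c}\|\,d\nu\le 0$. On the other hand, mostly expanding center forces $\lambda^c>0$ $\nu$-a.e., and Birkhoff gives $\int\log\|Df|_{E^c}\|\,d\nu=\int\lambda^c\,d\nu>0$. This contradiction completes the one-dimensional case.

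For higher-dimensional $E^c$, the scheme is the same but the 1-dim Birkhoff identity is replaced by the super-additive cocycle $\log m(Df^n|_{E^c})$ (with $m$ the conorm, i.e.\ smallest singular value): Kingman's theorem identifies $\lim\tfrac1n\int\log m(Df^n|_{E^c})\,d\mu=\int\lambda^c_{\min}\,d\mu$ as a supremum of continuous functions, hence lower semi-continuous on the compact set $\Gibb^u(f)$, and strictly positive by mostly expanding center. The main technical obstacle I anticipate is the transfer of $n$-step integrals between the $f^n$-invariant $\tilde\mu_n$ and the $f$-invariant $\nu_n$: the integrand $\phi_n$ is not $f$-invariant, so one must exploit the cyclic identification $\{f^{-i}_*\tilde\mu_n\}_{i=0}^{n-1}=\{f^j_*\tilde\mu_n\}_{j=0}^{n-1}$ (which holds because $f^n_*\tilde\mu_n=\tilde\mu_n$) together with the subadditive inequality $\phi_n(x)\le\sum_{i=0}^{n-1}\phi_1(f^{-i}x)$ to convert integrals against $\tilde\mu_n$ into integrals against $\nu_n$, and then pass to the super-additive limit.
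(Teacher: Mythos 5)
First, note that the paper itself does not prove this proposition: it is imported verbatim from \cite{Y}[Proposition 6.1], so your argument stands or falls on its own. Your contradiction setup is sound, the domination estimate $\|Df^{-n}|_{E^{u}}\|\le (1/2)^n m(Df^{-n}|_{E^c})$ and the resulting bound $\|Df^{-n}|_{E^{cu}}\|\le 2C\|Df^{-n}|_{E^c}\|$ are correct, and in the case $\dim E^c=1$ the proof is complete: the cocycle $\log\|Df^{-n}|_{E^c}\|$ is genuinely additive, the cyclic identification $f^{-i}_*\tilde\mu_n=f^{n-i}_*\tilde\mu_n$ converts the integral against $\tilde\mu_n$ into $-n\int\log\|Df|_{E^c}\|\,d\nu_n$, and the weak-$*$ limit contradicts $\int\lambda^c\,d\nu>0$ for $\nu\in\Gibb^u(f)$.

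The gap is in the higher-dimensional center case, precisely at the step you flag but do not actually resolve. The inequality $\phi_n(x)\le\sum_{i=0}^{n-1}\phi_1(f^{-i}x)$ combined with the cyclic identification yields only $\int\phi_n\,d\tilde\mu_n\le n\int\phi_1\,d\nu_n$, hence in the limit $\int\log\|Df^{-1}|_{E^{cu}}\|\,d\nu\ge 0$. This is not a contradiction: setting $a_m=\int\log\|Df^{-m}|_{E^{cu}}\|\,d\nu$, Kingman and the mostly expanding hypothesis give $\inf_m a_m/m=-\int\lambda^{cu}_{\min}\,d\nu<0$, but $a_1\ge 0$ is perfectly consistent with that --- the one-step quantity $\log\|Df^{-1}|_{E^{cu}}\|$ can have nonnegative average even when all center exponents are positive, which is exactly why the proposition requires an $N_0>1$ in the first place. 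One cannot ``pass to the super-additive limit'' from the single datum $a_1\ge 0$. The repair is to run your decomposition with blocks of every fixed length $m$: averaging over the $m$ possible offsets (to handle $m\nmid n$) gives $\phi_n(x)\le\frac1m\sum_{i=0}^{n-1}\phi_m(f^{-i}x)+C_m$ with $C_m$ uniform in $n$ and $x$, whence $\int\phi_n\,d\tilde\mu_n\le\frac{n}{m}\int\phi_m\,d\nu_n+C_m$ by the same cyclic identification; letting $n\to\infty$ with $m$ fixed yields $\int\phi_m\,d\nu\ge 0$ for \emph{every} $m$, which does contradict $\inf_m a_m/m<0$. With that modification the argument closes in full generality, and the reduction to $E^c$ in your first step becomes unnecessary (one may work with $E^{cu}$ throughout, since $\lambda^{cu}_{\min}>0$ almost everywhere for every Gibbs $u$-state).
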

\begin{remark}\label{rk.step}
From now on, we assume $N_0=1$.
\end{remark}

By the upper semi-continuity of the space $\G^u(f)$ with respect to diffeomorphisms in $C^1$ topology (Proposition~\ref{p.Gu}), we can extend this estimate to nearby $C^1$ maps:
\begin{lemma}\label{l.robustGexpand}
There is a $C^1$ open neighborhood $\cU$ of $f$, such that for any $C^1$ diffeomorphism $g\in \cU$,
and any $\mu\in \G^u(g)$, we have
\begin{equation}\label{eq.uniformme}
\int \log \|Dg^{-1}|_{E^{cu}_g(x)}\|d\mu(x)<-b_0.
\end{equation}
\end{lemma}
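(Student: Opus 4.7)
The plan is to argue by contradiction, using Proposition~\ref{p.onestep} together with the upper semi-continuity of $\G^u(\cdot)$ provided by Proposition~\ref{p.Gu}. Suppose no such neighborhood $\cU$ exists. Then there is a sequence $g_n \to f$ in $C^1$ together with measures $\mu_n \in \G^u(g_n)$ such that
$$\int \log \|Dg_n^{-1}|_{E^{cu}_{g_n}(x)}\|\,d\mu_n(x) \geq -b_0.$$
Since the space of Borel probability measures on $M$ is weak-* compact, up to a subsequence we may assume $\mu_n \to \mu$ weak-*.

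By the upper semi-continuity of the map $g \mapsto \G^u(g)$ under $C^1$ topology (Proposition~\ref{p.Gu}), the limit measure satisfies $\mu \in \G^u(f)$. Because $f$ is $C^{1+}$, Remark~\ref{rk.Gu}(a) (following Ledrappier's characterization) gives $\G^u(f) = \Gibbs^u(f)$, so $\mu \in \Gibbs^u(f)$.

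Next I pass to the limit in the integral inequality. Since partial hyperbolicity is a $C^1$-open condition, for $g$ sufficiently $C^1$-close to $f$ the center-unstable bundle $E^{cu}_g$ is well defined and varies continuously in $(g,x)$ when $g$ is taken in $C^1$ and $x$ in $M$; combined with the continuous dependence of $Dg^{-1}$ on $g$ in $C^1$, this makes $(g,x) \mapsto \log\|Dg^{-1}|_{E^{cu}_g(x)}\|$ a continuous function, uniformly bounded away from $\pm\infty$ by the uniform partial hyperbolicity estimates. In particular, $\log\|Dg_n^{-1}|_{E^{cu}_{g_n}(\cdot)}\|$ converges uniformly on $M$ to $\log\|Df^{-1}|_{E^{cu}(\cdot)}\|$, so
$$\int \log\|Df^{-1}|_{E^{cu}(x)}\|\,d\mu(x) \;=\; \lim_{n\to\infty}\int \log\|Dg_n^{-1}|_{E^{cu}_{g_n}(x)}\|\,d\mu_n(x) \;\geq\; -b_0.$$
This contradicts Proposition~\ref{p.onestep} (with $N_0 = 1$ by Remark~\ref{rk.step}) applied to $\mu \in \Gibbs^u(f)$, completing the proof.

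The only subtlety is making sure the integrand converges uniformly rather than merely pointwise, so that weak-* convergence of $\mu_n$ can be combined with varying integrands. This is handled by the joint continuity of $(g,x) \mapsto \log\|Dg^{-1}|_{E^{cu}_g(x)}\|$ under $C^1$ perturbations of $g$, which in turn rests on the $C^1$-robustness of the partially hyperbolic splitting. Everything else is a routine weak-* limit argument.
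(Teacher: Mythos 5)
Your argument is correct and is exactly the route the paper intends: the paper gives no detailed proof, merely asserting that the estimate of Proposition~\ref{p.onestep} extends to nearby maps by the upper semi-continuity of $\G^u(\cdot)$ from Proposition~\ref{p.Gu}, and your contradiction/weak-* limit argument (using $\G^u(f)=\Gibbs^u(f)$ for $C^{1+}$ maps and the continuous dependence of the splitting) is the standard way to fill in that assertion.
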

This is later used in Section~\ref{ss.hyperbolictime}, where we show that for any $C^1$ diffeomorphism $g$ in a small
$C^1$ neighborhood $\cU$ of $f$, and for any $\mu\in \G^u(g)$, $\mu$ typical points $x$ have infinitely many {\em hyperbolic times} for the bundle
$E^{cu}$ in its orbit (see Lemma~\ref{l.Pliss}).

On the other hand, the space $\G^{cu}(f)$ is also `too large' since it may contain measures with negative center exponents.  Such measures need not be a Gibbs $u$-state, thus not physical due to Proposition~\ref{p.Gibbsustates} (4). One way to solve this issue is to take the space of intersection, $\G(f)$, which is a much smaller space to work with. However, this creates another problem: unlike the partial entropy which is upper semi-continuous (which makes the space $\G^u(f)$ upper semi-continuous in $f$), the metric entropy $h_\mu$ may not have such property. This is dealt with in Section~\ref{ss.fakefoliation}, as we introduce fake foliations for partially hyperbolic diffeomorphisms,
and show in Lemma~\ref{l.uniformentropyexpansiveness} that the measures in $\G^u(g)$ for $g\in \cU$ are uniformly entropy expansive. As a consequence, in Section~\ref{ss.uppermetricentropy} it is shown (Corollary~\ref{c.uppersemicontinuous})
that metric entropy,  {\em when restricted to measures in $\G^u(g)$}, varies in a upper semi-continuous fashion in weak-* topology and
with respect the diffeomorphism $g\in \cU$ in $C^1$ topology.

Finally, Section~~\ref{ss.physicalmeasures} contains the main result of this section:
for any $C^{1+}$ diffeomorphism $g\in \cU$, every extreme element of $\G(g)$ is an ergodic physical measure of $g$.

\subsection{Hyperbolic times\label{ss.hyperbolictime}}
\begin{definition}
Given $b>0$, we say that $n$ is a $b$-\emph{hyperbolic time} for a point $x$ if
$$\frac{1}{k}\sum_{j=n-k+1}^n\log\|Df^{-1}\mid_{E^{cu}(f^j(x))}\|\leq -b \text{ for any $0<k \leq n$}.$$
\end{definition}

Let $D$ be any $\C^1$ disk, we use $d_D(\cdot,\cdot)$ to denotes the distance between two points in
the disk. Recall that for the dominated splitting $E^s\oplus E^{cu}$, one can define the center unstable cone field, which is invariant under forward iteration.

The next lemma states that if $n$ is a hyperbolic time for $x$, then on the disk $f^n(D)$, one picks up an contraction by $e^{-b}$ for each backward iteration.

\begin{lemma}[\cite{ABV00} Lemma 2.7]\label{l.hyperbolictime}
For any $b>0$, there is $r>0$ such that, given any $\C^1$ disk $D$ tangent to the
center-unstable cone field, $x\in D$ and $n\geq 1$ a $b/2$-hyperbolic time for $x$, we have
$$d_{f^{n-k}(D)}(f^{n-k}(y),f^{n-k}(x))\leq e^{-kb/2} d_{f^n(D)}(f^n(x),f^n(y)),$$
for any point $y\in D$ with $d_{f^n(D)}(f^n(x),f^n(y))\leq r$ and any $1\leq k \leq n$.
\end{lemma}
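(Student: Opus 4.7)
The plan is to follow the classical Alves--Bonatti--Viana scheme: exploit the hyperbolic time condition to obtain strong backward contraction along the orbit of $x$, and transfer this to a nearby curve in $f^n(D)$ via a uniform distortion estimate on the $cu$-cone field $\cC^{cu}$. I would first fix the length-minimizing curve $\gamma\subset f^n(D)$ joining $f^n(x)$ to $f^n(y)$, of length $L_0=d_{f^n(D)}(f^n(x),f^n(y))\le r$, and set $\gamma_{n-k}:=f^{-k}(\gamma)\subset f^{n-k}(D)$ with length $L_{n-k}$. Because every forward iterate $f^{j}(D)$ remains tangent to the forward-invariant cone field, the tangent vectors to $\gamma_{n-k}$ always lie in $\cC^{cu}$; thus one step of backward iteration satisfies
\[L_{n-k}\le L_{n-k+1}\cdot \sup_{z\in\gamma_{n-k+1}} \bigl\|Df^{-1}(z)|_{\cC^{cu}(z)}\bigr\|,\]
and I will prove the bound $L_{n-k}\le e^{-kb/2}L_0$ by induction on $k$.

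\textbf{Key step.} On the compact manifold $M$, the $C^1$ regularity of $f$ guarantees that the cone-restricted norm $z\mapsto \log\|Df^{-1}(z)|_{\cC^{cu}(z)}\|$ is uniformly continuous, with some modulus $\omega$ satisfying $\omega(t)\to 0$ as $t\to 0$. Under the inductive hypothesis $L_{n-j}\le e^{-jb/2}L_0<\rho$ for $j<k$, with $\rho$ chosen so that the distortion estimate below applies, each point of $\gamma_{n-k+1}$ lies within distance $L_{n-k+1}$ of the orbit point $f^{n-k+1}(x)$, yielding
\[\sup_{z\in\gamma_{n-k+1}} \bigl\|Df^{-1}(z)|_{\cC^{cu}}\bigr\|\le \bigl\|Df^{-1}|_{E^{cu}(f^{n-k+1}(x))}\bigr\|\cdot e^{\omega(L_{n-k+1})}.\]
Iterating and invoking the $b/2$-hyperbolic time hypothesis at $x$ then produces
\[L_{n-k}\le L_0\cdot e^{-kb/2}\cdot\exp\!\Bigl(\textstyle\sum_{j=0}^{k-1}\omega(e^{-jb/2}L_0)\Bigr),\]
and the tail series $\sum_{j\ge 0}\omega(e^{-jb/2}r)$ is summable and tends to $0$ as $r\to 0$.

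\textbf{Main obstacle.} The delicate point is that the claimed contraction rate exactly matches the hyperbolic rate, leaving no nominal slack for distortion. I would handle this by running the induction with a marginally strengthened rate, say $(b+\delta)/2$ in place of $b/2$: the geometrically summable distortion factor $\exp(\sum_j\omega(e^{-jb/2}r))$ is then absorbed into $e^{k\delta/2}$ by taking $r$ sufficiently small, and the stated bound with $e^{-kb/2}$ is recovered by letting $\delta>0$ be arbitrary at the outset. Only uniform continuity of $Df$ is used, so $C^1$ regularity suffices and no H\"older input is needed; any discrepancy between $\cC^{cu}(z)$ and the invariant bundle $E^{cu}(z)$ can be controlled by taking the cone arbitrarily thin around $E^{cu}$, at the cost of shrinking $r$ once more.
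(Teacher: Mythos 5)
The paper offers no proof of this lemma; it is quoted verbatim (up to notation) from \cite{ABV00}, so your task was to reconstruct the ABV argument. Your overall scheme is the right one --- pull back the length-minimizing curve $\gamma\subset f^n(D)$, bound the length growth of $f^{-1}$ step by step using that the tangent spaces of iterates of $D$ stay in the invariant cone field, and close an induction that keeps $\gamma_{n-j}$ within distance $r$ of $f^{n-j}(x)$. The genuine gap is in the step where you absorb the distortion. First, for a diffeomorphism that is merely $C^1$, the modulus of continuity $\omega$ of $z\mapsto\log\|Df^{-1}(z)|_{\cC^{cu}(z)}\|$ need \emph{not} be summable along a geometric sequence: if $\omega(t)\sim 1/\log(1/t)$, then $\sum_{j\ge 0}\omega(e^{-jb/2}r)$ diverges like a harmonic series. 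Summability requires $Df$ H\"older, i.e.\ $C^{1+\alpha}$; but the lemma is applied in the paper (Remark~\ref{r.uniformunstable}, \eqref{eq.cuhyperbolic}, Lemma~\ref{l.sizeunstable}) to arbitrary $C^1$ diffeomorphisms $g$ in a $C^1$ neighborhood of $f$, so an argument resting on summability of $\omega$ does not cover the intended use. Second, even granting summability, your estimate reads $L_{n-k}\le e^{-kb/2}L_0\cdot\exp\bigl(\sum_j\omega(e^{-jb/2}L_0)\bigr)$ with a prefactor strictly larger than $1$ that tends to $1$ only as $r\to 0$; the stated inequality, with constant exactly $1$, is never reached. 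Your proposed repair --- running the induction at rate $(b+\delta)/2$ --- strengthens the \emph{hypothesis} (you would need $n$ to be a $(b+\delta)/2$-hyperbolic time), not the conclusion, so it does not prove the statement as given.

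The classical resolution, and the actual content of \cite{ABV00}[Lemma 2.7], is to sacrifice half of the exponent rather than to make the distortion vanish: from a $\sigma$-hyperbolic time (products of $\|Df^{-1}|_{E^{cu}}\|$ over the last $k$ steps at most $\sigma^k$) one concludes contraction by $\sigma^{k/2}$ only, the remaining factor $\sigma^{k/2}$ being spent on a per-step distortion bound $\le\sigma^{-1/2}$. That per-step bound needs only uniform continuity of $Df$ (choose $r$ small) together with the fact that, by domination, the tangent spaces of forward iterates of $D$ align exponentially fast with $E^{cu}$ --- a point you also dismiss too quickly: you cannot ``take the cone arbitrarily thin,'' since the cone field is fixed in the hypothesis, but you can use that $Df$ contracts the cone aperture under forward iteration, so the cone-versus-bundle discrepancy contributes a factor $1+O(\lambda^j)$ at the $j$-th iterate, which is likewise absorbed into the sacrificed half of the exponent. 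Read this way, the hypothesis in the statement should be ``$n$ is a $b$-hyperbolic time'' rather than ``$b/2$-hyperbolic time'' (this is how ABV phrase it; the discrepancy is harmless for the rest of the paper, which only ever needs contraction at \emph{some} uniform exponential rate). You correctly sensed that the statement as written leaves no room for distortion, but the way out is to give up a factor of two in the exponent between hypothesis and conclusion, not to shrink $r$.
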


\begin{remark}\label{r.uniformunstable}
For fixed $b_0/2>0$, we can take $r=r_1$ to be constant for the diffeomorphisms
in a $\C^1$ neighborhood of $f$.
\end{remark}

By Lemma~\ref{l.robustGexpand} and Proposition~\ref{p.physical}, for any $g\in \cU$, there is a full volume subset $\Gamma_g$ such that for any $x\in\Gamma_g$, any limit of the
sequence $\frac{1}{n}\sum_{i=0}^{n-1}\delta_{g^i(x)}$ belongs to $\G(g)$. Thus for any $x\in \Gamma_g$,
\begin{equation}
\begin{split}
\limsup_{n\to \infty}\frac{1}{n}\sum_{i=0}^{n-1}\log \|Dg^{-1}\mid_{E^{cu}_g(g^i(x))}\|&=\limsup \int \log \|Dg^{-1}\mid_{E^{cu}_g(x)}\| d\frac{1}{n}\sum_{i=0}^{n-1}\delta_{g^i(x)}\\
&<-b_0<0.
\end{split}
\end{equation}

Define $H(b_0/2,x,g)$ to be the set of $b_0/2$-hyperbolic times for $x\in \Gamma_g$, that is, the set of times $m\geq 1$ such that
\begin{equation}\label{eq.hyperbolictime}
\frac{1}{k}\sum_{i=m-k+1}^{m} \log \|Dg^{-1}\mid_{E^{cu}_g(g^i(x))}\|\leq -b_0/2 \text{ for all $1\leq k \leq m$}.
\end{equation}
By the Pliss Lemma (see \cite{ABV00}), such hyperbolic times have  positive density on the orbit segment from $0$ to $n$: there exists $n_x\geq 1$ and $\delta_1>0$ such that
\begin{equation}\label{eq.Pliss}
\#(H(b_0/2,x,g)\cap [1,n))\geq n\delta_1 \text{ for all $n\geq n_x$ }.
\end{equation}

By Lemma~\ref{l.hyperbolictime} and Remark~\ref{r.uniformunstable}, there is $r_1>0$
which only depends on $\cU$ and $b_0/2$, such that for any $x\in \Gamma_g$, and any disk
$D$ tangent to the center-unstable cone field, $x\in D$, $n\in H(b_0/2,x,g)$, we have
\begin{equation}\label{eq.cuhyperbolic}
d_{D}(x,y)\leq e^{-nb_0/2}d_{g^n(D)}(g^n(x),f^n(y)),
\end{equation}
for any $y\in D$ with $d_{g^n(D)}(g^n(x),g^n(y))\leq r_1$  (We also assume that $r_1$
satisfies the condition \eqref{eq.sizeunstable} below, which depends only on the neighborhood $\cU$.)
In particular, for $x\in \Gamma_g\cap D$,
$g^n(D)$ contains a smaller disk $D_n$ with diameter $r_1$ for $n\in H(b_0/2,x,g)$ sufficiently large.
Then $\cup_{z\in D_n}\cF^s(z)$ contains an open ball with radius $r_1$.

\begin{definition}\label{df.hyperbolicpt}
Denote by $\cH(b_0/2,g)$ the set of point $x$ such that for any $k\geq 1$,
\begin{equation}\label{eq.sethyperbolictime}
\frac{1}{k}\sum_{i=0}^{k-1} \log \|Dg^{-1}\mid_{E^{cu}_g(g^{-i}(x))}\|\leq -b_0/2 \text{ for all $k\geq 0$}.
\end{equation}
In other words, for every $n>0$, $n$ is a hyperbolic time for the point $f^{-n}(x)$.
\end{definition}

The next lemma shows that there are plenty of hyperbolic times on the forward orbit of $x$, every $\mu\in\G^u(g)$ and $\mu$ almost every $x$.

\begin{lemma}\label{l.Pliss}
For any $g\in \cU$ and any $\mu\in \G^u(g)$, we have
\begin{equation}\label{eq.measurehyperbolicitme}
\mu(\cH(b_0/2,g))\geq \delta_1
\end{equation}
where $\delta_1$ is given in \eqref{eq.Pliss}.
\end{lemma}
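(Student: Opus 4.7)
The plan is to translate the positive-density statement~\eqref{eq.Pliss} from the forward-orbit picture to a statement about the $\mu$-measure of $\cH(b_0/2, g)$, using the $g$-invariance of $\mu$ together with a monotone-convergence argument.

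First, fix $\mu \in \G^u(g)$ and let $\mu = \int \mu_x\, d\mu(x)$ be its ergodic decomposition. By Proposition~\ref{p.Gu}, $\mu$-a.e.\ ergodic component $\mu_x$ again belongs to $\G^u(g)$, so Lemma~\ref{l.robustGexpand} gives $\int \log\|Dg^{-1}|_{E^{cu}_g}\|\, d\mu_x < -b_0$. Birkhoff's theorem applied to the continuous function $\varphi(y) := \log\|Dg^{-1}|_{E^{cu}_g(y)}\|$ therefore yields, for $\mu$-a.e.~$x$,
\begin{equation*}
\lim_{n\to\infty} \frac{1}{n}\sum_{j=1}^n \varphi(g^j(x)) \;=\; \int \varphi\, d\mu_x \;<\; -b_0.
\end{equation*}
Since $\varphi$ is bounded on the compact manifold $M$, the Pliss lemma (invoked exactly as for $x\in\Gamma_g$ in~\eqref{eq.Pliss}) produces the same constant $\delta_1 > 0$ and delivers, for $\mu$-a.e.~$x$,
\begin{equation*}
\liminf_{N \to \infty} \frac{1}{N}\,\#\bigl(H(b_0/2, x, g) \cap [1, N]\bigr) \;\geq\; \delta_1.
\end{equation*}

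Next, set $B_n := \{x : n \in H(b_0/2, x, g)\}$ and $D_n := g^n(B_n)$. The substitution $i = n - j$ in the defining inequality of $H(b_0/2, x, g)$ identifies
\begin{equation*}
D_n \;=\; \Bigl\{y\in M:\ \tfrac{1}{k}\sum_{i=0}^{k-1}\varphi(g^{-i}(y))\leq -b_0/2\ \text{for all } 1\leq k\leq n\Bigr\},
\end{equation*}
so $(D_n)$ is nested decreasing with $\bigcap_n D_n = \cH(b_0/2, g)$. The $g$-invariance of $\mu$ yields $\mu(D_n) = \mu(B_n)$, and integrating the pointwise density estimate against $\mu$ via Fatou's lemma (the integrands are bounded in $[0,1]$) gives
\begin{equation*}
\liminf_{N \to \infty} \frac{1}{N}\sum_{n=1}^N \mu(D_n)
\;=\;\liminf_{N \to \infty} \frac{1}{N}\sum_{n=1}^N \mu(B_n)
\;\geq\; \delta_1.
\end{equation*}

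Finally, $\mu(D_n)$ is a decreasing sequence with $D_n \downarrow \cH(b_0/2, g)$, so $\mu(D_n) \to \mu(\cH(b_0/2, g))$; the Ces\`aro average of a convergent sequence shares that limit, so the previous display gives $\mu(\cH(b_0/2, g)) \geq \delta_1$. The one step that requires care is the bookkeeping that identifies $g^n(B_n)$ with the truncation to $1 \leq k \leq n$ of the defining condition of $\cH(b_0/2, g)$; once that index shuffle is carried out, the remainder is a routine combination of $g$-invariance, Fatou, and monotone convergence.
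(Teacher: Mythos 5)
Your proof is correct. It shares the paper's opening moves --- reduce to ergodic components via Proposition~\ref{p.Gu}, apply Birkhoff and the Pliss lemma with the uniform constant $\delta_1$ to get positive density of $b_0/2$-hyperbolic times along $\mu$-a.e.\ forward orbit --- but the conversion of that pointwise density into the bound $\mu(\cH(b_0/2,g))\geq\delta_1$ is genuinely different. The paper stays with Birkhoff's identification of $\mu(\cH(b_0/2,g))$ as a visit frequency, and then has to prove two dynamical claims: first that some backward iterate $g^{-m}(x)$ lies in $\cH(b_0/2,g)$ (by a recursive contradiction argument), and second that each forward hyperbolic time $k$ of $g^{-m}(x)$ sends $g^{k-m}(x)$ into $\cH(b_0/2,g)$ (a concatenation of averages). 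You bypass both by pushing forward the sets $B_n=\{x: n\in H(b_0/2,x,g)\}$: the index shift identifying $D_n=g^n(B_n)$ with the $n$-truncation of the defining condition of $\cH(b_0/2,g)$ is exactly right, and from there invariance, Fatou, continuity from above along the nested sequence $D_n\downarrow\cH(b_0/2,g)$, and the Ces\`aro argument close the proof with no further dynamics. Your route is cleaner and more purely measure-theoretic; the paper's route has the incidental benefit of exhibiting explicit points of $\cH(b_0/2,g)$ on individual orbits, which is closer in spirit to how the set is used later (e.g.\ in Lemma~\ref{l.sizeunstable} and the shadowing construction), but nothing in the statement requires that.
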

\begin{proof}
By Proposition~\ref{p.Gu}, we may assume $\mu$ to be ergodic. By Birkhoff theorem, we only need to show that
for $\mu$ almost every $x$, $\liminf\frac{1}{n}\#\{1\leq k \leq n; f^k(x)\in \cH(b_0/2,g)\}\geq \delta_1$.
It is equivalent to show that for some fixed $m_x$,
\begin{equation}\label{eq.averagehyperbolic}
\liminf\frac{1}{n}\#\{1\leq k \leq n; f^{k+m_x}(x)\in \cH(b_0/2,g)\}\geq \delta_1.
\end{equation}

By Lemma~\ref{l.robustGexpand}, take $x$ be a typical point of $\mu$, such that
$$\lim\frac{1}{n}\sum_{i=0}^{n-1} \log \|Dg^{-1}\mid_{E^{cu}_g(g^{-i}(x))}\|\leq -b_0.$$
We claim that there is $m>0$ such that $g^{-m}(x)\in \cH(b_0/2,g)$. Otherwise
for any $g^{-n}(x)$ , there is $i_n>0$ such that
$\frac{1}{i_n}\sum_{0}^{i_n-1} \log \|Dg^{-1}\mid_{E^{cu}_g(g^{-i-n}(x))}\|\geq -b_0/2$.
Recursively, we obtain a sequence of points: $n_1=i_0$, $n_2=n_1+i_{n_1}, \cdots$; by induction, we have
$$\frac{1}{n_k}\sum_{i=0}^{n_k-1} \log \|Dg^{-1}\mid_{E^{cu}_g(g^{-i}(x))}\|\geq -b_0/2.$$
This contradicts with the choice of $x$.

Moreover, it is easy to see that for any $k\in H(b_0/2,g^{-m}(x),g)$,
$g^{k-m}(x)\in \cH(b_0/2,g)$.
Then by \eqref{eq.Pliss} and take $m_x=m$ in \eqref{eq.averagehyperbolic}, we conclude the proof.
\end{proof}
\subsection{Fake foliations\label{ss.fakefoliation}}
In order to avoid  assuming dynamical coherence of $f$, we use locally invariant
({\em fake}) foliations, a construction that follows  Burns, Wilkinson \cite{BW} and goes back to Hirsch, Pugh, Shub \cite{HPS}. We fix $\cU$ a small $C^1$ neighborhood of $f$ provided by Lemma~\ref{l.robustGexpand}.

\begin{lemma}\label{l.fake}
There are real numbers $\rho>r_0>0$ only depending on $\cU$ with the following properties. For any $x\in M$,
the neighborhood $B(x,\rho)$ admits foliations $\hat{\cF}^s_{g,x}$ and $\hat{\cF}^{cu}_{g,x}$ such that
for every $y\in B(x,r_0)$ and $*=\{s,cu\}$:
\begin{itemize}
\item[(1)] the leaf $\hat{\cF}^i_{g,x}(y)$ is $C^1$, and its tangent bundle $T_y(\hat{\cF}^i_{g,x}(y))$ lies in a cone of $E^i(x)$;
\item[(2)] $g(\hat{\cF}^s_{g,x}(y,r_0))\subset \hat{\cF}^s_{g,g(x)}(g(y))$ and $g^{-1}(\hat{\cF}^{cu}_{g,x}(y,r_0))\subset \hat{\cF}^{cu}_{g,f^{-1}(x)}(g^{-1}(y))$;
\item[(3)] we have product structures on the $B(x,r_0)$, i.e., for any $y,z\in B(x,r_0)$, there is a unique intersection
between $\hat{\cF}^s_{g,x}(y)$ with $\hat{\cF}^{cu}_{g,x}(z)$, which we denote by $[y,z]$.
\end{itemize}
\end{lemma}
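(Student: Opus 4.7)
The plan is to follow the standard Burns--Wilkinson construction of fake foliations (\cite{BW}), which itself is based on the Hirsch--Pugh--Shub graph transform machinery (\cite{HPS}). The point is that while $g$ need not be dynamically coherent, one can still build locally invariant foliations on each small ball by trivialising the dynamics through the exponential map and then performing the HPS construction on a linearised model where dynamical coherence is automatic.

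First, I would shrink $\cU$ if necessary so that every $g\in\cU$ admits continuous invariant cone fields $\cC^s$, $\cC^{cu}$, $\cC^u$ around $E^s$, $E^{cu}$, $E^u$ with uniform bounds on contraction, expansion, and domination, independent of $g\in\cU$. Next, fix $\rho>0$ small compared to the injectivity radius of $M$, and for each $x\in M$ and $g\in\cU$ consider the map
\[
\tilde g_x = \exp_{g(x)}^{-1}\circ g\circ \exp_x : T_xM(\rho)\to T_{g(x)}M,
\]
defined on the $\rho$-ball in $T_xM$. Using a smooth bump function supported on a ball of radius slightly larger than $\rho$, I would extend $\tilde g_x$ to a globally defined diffeomorphism of $T_xM$ onto $T_{g(x)}M$ which coincides with $\tilde g_x$ on $T_xM(\rho)$ and agrees outside a slightly larger ball with the derivative $Dg(x)$ (thought of as a linear map between the tangent spaces via parallel transport). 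Provided $\rho$ is small and $\cU$ is small, this extension is a $C^1$-small perturbation of a partially hyperbolic linear map on each $T_xM$; in particular it preserves constant cones extending the splitting at $x$.

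Then I would apply the Hirsch--Pugh--Shub theorem to the extended map, separately for the stable and the center--unstable subbundles. This yields globally defined, $\tilde g_x$-invariant $C^1$ foliations $\tilde\cF^s_{g,x}$ and $\tilde\cF^{cu}_{g,x}$ of $T_xM$, transverse to one another, with leaves tangent to the appropriate cone fields; transversality and the fact that both foliations are graphs in the constant splitting give the product structure on the whole $T_xM$. Setting $\hat\cF^\ast_{g,x}(y):=\exp_x(\tilde\cF^\ast_{g,x}(\exp_x^{-1}(y)))$ for $\ast\in\{s,cu\}$ produces the desired foliations on $B(x,\rho)$, and (1) follows from the tangency to the cones. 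For (2), one chooses $r_0<\rho$ so small that $\exp_x^{-1}(B(x,r_0))$ is mapped by $\tilde g_x$ inside $\exp_{g(x)}^{-1}(B(g(x),\rho))$ and equals the pullback of $g$; since $\tilde g_x$ coincides with $g$ in coordinates on this smaller ball and the foliations on $T_xM$ are invariant under $\tilde g_x$, one gets $g(\hat\cF^s_{g,x}(y,r_0))\subset \hat\cF^s_{g,g(x)}(g(y))$, and symmetrically for the center--unstable foliation under $g^{-1}$. Item (3) follows by shrinking $r_0$ once more so that, in each $B(x,r_0)$, the leaves of both fake foliations are close to affine subspaces transverse along cones, forcing a unique transverse intersection $[y,z]$.

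The main obstacle will be verifying item (2): one must ensure that the cut-off used in the extension of $\tilde g_x$ does not spoil the part of $T_xM$ inside which the extended map actually equals the genuine pullback of $g$, and that this honest region still contains the image of $\exp_x^{-1}(B(x,r_0))$ under $\tilde g_x$. This boils down to quantitative estimates comparing $\rho$, $r_0$, the injectivity radius, the $C^1$ size of $\cU$, and the size of the bump function's support; all constants are uniform in $g\in\cU$ because the cones, the splitting modulus, and the exponential map estimates depend only on $\cU$. Once these bookkeeping estimates are fixed, items (1) and (3) are immediate consequences of the HPS construction, exactly as in \cite[Proposition~3.1 and Section~3]{BW}.
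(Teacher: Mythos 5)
Your proposal is correct and is exactly the construction the paper relies on: the paper gives no proof of Lemma~\ref{l.fake} but cites Burns--Wilkinson \cite{BW} (Proposition~3.1 there) and Hirsch--Pugh--Shub \cite{HPS}, and your argument --- lifting $g$ by the exponential map, extending by a bump function to a global small perturbation of $Dg(x)$ on $T_xM$, running the HPS graph transform along the orbit to foliate each tangent space, and pushing down to $B(x,\rho)$ with $r_0<\rho$ chosen so the extension agrees with the honest pullback where invariance is asserted --- is precisely that construction, with the correct identification of the only delicate point (the cut-off bookkeeping for item~(2)).
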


For $g\in \cU$ and any $x\in M$, we considering the following three types of Bowen balls:
\begin{itemize}
\item \emph{finite Bowen ball}: $B_n(g,x,\vep)=\{y\in M: d(g^i(x),g^i(y))<\vep, |i|<n\},$
\item \emph{negative Bowen ball}: $B^-_\infty(g,x,\vep)=\{y\in M: d(g^i(x),g^i(y))<\vep, i<0\},$
\item \emph{(two sided) infinite Bowen ball}: $$B_\infty(g,x,\vep)=\{y\in M: d(g^i(x),g^i(y))<\vep, i\in \mathbb{Z}\}.$$
\end{itemize}

It was shown in the proof of \cite{LVY}[Theorem 3.1] that:
\begin{lemma}\label{l.unstablefake}
For $\vep<r_0/2$ and any $x\in M$, $B^-_\infty(g,x,\vep)\subset \hat{\cF}^{cu}_{g,x}(y,2\vep)$.
\end{lemma}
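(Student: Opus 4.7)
The plan is to show that every $y\in B^-_\infty(g,x,\vep)$ lies on the fake center-unstable leaf $\hat{\cF}^{cu}_{g,x}(x)$; since leaves are disjoint, this forces $\hat{\cF}^{cu}_{g,x}(x)=\hat{\cF}^{cu}_{g,x}(y)$, and the inclusion $B^-_\infty(g,x,\vep)\subset \hat{\cF}^{cu}_{g,x}(y,2\vep)$ then follows from the ambient bound $d(x,y)\le\vep$ together with the standard cone estimate between ambient and leaf distance. The main tools are the product structure at the backward iterates $g^{-n}(x)$ (Lemma~\ref{l.fake}(3)) and the forward contraction on fake stable leaves coming from Lemma~\ref{l.fake}(1) together with the uniform contraction of $Dg|_{E^s}$.

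Fix $y\in B^-_\infty(g,x,\vep)$. For each $n\ge 0$, we have $g^{-n}(y)\in B(g^{-n}(x),\vep)\subset B(g^{-n}(x),r_0)$, so the product structure at $g^{-n}(x)$ produces
$$z_n:=[g^{-n}(y),g^{-n}(x)]\in \hat{\cF}^s_{g,g^{-n}(x)}(g^{-n}(y))\cap \hat{\cF}^{cu}_{g,g^{-n}(x)}(g^{-n}(x)),$$
with $d(z_n,g^{-n}(y))\le C\vep$ for a uniform constant $C$ controlled by the transversality of the two fake foliations. I would then push $z_n$ forward by $g^n$ and prove by induction on $k=0,\dots,n$ that $g^k(z_n)$ belongs simultaneously to $\hat{\cF}^s_{g,g^{-n+k}(x)}(g^{-n+k}(y))$ and to $\hat{\cF}^{cu}_{g,g^{-n+k}(x)}(g^{-n+k}(x))$. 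The stable inclusion uses the forward invariance $g(\hat{\cF}^s_{g,u}(v,r_0))\subset \hat{\cF}^s_{g,g(u)}(g(v))$ from Lemma~\ref{l.fake}(2); the center-unstable inclusion uses the local invariance of $\hat{\cF}^{cu}$, obtained by dualizing the $g^{-1}$-statement of the same item to a forward-$g$ statement on small pieces of $cu$-leaves near the orbit of $x$.

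Because fake stable leaves are tangent to a narrow cone around $E^s$, one obtains a rate $\lambda<1$ with $d_{\hat{\cF}^s}(g(u),g(v))\le \lambda\,d_{\hat{\cF}^s}(u,v)$ on a common fake stable leaf. Iterating this yields $d_{\hat{\cF}^s}(g^k(z_n),g^{-n+k}(y))\le \lambda^k C\vep$, which keeps all intermediate distances well inside $r_0$ and makes both invariance relations applicable at every step of the induction. At $k=n$ we obtain $g^n(z_n)\in \hat{\cF}^s_{g,x}(y)\cap \hat{\cF}^{cu}_{g,x}(x)$ with $d_{\hat{\cF}^s}(g^n(z_n),y)\le \lambda^n C\vep\to 0$, hence $g^n(z_n)\to y$; since every $g^n(z_n)$ lies on the closed leaf $\hat{\cF}^{cu}_{g,x}(x)$, so does the limit $y$, and the proof is then completed by the observations in the first paragraph.

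The main obstacle I anticipate is precisely the $cu$ half of the induction: Lemma~\ref{l.fake}(2) states local invariance of $\hat{\cF}^{cu}$ only in the backward direction, so one must carefully dualize the containment $g^{-1}(\hat{\cF}^{cu}_{g,u}(v,r_0))\subset \hat{\cF}^{cu}_{g,g^{-1}(u)}(g^{-1}(v))$ to a forward-$g$ statement on small enough pieces of $cu$-leaves near the orbit, and use the stable-side contraction from step (a) to verify at each forward step that $g^k(z_n)$ is close enough to $g^{-n+k}(x)$ for the set inclusions to compose. This stronger forward invariance is implicit in the Hirsch-Pugh-Shub/Burns-Wilkinson construction underlying Lemma~\ref{l.fake}, but it is the technical point that must be made explicit for the induction to go through.
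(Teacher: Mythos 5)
Your argument is correct and is essentially the standard proof, which this paper does not reproduce but simply imports from the proof of \cite{LVY}[Theorem 3.1]: pull back, take the local product point $z_n=[g^{-n}(y),g^{-n}(x)]$, push forward using contraction along fake stable leaves, and pass to the limit. The one obstacle you flag — forward local invariance of $\hat{\cF}^{cu}$ — is not a real gap: in the Burns--Wilkinson construction \cite{BW} underlying Lemma~\ref{l.fake}, local invariance of each fake foliation is stated in both time directions on $r_0$-plaques, so the induction closes without any dualization argument; the only remaining care is the routine bookkeeping ensuring $(C+1)\vep<r_0$ so all plaques stay in the domain of the product structure (and note the statement's $\hat{\cF}^{cu}_{g,x}(y,2\vep)$ should read $\hat{\cF}^{cu}_{g,x}(x,2\vep)$, as it is used later).
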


We may take $r_1$ in the previous section to satisfy that
\begin{equation}\label{eq.sizeunstable}
r_1<r_0/2.
\end{equation}
Then as a consequence of Lemma~\ref{l.hyperbolictime}, we show that for every point in $\cH(b_0/2,g)$, the unstable manifold has uniform size:
\begin{lemma}\label{l.sizeunstable}
For any $x\in \cH(b_0/2,g)$, $\hat{\cF}^{cu}_{g,x}(x,r_1)\subset W_{\loc}^u(x)$. More precisely,
for any $y\in \hat{\cF}^{cu}_{g,x}(x,r_1)$,
$$d_{\hat{\cF}^{cu}_{g,g^{-n}(x)}(g^{-n}(x))}(g^{-n}(x),g^{-n}(y))\leq e^{-nb_0/2}d_{\hat{\cF}^{cu}_{g,x}(x)}(x,y).$$
\end{lemma}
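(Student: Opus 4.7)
The plan is to reduce the statement to a direct application of Lemma~\ref{l.hyperbolictime}, using the fake $cu$-foliation provided by Lemma~\ref{l.fake} to supply disks tangent to the center-unstable cone field whose backward iterates can be controlled.

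First, I would observe that the defining condition of $\cH(b_0/2,g)$ in \eqref{eq.sethyperbolictime} is, after the substitution $j=n-i$, precisely the statement that $n$ is a $b_0/2$-hyperbolic time for the point $g^{-n}(x)$ with respect to the forward dynamics of $g$. Hence for every $n\ge 1$ we may invoke Lemma~\ref{l.hyperbolictime} at the point $g^{-n}(x)$ with hyperbolic time $n$ and rate constant $b=b_0$.

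Next, for each $n\ge 1$, I would construct a disk $D_n$ through $g^{-n}(x)$, tangent to the $cu$-cone field, such that $g^n(D_n)=\hat{\cF}^{cu}_{g,x}(x,r_1)$. The construction uses the local invariance property Lemma~\ref{l.fake}(2) iterated backwards: $g^{-n}\bigl(\hat{\cF}^{cu}_{g,x}(x,r_0)\bigr)$ lies inside $\hat{\cF}^{cu}_{g,g^{-n}(x)}(g^{-n}(x))$, and I define $D_n$ to be the connected component through $g^{-n}(x)$ of the preimage of $\hat{\cF}^{cu}_{g,x}(x,r_1)$. By Lemma~\ref{l.fake}(1), $D_n$ is automatically tangent to the $cu$-cone field. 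I would take $r_1\le\min\{r_0/2,\,r\}$, where $r$ is the constant furnished by Lemma~\ref{l.hyperbolictime} for $b=b_0$, so that the metric hypothesis of that lemma is met. Applying the lemma to $D_n$, base point $g^{-n}(x)$, and an arbitrary test point $g^{-n}(y)\in D_n$ coming from $y\in\hat{\cF}^{cu}_{g,x}(x,r_1)$, the bound $d_{g^n(D_n)}(x,y)\le r_1\le r$ holds, and setting $k=n$ in the conclusion yields
$$
d_{D_n}\bigl(g^{-n}(y),g^{-n}(x)\bigr)\;\le\;e^{-nb_0/2}\,d_{\hat{\cF}^{cu}_{g,x}(x)}(x,y).
$$
Since $D_n\subset\hat{\cF}^{cu}_{g,g^{-n}(x)}(g^{-n}(x))$, distance inside $D_n$ dominates distance in the ambient fake leaf, which is exactly the inequality claimed. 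As the right-hand side is at most $e^{-nb_0/2}r_1\to 0$, the backward orbit of $y$ shadows that of $x$ exponentially, placing $y$ in $W^u_{\loc}(x)$.

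The main obstacle is the bootstrap required to justify that the iterated preimages defining $D_n$ remain inside the $r_0$-neighborhoods where the fake foliations of Lemma~\ref{l.fake} are defined at each base point $g^{-k}(x)$ for $0\le k\le n$. I would handle this by induction on $n$: every intermediate time $k\le n$ is itself a $b_0/2$-hyperbolic time for $g^{-k}(x)$ by the same substitution argument, so the contraction estimate applied at each intermediate step forces the iterates to stay well inside a small $r_1$-tube around the backward orbit of $x$, much smaller than $r_0$. This validates the iterated use of local invariance and closes the induction, after which the application of Lemma~\ref{l.hyperbolictime} above delivers both the inclusion $\hat{\cF}^{cu}_{g,x}(x,r_1)\subset W^u_{\loc}(x)$ and the quantitative contraction statement.
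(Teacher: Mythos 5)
Your argument is correct and is exactly the route the paper intends: the paper states Lemma~\ref{l.sizeunstable} without proof, presenting it as "a consequence of Lemma~\ref{l.hyperbolictime}," and your reduction (reindexing the defining sums of $\cH(b_0/2,g)$ so that $n$ is a $b_0/2$-hyperbolic time for $g^{-n}(x)$, then applying Lemma~\ref{l.hyperbolictime} with $b=b_0$ to the backward iterates of the fake $cu$-leaf, which stay tangent to the cone field and within the $r_0$-neighborhoods by Lemma~\ref{l.fake} and the inductive contraction) is precisely that argument, with the bootstrap on the local invariance correctly identified and handled. Your choice $r_1\le\min\{r_0/2,r\}$ also matches the paper's normalization of $r_1$ via Remark~\ref{r.uniformunstable} and \eqref{eq.sizeunstable}.
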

The goal of this subsection is to show that the measures in $\G^u(g)$ for $g\in \cU$ are uniformly entropy expansiveness.

\begin{lemma}\label{l.uniformentropyexpansiveness}
For any $g\in \cU$, and any measure $\mu \in \G^u(g)$, for $\mu$ almost every point $x$,
$$B_\infty(g,x,r_1)=x.$$
\end{lemma}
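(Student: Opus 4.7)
The plan is to use the positive-frequency of hyperbolic times along the backward orbit, supplied by Lemma~\ref{l.Pliss}, to force any $y\in B_\infty(g,x,r_1)$ onto the strong unstable manifold $W^u(x)$, and then to invoke the uniform expansion of $Dg|_{E^u}$ to conclude that $y=x$.

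First I would reduce to ergodic $\mu$ via the ergodic decomposition, which is compatible with $\G^u(g)$ by Proposition~\ref{p.Gu}. Lemma~\ref{l.Pliss} then gives $\mu(\cH(b_0/2,g))\geq\delta_1>0$, and applying the Birkhoff ergodic theorem to $g^{-1}$ with indicator $\mathbf{1}_{\cH(b_0/2,g)}$ shows that, for $\mu$-a.e.\ $x$, the backward orbit visits $\cH(b_0/2,g)$ infinitely often. Fix such $x$, a subsequence $n_k\to\infty$ with $x_k:=g^{-n_k}(x)\in\cH(b_0/2,g)$, and $y\in B_\infty(g,x,r_1)$ with $y_k:=g^{-n_k}(y)$. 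Since $y_k\in B^-_\infty(g,x_k,r_1)$ and $r_1<r_0/2$ via~\eqref{eq.sizeunstable}, Lemma~\ref{l.unstablefake} places $y_k$ in the fake center-unstable plaque $\hat{\cF}^{cu}_{g,x_k}(x_k,2r_1)$; and since $x_k\in\cH(b_0/2,g)$, Lemma~\ref{l.sizeunstable} identifies this plaque with a piece of the true local unstable manifold $W^u_{\loc}(x_k)$. Hence $y_k\in W^u(x_k)$ for all $k$, and $g$-invariance of $\cF^u$ gives $y\in W^u(x)$.

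Next I would exploit that $B_\infty(g,\cdot,r_1)$ is equivariant under $g$ (so $y_k\in B_\infty(g,x_k,r_1)$) together with uniform expansion of the unstable bundle. If $y\ne x$ then $y_k\ne x_k$ for every $k$; because $g^{-1}$ contracts leaf distance along $\cF^u$, we have $d^u(y_k,x_k)\to 0$ as $k\to\infty$, so for large $k$ this distance lies below a uniform threshold $\epsilon_u$ on which leaf and ambient distances are comparable, in the sense that $d(z,z')\ge c\,d^u(z,z')$ whenever $z'\in W^u_{\loc}(z)$ satisfies $d^u(z,z')\le\epsilon_u$. Uniform expansion produces a first time $m_0\ge 1$ with $d^u(g^{m_0}(y_k),g^{m_0}(x_k))>\epsilon_u$; since $\|Dg|_{E^u}\|\le K$ is bounded, at step $m_0-1$ the leaf distance lies in $[\epsilon_u/K,\epsilon_u]$, and hence $d(g^{m_0-1}(y_k),g^{m_0-1}(x_k))\ge c\epsilon_u/K$. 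Choosing $r_1<c\epsilon_u/K$ at the outset, this contradicts $y_k\in B^+_\infty(g,x_k,r_1)$, forcing $y=x$.

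The main obstacle I expect is the bookkeeping of constants: the $r_1$ fixed by Lemma~\ref{l.hyperbolictime} and~\eqref{eq.sizeunstable} must also satisfy an implicit compatibility with Lemma~\ref{l.sizeunstable} at radius $2r_1$ produced by Lemma~\ref{l.unstablefake}, along with the bound $r_1<c\epsilon_u/K$ needed for the forward-expansion step. Since all these constraints depend only on $\cU$ and $b_0/2$, they can be accommodated by a single sufficiently small uniform choice of $r_1$ at the outset, consistent with the way $r_1$ is fixed in Section~\ref{ss.hyperbolictime}.
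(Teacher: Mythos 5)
Your proposal has a genuine gap, and it occurs at the step where you pass from the fake center--unstable plaque to ``the true local unstable manifold'' and then invoke uniform expansion.

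The manifold $W^u_{\loc}(x_k)$ appearing in Lemma~\ref{l.sizeunstable} is \emph{not} a leaf of the strong unstable foliation $\cF^u$ tangent to $E^u$: it is the Pesin unstable disk of dimension $\dim E^{cu}$, tangent to a cone around $E^{cu}=E^c\oplus E^u$, on which the only contraction available for $g^{-1}$ is the \emph{non-uniform} one of rate $e^{-nb_0/2}$ supplied at points of $\cH(b_0/2,g)$. The displacement of $y$ from $x$ inside this plaque may lie entirely in the center direction, where $Dg$ is not uniformly expanding and $Dg^{-1}$ is not uniformly contracting. Consequently the assertions ``$g^{-1}$ contracts leaf distance along $\cF^u$, so $d^u(y_k,x_k)\to 0$'' and ``uniform expansion produces a first time $m_0$ with $d^u(g^{m_0}(y_k),g^{m_0}(x_k))>\epsilon_u$'' are unjustified. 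If uniform $E^u$-expansion were enough, the lemma would be immediate and none of the hyperbolic-time machinery would be needed; the entire content of the statement is precisely the collapse of the center directions inside $B_\infty(g,x,r_1)$, and that collapse can only come from the non-uniform expansion along $E^{cu}$ at hyperbolic times.

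There is a second, related problem: you run Birkhoff for $g^{-1}$ and extract \emph{backward} visits $x_k=g^{-n_k}(x)\in\cH(b_0/2,g)$. But membership of $x_k$ in $\cH(b_0/2,g)$ controls the orbit of $x_k$ in \emph{its} past, i.e.\ the times $-n_k-1,-n_k-2,\dots$ relative to $x$. Applying Lemma~\ref{l.sizeunstable} at $x_k$ therefore only bounds $d\bigl(g^{-n_k-m}(x),g^{-n_k-m}(y)\bigr)$ for $m\ge 0$; it never produces a bound on $d(x,y)$ itself, and no forward-expansion estimate from $x_k$ back to $x$ is available from your hypotheses. The correct orientation is the opposite one: by Lemma~\ref{l.Pliss} and the (forward) Birkhoff theorem, $g^{n}(x)\in\cH(b_0/2,g)$ for infinitely many $n$; since $g^n(y)\in B^-_\infty(g,g^n(x),r_1)\subset\hat{\cF}^{cu}_{g,g^n(x)}(g^n(x),r_1)$ (modulo the factor-of-two bookkeeping you already flag), Lemma~\ref{l.sizeunstable} applied backward over exactly those $n$ steps yields
$$d_{\hat{\cF}^{cu}_{g,x}(x)}(x,y)\le e^{-nb_0/2}\,d_{\hat{\cF}^{cu}_{g,g^n(x)}(g^n(x))}(g^n(x),g^n(y))\le e^{-nb_0/2}r_1,$$
and letting $n\to\infty$ forces $y=x$. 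No appeal to $\cF^u$ or to uniform expansion is needed, and none would suffice.
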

\begin{proof}
By Lemma~\ref{l.unstablefake} and the choice of $r_1\leq r_0/2$, we have
$$B_\infty(g,x,r_1)\subset B^-_\infty(g,x,r_1)\subset \hat{\cF}^{cu}_{g,x}(x,r_1).$$

Let $x$ be a $\mu$ typical point, by Lemma~\ref{l.Pliss}, we may assume that the forward orbit of $x$ enters $\cH(b_0/2,g)$
infinitely many times. Suppose there is a distinct point $y\in B_\infty(g,x,r_1/2)\subset \hat{\cF}^{cu}_{g,x}(x,r_1)$, we are going to
prove by contradiction. Suppose $f^n(x)\in \cH(b_0/2,g)$, then $f^n(y)\in B_\infty(g,g^n(x),r_1/2)\in \hat{\cF}^{cu}_{g,x}(x,r_1)$.
By Lemma~\ref{l.sizeunstable},
$$d_{\hat{\cF}^{cu}_{g,x}(x)}(x,y)\leq e^{-nb_0/2}d_{\hat{\cF}^{cu}_{g,g^n(x)}(g^n(x))}(g^n(x),g^n(y))\leq e^{-nb_0/2}r_1.$$

Taking $n\to \infty$, we have $d_{\hat{\cF}^{cu}_{g,x}(x)}(x,y)=0$. Hence $x=y$, a contradiction with the hypothesis that $x$
and $y$ are distinct. The proof is complete.
\end{proof}

\begin{remark}
The classical definition of entropy expansive by Bowen requires that the topological entropy of $B_\infty(g,x,r_1)$ to be vanishing for {\em every} $x\in M$. However, as observed in~\cite{LVY}, this is equivalent to having zero topological entropy for the infinite Bowen ball for {\em every} invariant measure $\mu$ and {\em $\mu$ almost every} $x$.  The statement  of the previous lemma follows this approach.

Also note that this lemma does not immediate lead to the upper semi-continuity of $h_\mu$ as in the classical case, since we only have entropy expansive on a {\em subspace} of invariant measure. However, we will see in a second that the upper semi-continuity holds for measures in $\G^u$.
\end{remark}

\subsection{Upper semi-continuity of metric entropy\label{ss.uppermetricentropy}}
In this section, we are going to show that the metric entropy for measures in $\G^u(\cdot)$
is upper semi-continuous, which is a consequence of the uniform entropy expansiveness for measures among $\G^u(\cdot)$.

Define the \emph{$\vep$-tail entropy at $x$} by
$$h^*(g,x,\vep)=h_{top}(g,B_\infty(g,x,\vep)).$$
For any probability measure $\mu$ of $g$, let $h^*(g,\mu,\vep)=\int h^*(g,x,\vep) d\mu(x)$.

As a direct consequence of Lemma~\ref{l.uniformentropyexpansiveness}, we get
\begin{lemma}\label{l.entropyexpansive}
For any $g\in \cU$ and any $\mu\in \G^u(g)$, $h^*(g,\mu,r_1)=0$.
\end{lemma}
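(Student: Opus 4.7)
The plan is to derive this as an essentially immediate corollary of the preceding Lemma~\ref{l.uniformentropyexpansiveness}. Fix $g\in\cU$ and $\mu\in\G^u(g)$. By that lemma, there is a full $\mu$-measure set $\Gamma\subset M$ such that for every $x\in\Gamma$ the infinite Bowen ball reduces to a single point, $B_\infty(g,x,r_1)=\{x\}$. On such a singleton the dynamics is just a single orbit, so $h_{top}(g,B_\infty(g,x,r_1))=h_{top}(g,\{x\})=0$. Hence $h^*(g,x,r_1)=0$ for every $x\in\Gamma$, and integrating yields
\[
h^*(g,\mu,r_1)=\int h^*(g,x,r_1)\,d\mu(x)=0.
\]

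There is no real obstacle, because all the genuine work has already been done in Lemma~\ref{l.uniformentropyexpansiveness}. The only minor point worth noting, should one want to be pedantic, is that $x\mapsto h^*(g,x,r_1)$ is a measurable function (indeed, an upper semi-continuous one by the standard Bowen-type construction of $h_{top}$ on non-compact sets), so the integral defining $h^*(g,\mu,r_1)$ is unambiguous and insensitive to modifications on the $\mu$-null set $M\setminus\Gamma$. This is precisely the viewpoint taken in \cite{LVY}, which is why it suffices to show that the infinite Bowen ball is trivial $\mu$-almost everywhere rather than everywhere.
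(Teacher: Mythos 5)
Your argument is correct and is exactly what the paper intends: it states Lemma~\ref{l.entropyexpansive} as a direct consequence of Lemma~\ref{l.uniformentropyexpansiveness}, namely that the infinite Bowen ball is a singleton $\mu$-a.e., a singleton has zero topological entropy, and integrating the a.e.-vanishing tail entropy gives $h^*(g,\mu,r_1)=0$. Your remark on measurability of $x\mapsto h^*(g,x,r_1)$ is a reasonable (optional) point of care; nothing further is needed.
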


We also need the following lemma of \cite{CLY}[Theorem 1.2]:

\begin{lemma}\label{l.tail}
$h_\mu(g)-h_\mu(g,\cP)\leq h^*(g,\mu,\rho)$ for any finite measurable partition $\cP$ with $diam(\cP)\leq \rho$.
\end{lemma}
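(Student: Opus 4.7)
The plan is to reduce the estimate to comparing the entropies $h_\mu(g,\cQ)$ and $h_\mu(g,\cP)$ for finite measurable refinements $\cQ \succeq \cP$. Since $h_\mu(g) = \sup_{\cR} h_\mu(g,\cR)$, and $h_\mu(g,\cR) \leq h_\mu(g,\cR\vee \cP)$, we may restrict the supremum to partitions $\cQ$ that refine $\cP$. Thus it suffices to show that for every such $\cQ$,
\[
h_\mu(g,\cQ)-h_\mu(g,\cP)\leq h^*(g,\mu,\rho).
\]
By the standard identity $h_\mu(g,\cQ)-h_\mu(g,\cP)=\lim_{n\to\infty}\tfrac{1}{n} H_\mu(\cQ_n\mid \cP_n)$, where $\cR_n=\bigvee_{i=0}^{n-1}g^{-i}\cR$, the problem is translated into controlling the growth rate of conditional entropies of $\cQ$-refinements inside $\cP$-Bowen cylinders.

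The next step is to exploit the diameter condition on $\cP$: because $\operatorname{diam}(\cP) \le \rho$, for every $x$ the infinite atom $\cP_\infty(x):=\bigcap_{n\in\mathbb Z}g^{-n}\cP(g^n(x))$ is contained in the infinite Bowen ball $B_\infty(g,x,\rho)$. Passing to the Rokhlin conditional entropy $H_\mu(\cQ_n\mid \cP_\infty)$ and using monotonicity with respect to the conditioning sigma-algebra, one gets
\[
\tfrac{1}{n}H_\mu(\cQ_n\mid \cP_n)\leq \tfrac{1}{n}H_\mu(\cQ_n\mid \cP_\infty).
\]
Then one bounds the right-hand side by counting how many atoms of $\cQ_n$ meet $\cP_\infty(x)$, which by the above containment is controlled by the number of $(n,\delta)$-separated points inside $B_\infty(g,x,\rho)$ for $\delta$ smaller than the Lebesgue number of $\cQ$. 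Taking $\limsup_{n\to\infty}\tfrac{1}{n}\log$ of this separation number and then $\delta\to 0$ yields the pointwise tail entropy $h^*(g,x,\rho)$. A Fatou-type argument (together with the fact that the counts are uniformly bounded by the cardinalities of $\cP_n$ and $\cQ_n$) allows us to integrate and pass the $\limsup$ inside the integral, producing the bound $h^*(g,\mu,\rho)=\int h^*(g,x,\rho)\,d\mu(x)$.

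The main obstacle is the last step: justifying the interchange of the $\limsup$ in $n$ with the integral and, more importantly, linking the \emph{combinatorial} growth of $\cQ_n$-atoms meeting $\cP_\infty(x)$ to the \emph{topological} tail entropy $h_{top}(g,B_\infty(g,x,\rho))$. This is subtle because $B_\infty(g,x,\rho)$ is typically a non-compact set and the tail entropy is defined in Bowen's sense on such sets. The standard remedy is to approximate $B_\infty(g,x,\rho)$ by the finite Bowen balls $B_n(g,x,\rho)$, use that $\cQ_n$-separation of $\cP_\infty(x)$ is dominated by $(n,\delta)$-separation of $B_n(g,x,\rho)$, and then invoke the definition of topological entropy of a non-compact set via the $n\to\infty$ limit; this is essentially the argument carried out in \cite{CLY}, and we would follow it closely to conclude.
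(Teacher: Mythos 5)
First, a remark on the comparison you were asked for: the paper gives no proof of this lemma at all --- it is quoted directly from \cite{CLY} (Theorem 1.2 there), so your proposal can only be judged on its own merits. The overall architecture you chose (reduce to $\lim_n\frac1nH_\mu(\mathcal{Q}_n\mid\cP_n)$ for refinements $\mathcal{Q}\succeq\cP$, then count atoms and compare with Bowen balls) is indeed the classical Bowen--Misiurewicz route that \cite{CLY} refines, and your steps 1 and 2 are correct. However, there is a concrete error in the one new inequality you introduce. You claim
\begin{equation*}
\tfrac1n H_\mu(\mathcal{Q}_n\mid\cP_n)\ \leq\ \tfrac1n H_\mu(\mathcal{Q}_n\mid\cP_\infty)
\end{equation*}
by ``monotonicity with respect to the conditioning sigma-algebra.'' This is backwards: $\cP_\infty=\bigvee_{n\in\ZZ}g^{-n}\cP$ refines $\cP_n$, and conditional entropy \emph{decreases} when the conditioning partition is refined, so in fact $H_\mu(\mathcal{Q}_n\mid\cP_\infty)\leq H_\mu(\mathcal{Q}_n\mid\cP_n)$. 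An upper bound on $H_\mu(\mathcal{Q}_n\mid\cP_\infty)$ therefore gives no control on the quantity your step 2 requires. The identity you would need at the level of limits, $\lim_n\frac1nH_\mu(\mathcal{Q}_n\mid\cP_n)=\lim_n\frac1nH_\mu(\mathcal{Q}_n\mid\cP_\infty)$, does hold, but it is the Abramov--Rokhlin relative-entropy formula for the invariant factor generated by $\cP$, not a termwise monotonicity, and it needs its own proof. Alternatively one avoids $\cP_\infty$ entirely and bounds $H_\mu(\mathcal{Q}_n\mid\cP_n)$ directly by counting the $\mathcal{Q}_n$-atoms meeting the \emph{finite} atom $\cP_n(x)$, which sits inside a finite (forward) Bowen ball; that is Bowen's original route.

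This brings out the second, more serious issue: whichever route one takes, the entire content of the lemma is the step you yourself label ``the main obstacle'' and then outsource to \cite{CLY}, namely showing that the exponential growth rate of the $(n,\delta)$-complexity of the finite Bowen balls $B_n(g,x,\rho)$ (equivalently, of the atoms $\cP_n(x)$) is dominated, \emph{after integration against $\mu$}, by $\int h_{top}(g,B_\infty(g,x,\rho))\,d\mu(x)$. The naive passage ``complexity of $B_n$ converges to complexity of $B_\infty$'' fails pointwise; Bowen's compactness argument only produces the bound with $\sup_x h_{top}(g,B_\infty(g,x,\rho))$ on the right-hand side, and upgrading the supremum to the integral is precisely the contribution of \cite{CLY}. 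Since your sketch misstates the one inequality it does assert and defers the decisive step to the same reference the paper already cites for the whole statement, it does not amount to a proof of the lemma.
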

By Lemma~\ref{l.entropyexpansive} and Lemma~\ref{l.tail},
we conclude that $h_\mu(g)=h_\mu(g,\cP)$ for any finite measurable partition $\cP$ with $diam(\cP)\leq \rho$. In particular,
by a standard argument for upper semi-continuity of metric entropy (see for instance \cite{LVY}[Lemma 2.3]), we have:
\begin{corollary}\label{c.uppersemicontinuous}
Let $g_n$ ($n\geq 0$) be a sequence of $C^1$ partially hyperbolic diffeomorphisms inside $\cU$, and
$\mu_n\in \G^u(g_n)$. Suppose $g_n\to g_0$ in $C^1$ topology and $\mu_n \to \mu_0\in \G^u(g_0)$
in weak-* topology, then
$$\limsup_{n\to \infty}h_{\mu_n}(g_n)\leq h_{\mu_0}(g_0).$$
\end{corollary}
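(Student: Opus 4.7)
The plan is to combine the uniform entropy expansiveness from Lemma~\ref{l.entropyexpansive} with a standard continuity argument for static (combinatorial) entropy along the sequence. The starting observation, already recorded just above the statement, is that for every $g\in\cU$ and every $\mu\in\G^u(g)$ and every finite measurable partition $\cP$ with $\diam(\cP)\le \rho$ one has $h_\mu(g)=h_\mu(g,\cP)$; in particular this identity applies to every $g_n$ and $\mu_n$ with the \emph{same} partition, provided $\diam(\cP)\le\rho$. The strategy then reduces upper semi-continuity of $h_{\mu_n}(g_n)$ to upper semi-continuity of the partition entropy $h_{\mu_n}(g_n,\cP)$ along the sequence.

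First I would fix a finite measurable partition $\cP=\{P_1,\ldots,P_\ell\}$ of $M$ with $\diam(\cP)\le r_1\le \rho$, chosen so that $\mu_0(\partial P_j)=0$ for every $j$. Such a partition can always be built from a covering by small balls whose radii are chosen outside a countable $\mu_0$-null set; this is a routine construction and requires nothing beyond regularity of $\mu_0$. With this $\cP$ fixed, by the remark above both endpoints of our entropy comparison become partition entropies:
\[
h_{\mu_n}(g_n)=h_{\mu_n}(g_n,\cP)\qquad(n=0,1,2,\ldots).
\]

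Next, for each fixed $N\ge 1$, I would study the refined partitions $\cP^N_{g_n}=\bigvee_{i=0}^{N-1}g_n^{-i}\cP$. Because the convergence $g_n\to g_0$ is $C^1$ (hence $C^0$) and because $\mu_n\to\mu_0$ in the weak-$*$ topology, and because each boundary piece $\partial(g_n^{-i}P_j)$ is the preimage under a converging homeomorphism of a $\mu_0$-null boundary, a standard approximation argument (the one used in the proof of \cite{LVY}[Lemma 2.3]) yields
\[
\lim_{n\to\infty} H_{\mu_n}\bigl(\cP^N_{g_n}\bigr)=H_{\mu_0}\bigl(\cP^N_{g_0}\bigr).
\]
Using the subadditive definition of partition entropy, for every $N$,
\[
\limsup_{n\to\infty} h_{\mu_n}(g_n,\cP)\le \limsup_{n\to\infty}\frac{1}{N} H_{\mu_n}\bigl(\cP^N_{g_n}\bigr)=\frac{1}{N} H_{\mu_0}\bigl(\cP^N_{g_0}\bigr).
\]
Letting $N\to\infty$ gives $\limsup_{n} h_{\mu_n}(g_n,\cP)\le h_{\mu_0}(g_0,\cP)$, and combining with the identities of the previous paragraph we obtain $\limsup_n h_{\mu_n}(g_n)\le h_{\mu_0}(g_0)$.

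The main (minor) obstacle is the continuity step $H_{\mu_n}(\cP^N_{g_n})\to H_{\mu_0}(\cP^N_{g_0})$ with a moving partition; the delicate point is that the partition depends on $n$ through $g_n$. This is handled in the classical way: $g_n\to g_0$ uniformly on $M$ together with $\mu_0(\partial \cP^N_{g_0})=0$ (which follows from $\mu_0(\partial \cP)=0$ and the fact that $\mu_0$ is a fixed Borel measure while $g_n^{-i}$ are homeomorphisms converging to $g_0^{-i}$) implies that the symmetric differences of the atoms of $\cP^N_{g_n}$ and $\cP^N_{g_0}$ have $\mu_0$-measure tending to zero, which together with $\mu_n\to\mu_0$ yields convergence of the entropies of these finite partitions by the continuity of $-\sum_j t_j\log t_j$. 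This is exactly the ingredient packaged in \cite{LVY}[Lemma 2.3], to which we appeal.
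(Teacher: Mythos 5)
Your proposal is correct and follows the same route as the paper: the paper deduces $h_\mu(g)=h_\mu(g,\cP)$ for small-diameter partitions from Lemmas~\ref{l.entropyexpansive} and~\ref{l.tail}, and then simply cites the standard upper semi-continuity argument of \cite{LVY}[Lemma 2.3], which is exactly the null-boundary partition argument you have written out in detail.
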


\subsection{Physical measures\label{ss.physicalmeasures}}
In this section, we will provide a uniform treatment on the existence of physical measures for
all $C^{1+}$ diffeomorphisms in $\cU$. For this purpose, let $r_1>0$ be given by Lemma~\ref{l.hyperbolictime} and~\eqref{eq.sizeunstable}.
\begin{proposition}\label{p.existencephysical}
Let $g$ be any $C^1$ diffeomorphism of $\cU$. Then $\G(g)$ is compact and convex, and every extreme element of
$\G(g)$ is an ergodic measure. The map:
$\cG:$ $g\in \cU \mapsto \G(g)$ is upper semi-continuous with respect to diffeomorphisms in $\cU$ under $C^1$ topology.
Moreover, if $g$ is $C^{1+}$, then $\G(g)$ has finitely many extreme points, each of which is a physical measure of $g$ and vice versa. The basin of each physical measure of
$g$ contains Lebesgue almost every point of some ball with radius $r_1$.
\end{proposition}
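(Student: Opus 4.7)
My plan is to treat the four claims in order, building on the semi-continuity theory of $\G^u(\cdot)$ and the entropy machinery established in Sections~\ref{ss.2}--\ref{ss.uppermetricentropy}. For compactness and convexity of $\G(g)$, I would start from Proposition~\ref{p.Gu}, which already supplies both for $\G^u(g)$. Inside that space the function $\mu\mapsto h_\mu(g)-\int\log\det(Dg|_{E^{cu}})d\mu$ is the difference of an upper semi-continuous affine functional (affine by the usual affineness of entropy, upper semi-continuous on $\G^u(g)$ by Corollary~\ref{c.uppersemicontinuous}) and a continuous affine functional, so its non-negativity cuts out a closed convex subset of $\G^u(g)$; this subset is exactly $\G(g)$.

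For ``extreme implies ergodic,'' let $\mu$ be extreme in $\G(g)$ and write its ergodic decomposition $\mu=\int\mu_x\,d\mu(x)$. Proposition~\ref{p.Gu} gives $\mu_x\in\G^u(g)$ for $\mu$-a.e.\ $x$. The Ruelle inequality in the $cu$-direction yields $h_{\mu_x}\leq\int\log\det(Dg|_{E^{cu}})\,d\mu_x$ a.e., and integrating this against $d\mu(x)$ and combining with $\mu\in\G^{cu}(g)$ together with the affineness of both sides forces equality $\mu$-a.e.; hence $\mu_x\in\G(g)$ a.e., and extremality forces this barycentric decomposition to be trivial. The upper semi-continuity of $\cG$ is then a three-ingredient check: given $g_n\to g_0$ in $C^1$ and $\mu_n\in\G(g_n)$ with $\mu_n\to\mu_0$ weak-*, Proposition~\ref{p.Gu} delivers $\mu_0\in\G^u(g_0)$, Corollary~\ref{c.uppersemicontinuous} gives $\limsup h_{\mu_n}(g_n)\leq h_{\mu_0}(g_0)$, and $C^1$ convergence of $g_n$ gives uniform convergence of the integrand, so the $\G^{cu}$ inequality passes to the limit.

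The bulk of the work, and the main difficulty, is the $C^{1+}$ statement. Fix an ergodic $\mu\in\G(g)$; by Remark~\ref{rk.Gu}(a), $\mu\in\Gibb^u(g)$, and Lemma~\ref{l.robustGexpand} forces all its $cu$-Lyapunov exponents to be strictly positive. Lemma~\ref{l.Pliss} then gives $\mu(\cH(b_0/2,g))\geq\delta_1>0$; pick a $\mu$-typical $x\in\cH(b_0/2,g)$. Lemma~\ref{l.sizeunstable} produces a genuine local unstable disk $D$ of radius $r_1$ through $x$ on which the Gibbs $u$-state disintegration is equivalent to Lebesgue, so by ergodicity Lebesgue-a.e.\ $y\in D$ lies in $\cB(\mu)$. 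Saturating $D$ by the fake stable plaques from Lemma~\ref{l.fake} yields an open neighborhood containing a ball of radius $r_1$ about $x$, and the classical $C^{1+}$ absolute continuity of the stable holonomy (Pugh--Shub) propagates the full-Lebesgue basin statement from $D$ to that ball.

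This simultaneously shows that every ergodic element of $\G(g)$ is a physical measure and yields the claimed basin description; the converse (every physical measure of $g$ is an extreme point of $\G(g)$) follows from Proposition~\ref{p.physical}, since a physical measure is ergodic and arises as the empirical limit along a positive-volume set of points, all of whose empirical limits lie in $\G(g)$. Finiteness is then a volume-packing argument: distinct physical measures have essentially disjoint basins, each containing a ball of the \emph{uniform} radius $r_1$, so compactness of $M$ bounds their number. The step I expect to be most delicate is the interplay in the second paragraph between the ergodic decomposition and $\G(g)$: unlike $\G^u(g)$, the set $\G(g)$ is not a priori stable under ergodic decomposition, and it is precisely the $cu$-Ruelle inequality applied componentwise that forces the ergodic components to return to $\G(g)$ and lets extremality close the loop.
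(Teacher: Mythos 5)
Your overall architecture matches the paper's: convexity from affineness of entropy, compactness and upper semi-continuity of $\cG$ from Proposition~\ref{p.Gu} plus Corollary~\ref{c.uppersemicontinuous}, ergodicity of extreme points exactly as in the paper's Lemma~\ref{l.ergodicdecompo} (componentwise Ruelle inequality in the $cu$-direction played against the global $\G^{cu}$ inequality and closed by affineness), and the radius-$r_1$ ball via Lemma~\ref{l.Pliss} and stable saturation. The genuine gap is in your fourth paragraph, where you claim that on the disk $D$ of radius $r_1$ produced by Lemma~\ref{l.sizeunstable} ``the Gibbs $u$-state disintegration is equivalent to Lebesgue.'' That disk is tangent to $E^{cu}$ --- it is a piece of the Pesin unstable manifold, not a leaf of the strong unstable foliation $\cF^u$ --- whereas membership in $\Gibb^u(g)$ only controls the conditional measures along $\cF^u$-leaves, which have strictly smaller dimension whenever $\dim E^c>0$. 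Absolute continuity along $\cF^u$ says nothing about the transverse center directions inside $D$, so the claim that Lebesgue-a.e.\ point of $D$ lies in $\cB(\mu)$ does not follow from the Gibbs $u$-state property. The correct mechanism, and the one the paper uses, is the entropy identity you yourself derived in the second paragraph: for an extreme (hence ergodic) $\mu$ all $cu$-exponents are positive and $h_\mu(g)=\int\log\det(Dg|_{E^{cu}})\,d\mu$, which is Pesin's formula; the Ledrappier--Young characterization \cite{LY85a} (cf.\ Proposition~\ref{p.Ruelle}) then gives that the conditionals of $\mu$ along the full Pesin unstable manifolds are equivalent to Lebesgue, and only then does your argument on $D$ go through. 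Relatedly, the saturation should be by genuine local strong stable manifolds, whose holonomy is absolutely continuous for $C^{1+}$ maps; the fake plaques of Lemma~\ref{l.fake} carry no absolute continuity statement.

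A secondary gap concerns the ``vice versa'': you assert that a physical measure is ergodic, which is not part of the definition used in the introduction and is not otherwise justified; Proposition~\ref{p.physical} only places a physical measure $\nu$ inside the convex set $\G(g)$, and a non-extreme element of a simplex can perfectly well be a convex combination $\sum a_i\mu_i$. The paper closes this loop differently, by proving that the union of the basins of the finitely many extreme points $\mu_1,\dots,\mu_k$ has full volume (a Lebesgue density point argument on the complement), so that $\cB(\nu)$, having positive volume, must meet and hence coincide with some $\cB(\mu_i)$. You should either supply that full-volume argument or justify ergodicity of $\nu$ directly. The finiteness count via disjoint basins each containing a.e.\ point of a ball of uniform radius $r_1$ agrees with the paper.
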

\begin{proof}
Recall that
$$\G^{cu}(g)=\{\mu\in \cM_{\inv}(f): h_\mu(g)\geq \int \log(\det(Dg\mid_{E^{cu}(x)}))d\mu(x)\}.$$
Because the metric entropy function is affine, it follows that $\G^{cu}(f)$ is convex.
By Proposition~\ref{p.Gu}, $\G^u(g)$ is convex, so is $\G(g)=\G^u(g)\bigcap \G^{cu}(g)$.

The compactness of $\G(g)$ follows from Corollary~\ref{c.uppersemicontinuous}. More precisely, suppose
there is a sequence of invariant probabilities $\{\mu_n\}_{n=0}^\infty$ of $g$ such that $\mu_n\in \G(g)$
and assume $\lim_{n\to \infty} \mu_n=\mu$. Because $\mu_n\in \G^{cu}(g)$, we have
$$h_{\mu_n}(g)\geq \int \log(\det(Dg\mid_{E^{cu}(x)}))d\mu_n(x).$$
Note that $\mu_n\in \G^{u}(g)$, and by Proposition~\ref{p.Gu}, $\G^u(g)$ is compact, we have
$\mu\in \G^u(g)$. It then follows from Corollary~\ref{c.uppersemicontinuous} that $\limsup_{n\to \infty} h_{\mu_n}(g)\leq h_\mu(g)$, which implies:
$$h_{\mu}(g)\geq \int \log(\det(Dg\mid_{E^{cu}(x)}))d\mu(x).$$
This means $\mu\in \G^{cu}(g)$, thus $\mu\in \G^u(g)\cap \G^{cu}(g)=\G(g)$.

Indeed, by Corollary~\ref{c.uppersemicontinuous} and a similar proof as above, for a sequence of $C^1$ maps $g_n\in \cU,g_n\to g\in\cU$ in $C^1$ topology and $\mu_n\in \G(g_n)$ converging to $\mu$ in weak-* topology, we have $\mu\in \G(g).$ Then the map $\cG(\cdot)$ is upper semi-continuous, as claimed.

Suppose that $\mu$ is any extreme element of $\G(g)$, then
it is contained in $\G^u(g)$. We claim that:

\begin{lemma}\label{l.ergodicdecompo}
$\mu$ is ergodic.
\end{lemma}
\begin{proof}
Let $\tilde{\mu}$ be a typical ergodic component in the ergodic decomposition of $\mu$, we are going to show that $\tilde{\mu}\in \G(g)$;
this implies that $\tilde{\mu}$ is also an extreme element of $\G(g)$, thus it coincides with $\mu$.

By Proposition~\ref{p.Gu}, $\tilde{\mu}\in \G^u(g)$.  Thus it suffices to show that $\tilde{\mu}\in \G^{cu}(g)$.

Because $g\in \cU$, by Lemma~\ref{l.robustGexpand}, any measure $\nu\in \G^u(g)$ has positive center exponent. By Ruelle's inequality,
$$h_{\tilde{\mu}}(g)\leq  \int \log(\det(Dg\mid_{E^{cu}(x)}))d\tilde{\mu}(x).$$

Because $\mu\in \G^{cu}(g)$,
$$h_\mu(g)\geq  \int \log(\det(Dg\mid_{E^{cu}(x)}))d\mu(x).$$
Since entropy function is an affine functional with respect to invariant measures, we must have
$h_{\tilde{\mu}}(g)= \int \log(\det(Dg\mid_{E^{cu}(x)}))d\tilde{\mu}(x)$ for typical ergodic component $\tilde{\mu}$ of $\mu$.
Thus $\tilde{\mu}\in \G^{cu}(g)$.
The proof is complete.
\end{proof}

We continue the proof of Proposition~\ref{p.existencephysical}. Assume that $g\in \cU$ is a $C^{1+}$ partially hyperbolic diffeomorphism.
First we suppose that $\mu$ is an extreme element of $\G(g)$. Then by the discussion above,
$\mu$ is  ergodic with positive center exponents. Moreover, by Ruelle's inequality, we get
$$h_\mu(g)= \int \log(\det(Dg\mid_{E^{cu}(x)}))d\mu(x).$$

By the entropy formula of Ledrappier-Young \cite{LY85a}, the disintegration of $\mu$ along the Pesin unstable manifold is equivalent to
the Lebesgue measure on the leaves. This means, for $\mu$ almost every $x$, Lebesgue almost every point on the Pesin unstable manifold of $x$ is a typical point of $\mu$. Since the Basin of $\mu$ is saturated by stable leaves (we use the fact that $E^s$ is uniformly contracting), and the stable
foliation is absolutely continuous, the union of the stable leaves of the previous full Lebesgue measure subset
of $W^u(x)$ is contained in the basin of $\mu$ and has full volume inside a ball with center at $x$. Note however, that such ball may not have uniform radius $r_1$.

To obtain a ball with radius $r_1$ in the basin of $\mu$, we apply Lemma~\ref{l.Pliss} to obtain an $n>0$, such that $g^n(x)\in \cH(b_0/2,g)$.
Then by Lemma~\ref{l.sizeunstable}, $W^u(g^n(x),g)$ contains a disk with radius $r_1$, where Lebesgue typical points in this disk are typical points of $\mu$. By the uniform transversality between
the bundles $E^s$ and $E^{cu}$, the basin
of $\mu$ contains Lebesgue almost every point of a ball  at $g^n(x)$ with radius $r_1$, which we denote by $B_{g^n(x)}(r_1)$. It then follows that
\begin{equation}\label{eq.support}
\mu(B_{g^n(x)}(r_1))>0.
\end{equation}
To simplify notation, we write any ball obtained in the above way by $B_{\mu}$.

Because the basins
of different physical measures are disjoint, $\G(g)$ has only finitely many extreme elements. We denote them by $\mu_1,\cdots, \mu_k$.

Now we prove that the union of basins of $\mu_1,\cdots,\mu_k$ has full volume. We prove by contradiction, suppose the compliment
of $\bigcup_{i=1}^k \cB(\mu_i)$, we denote by $\Lambda$, has positive volume.
By Proposition~\ref{p.physical}, for Lebesgue almost every point
$x\in \Lambda$, any limit $\mu$ of the sequence $\frac{1}{n}\sum_{i=0}^{n-1}\delta_{g^i(x)}$ belongs to $\G(g)$.
We may choose the previous $x$ to be a Lebesgue density point of $\Lambda$, and denote by
$\mu=\lim_i\frac{1}{n_i}\sum_{j=0}^{n_i-1}\delta_{g^j(x)}$.

Because $\G(g)$ is convex with finitely many extreme elements, $\mu$ can be written as a combination:
$$\mu=a_1 \mu_1+\cdots +a_k \mu_k$$
where $0\leq a_1,\cdots, a_k \leq 1$ and $\sum_{i=1}^k a_k=1$. There is
$1\leq t\leq k$ such that $a_t>0$. Then by \eqref{eq.support}, $\mu(B_{\mu_t})\geq a_t \mu_t(B_{\mu_t})>0$.

Thus there is $n_i$ sufficiently large,
$\frac{1}{n_i}\sum_{j=0}^{n_i-1} \delta_{g^j(x)} (B_{\mu_t})>0$. In particular, there is $j>0$ such that $g^{j}(x)\in B_{\mu_t}$.

Because we choose $x$ a Lebesgue density point of $\Lambda$, i.e., it satisfies:
$$\lim_{r\to 0^+} \frac{\Leb(B_x(r)\cap \Lambda)}{\Leb(B_x(r))}\to 1.$$ Observe that since the basin of physical measures is invariant under
$g$, $\Lambda$ is invariant under the iteration of $g$ also. Then the above argument shows that $\Lambda \bigcap B_{\mu_j}=g^{j}(\Lambda)\bigcap B_{\mu_j}$ has positive Lebesgue measure.

Recall that Lebesgue almost every point of $B_{\mu_j}$ is in the basin of $\mu_j$. Therefore $\Lambda$ and the basin of $\mu_j$ have non-trivial intersection.
This contradicts the choice of $\Lambda$. The proof of Proposition~\ref{p.existencephysical} is complete.
\end{proof}

\begin{remark}
The $C^{1+}$ regularity is used to:
\begin{itemize}
\item show that the conditional measures of $\mu$ along unstable leaves are absolutely continuous; we need the work of Ledrappier and Young, which requires $C^{1+}$;
\item show that the basin of $\mu$ contains Lebesgue almost every point in a ball; there we need the stable foliation to be absolutely continuous.
\end{itemize}
We will see later in Section~\ref{s.robustskeleton} that such regularity condition can be bypassed for generic $C^1$ diffeomorphisms in $\cU$.
\end{remark}

\section{Proof of Theorem~\ref{main.sksleton} and Corollary~\ref{maincor.period}}\label{s.6}
In this section, we provide the proof of Theorem~\ref{main.sksleton} and Corollary~\ref{maincor.period}.

Throughout this section, we assume $f$ to be a $C^{1+}$ diffeomorphism with mostly expanding center,
$\cU$ a sufficiently small $C^1$ neighborhood of $f$. By Proposition~\ref{p.onestep}, there is $b_0>0$ such that for any
$C^1$ diffeomorphism $g\in \cU$ and any $\mu\in\G^u(g)$,
\begin{equation}\label{eq.Guexpanding}
\int \log \|Dg^{-1}\mid_{E^{cu}(x)}\|d\mu(x)<-b_0.
\end{equation}
The structure of this section is as following:
In Section~\ref{ss.Liao} we introduce the Liao's shadowing lemma, which will be used in  Section~\ref{ss.skeleton} to construct skeletons. For the discussion in Section~\ref{s.8}, we will make the construction for every $C^1$ diffeomorphism $g\in\cU$.

Then in Section~\ref{ss.skeletonandmeasures}, we will show that each element in $\cS(g)$ is associated to a physical measure, assuming that $g$ is $C^{1+}$. This concludes the proof of Theorem~\ref{main.sksleton}.
Finally, in Section~\ref{ss.proofAB} we provide the proof of Corollary~\ref{maincor.period}.

\subsection{Liao's shadowing lemma\label{ss.Liao}}
\begin{definition}\label{df.pseudo}
An orbit segment $(x,f(x),\cdots,f^n(x))$ is called \emph{$\lambda$-quasi hyperbolic} if there exists
$0< \lambda <1$ such that
\begin{equation}
\prod_{i=0}^{k-1} \|Df^{-1}\mid_{E^{cu}(f^{n-i}(x))}\|< \lambda^k
\end{equation}
for any $1 \leq k \leq n$.
\end{definition}
In other words, $(x,f(x),\cdots,f^n(x))$ is $\lambda$-quasi hyperbolic if $n$ is a $(-\log \lambda)$-hyperbolic time for $x$.
In this subsection we need the following shadowing lemma by Liao, which allows a quasi hyperbolic, periodic pseudo orbit to be shadowed by a periodic orbit with large unstable manifold.
\begin{lemma}[\cite{Liao,Gan}]\label{l.Liao}
For any $\lambda>0$, there exist $\rho>0$ and $L>0$, such that for any $\lambda$-quasi hyperbolic orbit
$(x,f(x),\cdots,f^n(x))$ of $f$ with $d(x,f^n(x))\leq \rho$, there
exists a hyperbolic periodic point $p\in M$ such that
\begin{itemize}
\item[(a)] $p$ is a hyperbolic periodic point with period $n$ and with stable index $i_s$;
\item[(b)] $d(f^i(x),f^i(p))\leq L d(x,f^n(x))$ for any $0\leq i \leq n-1$;
\item[(c)] $p$ has uniform size unstable manifold: there is a constant $r>0$ depending on $\lambda$, such that the local unstable manifold of $p$ contains a disk with radius $r$.
\end{itemize}
\end{lemma}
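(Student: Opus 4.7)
The plan is to construct the shadowing periodic orbit as a fixed point of a graph transform operator, using the quasi-hyperbolicity to secure the contraction estimates, and then read off property (c) from the uniform hyperbolic rates. I would not try to produce an orbit by a sequence of local perturbations; instead, I would work directly in adapted charts along the segment $\{x_i = f^i(x)\}_{i=0}^{n}$ and exploit the splitting $E^s \oplus E^{cu}$.

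\smallskip

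\textbf{Step 1 (adapted charts).} Using the exponential map at each $x_i$, identify a small neighborhood of $x_i$ with a neighborhood of the origin in $E^s(x_i) \oplus E^{cu}(x_i)$. Write $f$ in these charts as $F_i \colon E^s(x_i) \oplus E^{cu}(x_i) \to E^s(x_{i+1}) \oplus E^{cu}(x_{i+1})$. By the partial hyperbolicity of $f$, there are constants $\sigma_s < 1 < \sigma$ with $\|DF_i|_{E^s}\| \le \sigma_s$ and the cone invariance for the center-unstable cone field, uniform over the manifold. By the quasi-hyperbolicity assumption, the norm of $(DF_i|_{E^{cu}})^{-1}$ contracts on average along the orbit at the rate $\lambda^k$, uniformly for $1 \le k \le n$.

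\smallskip

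\textbf{Step 2 (graph transform along the orbit).} Over the orbit segment consider the space $\mathcal{X}$ of sequences $(y_i)_{i=0}^{n}$ with $y_i$ in a small disk $D_i \subset E^s(x_i)\oplus E^{cu}(x_i)$, and look at the operator $\mathcal{T}$ sending such a sequence to the graph obtained by applying the stable graph transform forward (so that the $E^s$-components are determined by contraction) and the center-unstable graph transform backward (so that the $E^{cu}$-components are determined by the quasi-hyperbolic expansion). Cone invariance and the rates above make each of the two graph transforms a genuine contraction with ratios bounded by $\sigma_s$ and by $\lambda$ respectively, giving a uniquely attracting fixed graph.

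\smallskip

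\textbf{Step 3 (closing up).} Since $d(x_0,x_n) \le \rho$, identify the chart at $x_n$ with the chart at $x_0$ via a translation of size $O(\rho)$ between $E^s(x_0)\oplus E^{cu}(x_0)$ and $E^s(x_n)\oplus E^{cu}(x_n)$ (using uniform continuity of the splitting). For $\rho$ small enough, the composed map $\Phi = F_{n-1}\circ \cdots \circ F_0$, viewed as a map from a neighborhood of $0$ in the chart at $x_0$ to itself, has the form (uniform stable contraction) $\oplus$ (uniform center-unstable expansion with rate at least $\lambda^{-n}$). A standard Perron-type fixed point / graph transform argument then yields a unique hyperbolic fixed point $p$ of $\Phi$ in the chart, of stable index $i_s$. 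Unwrapping the chart, $p$ is a hyperbolic periodic point of $f$ of period $n$, which proves (a).

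\smallskip

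\textbf{Step 4 (shadowing estimate and uniform unstable disk).} The fixed point $p$ depends Lipschitz-continuously on the closing translation, with Lipschitz constant controlled by $(1-\sigma_s)^{-1} + (1-\lambda)^{-1}$ coming from the geometric series of the two contractions; this gives $d(p,x_0) \le L\, d(x_0,x_n)$ and, propagating through the orbit segment with the uniform bounds on $DF_i$, the estimate $d(f^i(p), x_i) \le L\, d(x_0,x_n)$ in (b). For (c), apply the classical Hirsch--Pugh--Shub unstable manifold theorem to the hyperbolic periodic orbit of $p$: since the backward contraction rate in $E^{cu}$ is uniformly bounded by $\lambda$ (depending only on the hypothesis and not on $n$), the size of the local unstable disk is bounded below by a constant $r=r(\lambda)>0$.

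\smallskip

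\textbf{Main obstacle.} The delicate point is handling the center-unstable bundle $E^{cu}$: it is only expanded on average (the quasi-hyperbolicity is a Cesàro-type condition on products), not uniformly at each step. Turning this average expansion into an honest graph-transform contraction requires choosing the sizes of the disks $D_i$ to shrink geometrically in the appropriate direction, so that the worst intermediate derivatives can be absorbed by the eventual product bound $\lambda^k$. Once the geometry of the disks is matched to the Pliss-type expansion gained from the quasi-hyperbolicity, the rest of the argument reduces to the standard hyperbolic closing lemma.
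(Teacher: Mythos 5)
The paper does not prove this lemma at all: it is quoted verbatim from Liao and from Gan's \emph{A generalized shadowing lemma}, so there is no internal argument to compare yours against. Judged on its own, your outline follows the same strategy as Gan's published proof (a Perron/graph-transform construction in adapted charts along the string, closed up by the hypothesis $d(x,f^n(x))\le\rho$), so the architecture is sound.

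However, the step you flag as the ``main obstacle'' is precisely the content of the lemma, and your proposed fix does not work as stated. Shrinking the disks $D_i$ geometrically cannot absorb the bad intermediate derivatives: the quasi-hyperbolicity condition controls the products $\prod_{i=0}^{k-1}\|Df^{-1}|_{E^{cu}(f^{n-i}(x))}\|\le\lambda^k$ only when measured \emph{from the endpoint $f^n(x)$ backwards}, and at an intermediate time $j$ the one-step map $F_j$ may expand $\|\cdot\|$ in the wrong direction by an amount that no choice of disk radii alone can compensate while keeping the operator $\mathcal T$ well defined and contracting on a fixed product space. The standard resolution (Liao's ``standard sequences'', or equivalently Gan's weighted sup-norm on the sequence space) is to replace the ambient norm at each $x_i$ by an adapted norm $\|v\|_i'=\sup_{0\le k\le n-i}\lambda^{-k}\prod_{m=1}^{k}\|Df^{-1}|_{E^{cu}(f^{i+m}(x))}\|\cdot\|v\|$ on $E^{cu}$ (and the analogous forward-weighted norm on $E^s$); the quasi-hyperbolicity guarantees these weights are uniformly bounded, and in the new norms every single step is a uniform $\lambda$-expansion on $E^{cu}$ and a uniform contraction on $E^s$. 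Only after this change of metric is your Step~2 a genuine contraction, and only then do the geometric series in Step~4 converge with constants $L$, $r$ depending on $\lambda$ alone. A second, smaller gap is in (c): the uniform unstable disk at $p$ does not follow from the hypothesis on $x$ directly, since the relevant products are those along the orbit of $p$, not of $x$; you must first use (b) together with uniform continuity of $y\mapsto\|Df^{-1}|_{E^{cu}(y)}\|$ to show that $p$ is itself a quasi-hyperbolic periodic point (with a slightly worse rate), which is exactly how the paper handles this in Remark~7.3 before invoking the unstable manifold theorem.
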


\begin{remark}\label{rk.uniformshadowing}
The parameters in the previous lemma can be made uniform for diffeomorphisms in a $C^1$ neighborhood $\cU$ of $f$.
Moreover, one can take $\delta$ sufficiently small, then $d(f^i(x),f^i(p))\leq L d(x,f^n(x))$ is sufficiently small
for any $0\leq i \leq n-1$, and then
$$
\prod_{i=0}^{k-1} \|Df^{-1}\mid_{E^{cu}(f^{n-i}(p))}\|\leq \lambda^k
$$
for any $1 \leq k \leq n$. In particular, if one takes $\lambda = e^{-b_0/2}$, then the size of unstable manifold of $p$ can be chosen to be $r_1>0$, which is the constant given by Lemma~\ref{l.sizeunstable}.
\end{remark}
\begin{definition}\label{df.lambdapt}
A periodic point $p$ of $g\in \cU$ is called a \emph{$\lambda$-hyperbolic periodic point}, if it satisfies
\begin{equation}\label{eq.lambdapt}
\prod_{i=0}^{k-1} \|Df^{-1}\mid_{E^{cu}(f^{n-i}(p))}\|\leq \lambda^k
\end{equation}
for any $1 \leq k \leq \pi(p)$.
\end{definition}

By the previous discussion and Remark~\ref{r.uniformunstable}, we have shown that
\begin{lemma}\label{l.uniformsizeunstable}
For any $e^{-b_0/2}$-quasi hyperbolic periodic point, its unstable manifold contains a $r_1$-ball
inside the $cu$-fake leaf $\hat{\cF}^{cu}_{g,p}(p,r_1)$
\end{lemma}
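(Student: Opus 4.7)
The plan is to reduce Lemma~\ref{l.uniformsizeunstable} to a direct application of Lemma~\ref{l.sizeunstable} by verifying that every $e^{-b_0/2}$-quasi hyperbolic periodic point $p$ of $g\in\cU$ lies in the set $\cH(b_0/2,g)$ of Definition~\ref{df.hyperbolicpt}. Once this is established, Lemma~\ref{l.sizeunstable} applied with $x=p$ immediately produces the inclusion $\hat{\cF}^{cu}_{g,p}(p,r_1)\subset W^u_{\loc}(p)$, which is the content of the lemma.

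The first step is a reindexing observation. In Definition~\ref{df.lambdapt} one takes $n=\pi(p)$; since $g^{\pi(p)}(p)=p$, the points $g^{n-i}(p)$ for $i=0,1,\ldots,k-1$ coincide pointwise with $g^{-i}(p)$ for the same range of $i$ whenever $1\leq k\leq\pi(p)$. Taking logarithms of the defining inequality with $\lambda=e^{-b_0/2}$ then yields
$$\sum_{i=0}^{k-1}\log\|Dg^{-1}\mid_{E^{cu}_g(g^{-i}(p))}\|\leq -\tfrac{k b_0}{2}$$
for every $1\leq k\leq\pi(p)$, which is precisely the inequality \eqref{eq.sethyperbolictime} restricted to one period.

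The second step extends this estimate to every $k\geq 1$ by periodicity. Writing $k=q\pi(p)+r$ with $0\leq r<\pi(p)$, the cyclic invariance of the orbit gives the splitting
$$\sum_{i=0}^{k-1}\log\|Dg^{-1}\mid_{E^{cu}_g(g^{-i}(p))}\|=q\cdot S_{\pi(p)}+S_r,$$
where $S_j$ denotes the analogous sum with upper index $j-1$. The previous step bounds $S_{\pi(p)}\leq-\pi(p)b_0/2$ and $S_r\leq-rb_0/2$ (the latter vacuously when $r=0$), so the total is at most $-kb_0/2$. Hence $p\in\cH(b_0/2,g)$, and Lemma~\ref{l.sizeunstable} closes the argument. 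There is no real obstacle: the entire proof is a bookkeeping exercise, the only subtlety being the reconciliation between the forward-indexed condition in Definition~\ref{df.lambdapt} and the backward-indexed condition in Definition~\ref{df.hyperbolicpt}.
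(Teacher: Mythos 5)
Your proof is correct and follows the same route the paper intends but leaves implicit (the paper merely says ``by the previous discussion and Remark~\ref{r.uniformunstable}''): one checks via the periodicity of the orbit that an $e^{-b_0/2}$-quasi hyperbolic periodic point satisfies the defining inequality of $\cH(b_0/2,g)$ for all $k\geq 1$, and then invokes Lemma~\ref{l.sizeunstable}. Your explicit reindexing of Definition~\ref{df.lambdapt} (with $n=\pi(p)$) and the division $k=q\pi(p)+r$ are exactly the bookkeeping needed to make that reduction rigorous.
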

\subsection{Existence of skeleton\label{ss.skeleton}}
In this section, we will show that any $C^1$ diffeomorphism $g\in \cU$ admits a skeleton.
The main result of this section is Proposition~\ref{l.existenceskeleton}.

In order to apply Liao's shadowing lemma, we need to establish the existence of orbit segments that are quasi-hyperbolic. This follows from Proposition~\ref{p.physical} and \eqref{eq.Guexpanding}:
\begin{proposition}\label{p.aeptexpanding}
Suppose $g\in \cU$. There is a full volume subset $\Gamma_g$ such that for Lebesgue almost every point $x\in \Gamma_g$,
$$\limsup_{n\to \infty}\frac{1}{n}\sum_{i=0}^{n-1} \log\|Dg^{-1}\mid_{E^{cu}(g^{n-i}(x))}\|\leq -b_0.$$
\end{proposition}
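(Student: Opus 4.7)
The plan is to combine Proposition~\ref{p.physical} applied to $g$ with the uniform expansion estimate~\eqref{eq.Guexpanding}. Since $g\in\cU$, Proposition~\ref{p.physical} furnishes a full volume subset $\Gamma_g\subset M$ such that, for every $x\in\Gamma_g$, every weak-$*$ accumulation point $\mu$ of the empirical averages $\mu_n^x:=\tfrac{1}{n}\sum_{i=0}^{n-1}\delta_{g^i(x)}$ lies in $\G(g)\subset\G^u(g)$. By~\eqref{eq.Guexpanding}, every such $\mu$ satisfies $\int\phi\,d\mu<-b_0$, where $\phi(x):=\log\|Dg^{-1}|_{E^{cu}(x)}\|$. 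Note that $\phi$ is continuous and bounded: continuity comes from the continuous dependence of $E^{cu}$ on $x$ for the $C^1$ partially hyperbolic diffeomorphism $g$, and boundedness from compactness of $M$.

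The next step is to rewrite the sum in the conclusion so that it becomes a Birkhoff average tested against $\phi$. Substituting $j=n-i$,
$$
\frac{1}{n}\sum_{i=0}^{n-1}\log\|Dg^{-1}|_{E^{cu}(g^{n-i}(x))}\|
=\frac{1}{n}\sum_{j=1}^{n}\phi(g^j(x))
=\int\phi\,d\mu_n^x+\frac{1}{n}\bigl(\phi(g^n(x))-\phi(x)\bigr),
$$
and the error term tends to $0$ as $n\to\infty$ because $\phi$ is bounded.

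Finally, fix $x\in\Gamma_g$ and choose a subsequence $n_k\to\infty$ realizing the limsup of the left-hand side. By compactness of the space of probability measures, we may pass to a further subsequence along which $\mu_{n_k}^x\to\mu$ in the weak-$*$ topology; by the first step, $\mu\in\G^u(g)$. Weak-$*$ convergence combined with continuity of $\phi$ gives $\int\phi\,d\mu_{n_k}^x\to\int\phi\,d\mu$, and~\eqref{eq.Guexpanding} now yields
$$
\limsup_{n\to\infty}\frac{1}{n}\sum_{i=0}^{n-1}\log\|Dg^{-1}|_{E^{cu}(g^{n-i}(x))}\|
=\int\phi\,d\mu<-b_0,
$$
which is in fact sharper than the claimed $\leq -b_0$. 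There is no substantive obstacle here; the only small subtlety is justifying the passage of weak-$*$ limits through the integral, which is handled by the continuity of $E^{cu}\mapsto\log\|Dg^{-1}|_{E^{cu}}\|$.
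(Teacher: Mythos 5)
Your proof is correct and follows exactly the route the paper intends: the paper derives this proposition in one line from Proposition~\ref{p.physical} together with the uniform estimate~\eqref{eq.Guexpanding}, and your argument simply fills in the standard details (reindexing the Birkhoff sum, continuity and boundedness of $x\mapsto\log\|Dg^{-1}|_{E^{cu}(x)}\|$, and passing to a weak-$*$ convergent subsequence of empirical measures). No gaps.
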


By the Pliss Lemma (see \cite{ABV00}), there exists $n_x\geq 1$ and $\delta_1>0$ such that
\begin{equation}\label{eq.expandingorbit}
\#(H(b_03/4,x,g)\cap [1,n))\geq n\delta_1 \text{ for all $n\geq n_x$ },
\end{equation}
where $H(b_03/4,x,g)$ is the collection of $b_03/4$-hyperbolic times along the forward orbit of $x$.

Taking a sequence of integers $n_x\leq n_1< n_2 <\cdots$ such that $n_i\in H(b_03/4,x,g))$, we may assume that
$x_{n_i} = f^{n_i}(x)$ converges to a point $x_0$. For $\lambda=e^{-\frac{3b_0}{4}}$, $\rho$ and $L$ are obtained by Lemma~\ref{l.Liao}.
We may further assume that $\sup_{i,j}\{d(x_{n_i},x_{n_j})\}\leq  \rho_0 \leq \rho$ where $\rho_0$ satisfies that
for any two points $y,z\in M$ with $d(y,z)\leq L \rho_0$, we have
\begin{equation}\label{eq.close}
\mid \log\|Dg^{-1}\mid_{E^{cu}(y)}\|-\log\|Dg^{-1}\mid_{E^{cu}(z)}\| \mid \leq b_0/4.
\end{equation}

Because for any $i<j$, the pseudo orbit $\{x_{n_i},x_{n_i+1},\cdots,x_{n_j-1}\}$ is $b_03/4$-quasi hyperbolic,
by Lemma~\ref{l.Liao}, this pseudo orbit is $L d(x_{n_i},x_{n_j})\leq L \rho_0$ shadowed by a periodic orbit $p_{x,i,j}$.
Because $x_{n_i}\to x_0$ as $i\to\infty$, all the periodic points $p_{x,i,i+1}$ converge to $x_0$.

Moreover, by the choice of $\rho_0$ in \eqref{eq.close}, $p_{x,i,j}$ is a $e^{-b_0/2}$-quasi hyperbolic periodic point.
By Lemma~\ref{l.uniformsizeunstable}, each periodic point $p_{x,i,j}$ has unstable manifold with size at least $r_1$. Their stable manifold already have uniform size due to $E^s$ being uniformly contracting (note that all $p_{x,i,j}$'s have stable index $i_s$). Thus there
is $m_x$ such that for any $i,j>m_x$, $p_{x,i,i+1}$ and $p_{x,j,j+1}$ are homoclinic related to each other,
and
\begin{equation*}\label{e.2}
\cF^s_{\loc}(x_{i})\pitchfork W^u_{r_1}(p_{j,j+1})\neq \emptyset.
\end{equation*}
Furthermore, $\cF^s_{\loc}(p_i)$ will intersect transversally with any disk center at $x_j$, tangent to the $cu$ cone with radius at least $r_1$.

To simplify notation, we will write  $p_{x,i,i+1}=p_{x,i}$.

\begin{lemma}\label{l.xskeleton}
For any $i>m_x$, $x\in \Cl(\cF^s(\Orb(p_{x,i})))$.
\end{lemma}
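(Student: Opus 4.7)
The plan is to use the fact that each $n_j$ is a hyperbolic time for $x$ to pull back a transverse intersection of $\cF^s_{\loc}(p_{x,i})$ with a $cu$-disk at $x_{n_j}$, and thereby produce points of $\cF^s(\Orb(p_{x,i}))$ accumulating at $x$.

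Concretely, for each $j$ large, I would first apply Lemma~\ref{l.hyperbolictime} at the $3b_0/4$-hyperbolic time $n_j$ of $x$ to choose a $cu$-tangent $C^1$ disk $D_j$ containing $x$ whose forward image $f^{n_j}(D_j)$ is the disk of radius exactly $r_1$ centered at $x_{n_j}$. This is legitimate provided $r_1$ does not exceed the constant furnished by Lemma~\ref{l.hyperbolictime} applied with $b=3b_0/2$, a condition already baked into the choice of $r_1$ via \eqref{eq.sizeunstable} and Remark~\ref{r.uniformunstable}. Lemma~\ref{l.hyperbolictime} then yields the estimate
\[
\diam(D_j)\;\le\; 2\,e^{-n_j\cdot 3b_0/4}\,r_1,
\]
so $D_j$ shrinks to $\{x\}$ as $j\to\infty$.

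I would then invoke the property stated just before the lemma: for $j>m_x$, the local stable leaf $\cF^s_{\loc}(p_{x,i})$ intersects transversally every $cu$-tangent disk of radius $\ge r_1$ centered at $x_{n_j}$. Applied to $f^{n_j}(D_j)$, this produces a point $y_j\in \cF^s_{\loc}(p_{x,i})\cap f^{n_j}(D_j)$. Since $p_{x,i}$ is periodic, $f^{-n_j}(p_{x,i})\in\Orb(p_{x,i})$, and pulling $y_j$ back along $f^{-n_j}$ gives
\[
f^{-n_j}(y_j)\;\in\; D_j\cap\cF^s\bigl(f^{-n_j}(p_{x,i})\bigr)\;\subset\; D_j\cap \cF^s(\Orb(p_{x,i})).
\]
Because $x\in D_j$ and $\diam(D_j)\to 0$, the sequence $\{f^{-n_j}(y_j)\}_j\subset \cF^s(\Orb(p_{x,i}))$ converges to $x$, so $x\in\Cl(\cF^s(\Orb(p_{x,i})))$.

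The only real subtlety is calibrating constants so that $r_1$ is at the same time small enough for Lemma~\ref{l.hyperbolictime} to provide exponential backward contraction of a $cu$-disk of that radius, and large enough that $\cF^s_{\loc}(p_{x,i})$ meets every radius-$r_1$ $cu$-tangent disk at $x_{n_j}$ for $j>m_x$. Both constraints have already been arranged in the paragraphs preceding the lemma, so no new hyperbolic-time estimate beyond what Lemma~\ref{l.hyperbolictime} supplies is needed.
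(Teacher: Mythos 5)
Your argument is correct and follows essentially the same route as the paper: expand a small $cu$-disk through $x$ to uniform size $r_1$ at the hyperbolic times $n_j$, intersect it transversally with $\cF^s_{\loc}(p_{x,i})$ using the property established just before the lemma, and pull the intersection point back to get points of $\cF^s(\Orb(p_{x,i}))$ accumulating at $x$. The only cosmetic difference is that the paper first reduces to finding \emph{some} sufficiently large index via the homoclinic relations among the $p_{x,j}$ and the Inclination lemma, whereas you keep the index $i$ fixed and let $j$ vary, which the transversality statement for all pairs $i,j>m_x$ indeed permits.
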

\begin{proof}
Let $U$ be any small neighborhood of $x$. Because for any $i>m_x$, all the hyperbolic periodic points $p_{i}$ are
homoclinic related to each other, we only need to show that there is $i>m_x$, such that $\cF^s(\Orb(p_i))\cap U\neq \emptyset$, then the lemma will follow from the Inclination lemma.

We take $\vep>0$ small enough such that $\hat{\cF}^{cu}_{g,x}(x,\vep)\subset U$. By Lemma~\ref{l.hyperbolictime}, for
$i>m_x$ sufficiently large, $g^{i}(\hat{\cF^{cu}}_{g,x}(x,\vep))\supset \hat{\cF^{cu}}_{g,x_{n_i}}(x_{n_i},r_1)$, where the latter is a disk
tangent to a $cu$ cone with uniform diameter. This means that when $i$ is sufficiently large,
$$g^{n_i}(\hat{\cF^u}_{g,x}(x,\vep))\pitchfork \cF^s_{\loc}(p_{i})\neq \emptyset.$$
By the invariance of the stable manifold under the iteration of $g$, we have $U\cap \cF^s(\Orb(p_i))\neq \emptyset$.

The proof is complete.
\end{proof}

Now we are ready to construct the skeleton for $g\in\cU$.
By Proposition~\ref{p.aeptexpanding}, for each $x\in \Gamma_g$,
we fix any one of $p_{x,i}$ for $i>m_x$ and denote it by $p_x$. Then by the previous lemma,
the union $\bigcup_{x\in \Gamma}\cF^s(\Orb(p_x))$ is dense in the manifold $M$.

Moreover, since each periodic point $p_x$ has stable and unstable manifold with size at least $r_1$, there are
only finitely many of them that are not homoclinically related to each other, with number uniformly bounded from above. Take $\{p_1,\cdots,p_k\}$ a subset
of $\{p_x\}_{x\in \Gamma}$ which are not homoclinic related and has maximal cardinality.

We claim that
$\bigcup_{i=1,\cdots,k}\cF^s(\Orb(p_i))$ is dense in the manifold $M$. Assume that this is not the case, then we can take $p_x$ for $x\in M\setminus \bigcup_{i=1,\cdots,k}\Cl(\cF^s(\Orb(p_i)))$. By the choice of $\{p_1,\cdots,p_k\}$, $p_x$ must be homoclinically related to some  $p_i$. However, this means that $\Cl(\cF^s(\Orb(p_i)))=\Cl(\cF^s(\Orb(p_x)))$ by the Inclination lemma. Lemma~\ref{l.xskeleton} then shows that $x\in\Cl(\cF^s(\Orb(p_i))) $, which is a contradiction.

Thus
$\{p_1,\cdots,p_k\}$ forms a pre-skeleton. By Lemma~\ref{l.subsetofpre}, we have shown that:

\begin{proposition}\label{l.existenceskeleton}
Every $C^1$ diffeomorphism $g\in\cU$ admits a skeleton $\cS(g)=\{p_1,\cdots,p_k\}$,
such that for any $1\leq i \leq k$, $W^u(p_i)$ contains a ball in the fake $cu$ leaf with center at $p_i$ and radius $r_1$.
\end{proposition}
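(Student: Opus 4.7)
\medskip

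\noindent\textbf{Proof proposal.} The plan is to build a pre-skeleton from hyperbolic periodic points produced by Liao's shadowing lemma, applied to quasi-hyperbolic orbit segments along Lebesgue-typical forward orbits, and then to extract a skeleton via Lemma~\ref{l.subsetofpre}.

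First I would fix $g\in\cU$ and use Proposition~\ref{p.aeptexpanding} with the Pliss Lemma to obtain a full volume set $\Gamma_g$ such that each $x\in\Gamma_g$ carries a set $H(3b_0/4,x,g)$ of $3b_0/4$-hyperbolic times of positive density. For such an $x$, I would extract a subsequence $n_1<n_2<\cdots$ in $H(3b_0/4,x,g)$ with $x_{n_i}:=g^{n_i}(x)$ converging to some $x_0$ and with all mutual distances $d(x_{n_i},x_{n_j})$ bounded by a small constant $\rho_0\le\rho$, where $\rho$ is provided by Lemma~\ref{l.Liao} for $\lambda=e^{-3b_0/4}$, and $\rho_0$ is additionally tightened so that \eqref{eq.close} holds. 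For any $i<j$ the segment $(x_{n_i},\dots,x_{n_j-1})$ is then an $e^{-3b_0/4}$-quasi hyperbolic pseudo orbit with nearby endpoints, so Liao's shadowing lemma yields a hyperbolic periodic point $p_{x,i,j}$ of stable index $i_s$ that shadows it.

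The next step is to upgrade each $p_{x,i,j}$ to an $e^{-b_0/2}$-quasi hyperbolic periodic point using \eqref{eq.close}, so that Lemma~\ref{l.uniformsizeunstable} equips it with an unstable disk of radius $r_1$ inside the fake $cu$-leaf at $p_{x,i,j}$. Since $p_{x,i,i+1}\to x_0$, for all sufficiently large indices the points $p_{x,i}:=p_{x,i,i+1}$ are mutually homoclinically related, and Lemma~\ref{l.xskeleton} then gives $x\in\Cl(\cF^s(\Orb(p_{x,i})))$. Choosing one representative $p_x$ for each $x\in\Gamma_g$, the family $\{p_x\}_{x\in\Gamma_g}$ therefore satisfies $\bigcup_{x\in\Gamma_g}\Cl(\cF^s(\Orb(p_x)))=M$.

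Finally, I would exploit the uniform size of both stable and unstable local manifolds of every $p_x$ (the stable size being uniform by uniform contraction on $E^s$, and the unstable size equal to $r_1$ by the previous paragraph) to conclude that only finitely many members of $\{p_x\}$ can be pairwise non-homoclinically related. Taking a maximal subfamily $\{p_1,\dots,p_k\}$ of pairwise non-homoclinically related points, maximality together with the Inclination lemma gives $\Cl(\cF^s(\Orb(p_x)))=\Cl(\cF^s(\Orb(p_i)))$ for every $p_x$ and some $i$, so $\bigcup_{i=1}^k \Cl(\cF^s(\Orb(p_i)))=M$ and $\{p_1,\dots,p_k\}$ is a pre-skeleton. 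Lemma~\ref{l.subsetofpre} then extracts a skeleton from it, and each of its elements retains the $r_1$-sized unstable disk. The hardest book-keeping step is the passage from $3b_0/4$-hyperbolicity of the pseudo orbit to $e^{-b_0/2}$-hyperbolicity of the shadowing periodic orbit, since only the latter rate guarantees the uniform $r_1$-sized unstable disk demanded by the statement; the remaining steps are standard consequences of the Pliss and Inclination lemmas together with Liao's shadowing lemma.
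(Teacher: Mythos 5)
Your proposal is correct and follows essentially the same route as the paper: Pliss times along Lebesgue-typical orbits, Liao shadowing of the resulting quasi-hyperbolic segments, the upgrade via \eqref{eq.close} to $e^{-b_0/2}$-quasi hyperbolicity to secure the $r_1$-sized unstable disks, finiteness of pairwise non-homoclinically-related representatives from the uniform sizes of invariant manifolds, and extraction of a skeleton via Lemma~\ref{l.subsetofpre}. No substantive differences from the paper's argument.
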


From now on, we fix $\cS(g)=\{p_1,\cdots,p_k\}$ a skeleton obtained as above.



\subsection{Skeleton and measures\label{ss.skeletonandmeasures}}
In this section we assume $g\in \cU$ to be $C^{1+}$, then by Lemma~\ref{l.robustGexpand}, $g$ has mostly expanding center. We will establish a one-to-one correspondence between elements of $\cS(g)$ and the physical measures of $g$.

By Proposition~\ref{p.existencephysical}, $g$ has only finitely many physical measures $\{\mu_1,\cdots,\mu_l\}$. Moreover, from
Lemma~\ref{l.Pliss} and Proposition~\ref{p.existencephysical}, there is $r_1>0$ only depending on $\cU$ and $b_0$ such that,
for any physical measure $\mu_j$ of $g$, there is a $\mu_j$ regular point $x_j$, such that:
\begin{itemize}
\item[(a)] $x_j\in \cH(b_0/2,g)$, thus has Pesin unstable manifold with size larger than $r_1$;
\item[(b)] $\mu$ regular points consists of Lebesgue almost every point on the Pesin unstable manifold of $x_j$.
\end{itemize}
The main result of this section is the following:

\begin{proposition}\label{l.support}
The number of physical measures and the number of elements of skeleton of $g$ are the same, i.e., $k=l$.
Indeed, there is a bijective map: $j\to i(j)$ such that for any physical measure $\mu_j$ of $g$,
there is $p_{i(j)}\in \cS(g)$ such that $\supp(\mu_j)= \Cl(W^u(\Orb(p_i),g))$, and Lebesgue almost every
point on $W^u(\Orb(p_i),g)$ belongs to the basin of $\mu_j$. Moreover, the closure of $\cF^s(\Orb(p_i))$ coincides with the
closure of $\cB(\mu_j)$.
\end{proposition}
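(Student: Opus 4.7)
The plan is to assign to each physical measure $\mu_j$ a skeleton element $p_{i(j)} \in \cS(g)$ by running the Liao--shadowing construction of Section~\ref{ss.skeleton} at the $\mu_j$-regular point $x_j$ of the setup. The construction outputs, for all large $n$, hyperbolic periodic points $p_{x_j,n}$ with stable and unstable manifolds of size at least $r_1$; by this uniform size they are pairwise homoclinically related, and by Proposition~\ref{l.existenceskeleton} together with Lemma~\ref{l.skeleton}(2) they are homoclinically related to exactly one element of $\cS(g)$, which I designate $p_{i(j)}$. Lemma~\ref{l.xskeleton} then gives $x_j \in \Cl(\cF^s(\Orb(p_{i(j)})))$, and continuity of the stable foliation produces a transverse intersection $z \in W^u_{\loc}(x_j) \pitchfork \cF^s(\Orb(p_{i(j)}))$ which will anchor the subsequent arguments.

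For the support identity I use $z$: the Inclination lemma applied to a small disk $D \subset W^u_{\loc}(x_j)$ about $z$ shows that, along the appropriate return times, $g^n(D)$ converges in $C^1$ to $W^u_{\loc}(p_{i(j)}^{(k_n)})$, so $W^u(\Orb(p_{i(j)})) \subset \Cl(W^u(x_j)) \subset \supp(\mu_j)$ by $u$-saturation of the support. For the reverse inclusion, at any $\mu_j$-typical point $y$ the shadowing construction produces periodic points $p_{y,n}$ which, along a recurrence subsequence $g^{n_i}(y) \to y$, satisfy $p_{y,n_i} \to y$ by the Liao estimate $d(y, p_{y,n_i}) \le L\,d(y, g^{n_i}(y))$. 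These $p_{y,n_i}$ are homoclinically related to a single element of $\cS(g)$, and $g$-invariance together with ergodicity of $\mu_j$ forces this element to be $p_{i(j)}$ for $\mu_j$-a.e.\ $y$. Hence $y \in H(p_{i(j)},g) = \Cl(W^u(\Orb(p_{i(j)})))$ by Proposition~\ref{p.skeletongeometry}(i), proving $\supp(\mu_j) = \Cl(W^u(\Orb(p_{i(j)})))$. The Lebesgue-a.e.\ basin statement on $W^u(\Orb(p_{i(j)}))$ then follows from the $C^1$ accumulation above: absolute continuity of the stable holonomy (afforded by $C^{1+}$ regularity) transports the full-Lebesgue basin property from $W^u(x_j)$ onto $W^u_{\loc}(p_{i(j)})$, and iteration spreads it to the entire orbit.

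For the stable-closure identity, $s$-saturation of $\cB(\mu_j)$ combined with the Lebesgue-a.e.\ basin statement shows that $\cB(\mu_j)$ contains a Lebesgue-full subset of $\cO_{i(j)}$, so Proposition~\ref{p.skeletongeometry}(ii) gives $\Cl(\cF^s(\Orb(p_{i(j)}))) \subset \Cl(\cB(\mu_j))$; conversely any $y \in \cB(\mu_j)$ has forward orbit accumulating on $p_{i(j)} \in \cF^s(\Orb(p_{i(j)}))$, so some $g^n(y)$ lies in $\Cl(\cF^s(\Orb(p_{i(j)})))$ and $g$-invariance of this closed set returns $y$ to it. Injectivity of $j \mapsto i(j)$ now follows from Proposition~\ref{p.existencephysical}: if $i(j) = i(j')$ for $j \ne j'$, the basin closures would coincide and both basins would Lebesgue-cover $\cO_{i(j)}$, contradicting the Lebesgue-disjointness of distinct physical basins. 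Surjectivity is the same assignment run in reverse: each $p_i \in \cS(g)$ has $W^u_{\loc}(p_i)$ of size $r_1$ by Proposition~\ref{l.existenceskeleton}, and by Proposition~\ref{p.existencephysical} Lebesgue-a.e.\ point on this disk lies in $\cB(\mu_j)$ for some $j$ whose skeleton assignment is precisely $p_i$. The step I expect to be the main technical obstacle is the ergodic stabilization used in the reverse support inclusion---verifying that for $\mu_j$-a.e.\ $y$ the shadowing periodic points really are homoclinically related to the same $p_{i(j)}$ as at $x_j$---since this requires the uniform Liao shadowing constants of Section~\ref{ss.skeleton} together with a careful use of Lemma~\ref{l.skeleton}(2) to exclude any other skeleton element from being labeled.
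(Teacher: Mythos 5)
Your overall architecture (assigning $p_{i(j)}$ via a transverse intersection of $W^u_{r_1}(x_j,g)$ with a stable manifold of the skeleton, the Inclination lemma for $\Cl(W^u(\Orb(p_{i(j)})))\subset\supp(\mu_j)$, absolute continuity of the stable holonomy for the Lebesgue--a.e.\ basin statement, and the two inclusions for $\Cl(\cF^s(\Orb(p_{i(j)})))=\Cl(\cB(\mu_j))$) matches the paper. But your surjectivity step has a genuine gap. You claim that ``by Proposition~\ref{p.existencephysical} Lebesgue-a.e.\ point on this disk lies in $\cB(\mu_j)$ for some $j$.'' Proposition~\ref{p.existencephysical} asserts that the union of the basins has full \emph{volume in $M$}; the disk $W^u_{r_1}(p_i)$ is a zero-volume $cu$-submanifold, so that proposition says nothing about leafwise Lebesgue measure on it. (A leafwise statement of this kind is not free: it would require showing that leafwise-a.e.\ empirical limit from a $cu$-disk is an \emph{extreme} point of $\G(g)$, which is essentially the content of the whole Section~\ref{s.9} machinery.) The paper closes the count differently, and you already have the needed ingredient: since the union of the basins is dense in $M$ and $\Cl(\cB(\mu_j))=\Cl(\cF^s(\Orb(p_{i(j)})))$ for each $j$, the set $\{p_{i(1)},\dots,p_{i(l)}\}$ is a pre-skeleton; being a subset of the skeleton $\cS(g)$, minimality in Definition~\ref{d.1}(b) forces $l=k$. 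You should replace your surjectivity argument with this one.

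A secondary remark: for the reverse inclusion $\supp(\mu_j)\subset\Cl(W^u(\Orb(p_{i(j)})))$ you invoke Liao shadowing at $\mu_j$-a.e.\ point plus an ``ergodic stabilization'' to identify the label, and you correctly flag that identification as unproven (it also hides a subtlety: the shadowing point is close to the \emph{start} of the quasi-hyperbolic segment, so $p_{y,n_i}\to y$ needs a Poincar\'e-recurrence argument inside $\cH(b_0/2,g)$, and the a.e.\ constancy of the label needs an invariance/measurability argument). This route can probably be completed, e.g.\ by ruling out a wrong label via the no-heteroclinic-intersection property of Lemma~\ref{l.skeleton}(2), but it is far heavier than necessary. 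The paper's argument is one line: pick any $y\in W^u(\Orb(p_{i(j)}))\cap\cB(\mu_j)$ (nonempty by the Lebesgue-a.e.\ basin statement, which you derive independently of this inclusion); since $g^n(y)\in W^u(\Orb(p_{i(j)}))$ for all $n\ge0$, the empirical measures of $y$, hence their limit $\mu_j$, are supported on the closed forward-invariant set $\Cl(W^u(\Orb(p_{i(j)})))$.
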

\begin{proof}
Fix any physical measure $\mu_j$ of $g$. By (a) above, there is $p_i\in \cS(g)$ such that
$\cF^s(\Orb(p_i))\pitchfork W^u_{r_1}(x_j,g)\neq \emptyset$. By the Inclination lemma, $g^n(W^u_{r_1}(x_j,g))$ converges
to $W^u(\Orb(p_i),g)$. Because $W^u_{r_1}(x,g)\subset \supp(\mu_j)$ by (b) above, we have $\Cl(W^u(\Orb(p_i),g))\subset \supp(\mu_j)$.

To show the reversed inclusion, note that for $n$ large enough, by the Inclination lemma, $g^n(W^u_{r_1}(x,g))$ approaches $W^u_{\loc}(p)$ in the following sense:
there is stable holonomy map from $W^u_{\loc}(p)$ to $g^n(W^u_{r_1}(x,g))$ induced by the stable foliation.
Because the set of $\mu_j$ typical points is invariant under iteration, Lebesgue almost every point of $g^n(W^u_{r_1}(x,g))$
is also typical for $\mu_j$. Since stable foliation is absolutely continuous, and the basin of $\mu_j$ is $s$-saturated,
it follows that  Lebesgue
almost every point of $W^u(\Orb(p_i),g)$ belongs to the basin of $\mu_j$.

Take any point $y\in W^u(p_i)\cap \cB(\mu_j)$. Because $g^n(y)\in W^u(\Orb(p_i))$ for any $n\geq 0$, thus $\mu_j=\lim \frac{1}{n}\sum \delta_{g^i(y)}$ is supported on $\Cl(W^u(\Orb(p_i),g))$.
As a conclusion,
\begin{equation}\label{eq.unstablesupport}
\supp(\mu_j)=\Cl(W^u(\Orb(p_i),g)).
\end{equation}

Because Lebesgue almost every point on $W^u(\Orb(p_i),g)$ belongs to the basin of $\mu_j$, the map
$j\to i(j)$ is injective; in particular, we have $k\geq l$. After reordering the periodic points of $\cS(g)$, we may assume that $i(j)=j$ for $j=1,\cdots, l$.

In order to prove $k=l$, we only need to show that $\{p_1,\cdots,p_l\}$ is a pre-skeleton, i.e.,
$\bigcup_{i=1}^l \cF^s(\Orb(p_i))$ is dense in the manifold $M$.
By Proposition~\ref{p.existencephysical}, the union of basins of physical measures has full volume, thus it suffices
to prove that  for each $1\leq i \leq l$, the closure of $\cF^s(\Orb(p_i))$ coincides with the
closure of $\cB(\mu_i)$.

By \eqref{eq.unstablesupport} we have $p_i\in \supp(\mu_i)$. Take $r>0$ sufficiently small such that $\mu_i(B_r(p_i))>0$ and
$B_r(p_i)\subset \cO_i=\bigcup_{y\in W^u(\Orb(p_i),g)}\cF^s(y)$. For any $x\in \cB(\mu_i)$, since we have
$\frac{1}{n}\sum_{i=0}^{n-1}\delta_{f^i(x)}\to \mu_i$, there is $n$ sufficiently large such that $\frac{1}{n}\sum_{i=0}^{n-1}\delta_{f^i(x)}(B_r(p_i))>0$. This shows that there is $m>0$
such that $f^m(x)\in B_r(p_i)\subset \cO_i$. By (ii) of Proposition~\ref{p.skeletongeometry}, $f^m(x)$ is accumulated by $\cF^s(\Orb(p_i))$, so is $x$. Thus we have shown that the basin of $\mu_i$ is contained in the closure of $\cF^s(\Orb(p_i))$, while the reversed inclusion follows immediately from the $u$-saturation of $\supp(\mu_i)$. We now conclude that $k=l$.

The proof is complete.
\end{proof}
\begin{proof}[Proof of Theorem~\ref{main.sksleton}]
By Proposition~\ref{l.existenceskeleton}, $f$ admits an index $i_s$ skeleton. Let
$\cS=\{p_1,\cdots,p_k\}$ be any index $i_s$ skeleton of $f$. By Proposition~\ref{l.support},
the number of physical measures is precisely $k=\#\cS$,
and for each $p_i\in \cS$ there exists a distinct physical measure $\mu_i$ such that
\begin{itemize}
\item[(1)] the closure of $W^u(Orb(p_i))$ coincides with $\supp(\mu_i)$ and by (ii) of Proposition~\ref{p.skeletongeometry}, they also coincide with the homoclinic class of the orbit $\Orb(p_i)$.
\item[(2)] the closure of $\cF^s(\Orb(p_i))$ coincides with the closure of the basin
of the measure $\mu_i$.
\end{itemize}
Moreover, by (ii) of Proposition~\ref{p.skeletongeometry},
$$\Int(\Cl(\cB(\mu_i)))\cap \Int(\Cl(\cB(\mu_j)))= \emptyset$$
for $1 \leq i \neq j \leq k$.
The proof is finished.
\end{proof}
\subsection{Proof of Corollary~\ref{maincor.period}\label{ss.proofAB}}
We finish this section with the proof of Corollary~\ref{maincor.period}.

\begin{proof}[Proof of Corollary~\ref{maincor.period}]
Let $f$ be $C^{1+}$. For any $n>0$, and $\nu$ an ergodic Gibbs $u$-state of $f^n$, by Lemma~\ref{l.power},
$\mu_n=\frac{1}{n}\sum_{i=0}^{n-1}f^i_*(\nu)$ is an invariant Gibbs $u$-state of $f$. It is easy to see that for
$\nu$ typical point $x$, its center exponents with  respect to $f^n$ are $n$ times of the corresponding exponents respect to $f$.
In particular, the center exponents of every Gibbs $u$-state of $f^n$ are positive. Thus
$f^n$ has mostly expanding center as well.

Because $\{p_1,\cdots,p_k\}$ is an index $i_s$ skeleton of $f$, $\bigcup_i \bigcup_{q\in \Orb(p_i)} \cF^s(q)$
is dense in the manifold $M$, which means that $\cS = \{q\in\Orb(p_i),i=1,\ldots,k\}$ is a pre-skeleton of $f^n$ for every $n\ge 0$. By Lemma~\ref{l.subsetofpre}, it has a subset which is a skeleton of $f^n$.
It follows from Theorem~\ref{main.sksleton} that $f^n$ has finitely many physical measures
with number bounded by $P=\prod_{i=1}^k \pi(p_i) = \# S$.

Moreover, because elements of $\cS$ are all distinct fixed points of $f^{nP}$ for any $n>0$, it is a skeleton for $f^{nP}$, $n>0$. Then by Theorem~\ref{main.sksleton}, $f^{nP}$ have the same number of
physical measures for every $n>0$. Let $\mu$ be a physical measure of $f^P$. By Proposition~\ref{p.existencephysical}, $\mu$ is ergodic,
and its conditional measures along the Pesin unstable manifolds are equivalent to the Lebesgue measure on the leaves. Below we will show that $\mu$ is ergodic for $f^{nP}$ for all $n>0$.

To this end, let $\tilde{\mu}$ be any ergodic component of $\mu$ with respect to $f^{nP}$, then the conditional measures  of $\tilde{\mu}$ along the Pesin
unstable manifolds are still equivalent to the Lebesgue measure on the leaves. It then follows from the argument of
Proposition~\ref{p.existencephysical} that $\tilde{\mu}$ is a physical measure of $f^{nP}$. Since the number of physical measures of
$f^{nP}$ are constant, $\tilde{\mu}$ must be the only ergodic component of $\mu$ with respect to $f^{nP}$. It then follows that $\mu = \tilde{\mu}$ which is ergodic for $f^{nP}$.

Then, by the classical work of Ornstein and Weiss \cite{OW}, every physical measure of $f^P$ is a Bernoulli measure.
\end{proof}

\section{Proof of Theorem~\ref{main.robustskeleton}\label{s.robustskeleton}}
In this section, we study the robustness of the skeleton and physical measures under $C^1$ topology among $C^{1+}$ diffeomorphisms and prove Theorem~\ref{main.robustskeleton}

For this purpose, we assume that $f:M\to M$ is a $C^{1+}$ partially hyperbolic diffeomorphism with mostly expanding center,
and $\cU$ a $C^1$ neighborhood of $f$ satisfying Lemma~\ref{l.robustGexpand} and Proposition~\ref{p.existencephysical}.
Let $b_0$ be given in Lemma~\ref{l.robustGexpand} and $r_1$ be given by Proposition~\ref{p.existencephysical}.
We take $\cS(f)=\{p_1,\cdots, p_k\}$ a skeleton of $f$. Since $\bigcup_{i=1}^k\cF^s(\Orb(p_i(f)),f)$ is dense in the manifold $M$, by the continuity of stable
foliation with respect to diffeomorphisms in $C^1$ topology, we may assume that $\cU$ is sufficiently small such that for any $C^1$ diffeomorphism $g\in \cU$, the continuation of $\cS(f)$ given by the continuation of hyperbolic saddles:
$\cS(g) = \{p_i(g),\cdots, p_k(g)\}$ satisfies that $\bigcup_{i=1}^k\cF^s(\Orb(p_i(g)),g)$ is $r_1$ dense, i.e., for any $x\in \cH(b_0/2,g)$,
$$\bigcup_{i=1}^k\cF^s(\Orb(p_i(g)),g)\pitchfork W^u_{r_1}(x,g)\neq \emptyset,$$
where $W^u_{r_1}(x,g)$ is given by Lemma~\ref{l.sizeunstable}.

Note that $\cS(g)$ may not be a skeleton. In the following, we will show the relation
between skeletons of diffeomorphisms in $\cU$. For the discussion in the next section, we will state the following lemma for $C^1$ diffeomorphisms in $\cU$.

\begin{lemma}\label{l.robustskeleton}
For $C^1$ diffeomorphisms in $\cU$, the number of elements of skeleton varies upper semi-continuously. More precisely, for $g\in \cU$:
\begin{itemize}
\item[(1)] $\cS(g) = \{p_1(g),\cdots, p_k(g)\}$ is a pre-skeleton of $g$, thus it contains a subset which is a skeleton of $g$;
\item[(2)] suppose that $\{q_1(g),\cdots, q_l(g)\}$ is a skeleton of $g$, then there is a $C^1$ neighborhood $\cV$ of $g$ such that for
any $h\in \cV$, $\{q_1(h),\cdots, q_l(h)\}$ is a pre-skeleton of $h$.
\end{itemize}
\end{lemma}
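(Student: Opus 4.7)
For part (1), the plan is to show that for every nonempty open set $V\subset M$ and every Lebesgue-typical $x_0\in V\cap\Gamma_g$, with $\Gamma_g$ the full-volume set of Proposition~\ref{p.aeptexpanding}, some $p_i(g)\in\cS(g)$ has $\cF^s(\Orb(p_i(g)),g)$ accumulating on $x_0$. By Pliss applied to the bound of Proposition~\ref{p.aeptexpanding}, the forward orbit of $x_0$ visits a positive-density set of $b_0/2$-hyperbolic times $n_j$. At each $n_j$, Lemma~\ref{l.hyperbolictime} together with Remark~\ref{r.uniformunstable} furnishes a $cu$-disk $D_{n_j}$ of radius $r_1$ centered at $g^{n_j}(x_0)$ whose pullback by $g^{-k}$ contracts by $e^{-kb_0/2}$ for $1\le k\le n_j$. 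Extract a subsequential $C^1$-limit $D_{n_{j_k}}\to D_\infty$ centered at $x_\infty=\lim g^{n_{j_k}}(x_0)$; by a diagonal argument the limit disk has uniform backward contraction at every depth, so $x_\infty\in\cH(b_0/2,g)$ and $D_\infty\subset W^u_{r_1}(x_\infty,g)$.

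The Section~\ref{s.robustskeleton} density hypothesis then supplies a transverse intersection $y_\infty\in D_\infty\pitchfork\cF^s(\Orb(p_i(g)),g)$ for some $i$. Since $y_\infty$ lies on a global stable leaf through an orbit point, iterating forward finitely many steps $N$ brings the intersection into a compact stable plaque of uniform radius through $\Orb(p_i(g))$. Transversality at $g^N(y_\infty)$ and $C^1$-convergence $g^N(D_{n_{j_k}})\to g^N(D_\infty)$ produce, for large $k$ and after pigeonholing $i$ to be constant, transverse intersection points $z_{j_k}\in D_{n_{j_k}}\cap\cF^s(\Orb(p_i(g)),g)$ close to $y_\infty$. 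Pulling back by $g^{-n_{j_k}}$ keeps these points on $\cF^s(\Orb(p_i(g)),g)$ by invariance, while Lemma~\ref{l.hyperbolictime} shrinks their distance to $x_0$ to $e^{-n_{j_k}b_0/2}r_1\to 0$. Hence $V\cap\cF^s(\Orb(p_i(g)),g)\ne\emptyset$, so $\cS(g)$ is a pre-skeleton of $g$ and Lemma~\ref{l.subsetofpre} extracts a skeleton.

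For part (2), the plan is to verify the analogue of the Section~\ref{s.robustskeleton} $r_1$-transversal-density hypothesis for $\{q_i(h)\}$ whenever $h$ lies in a sufficiently small $C^1$ neighborhood of $g$, then apply the part (1) scheme with this set as reference. Since $\{q_i(g)\}$ is a skeleton of $g$, $\bigcup_i\cF^s(\Orb(q_i(g)),g)$ is dense in $M$; combined with the uniform lower bound on local stable leaves and the uniform transversality between $E^s$ and the $cu$-cone, this gives, for every $x\in\cH(b_0/2,g)$, a local stable leaf of $\cF^s(\Orb(q_i(g)),g)$ within $r_1/2$ of $x$, hence a transverse intersection with $W^u_{r_1}(x,g)$. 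The relevant ingredients — the saddles $q_i(g)$ and their continuations $q_i(h)$, the local stable manifolds, the compact stable segments $\bigcup_{n\le N}g^{-n}(\cF^s_{\loc}(\Orb(q_i(g)),g))$ for any fixed $N$, the $cu$-cone field, the fake foliations, and the disks $W^u_{r_1}(\cdot,\cdot)$ over $\cH(b_0/2,\cdot)$ — all depend $C^0$-continuously on the diffeomorphism in the $C^1$ topology. Shrinking $\cU$ around $g$ to a $\cV$ in which these continuous dependencies stay within the slack built above preserves the $r_1$-transversal density for $\{q_i(h)\}$ for all $h\in\cV$, and the part (1) argument applied to $h$ with reference $\{q_i(h)\}$ then yields the pre-skeleton property.

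The main obstacle is the upgrade in part (2) from topological density of $\bigcup_i\cF^s(\Orb(q_i(g)),g)$ to the uniform transverse $r_1$-density required by the part (1) scheme, with enough quantitative slack to survive the $C^1$ perturbation $g\to h$. This is controlled by compactness of $M$, the uniformity of the partially hyperbolic structure, and the $C^0$-continuous dependence of local stable manifolds of hyperbolic saddles and of the stable foliation on the diffeomorphism; once these ingredients are set up, shrinking $\cV$ about $g$ is a routine continuity argument.
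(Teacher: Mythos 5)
Your proof is correct, and it rests on the same two pillars as the paper's: the $r_1$-transversal density of $\bigcup_i\cF^s(\Orb(p_i(g)),g)$ built into the choice of $\cU$ at the start of Section~\ref{s.robustskeleton}, and the existence of unstable disks of uniform size $r_1$ at suitable anchor points. The difference lies in how the anchor points are produced. The paper's proof of (1) is a two-step shortcut: it invokes Proposition~\ref{l.existenceskeleton} to obtain a skeleton $\{q_j(g)\}$ of $g$ whose elements already carry $r_1$-sized unstable manifolds, intersects each $W^u_{r_1}(q_j(g),g)$ with some $\cF^s(\Orb(p_i(g)),g)$ via the density hypothesis, and then transfers the density of $\bigcup_j\cF^s(\Orb(q_j(g)),g)$ to $\bigcup_i\cF^s(\Orb(p_i(g)),g)$ by the Inclination lemma. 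You instead rerun the hyperbolic-times/pullback argument at Lebesgue-a.e.\ point, which in effect reproves Lemma~\ref{l.xskeleton} with the $p_i(g)$ playing the role of the shadowing periodic orbits; this is valid but duplicates work already done in Section~\ref{ss.skeleton}. Part (2) is handled identically in both proofs (shrink $\cV$ so that the continuations $\{q_i(h)\}$ remain $r_1$-transversally dense, then repeat the part (1) argument for $h$); your paragraph justifying the persistence of that density under $C^1$ perturbation spells out what the paper leaves implicit. One small caution: the ``$C^1$-limit'' $D_{n_{j_k}}\to D_\infty$ is stronger than what is freely available for a $C^1$ diffeomorphism; a $C^0$ limit of uniformly Lipschitz graphs tangent to the $cu$-cone already suffices for the transversality argument, and in fact the limit disk can be avoided altogether by intersecting the local stable plaques directly with the disks $D_{n_{j_k}}$ themselves, as the paper does in Lemma~\ref{l.xskeleton}.
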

\begin{proof}
By Proposition~\ref{l.existenceskeleton}, $g$ admits a skeleton $\{q_1(g),\cdots,q_l(g)\}$
and each $q_j(g)$ ($j=1,\cdots,l$) has unstable manifold with size $r_1$. Then by the previous assumption on $\cU$, for
every $1\leq j \leq l$, there is an $1\leq i \leq k$ such that $\cF^s(\Orb(p_i(g)),g)\pitchfork W^u_{r_1}(q_j(g),g)\neq \emptyset$. Thus, by the Inclination lemma,
$\cF^s(\Orb(q_j(g)),g)$ is accumulated by $\cF^s(\Orb(p_i(g)),g)$, which implies that
$\cup_{i=1}^k \cF^s(\Orb(p_i(g)))$ is dense in the manifold $M$. This finishes the proof of (1).

The proof of (2) is quite similar. Take $\cV$ sufficiently small such that for any $C^1$ diffeomorphism $h\in \cV$,
the continuation $\{q_1(h),\cdots, q_l(h)\}$ satisfies the condition that $\cup_{i=1}^k\cF^s(\Orb(q_i(h)),h)$ is $r_1$ dense.
By Proposition~\ref{l.existenceskeleton}, every $h\in \cV\subset \cU$ admits a skeleton $\{q^\prime_1(h),\cdots,q^\prime_t(h)\}$.
has unstable manifold with size $r_1$. Then for every $1\leq j \leq t$, there is an $1\leq i \leq l$ such that $\cF^s(\Orb(q_i(h)),h)\pitchfork W^u_{r_1}(q^\prime_j(h),h)\neq \emptyset$. Thus, by the Inclination lemma, $\cF^s(\Orb(q^\prime_j(h)),h)$ is accumulated by $\cF^s(\Orb(q_i(h)),h)$, which implies that $\cup_{i=1}^k \cF^s(\Orb(q_i(h)),h)$ is dense in $M$. This finishes the proof of (2).
\end{proof}
Thus, by Lemma~\ref{l.skeletondifferent}, the number of elements of the skeleton of $g$ is bounded from above by $k=\#\cS(f)$.
It follows that, restricted to an $C^1$ open
and dense subset $\cU^\circ\subset \cU$, the number of elements of a skeleton for diffeomorphisms of $\cU^\circ$
is locally constant. More precisely, for any $1\leq i \leq k$, denote by
$$\cU_i=\{g\in \cU; \text{skeleton of $g$ has less than $i$ number of elements.}\}$$
Then $\cU_i$ is an open set, and $\cU^\circ =\cU_1 \bigcup_{2\leq i\leq k} (\cU_i \setminus \Cl(\cU_{i-1}))$ satisfies
our requirement.

By Theorem~\ref{main.sksleton}, the number of physical measures for $C^{1+}$ diffeomorphisms in $\cU^\circ$
is locally constant.

Suppose $f_n\in \cU^\circ$ be a sequence of $C^{1+}$ diffeomorphisms such that $f_n\to f_0\in \cU^\circ$.
We assume that all $f_n$ have $m\le k$ physical measures. By the previous argument, all the diffeomorphisms $f_n$ and $f_0$
have the same number of elements in their skeletons. In particular, by Lemma~\ref{l.robustskeleton}, we may take a skeleton $\cS(f_0)=\{p_1(f_0),\cdots,p_m(f_0)\}$ of $f_0$ such that its continuation
$\cS(f_n)=\{p_1(f_n),\cdots,p_m(f_n)\}$ is a skeleton of $f_n$. For $f_n$ ($n\geq 0$), denote by $\mu_{n,1},\cdots,\mu_{n,m}$ the physical
measures of $f_n$ associated with the periodic point $p_j(f_n)$ as explained in Theorem~\ref{main.sksleton}. In the following
we are going to show that:
\begin{lemma}\label{l.weak*}
$\mu_{n,i}\overset{weak *}{\longrightarrow} \mu_{0,i}$.
\end{lemma}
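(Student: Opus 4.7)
\textit{Plan.} Take any weak-$*$ subsequential limit $\mu^*$ of $\mu_{n,i}$; by compactness of the probability simplex it suffices to show that every such $\mu^*$ equals $\mu_{0,i}$. The strategy is to combine two ingredients: (a) the simplex structure of $\G(f_0)$ provided by the previous section, which forces $\mu^*$ to be a convex combination of $\mu_{0,1},\dots,\mu_{0,m}$, and (b) the disjointness of the open sets $\cO_j(g)=\bigcup_{y\in W^u(\Orb(p_j(g)),g)}\cF^s(y,g)$ of Remark~\ref{rk.realbasinlocation}, which will force exactly one weight to be nonzero.

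First I would verify $\mu^*\in\G(f_0)$. Membership in $\G^u(f_0)$ is immediate from Proposition~\ref{p.Gu}, while membership in $\G^{cu}(f_0)$ follows by passing to the limit in
\[
h_{\mu_{n,i}}(f_n)\;\geq\;\int \log \det\bigl(Df_n|_{E^{cu}}\bigr)\,d\mu_{n,i},
\]
using continuity of the integrand in $(f_n,\mu_{n,i})$ on the right and Corollary~\ref{c.uppersemicontinuous} on the left (this is precisely where the setup of Section~\ref{s.5} pays off, since the $\mu_{n,i}$ lie in $\G^u$). Because $f_0$ is $C^{1+}$ and in $\cU^\circ$, Proposition~\ref{p.existencephysical} identifies the extreme points of $\G(f_0)$ with the $m$ physical measures $\mu_{0,1},\dots,\mu_{0,m}$; the ergodic decomposition argument of Lemma~\ref{l.ergodicdecompo} shows $\G(f_0)$ is closed under ergodic decomposition, so one obtains $\mu^*=\sum_{j=1}^m a_j\mu_{0,j}$ with $a_j\geq 0$ and $\sum a_j=1$.

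Next I would pin down the weights. For each $n$ and each $j\neq i$, ergodicity of $\mu_{n,i}$ together with $\cB(\mu_{n,i})\subset\cO_i(f_n)$ and $\cO_i(f_n)\cap\cO_j(f_n)=\emptyset$ (Theorem~\ref{main.sksleton} and Remark~\ref{rk.realbasinlocation}) give $\mu_{n,i}(\cO_j(f_n))=0$; the same reasoning at $f_0$ gives $\mu_{0,k}(\cO_j(f_0))=\delta_{kj}$, so the decomposition yields $\mu^*(\cO_j(f_0))=a_j$. Thus $a_j=0$ for $j\neq i$ is equivalent to $\mu^*(\cO_j(f_0))=0$. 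I would prove this by exhausting $\cO_j(f_0)$ by a sequence of compacts $K_\ell\subset K_{\ell+1}^\circ$, checking that $K_{\ell+1}\subset\cO_j(f_n)$ for all $n$ sufficiently large, and then applying the open-set Portmanteau inequality
\[
\mu^*(K_\ell^\circ)\;\leq\;\liminf_n\mu_{n,i}(K_\ell^\circ)\;\leq\;\liminf_n\mu_{n,i}(K_{\ell+1})\;=\;0.
\]
Letting $\ell\to\infty$ gives $\mu^*(\cO_j(f_0))=0$, so $a_j=0$ for $j\ne i$ and $\mu^*=\mu_{0,i}$.

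\textit{Main obstacle.} The delicate point is the inclusion $K\subset\cO_j(f_n)$ for large $n$; this is not formal continuity of the set-valued map $g\mapsto\cO_j(g)$, and has to be checked pointwise. For each $x\in K$ the $f_0$-stable leaf through $x$ crosses $W^u(\Orb(p_j(f_0)),f_0)$ at some $y_0(x)$. Using $C^1$ continuity of the periodic continuation $p_j(f_n)$ and of compact pieces of its unstable manifold, there is $y_n(x)\in W^u(\Orb(p_j(f_n)),f_n)$ close to $y_0(x)$; continuity of the stable foliation and the uniform transversality between $\cF^s$ (dimension $i_s$) and $W^u(\Orb(p_j))$ (tangent to the complementary $E^{cu}$ of dimension $i_{cu}$) then guarantee that the $f_n$-stable leaf through $x$ still meets $W^u(\Orb(p_j(f_n)),f_n)$ near $y_n(x)$, placing $x$ in $\cO_j(f_n)$. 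Covering $K$ with finitely many stable-foliation boxes makes the choice of $n$ uniform over $x\in K$.
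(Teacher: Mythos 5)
Your proposal is correct, and its first two steps (upper semi-continuity of $\cG$ forces every subsequential limit $\mu^*$ into $\G(f_0)$, whose simplex structure then gives $\mu^*=\sum_j a_j\mu_{0,j}$) coincide with the paper's. Where you diverge is in showing that only $a_i$ survives. The paper argues by contradiction: if $a_j>0$ for some $j\neq i$, then the open-set Portmanteau inequality gives $\mu_{n,i}(B_r(p_j(f_0)))>0$ for large $n$; since $\supp(\mu_{n,i})$ is the homoclinic class of $p_i(f_n)$, hence contained in $\Cl(\cF^s(\Orb(p_i(f_n))))$, the local product structure near $p_j$ (robust under $C^1$ perturbation) produces a transverse intersection $\cF^s(\Orb(p_i(f_n)))\pitchfork W^u(\Orb(p_j(f_n)))\neq\emptyset$, contradicting Lemma~\ref{l.skeleton}(2). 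You instead compute the weights directly: $a_j=\mu^*(\cO_j(f_0))$, and you kill this quantity by exhausting $\cO_j(f_0)$ with compacta that land inside $\cO_j(f_n)$ for large $n$, combined with $\mu_{n,i}(\cO_j(f_n))=0$ (which follows from $\supp(\mu_{n,i})\subset\Cl(\cO_i(f_n))$ and the disjointness of the open sets $\cO_i,\cO_j$ coming from Lemma~\ref{l.skeleton}(3) and Proposition~\ref{p.skeletongeometry}(ii)). Both routes ultimately rest on the same two pillars -- robustness of transverse intersections between compact pieces of $W^u(\Orb(p_j(\cdot)))$ and the stable foliation under $C^1$ perturbation, and the pairwise separation of skeleton elements -- so neither is more general; yours trades the paper's single localized transversality argument near $p_j$ for a compact-exhaustion argument over all of $\cO_j(f_0)$, which is slightly more work but yields a direct (non-contradiction) identification of the limit and makes the measure-theoretic bookkeeping transparent. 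Two small points to tighten: the inclusion $\cB(\mu_{n,i})\subset\cO_i(f_n)$ you invoke does hold (a basin point eventually enters $B_r(p_i)\subset\cO_i$ and $\cO_i$ is invariant), but for your purposes the cleaner justification of $\mu_{n,i}(\cO_j(f_n))=0$ is via supports; and $\mu_{0,j}(\cO_j(f_0))=1$ deserves a word (invariance of $\cO_j$ plus ergodicity plus $p_j\in\supp\mu_{0,j}$), though your conclusion only needs $a_j=0$ for $j\neq i$ together with $\sum_j a_j=1$.
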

\begin{proof}
For simplicity, we will only prove it for $i=1$. We prove by contradiction, and
assume (after taking subsequence if necessary) that $\mu_{n,1}\overset{weak *}{\longrightarrow} \mu \neq \mu_{0,1}$.

By Proposition~\ref{p.existencephysical},
the space $\G(\cdot)$ is compact and convex; extreme elements of $\G(\cdot)$ are precisely those  physical measures, and
it varies in a upper semi-continuous fashion with respect to diffeomorphisms in $\cU$ under $C^1$ topology.
Thus $\mu_{n,1}\in \G(f_n)$ and $\mu\in \G(f_0)$. Moreover, $\mu$ can be written as a combination
of the physical measures of $f_0$:
$$\mu=a_1\mu_{0,1}+\cdots a_m \mu_{0,m}.$$
By our assumption, $a_1\neq 1$, thus there is $1< i \leq m$ such that $a_i>0$. We will show that this implies heteroclinic intersection between $p_1(f_n)$ and $p_i(f_n)$, which is a contradiction.

Take $r>0$ sufficiently small, such that $B_r(p_i(f_0))\subset \cup_{x\in W^u(p_i(f_0),f_0)} \cF^s(x,f_0)$. Then
by the continuity of unstable manifolds of $p_i(\cdot)$ and the continuity of stable foliation with respect to
diffeomorphisms, there is $n_0$ such that for any $n>n_0$, any point $x\in B_r(p_i(f_n))$,
\begin{equation}\label{eq.localintersection}
\cF^s_{\loc}(x,f_n)\pitchfork W^u(p_i(f_n),f_n)\neq \emptyset.
\end{equation}

By Theorem~\ref{main.sksleton}, $p_i(f_0)\in \supp(\mu_{0,i})$ and $\mu_{0,i}(B_r(p_i(f_0)))>0$. Since $\mu_{n,1}\to \mu$ which also assigns positive measure to $B_r(p_i(f_0))$, there is $n>n_0$ such that
$\mu_{n,1}(B_r(p_i(f_0)))>0$. In particular, we have $\supp(\mu_{n,1})\cap B_r(p_i(f_0))\neq \emptyset$. Again by Theorem~\ref{main.sksleton},
$\supp(\mu_{n,1})=H(p_{1}(f_n),f_n)$, thus $\cF^s(\Orb(p_1(f_n)), f_n) \cap B_r(p_i(f_0)) \neq \emptyset$. By \eqref{eq.localintersection},
$$\cF^s(\Orb(p_1(f_n)))\pitchfork W^u(p_i(f_n),f_n)\neq \emptyset,$$
which contradicts the fact that $\{p_{1}(f_n),\cdots, p_k(f_n)\}$ is a skeleton of $f_n$ and thus by Lemma~\ref{l.skeleton}[(1)] there is no heteroclinic intersection between
$p_i(f_n)$ and $p_j(f_n)$ for $1\leq i \neq j \leq k$.
\end{proof}

To prove Theorem~\ref{main.robustskeleton}, it remains to show that for diffeomorphisms in $\Diff^{1+}(M)\cap \cU^\circ$,
the supports of corresponding physical measures and the closures of their basins vary in a lower semi-continuous fashion, both in the sense of the Hausdorff topology.

Indeed, by the unstable manifold theorem of fixed saddle, for
each $R>0$, the local invariant manifolds $W^u_R(\Orb(p_i(g),g))$
vary continuously with $g\in \cU$; moreover, the stable foliation also varies continuously with respect to $g$.
Thus the closures of $W^u(\Orb(p_i(g),g))$ and $\bigcup_{x\in W^u(\Orb(p_i(g)),g)} \cF^s(x,g)$
both vary in a lower semi-continuous fashion with $g$, relative to the Hausdorff topology. By Theorem~\ref{main.sksleton},
this means that the supports and the closures of the basins of the physical measures vary lower semi-continuously with the dynamics.
The proof of Theorem~\ref{main.robustskeleton} is now complete.

\section{Proof of Theorem~\ref{main.generic}}\label{s.8}
In this section we will generalize the result of Theorem~\ref{main.robustskeleton} to $C^1$ generic diffeomorphisms in $\cU$. The proof is similar to~\cite[Theorem B]{HYY}. The key observations are:
\begin{itemize}
	\item $C^{1+}$ diffeomorphisms are dense in $C^1$ topology;
	\item skeletons are upper semi-continuous in $\cU$;
	\item the support of physical measures for $C^{1+}$ $g\in\cU$ are homoclinic classes, which are (generically) Lyapunov stable and lower semi-continuous with the dynamics;
	\item the candidate space of physical measures, $\G(\cdot)$, is upper semi-continuously.
\end{itemize}
These properties will allow us to find a residual subset of $\cU$, consisting of continuous points of $H(p_i(\cdot),\cdot)$ and $\cG(\cdot)$. We will prove Theorem~\ref{main.generic} on this residual subset of $\cU$.

Throughout this section, let $f:M\to M$ be a $C^{1+}$ partially hyperbolic diffeomorphism with mostly expanding center,
$\cS(f)=\{p_1,\cdots, p_k\}$ be a skeleton of $f$, and $\cU$ be the $C^1$ neighborhood of
$f$ provided by Theorem~\ref{main.robustskeleton}. Recall that by Lemma~\ref{l.robustskeleton}, the  cardinality of skeleton varies in an upper semi-continuous way,
we may choose a $C^1$ open and dense subset $\cU^\circ\subset \cU$, such that the cardinality of skeleton is $C^1$ locally constant for diffeomorphisms in $\cU^\circ$.

Take any $C^{1+}$ diffeomorphism $g\in \cU^\circ$, then $g$ has $l\leq k$ physical measures due to Theorem~\ref{main.robustskeleton}. Furthermore,  there is a subset of the continuation $\cS(g) = \{p_1(g),\cdots, p_k(g)\}$
which forms a skeleton of $g$. After reordering, we may assume $\{p_1(g),\cdots,p_l(g)\}$ to be a skeleton of $g$.
Then by Lemma~\ref{l.robustskeleton}[(2)], there is a $C^1$ neighborhood $\cV\subset \cU^\circ$ of $g$,
such that for any $C^1$ diffeomorphism $h\in \cV$, $\{p_1(h),\cdots, p_l(h)\}$ forms a skeleton of $h$.

Then by Lemma~\ref{l.skeleton}[(2)], for any $C^1$ diffeomorphism $h\in \cV$ and any $1\leq i \neq j \leq l$, $W^u(\Orb(p_i(h)),h)\cap \cF^s(\Orb(p_j(h)),h)=\emptyset$.
Using Bonatti and Crovisier's connecting lemma (\cite{BC}), we see that for any diffeomorphism $h'\in \cV$ and any $1\leq i \neq j \leq l$,
$$\Cl(W^u(\Orb(p_i(h^\prime)),h'))\cap \Cl(W^s(\Orb(p_j(h^\prime)),h'))=\emptyset,$$
since otherwise one can create a non-trivial intersection between $W^u(\Orb(p_i(\cdot)),\cdot)$ and $\cF^s(\Orb(p_j(\cdot)),\cdot)$.

By Proposition~\ref{p.skeletongeometry}, $$\Cl(W^u(\Orb(p_i(h^\prime)),h'))=H(p_i(h^\prime),h^\prime)\subset \Cl(W^s(\Orb(p_j(h^\prime)),h')).$$
Thus, we have
\begin{equation}\label{eq.disjointhomoclinic}
H(p_i(h^\prime),h^\prime)\cap H(p_j(h^\prime),h^\prime)=\emptyset, \text{ and }
\end{equation}
\begin{equation}\label{eq.unstabledisjoint}
\Cl(W^u(\Orb(p_i(h^\prime),h^\prime)))\cap \Cl(W^u(\Orb(p_j(h^\prime)),h^\prime))=\emptyset.
\end{equation}

We need the following generic property proved by Morales and Pacifico [17]:
\begin{proposition}\label{p.genericlyapunov}
For every $h$ that belongs to a $C^1$ residual subset of diffeomorphisms
$\cR_0$ and every periodic point $p$ of $h$, the set $\Cl(W^u(\Orb(p),h))$ is Lyapunov stable.
\end{proposition}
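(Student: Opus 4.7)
The plan is to combine the standard $C^1$-generic framework based on Baire category with Bonatti--Crovisier's connecting lemma, following the strategy of Morales and Pacifico. First, I would restrict attention to the Kupka--Smale residual, so that every periodic point is hyperbolic and admits a canonical continuation on a $C^1$-open neighborhood. Enumerate a countable family $\{(n,U)\}$ with $n\in\mathbb{N}$ and $U$ from a countable base of $M$; the set $\cV_{n,U}$ of diffeomorphisms having a hyperbolic periodic point of period $n$ inside $U$ is $C^1$-open, a continuation $p_h$ is defined there, and every hyperbolic periodic orbit of a Kupka--Smale diffeomorphism is captured by at least one such pair. It therefore suffices to deal with each pair $(n,U)$ separately and intersect the resulting residual subsets.

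Next, define the set-valued map $\Phi_{n,U}(h)=\Cl(W^u(\Orb(p_h),h))$ into the space of compact subsets of $M$ with the Hausdorff topology. Since local unstable manifolds of hyperbolic saddles vary continuously in $C^1$, and any finite iterate of a local unstable disk persists under $C^1$ perturbation, each $\Phi_{n,U}$ is lower semi-continuous on $\cV_{n,U}$. A classical Baire-category argument of Kuratowski then yields, for each $(n,U)$, a residual subset $\cR_{n,U}\subset\cV_{n,U}$ on which $\Phi_{n,U}$ is Hausdorff continuous. Taking $\cR_0$ to be the intersection of the Kupka--Smale residual with $\bigcap_{n,U}\bigl(\cR_{n,U}\cup(\Diff^1(M)\setminus\cV_{n,U})\bigr)$ gives a $C^1$-residual subset of $\Diff^1(M)$ for which the hypothesis of the proposition has a chance of holding.

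The main obstacle, and the heart of the argument, is to show that Hausdorff continuity of $\Phi_{n,U}$ at $h\in\cR_0$ forces $K:=\Phi_{n,U}(h)$ to be Lyapunov stable. Suppose not; then there exist $\vep>0$, a sequence $y_j\to K$ and times $m_j\geq 0$ with $d(h^{m_j}(y_j),K)\geq\vep$. Since $K$ is the closure of $W^u(\Orb(p_h),h)$, each $y_j$ is approximated by points of this unstable manifold, but $m_j$ may be unbounded so one cannot directly iterate nearby points of the unstable manifold and stay close to $y_j$. Instead, I would invoke the Bonatti--Crovisier connecting lemma to produce a $C^1$-small perturbation $h'$ of $h$ such that $W^u(\Orb(p_{h'}),h')$ actually contains a point at distance at least $\vep/2$ from $K$: starting from a local unstable disk of $\Orb(p_h)$ near some $y_j$, one concatenates, up to a small perturbation localized away from $\Orb(p_h)$, a finite orbit segment tracking $y_j,h(y_j),\dots,h^{m_j}(y_j)$, yielding a genuine orbit in the new unstable manifold that reaches the $\vep$-far region. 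Then $\Phi_{n,U}(h')$ is not contained in the $\vep/3$-neighborhood of $K$, contradicting upper semi-continuity of $\Phi_{n,U}$ at $h$. The delicate point is to control the perturbation so that it stays in $\cV_{n,U}$ and preserves the continuation $p_h$, which is precisely what the connecting lemma provides and where the technical work concentrates.
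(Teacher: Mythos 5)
The paper does not actually prove this proposition: it is quoted verbatim from the literature (``the following generic property proved by Morales and Pacifico''), so there is no internal proof to compare against. Your proposal is, in substance, a correct reconstruction of the published argument of Carballo--Morales--Pac\'ifico: restrict to the Kupka--Smale residual, show that $h\mapsto \Cl(W^u(\Orb(p_h),h))$ is lower semi-continuous, pass to the residual set of Hausdorff-continuity points of this map, and at such a point derive Lyapunov stability by using the connecting lemma to turn a hypothetical escaping orbit into a genuine piece of the perturbed unstable manifold, contradicting upper semi-continuity. Two points deserve more care than your sketch gives them. First, the pair $(n,U)$ does not by itself determine $p_h$, since $U$ may contain several hyperbolic periodic points of period $n$; the standard fix is to use that a Kupka--Smale diffeomorphism has only finitely many periodic points of period at most $n$, all hyperbolic, so one can choose $U$ from the countable base small enough to isolate a single one and shrink the open set of diffeomorphisms accordingly. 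Second, before invoking the connecting lemma you must dispose of the case of bounded escape times $m_j$: if $m_j\le m$ along a subsequence, then $y_j\to x\in K$ gives $h^{m_j}(y_j)\to h^{m}(x)\in K$ by the forward invariance of $K$, contradicting $d(h^{m_j}(y_j),K)\ge\vep$; hence $m_j\to\infty$, which is what makes the tube perturbation of the connecting lemma applicable, and the perturbation box can then be chosen disjoint from $\Orb(p_h)$ and from a fixed local unstable disk, as you indicate. With these points filled in, the argument is complete and agrees with the cited source.
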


Recall that the map $\cG$ which maps a diffeomorphism $h\in \cV$ to $\G(h)$ is upper semi-continuous
by Proposition~\ref{p.existencephysical}. Let $\cR_1\subset \cV$ be the residual subset of diffeomorphisms
which are continuous points of the map $\cG$. For each $1\leq i \leq l$, also
consider the map $\cI_i$ from $\cV$ to compact subsets of $M$:
$$\cI_i(h)=H(p_i(h),h).$$
Because homoclinic classes vary lower semi-continuously with respect to diffeomorphisms
(since they contain hyperbolic horseshoes), there is a residual subset of
diffeomorphisms $\cR_2 \subset \cV$ consists of the continuous points of $\cI_i$ for every $1\leq i\leq l$.
Now let us take $\cR=\cR_0\cap \cR_1\cap \cR_2\subset \cV$. We are going to show that the residual set $\cR$ satisfies the
conditions we need.

\begin{proposition}\label{p.genericphysical}
Every $C^1$ diffeomorphism $h\in \cR$ has exactly $l$ physical measures, each of which is supported on
$\Cl(W^u(\Orb(p_i(h)),h))$ for some $i=1,\cdots,k$.
Furthermore, the basin of each physical measure covers a full volume subset within
a neighborhood of its support.
\end{proposition}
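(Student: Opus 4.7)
The plan is to approximate $h\in\cR$ in the $C^1$ topology by $C^{1+}$ diffeomorphisms $h_n\in\cV$ (possible since $C^{1+}$ is $C^1$-dense and $\cV$ is open) and transfer the physical measure structure from $h_n$ to $h$ using the three continuity properties packed into $\cR=\cR_0\cap\cR_1\cap\cR_2$. By Theorem~\ref{main.sksleton} each $h_n$ admits exactly $l$ ergodic physical measures $\mu_{n,1},\ldots,\mu_{n,l}$ with $\supp(\mu_{n,i})=H(p_i(h_n),h_n)=\Cl(W^u(\Orb(p_i(h_n)),h_n))$, and by Proposition~\ref{p.existencephysical} these are the extreme points of the simplex $\G(h_n)$. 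Extracting a subsequence, $\mu_{n,i}\to\mu_i$ weak-*. Since $h\in\cR_1$ is a continuity point of $\cG$, we have $\mu_i\in\G(h)$; since $h\in\cR_2$ is a continuity point of $\cI_i$, the Hausdorff limit of $\supp(\mu_{n,i})$ is $H(p_i(h),h)=:\Lambda_i$, so $\supp(\mu_i)\subset\Lambda_i$. By~\eqref{eq.disjointhomoclinic} the compact sets $\Lambda_1,\ldots,\Lambda_l$ are pairwise disjoint.

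Next I would identify $\G(h)$ with the convex hull of $\{\mu_1,\ldots,\mu_l\}$ and show each $\mu_i$ is ergodic. Continuity of $\cG$ at $h$ means any $\nu\in\G(h)$ is a weak-* limit of some $\nu_n\in\G(h_n)$; writing $\nu_n=\sum_j a_{n,j}\mu_{n,j}$ and passing to a further subsequence yields $\nu=\sum_j a_j\mu_j$, so $\G(h)$ equals the convex hull of the $\mu_j$'s. As the $\mu_j$'s have disjoint supports they are mutually singular, so any ergodic measure in the simplex must equal a vertex. Replaying the ergodic decomposition argument of Lemma~\ref{l.ergodicdecompo} (which only uses Ruelle's inequality, Lemma~\ref{l.robustGexpand} and the closure of $\G^u$ under ergodic decomposition, all available for $C^1$ maps in $\cU$), every ergodic component of $\mu_i$ belongs to $\G(h)$ and is supported in $\Lambda_i$; the only such vertex is $\mu_i$, so $\mu_i$ is ergodic.

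Finally, I would invoke Lyapunov stability: from $h\in\cR_0$ and Proposition~\ref{p.genericlyapunov}, each $\Lambda_i$ is Lyapunov stable, so we may choose neighborhoods $\Lambda_i\subset U_i'\subset U_i$ with $h^n(U_i')\subset U_i$ for all $n\geq 0$ and with $\Cl(U_i)\cap\Lambda_j=\emptyset$ for $j\neq i$ (possible because the $\Lambda_j$'s are compact and disjoint). By Proposition~\ref{p.physical}, for Lebesgue almost every $x\in U_i'$ every weak-* accumulation point of $\frac{1}{n}\sum_{j=0}^{n-1}\delta_{h^j(x)}$ lies in $\G(h)$ and is supported in $\Cl(U_i)$; combined with the disjointness of the $\Lambda_j$'s, this forces the accumulation measure to be $\mu_i$. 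Thus the empirical averages converge to $\mu_i$ on a full-volume subset of $U_i'$, so $\mu_i$ is a physical measure whose basin has full volume in the neighborhood $U_i'$ of its support. Conversely, any physical measure of $h$ is ergodic and lies in $\G(h)$ by Proposition~\ref{p.physical}, hence equals one of the $\mu_j$'s, yielding exactly $l$ physical measures.

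The hard part is ergodicity of the limit $\mu_i$: the Pesin theoretic and absolute continuity tools used in the $C^{1+}$ setting of Proposition~\ref{p.existencephysical} are unavailable for $C^1$ maps in $\cR$, so one must instead exploit the simplex structure of $\G(h)$ together with the Hausdorff continuity of the homoclinic classes at $h$, which jointly force every ergodic measure in $\G(h)$ to coincide with one of the $\mu_j$'s.
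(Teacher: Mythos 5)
Your proposal is correct and follows essentially the same route as the paper: approximation by $C^{1+}$ diffeomorphisms in $\cV$, continuity of $\cG$ and of the homoclinic classes at $h\in\cR$, disjointness of the classes via \eqref{eq.disjointhomoclinic}, the ergodic-decomposition argument of Lemma~\ref{l.ergodicdecompo}, and the Lyapunov stability of $\Cl(W^u(\Orb(p_i(h)),h))$ for the basin statement. The only difference is organizational — you identify $\G(h)$ directly as the convex hull of the $l$ limit measures and deduce ergodicity from mutual singularity, whereas the paper counts extreme points $m_h\le l$ and rules out $m_h<l$ by producing an extra extreme point — but the ingredients and conclusions are the same.
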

\begin{proof} For any $C^{1+}$ diffeomorphism $h^\prime \in \cV$, denote by
$\mu_{h^\prime,1},\cdots, \mu_{h^\prime,l}$ the ergodic physical measures of $h^\prime$. Then by Proposition~\ref{p.existencephysical},
$\G(h^\prime)$ is the simplex generated
by $\{\mu_{h^\prime,1},\cdots, \mu_{h^\prime,l}\}$.
For any $h\in \cR$, by the continuity of the map $\cG$ at $h$, we see that
$\G(h)=\cG(h)$ is a simplex of dimension $m_h\leq l$. In particular, the number of extreme elements of
$\G(h)$ is at most $l$. Below we will show that it is in fact $l$.

Denote the extreme points of $\G(h)$ by $\mu_{h,1},\cdots, \mu_{h,m_h}$. Let
$h_n$ be a sequence of $C^{1+}$ diffeomorphisms converging to $h$ in $C^1$ topology.
By continuity of $\cG(\cdot)$ and relabelling if necessary, we may assume that $\lim \mu_{h_n,i}=\mu_{h,i}$ for $i=1,\cdots, m_h$.
Note that $\mu_{h,i}$ is supported on $W^u(\Orb(p_i(h)),h)$. This is because by Theorem~\ref{main.sksleton}, $\mu_{h_n,i}$ is supported
on $W^u(\Orb(p_i(h_n)),h_n)=H(p_i(h_n),h_n)$, and $h$ is a continuous point of the map $\Gamma_i(\cdot)$, so we must have
$\lim_{n} H(p_i(h_n),h_n)=H(p_i(h),h)$.

Next, we claim that $m_h=l$.
Assume that this is not the case. Then we take $m_h<j\leq l$ and take a weak-$*$ limit
$\mu_{h}=\lim_n \mu_{h_n,j}$. Note that $\mu_h\in \G(h)$ is supported on $W^u(\Orb(p_j(h)),h)$ by
the discussion above. Take any ergodic component $\tilde{\mu}_h$ or $\mu_h$, then $\tilde{\mu}_h\in \G(h)$
by Lemma~\ref{l.ergodicdecompo} and is still supported on $\Cl(W^u(\Orb(p_j(h))),h)$. Thus by \eqref{eq.disjointhomoclinic},
$\tilde{\mu}_h\neq \mu_{h,i}$ for every $i=1,\cdots, m_h$. We have thus created a new extreme point of $\G(h)$, which
is a contradiction.

To finish the proof, we have to show that each $\mu_{h,i}$ is a physical measure.
Since $\Cl(W^u(\Orb(p_i(h)),h))$ is Lyapunov stable, we can take $U_i\supset V_i$ open
neighborhoods for each $\Cl(W^u(\Orb(p_i(h)),h))$, such that $\{U_i\}_{i=1,\cdots,l}$ are disjoint and
for each $i$ and any $n>0$, $h^n(V_i)\subset U_i$.
By Proposition~\ref{p.physical}, there is a full volume subset $\Gamma_i\subset V_i$ such that for any $x\in \Gamma_i$,
any limit $\mu$ of the sequence $\frac{1}{n}\sum_{i=0}^{n-1}\delta_{h^i(x)}$ belongs to $\G(h)$. Note that
since $x\in V_i$, we have $h^n(x)\in U_i$ for all $n\geq 1$. As a result, $\mu$ is supported on $U_i$.
On the other hand, $\mu_{h,i}$ is the only ergodic measure in $\G(h)$ that is supported on $U_i$.
It follows that $\mu=\mu_{h,i}$. This implies that Lebesgue almost every point of $x\in V_i$ belongs to
the basin of $\mu_{h,i}$. The proof is complete.
\end{proof}

We conclude the proof of Theorem~\ref{main.generic} with the following lemma:

\begin{lemma}\label{l.fullbasin}
The basins of $\mu_{h,i}$ for $i=1,\cdots, l$ covers a full volume set.
\end{lemma}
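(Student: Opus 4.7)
The plan is to run the same contradiction argument used at the end of the proof of Proposition~\ref{p.existencephysical}, but with the Lyapunov-stable neighborhoods $V_i$ from Proposition~\ref{p.genericphysical} in place of the balls $B_{\mu_j}$ appearing there. Suppose $\Lambda=M\setminus\bigcup_{i=1}^l\cB(\mu_{h,i})$ has positive Lebesgue measure; since each basin is $h$-invariant, so is $\Lambda$. By Proposition~\ref{p.physical}, for Lebesgue-a.e.\ $x\in\Lambda$ every weak-$*$ accumulation point of $\frac{1}{n}\sum_{j=0}^{n-1}\delta_{h^j(x)}$ lies in $\G(h)$.

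Next I would choose such an $x$ which is in addition a Lebesgue density point of $\Lambda$, and extract a limit $\mu=\lim_k\frac{1}{n_k}\sum_{j=0}^{n_k-1}\delta_{h^j(x)}\in\G(h)$. By Proposition~\ref{p.genericphysical}, $\G(h)$ is the simplex spanned by $\mu_{h,1},\ldots,\mu_{h,l}$, so $\mu=\sum_i a_i\mu_{h,i}$ with some $a_j>0$. Since $\supp(\mu_{h,j})=\Cl(W^u(\Orb(p_j(h)),h))\subset V_j$ and $V_j$ is open, weak-$*$ lower semicontinuity gives $\mu(V_j)\ge a_j>0$, and hence for some $m\ge 1$ one has $h^m(x)\in V_j$.

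To close the contradiction I would push the density property forward by $h^m$: since $Dh^m(x)$ is nonsingular, $h^m(x)$ is a Lebesgue density point of $h^m(\Lambda)=\Lambda$, and since $V_j$ is open this forces $\Leb(\Lambda\cap V_j)>0$. But Proposition~\ref{p.genericphysical} guarantees that $\cB(\mu_{h,j})$ has full Lebesgue measure in $V_j$, so $\Lambda\cap V_j\subset V_j\setminus\cB(\mu_{h,j})$ must be Lebesgue null, a contradiction. The one technical point, and it is not really an obstacle, is the $C^1$ change-of-variables needed to transport the density property by $h^m$; this is standard from the Lebesgue differentiation theorem combined with $|\det Dh^m(x)|>0$ and requires no extra regularity. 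Alternatively, one can avoid density points altogether by observing that on the $h$-invariant set $\Lambda$ of positive Lebesgue measure, Lebesgue-a.e.\ $x\in\Lambda\cap h^{-m}(V_j)$ still has all of its empirical limits in $\G(h)$, and then repeat the argument on the positive-measure subset $\Lambda\cap h^{-m}(V_j)\setminus\cB(\mu_{h,j})$, which is forced to be empty.
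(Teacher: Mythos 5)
Your proposal is correct and follows essentially the same contradiction argument as the paper: pick a Lebesgue density point $x$ of $\Lambda$ whose empirical limits lie in $\G(h)$, use the simplex structure of $\G(h)$ to find a time $m$ with $h^m(x)\in V_j$, and then contradict the fact that $\cB(\mu_{h,j})$ has full volume in $V_j$. The only cosmetic difference is that the paper transports a small ball $B_\vep(x)$ forward by continuity ($h^m(B_\vep(x))\subset V_j$) rather than transporting the density property of the point itself; both yield $\Leb(\Lambda\cap V_j)>0$ and the same contradiction.
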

\begin{proof}
Let $\Gamma$ be the full volume subset given by Proposition~\ref{p.physical}. We are going to show
that $\Leb(\Gamma\setminus \bigcup_{i=1}^l \cB(\mu_{h,i}))=0$.

We prove by contradiction. Write $\Lambda=\Gamma \setminus \bigcup_{i=1}^l \cB(\mu_{h,i})$ and suppose that
$\Leb(\Lambda)>0$. Let $x\in \Lambda$ be a Lebesgue density point of $\Lambda$, which means that for any $r>0$,
we have $\Leb(B_r(x)\cap \Lambda)>0$. Let $\mu$ be any limit point of the sequence $\frac{1}{n}\sum_{i=0}^{n-1}\delta_{h^i(x)}$.
Since $\mu\in \G(h)$, $\mu$ can be written as a combination of $\mu_{h,i}$:
$$\mu=a_1\mu_{h,1}+\cdots+a_l\mu_{h,l},$$
where $a_1+\cdots+a_k=1$.

Suppose without loss of generality that $a_1>0$, then $\mu(V_1)>0$ where $V_1$ is the neighborhood of $\Cl(W^u(\Orb(p_i(h)),h))$ in the proof of the previous proposition. Thus there is
$n>0$ such that $\frac{1}{n}\sum_{i=0}^{n-1}\delta_{h^i(x)}(V_1)>0$. In particular,
there is $0\leq m \leq n-1$ such that $h^m(x)\in V_1$. Take $\vep>0$ sufficiently small, we have
$h^m(B_\vep(x)) \subset V_1$. By Proposition~\ref{p.genericphysical}, $f^m(B_\vep(x)\cap \Lambda)$
intersects with the basin of $\mu_{h,1}$ on a positive
volume set. Because the basin of a measure is invariant under iteration of $h$ and
$h^{-1}$, we have $\Leb(\Lambda \cap \cB(\mu_{h,1}))>0$, which contradicts with the choice of $\Lambda$.
\end{proof}

\section{Gibbs-Markov-Young structure}\label{s.9}
To study statistical properties of some non-uniformly hyperbolic systems, in \cite{You98} Young constructed Markov towers, which are Markov partitions with infinitely many symbols and certain recurrence property. In particular she uses tower to study statistical properties of these
non-uniformly hyperbolic systems, including the existence of physical measures, exponential decay of correlations and
the validity of the Central Limit Theorem for the physical measure. These structures have some properties which address
to Gibbs states and they are nowadays commonly called as Gibbs-Markov-Young (GMY) structures.

Alves and Li in \cite{AL} obtained GMY structures for partially hyperbolic attractors and they managed
to prove the exponential decay of correlations: if the lack of expansion of the
system at time $n$ in the center-unstable direction is exponential small, then the system
has some GMY structure for physical measures with exponential decay of recurrence times.
In this section we will show that their criterion is satisfied for any physical measures
of any $C^{1+}$ diffeomorphisms with mostly expanding center.

As before, we assume $f$ to be a $C^{1+}$ partially hyperbolic diffeomorphism with mostly expanding
center, $\{p_1,\cdots,p_k\}$ be a skeleton of $f$ and $\mu_1,\cdots,\mu_k$ are the corresponding physical measures
of $f$ in the sense of Theorem~\ref{main.sksleton}. Recall that $P=\prod_{i=1}^k \pi(p_i)$.

By Corollary~\ref{maincor.period}, $\{f^{nP}\}_{n>0}$ also have mostly expanding center, and they share the same
physical measures and skeletons.  Therefore, to simply notation,  we may assume that $\{p_i\}_{i=1}^k$ are all fixed points and $P=1$. Moreover, by
Proposition~\ref{p.onestep}, we may assume that there is $b_0>0$ such that for any $\mu\in \G^u(f)$:
\begin{equation}\label{eq.fcuexpanding}
\int \log \|Df^{-1}\mid_{E^{cu}(x)}\|d\mu(x)<-b_0.
\end{equation}

The notations below are used by Alves and Li~\cite{AL} and clearly resembles our definition of hyperbolic times:
\begin{definition}
Given $b>0$, we say that $f$ is \emph{$b$ non-uniformly expanding ($b$-NUE)} at a point $x$
in the central-unstable direction if
\begin{equation}
\limsup_{n\to \infty}\frac{1}{n}\sum_{j=1}^n\log \|Df^{-1}\mid_{E^{cu}(f^{j}(x))}\|< -b.
\end{equation}
If $f$ satisfies ($b$-NUE) at some point $x$, then the \emph{expansion time function} at $x$
\begin{equation}
\cE_b(x)=\min\{ N\geq  1: \frac{1}{n}\sum^n_{i=1}\log \|Df^{-1}\mid_{E^{cu}(f^{i}(x))}\|< -b/2 \;\; \text{ for any } n\geq N\}
\end{equation}
is defined and finite. We call $\{x:\cE_b(x)>n\}$ the \emph{tail of $b/2$-hyperbolic times} (at time $n$).
\end{definition}

We need the following two propositions from \cite{AL} which play the key role in the proof of decay of correlations and center limit theorem.

\begin{proposition}\cite{AL}\label{p.decayexpanding}
Assume for $b>0$ that there is a local unstable disk $D$ of $f$ and constants $0<\tau \leq 1$, $c>0$ such that
$$\Leb_D(\cE_b>n)=O(e^{-cn^\tau}).$$
Then some power $f^l$ has an physical measure $\mu$ and there is $d>0$ such that
$$C_\mu(\phi,\psi\circ f^{ln})=O(e^{-dn^\tau})$$
for Holder continuous $\phi:M\to \mathbb{R}$ and $\psi\in L^\infty(\mu)$.
\end{proposition}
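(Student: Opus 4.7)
The plan is to construct a Gibbs--Markov--Young (GMY) tower over a reference hyperbolic product set sitting inside (or near) the local unstable disk $D$, and then to invoke Young's abstract theorem to deduce the correlation decay from the tail estimate on return times. The hypothesis $\Leb_D(\cE_b>n)=O(e^{-cn^\tau})$ is precisely what feeds into the required tail estimate, once one passes from the expansion-time function to a Markov return-time function.

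First, I would use the hypothesis on $\cE_b$ together with the partial hyperbolicity to produce, at every $b/2$-hyperbolic time $n$ of a point $x\in D\setminus\{\cE_b>n\}$, an unstable disk of uniformly large size around $f^n(x)$. Concretely, by Lemma~\ref{l.hyperbolictime} (and Remark~\ref{r.uniformunstable}), each $b/2$-hyperbolic time provides an unstable plaque of inner radius bounded below by a constant $r=r(b)>0$. The Pliss argument, together with the exponential tail on $\cE_b$, then guarantees a Lebesgue-positive subset $D_0\subset D$ on which hyperbolic times have positive density.

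Second, I would fix a reference hyperbolic product set $\Lambda=(\bigcup_{\gamma^u}\gamma^u)\cap(\bigcup_{\gamma^s}\gamma^s)$ of positive Lebesgue measure, with $\gamma^u$ carrying uniform-size unstable plaques through points of $D_0$ and $\gamma^s$ being the uniform strong-stable plaques, and define a return map $F=f^R$ on $\Lambda$ with the following Markov property: each unstable leaf of $\Lambda$ contains countably many subsets that map under $f^{R(\cdot)}$ onto full unstable leaves of $\Lambda$. The key geometric lemma, which is where the heavy lifting lies, is that at a hyperbolic time $n$ of $x$ large enough so that $f^n(x)$ is $\varepsilon$-close to $\Lambda$, the image of a suitable neighborhood of $x$ in its unstable leaf has inner diameter at least $r$, so it crosses $\Lambda$ completely in the unstable direction and can then be $s$-holonomy saturated back into $\Lambda$. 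This is the Alves--Li adaptation of Young's construction, and it gives the GMY structure with $R\leq C\,\cE_b$ (up to an additive constant accounting for the time needed to get close to $\Lambda$), so that $\Leb_D(R>n)=O(e^{-c'n^\tau})$ with possibly smaller $c'$.

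Third, the $C^{1+}$ regularity of $f$ delivers the two bounded-distortion inputs of Young's theorem: H\"older continuity of $\log\Jac^u f$ along unstable leaves, and absolute continuity with H\"older Jacobian of the strong-stable holonomy (needed to transfer estimates between unstable leaves of $\Lambda$). Applying Young's theorem, or its extension by Gou\"ezel \cite{G06} that handles stretched-exponential tails (the case $\tau<1$), then produces an $F$-invariant measure that lifts to an $f^l$-invariant physical measure $\mu$ and yields $C_\mu(\phi,\psi\circ f^{ln})=O(e^{-dn^\tau})$ for H\"older $\phi$ and bounded $\psi$. The main obstacle is the geometric construction of the Markov returns: one must carefully match the stable/unstable widths of $\Lambda$ so that the returns are genuinely \emph{full} (crossing $\Lambda$ completely in the unstable direction), and one must track the loss incurred when passing from hyperbolic-time-tails to return-time-tails so that the stretched exponent $\tau$ is preserved.
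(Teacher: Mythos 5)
The paper does not actually prove this proposition: it is quoted as a black box from Alves--Li \cite{AL}, so there is no internal argument to compare yours against. What you have written is a faithful reconstruction of the strategy of the cited proof: build a Gibbs--Markov--Young structure over a hyperbolic product set using the uniform-size unstable plaques produced at hyperbolic times (Lemma~\ref{l.hyperbolictime}), convert the tail of the expansion-time function $\cE_b$ into a (stretched) exponential tail for the Markov return time $R$, and then feed the resulting tower into Young's theorem \cite{You98} or Gou\"ezel's extension \cite{G06} for $\tau<1$. That is indeed how \cite{AL} proceeds, and your identification of the two delicate points --- forcing the returns to cross the product set fully in the unstable direction, and preserving the exponent $\tau$ when passing from hyperbolic-time tails to return-time tails --- matches where the real work in \cite{AL} is concentrated.

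Be aware, though, that as written this is a roadmap rather than a proof: the construction of the countable Markov partition of an unstable leaf into returning sets, the bounded-distortion estimates along the tower, and the combinatorial argument showing $\Leb_D(R>n)=O(e^{-c'n^\tau})$ occupy most of the Alves--Li paper and are only named, not executed, in your outline. Since the present paper itself treats the proposition as an external input, this level of detail is appropriate for the role the statement plays here; it would not suffice as a standalone replacement for the citation.
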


\begin{proposition}\cite{AL}\label{p.centerlimit}
Assume for $b>0$ that there is a local unstable disk $D$ of $f$ and constants $0<\tau \leq 1$, $c>0$ such that
$$\Leb_D(\cE_b>n)=O(e^{-cn^\tau}).$$
Then some power $f^l$ has an physical measure $\mu$; moreover, given any H\"older continuous function $\phi$,
the limit exists:
$$\sigma^2=\lim_{n\to \infty} \frac{1}{n}\int (\sum_{j=0}^{n-1}\phi\circ f^{jl} -n\int \phi d\mu)^2d\mu.$$
Furthermore, if $\sigma^2>0$, then there is a rate function $c(\vep)>0$ such that
$$\lim_{n\to \infty} \frac{1}{n}\log \mu(\mid \sum_{j=0}^{n-1}\phi\circ f^{jl} -n\int \phi d\mu \mid \geq \vep)=-c(\vep).$$
\end{proposition}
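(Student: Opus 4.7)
The plan is to derive the variance formula and the large deviation estimate by first constructing a Gibbs-Markov-Young tower with exponential tails, then exploiting the spectral properties of the associated transfer operator. First, I would invoke the construction of Alves-Li \cite{AL}: the decay assumption $\Leb_D(\cE_b > n) = \cO(e^{-cn^\tau})$ on an unstable disk $D$, together with the hyperbolic-time machinery, yields a GMY structure for some iterate $f^l$. The base $\Lambda$ is a union of local unstable disks, the induced return map enjoys uniform backward contraction and bounded distortion, and crucially the return time $R:\Lambda \to \NN$ inherits the bound $\Leb_\Lambda(R > n) = \cO(e^{-cn^\tau})$. From this, Proposition~\ref{p.decayexpanding} already supplies a physical measure $\mu$ of $f^l$ and exponential decay of correlations for H\"older observables.

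For the variance, I would set $\bar\phi = \phi - \int\phi\,d\mu$ and $S_n = \sum_{j=0}^{n-1}\bar\phi\circ f^{jl}$. The identity
\begin{equation*}
\frac{1}{n}\int S_n^{\,2}\,d\mu = \int \bar\phi^{\,2}\,d\mu + 2\sum_{k=1}^{n-1}\Bigl(1 - \tfrac{k}{n}\Bigr)\int \bar\phi\,(\bar\phi \circ f^{lk})\,d\mu,
\end{equation*}
combined with $\bigl|\int \bar\phi\,(\bar\phi \circ f^{lk})\,d\mu\bigr| = C_\mu(\bar\phi,\bar\phi\circ f^{lk}) = \cO(e^{-dk})$ from Proposition~\ref{p.decayexpanding}, makes the Green-Kubo series absolutely convergent, and dominated convergence yields
\begin{equation*}
\sigma^2 = \int \bar\phi^{\,2}\,d\mu + 2\sum_{k\geq 1}\int \bar\phi\,(\bar\phi \circ f^{lk})\,d\mu.
\end{equation*}
This step does not require $\sigma^2 > 0$ and produces the limit unconditionally.

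For the exponential large deviation bound, I would follow Gou\"ezel's spectral approach from \cite{G06}. Lifting $f^l$ to the Young tower and working with the transfer operator $\cL$ on a Banach space of H\"older densities, the exponential tail of $R$ yields a simple leading eigenvalue $1$ with a spectral gap. The twisted operator $\cL_t h := \cL(e^{t\phi}h)$ then depends analytically on $t$ in a complex neighborhood of $0$, and persistence of the spectral gap under analytic perturbation furnishes a real-analytic leading eigenvalue $\lambda(t)$ with $\lambda(0) = 1$, $\lambda'(0) = 0$, and $\lambda''(0) = \sigma^2$. When $\sigma^2 > 0$, $\log\lambda$ is strictly convex, and the G\"artner-Ellis theorem yields the exponential large deviation principle with rate function $c(\vep) = \sup_t\{t\vep - \log\lambda(t)\} > 0$. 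The main obstacle is controlling the spectral perturbation of the twisted transfer operator on the tower, since the underlying Banach space must accommodate both the H\"older regularity of $\phi$ and the infinite-symbol tower structure; this is precisely the content of Gou\"ezel's theorem, and its hypotheses reduce to the tower construction of Alves-Li, which is already available in our setting.
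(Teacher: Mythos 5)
This proposition is not proved in the paper at all: it is quoted verbatim from Alves--Li \cite{AL} (the statement carries the citation, and the surrounding text says ``We need the following two propositions from \cite{AL}''), so there is no in-paper argument to compare yours against. What you have written is a reasonable reconstruction of the argument behind the cited result, and it follows the standard route: the tail bound on $\cE_b$ over an unstable disk yields a GMY tower for some $f^l$ with return-time tails of the same order; the variance limit then follows unconditionally from the Green--Kubo identity $\frac1n\int S_n^2\,d\mu=\int\bar\phi^2\,d\mu+2\sum_{k=1}^{n-1}(1-\tfrac kn)\int\bar\phi\,(\bar\phi\circ f^{lk})\,d\mu$ together with summable decay of correlations (note only that for $\tau<1$ the decay supplied by Proposition~\ref{p.decayexpanding} is $\cO(e^{-dk^\tau})$ rather than $\cO(e^{-dk})$, which is still summable, so nothing breaks); and the large deviation principle comes from the twisted transfer operator and G\"artner--Ellis.

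The one point you should be more careful about is the last step in the regime $0<\tau<1$. Your spectral argument needs the twisted operator $\cL_t$ to be an analytic perturbation of $\cL$ near $t=0$, and on the tower this requires finite exponential moments of the return time $R$ (the induced observable satisfies only $|\Phi|\le\|\phi\|_\infty R$, so one needs $\int e^{sR}\,d\Leb_\Lambda<\infty$ for some $s>0$). With stretched-exponential tails $\Leb_\Lambda(R>n)=\cO(e^{-cn^\tau})$, $\tau<1$, this fails, and the G\"artner--Ellis route only delivers stretched-exponential large deviation bounds, not the exact exponential rate $c(\vep)$ claimed in the statement. So as written your proof establishes the LDP conclusion only for $\tau=1$. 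This does not affect the paper, since the only application (via Lemma~\ref{l.tailofhyperbolictime}) produces $\tau=1$; but if you want the proposition in the generality stated you must either restrict the LDP clause to genuinely exponential tails or replace the spectral step by the Melbourne--Nicol-type machinery adapted to subexponential return times.
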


\begin{remark}\label{rk.decayexpanding}
From the proof, the physical measure equals to the limit of
$$\lim_{n\to \infty} \sum_{i=0}^{n-1}\Leb_{f^i(\Lambda)}$$
where $\Lambda\subset D$ is some subset with positive volume.
\end{remark}

With these notations, we are ready to prove Theorem~\ref{main.decay of Correlations} and Corollary~\ref{maincor.large deviations}. It suffices for us prove only for physical measures $\mu_1$:
Take $D=W^u_r(p_1)$. We will show in the end of this section that $D$ satisfies the following property:
\begin{lemma}\label{l.tailofhyperbolictime}
There are constants $0<\tau \leq 1$ and $c>0$ such that
$$\Leb_D(\cE_{b_0}>n)=O(e^{-cn^\tau}).$$
\end{lemma}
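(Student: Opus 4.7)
\medskip

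\textbf{Proof plan for Lemma~\ref{l.tailofhyperbolictime}.} Set $\phi(x)=\log\|Df^{-1}|_{E^{cu}(x)}\|$, and write $\nu:=\Leb_D/\Leb_D(D)$. Unravelling the definition of $\cE_{b_0}$,
\begin{equation*}
\{\cE_{b_0}>n\}\;\subseteq\;\bigcup_{m\ge n}\Bigl\{x\in D:\;\tfrac1m\sum_{j=1}^{m}\phi(f^j(x))\ge -\tfrac{b_0}{2}\Bigr\},
\end{equation*}
so by a union bound it suffices to establish an exponential large deviations estimate of the form
\begin{equation}\label{eq.LDp}
\nu\Bigl(\tfrac1m\sum_{j=1}^{m}\phi\circ f^j \ge -\tfrac{b_0}{2}\Bigr)\le C\,e^{-cm},\qquad m\ge M_0,
\end{equation}
for some constants $C,c,M_0>0$; summing~\eqref{eq.LDp} over $m\ge n$ then yields $\Leb_D(\cE_{b_0}>n)=O(e^{-cn})$, i.e.\ the conclusion with $\tau=1$.

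The first half of the proof promotes \eqref{eq.fcuexpanding} from Gibbs $u$-states to Ces\`aro averages of $\nu$. Because $D$ is an unstable disk and $f$ is $C^{1+}$, the classical bounded distortion for $\Jac^u$ along unstable plaques (as in~\cite[Subsection 11.2]{BDVnonuni}) guarantees that any weak-$*$ accumulation point of $\nu_m:=\tfrac1m\sum_{i=0}^{m-1}f^i_{\ast}\nu$ is a Gibbs $u$-state (Proposition~\ref{p.Gibbsustates}(3)). Since $\phi$ is continuous and $\Gibb^u(f)$ is weak-$*$ compact, Proposition~\ref{p.onestep} together with a standard compactness argument produces $M_0\in\mathbb{N}$ and $b_1\in(b_0/2,b_0)$ with
\begin{equation*}
\int\phi\,d\nu_m \;=\; \frac{1}{m}\int\sum_{j=0}^{m-1}\phi\circ f^j\,d\nu\;\le\;-b_1\qquad\text{for every }m\ge M_0.
\end{equation*}
This is the $L^1$ (in $\nu$) counterpart of~\eqref{eq.LDp}, with gap $b_1-b_0/2>0$ between the mean of the Birkhoff average and the target level.

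The second half upgrades this $L^1$ estimate to the exponential concentration~\eqref{eq.LDp}. The strategy is to exploit bounded distortion along the iterated unstable disks $f^{M_0 k}(D)$ and partition them into sub-disks of small uniform diameter; on each such sub-disk the average of $\phi$ over the next $M_0$ steps is again controlled by $-b_1$ (up to distortion error), and the resulting nearly-independent structure of Birkhoff blocks yields a Chernoff-type bound. Equivalently, one may invoke the large deviations framework of Ara\'ujo--Pacifico or the transfer operator approach on Hölder densities along unstable leaves, where the uniform Gibbs $u$-state gap~\eqref{eq.fcuexpanding} supplies the spectral gap input. Either way one obtains~\eqref{eq.LDp}, and the lemma follows.

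The main obstacle is precisely this second half: converting a mean Birkhoff bound into exponential tails. The $L^1$ part only uses compactness of $\Gibb^u(f)$ together with Proposition~\ref{p.onestep}, but getting genuine exponential concentration is delicate in the partially hyperbolic setting, where the center direction is only non-uniformly expanding. The essential ingredients are the Hölder regularity of $\phi$ (coming from the $C^{1+}$ hypothesis on $f$) and uniform bounded distortion of $\Jac^u$ on iterates of $D$; without the Andersson--V\'asquez strong form of mostly expanding center (Proposition~\ref{p.onestep}), only sub-exponential tails would be available, as in~\cite{ABV00}.
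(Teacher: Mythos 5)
Your reduction is the same as the paper's: the union bound
$\{\cE_{b_0}>n\}\subset\bigcup_{m\ge n}\{\frac1m\sum_{j\le m}\phi\circ f^j\ge -b_0/2\}$
plus an exponential large-deviations estimate for the Birkhoff averages of $\phi=\log\|Df^{-1}|_{E^{cu}}\|$ on the disk $D$, summed over $m\ge n$, giving $\tau=1$. The problem is that the large-deviations estimate itself --- which you correctly identify as ``the main obstacle'' --- is asserted rather than proved. Your first half only yields the Ces\`aro-mean bound $\int\phi\,d\nu_m\le -b_1$, and your second half replaces the actual concentration argument with a sketch (``partition into sub-disks \dots{} Chernoff-type bound'') and a vague appeal to alternative frameworks; neither is carried out, and it is not clear that an off-the-shelf transfer-operator or Ara\'ujo--Pacifico-type result applies here, precisely because the center direction is only non-uniformly expanding. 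The paper closes this gap by citing Dolgopyat's Proposition~3.1 of~\cite{D}: for any H\"older $A$ and any strong-unstable plaque $L$, $\Leb_L(d(\frac1nS_nA,I_A)\ge\vep)\le Ce^{-\delta n}$, where $I_A$ is the set of integrals of $A$ against Gibbs $u$-states. Combined with \eqref{eq.fcuexpanding}, which places $I_A\subset(-\infty,-b_0)$, this is exactly the estimate \eqref{eq.LDp} you need, and it even subsumes your ``first half''. Without this (or an equivalent worked-out concentration inequality) your proof is incomplete.

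A secondary imprecision: $D=W^u_r(p_1)$ is tangent to $E^{cu}$, not to $E^u$, so it is not a disk inside a strong-unstable leaf and Proposition~\ref{p.Gibbsustates}(3) and the plaque-wise estimate do not apply to it directly. The paper handles this by observing that $\cF^u$ induces an absolutely continuous sub-foliation of $D$ into strong-unstable plaques, proving the estimate on each plaque $L$, and then integrating over the sub-foliation to pass to $\Leb_D$. You should add this disintegration step explicitly; as written, your appeal to ``bounded distortion for $\Jac^u$ along unstable plaques'' conflates the $cu$-disk $D$ with its strong-unstable plaques.
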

Then we may applying Proposition~\ref{p.decayexpanding} and Proposition \ref{p.centerlimit} on some physical measure $\mu$ for some power $f^l$ of $f$.
Moreover, by Proposition~\ref{l.support}, Lebesgue almost every point belongs to the basin of $\mu_1$, and thus by Remark~\ref{rk.decayexpanding}, for any subset $\Lambda\subset D$  with positive volume, we have
$$\lim_{n\to \infty} \sum_{i=0}^{n-1}\Leb_{f^i(\Lambda)}=\mu_1.$$
Then we conclude the proof Theorem~\ref{main.decay of Correlations} and Corollary~\ref{maincor.large deviations}.

It remains to show the proof of Lemma~\ref{l.tailofhyperbolictime}.
\begin{proof}
We need the following result:
\begin{proposition}\cite{D}[Proposition 3.1]
Let $\cB$ be any foliation box for the unstable foliation $\cF^u$ of $f$, $A$ be any H$\ddot{o}$lder function and $I_A=\{\int A d\mu\}_{\mu\in\Gibb^u(g)}$.
Then $\forall \vep>0$, $\exists \delta>0$, $C>0$ such that
for any plaque $L$ of $\cF^u\mid \cB$,
$$\Leb_L(\{x:d(\frac{1}{n}S_n(A)(x),I_A)\geq \vep\})\leq C e^{-\delta n},$$
where $S_n(A)=\sum_{i=1}^{n}A(f^i(x))$.
\end{proposition}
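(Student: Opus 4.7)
The plan is a Chernoff argument combined with a pressure/spectral estimate for a twisted transfer operator along unstable plaques. Note first that $I_A\subset\RR$ is a closed interval $[a_-,a_+]$, since $\Gibbs^u(f)$ is weak-$*$ compact and convex (Proposition~\ref{p.Gibbsustates}) and $\mu\mapsto\int A\,d\mu$ is affine and continuous. By symmetry it suffices to bound the upper tail $\Leb_L(\{\tfrac{1}{n}S_n(A)>a_++\vep\})$; the lower tail is handled by replacing $A$ with $-A$. For any $\lambda>0$ Markov's inequality yields
$$
\Leb_L\bigl(\{\tfrac{1}{n}S_n(A)>a_++\vep\}\bigr)\;\le\;e^{-n\lambda(a_++\vep)}\,Z_n(\lambda),\qquad Z_n(\lambda):=\int_L e^{\lambda S_n(A)}\,d\Leb_L,
$$
so the task reduces to an exponential upper bound on $Z_n(\lambda)$ with the correct asymptotic rate.

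The central step is to show that, uniformly in plaques $L\subset\cB$,
$$
\limsup_{n\to\infty}\frac{1}{n}\log Z_n(\lambda)\;\le\;P_u(\lambda A)\;:=\;\sup_{\mu\in\Gibbs^u(f)}\lambda\int A\,d\mu.
$$
For this I would introduce the twisted $u$-transfer operator $\cL_\lambda\varphi(x)=\sum_{y\in f^{-1}(x)}\frac{e^{\lambda A(y)}}{\Jac^u(y)}\varphi(y)$, acting on H\"older densities along local unstable plaques, set up intrinsically via the fake foliations from Section~\ref{ss.fakefoliation} and comparing neighbouring plaques by strong-stable holonomy. A change of variables plus bounded distortion of the $u$-Jacobian cocycle rewrites $Z_n(\lambda)$ as an integral of $\cL_\lambda^n\mathbf{1}_L$, up to multiplicative constants independent of $n$. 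A Lasota--Yorke inequality, arising from uniform backward contraction of $f^{-1}$ along unstables together with H\"older regularity of $\log\Jac^u$ (provided by $f\in C^{1+}$), then yields a spectral gap for $\cL_\lambda$ on a suitable H\"older space. The leading eigenvalue is $e^{P_u(\lambda A)}$, and identifying its eigenvector with a genuine Gibbs $u$-state density via absolute continuity of stable holonomy yields the displayed variational formula.

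Inserting this pressure estimate into the Chernoff bound gives
$$
\Leb_L\bigl(\{\tfrac{1}{n}S_n(A)>a_++\vep\}\bigr)\;\le\;C\,e^{-n(\lambda(a_++\vep)-P_u(\lambda A))}.
$$
Since $\lambda\mapsto P_u(\lambda A)$ is convex with $P_u(0)=0$ and right derivative $a_+$ at the origin, for all sufficiently small $\lambda>0$ the exponent is at least $\lambda\vep/2$, giving $\delta=\lambda\vep/2>0$ as desired. The constant $C$ depends only on the foliation box $\cB$ and $\|A\|_{C^\alpha}$ through the distortion estimate, so the bound is uniform over plaques $L$ inside $\cB$.

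\textbf{Main obstacle.} The genuinely delicate step is the spectral gap / pressure formula: in the partially hyperbolic, possibly non-dynamically-coherent setting one cannot iterate a real unstable leaf and return to itself, so $\cL_\lambda$ must be defined via fake foliation charts; the Lasota--Yorke inequality must exploit uniform $u$-expansion and H\"older regularity of $\Jac^u$; and the top eigenvector must be matched with a true Gibbs $u$-state density by stable-holonomy transport. These ingredients together constitute Dolgopyat's construction in~\cite{D}, which this proposal follows schematically.
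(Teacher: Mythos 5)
You should first note that the paper does not prove this statement at all—it imports it verbatim from Dolgopyat's paper \cite{D} and simply cites Proposition 3.1 there. So the relevant comparison is against Dolgopyat's own argument, not anything in the present paper.

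Against that benchmark, your proposal takes a genuinely different and, I believe, unworkable route. Dolgopyat's proof in \cite{D} does not use a twisted transfer operator or a spectral gap. It is a direct combinatorial/probabilistic argument based on ``standard pairs'' (unstable plaques carrying densities with bounded distortion): one chops the orbit into blocks of a fixed large length $n_0$, uses bounded distortion of the unstable Jacobian to decompose the pushed-forward Lebesgue measure on a plaque into a family of sub-plaques of controlled size after each block, and observes that, because weak-$*$ limits of $\frac{1}{n}\sum_{j}f^j_*\Leb_L$ are Gibbs $u$-states, on all but a fraction $\kappa<1$ of each sub-plaque the time-$n_0$ Birkhoff average lies within $\vep/2$ of $I_A$. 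Iterating this block decomposition and counting the bad branches gives the exponential bound. Crucially, no leading eigenvalue, spectral gap, or variational pressure formula enters.

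The central gap in your proposal is the spectral gap for $\cL_\lambda$. In the generality of this proposition there is no transitivity or topological mixing assumption on $f$; the unstable dynamics can be reducible, with several closed $u$-saturated invariant sets carrying disjoint Gibbs $u$-states, or with nontrivial wandering unstable plaques. In such cases the unperturbed operator $\cL_0$ already has a non-simple leading eigenvalue (each ergodic Gibbs $u$-state gives an invariant density on its support), so no Lasota--Yorke inequality will produce a spectral gap, and the Ruelle--Perron--Frobenius picture on which your pressure formula rests does not apply. Your claimed bound $\limsup_n\frac1n\log Z_n(\lambda)\le\sup_{\mu\in\Gibbs^u(f)}\lambda\int A\,d\mu$ is also imprecise: the variational quantity controlling $Z_n$ is $\sup_{\nu}\{h_\nu(f,\cF^u)-\int\log\Jac^u\,d\nu+\lambda\int A\,d\nu\}$ over \emph{all} invariant $\nu$, which strictly exceeds your $P_u(\lambda A)$ for $\lambda$ beyond a threshold; restricting to small $\lambda$ is necessary and you do say ``sufficiently small'', but even establishing the $\limsup$ inequality uniformly over plaques already contains the entire difficulty and is precisely what Dolgopyat's block argument achieves directly. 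Finally, attributing the transfer-operator construction to \cite{D} is a misreading: Dolgopyat's Proposition 3.1 is proved without any such operator. If you want to keep a moment-generating-function formulation, you would have to replace the spectral-gap step by a block sub-multiplicativity bound on $Z_n(\lambda)$ derived from the standard-pair decomposition, which in effect reproduces Dolgopyat's argument.
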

Fix $\cB$ to be any foliation box for the unstable foliation $\cF^u$ such that $D\subset \cB$.
By \eqref{eq.fcuexpanding}, for $A=\log \|Df^{-1}\mid_{E^{cu}(x)}\|$, $I_A\subset (\infty, -b_0)$. Applying the previous proposition with  $\vep = b_0/2$, we obtain $C>0, \delta>0$ such that for any plaque $L$ of
$\cF^u\mid \cB$,
\begin{equation}\label{eq.leaftail}
\Leb_L(\{x: \frac{1}{n} \sum_{i=1}^n \log \|Df^{-1}\mid_{E^{cu}(f^i(x))}\|\geq -b_0/2\})\leq C e^{-\delta n}.
\end{equation}

Note that
$$\{x:\cE_{b_0}>n\}\subset \bigcup_{m\geq n} \{x: \frac{1}{n} \sum_{i=1}^n \log \|Df^{-1}\mid_{E^{cu}(f^i(x))}\|\geq -b_0/2\}.$$
Thus by \eqref{eq.leaftail}, there are $C^\prime$ and $\delta^\prime$ such that for any unstable plaque $L\subset \cB$,
$$\Leb_L(\cE_{b_0}>n)\leq C^\prime e^{-\delta^\prime n}.$$

Because $D$ is the local unstable manifold at $p_1$, $\cF^u$ also induces a sub-foliation of $D$ (note that $\dim D = \dim E^{cu}$). It is well known that $\cF^u$ is absolutely continuous, so is the sub-foliation of $D$. Then
the previous inequality implies that there is $C_0>0$ such that
\begin{equation}\label{eq.disktail}
\Leb_D(\cE_{b_0}>n)\leq C_0 e^{-\delta^\prime n}.
\end{equation}

Then Lemma~\ref{l.tailofhyperbolictime} follows with $\tau =1$.
\end{proof}

\end{document}